\newcommand{\qedhere}{\quad\qed}
\spnewtheorem{strategy}[theorem]{Strategy}{\bf}{\rm}
\numberwithin{equation}{section}
\newcommand{\rom}[1]{\mbox{\leavevmode\skip@\lastskip\unskip\/%
           \ifdim\skip@=\z@\else\hskip\skip@\fi{\rm{#1}}}}
\newcommand{\Thm}[1]{Theorem~\ref{th:#1}}
\newcommand{\Prop}[1]{Proposition~\ref{prop:#1}}
\newcommand{\Lem}[1]{Lemma~\ref{lem:#1}}
\newcommand{\Cor}[1]{Corollary~\ref{cor:#1}}
\newcommand{\Defn}[1]{Definition~\ref{def:#1}}
\newcommand{\Str}[1]{Strategy~\ref{str:#1}}
\newcommand{\Fig}[1]{Fig.~\ref{fig:#1}}
\newcommand{\Eq}[1]{\rom{\eqref{eq:#1}}}
\renewcommand{\a}{\alpha}\renewcommand{\b}{\beta}
\newcommand{\gm}{\gamma}\newcommand{\dl}{\delta}
\newcommand{\eps}{\epsilon}\newcommand{\zt}{\zeta}
\newcommand{\te}{\theta}
\newcommand{\lm}{\lambda}\newcommand{\kp}{\kappa}
\newcommand{\sg}{\sigma}
\newcommand{\ph}{\varphi}
\newcommand{\om}{\omega}
\newcommand{\Gm}{\Gamma}
\newcommand{\Sg}{\Sigma}
\newcommand{\Ph}{\Phi}
\newcommand{\bff}{\boldsymbol{f}}
\newcommand{\bfF}{\boldsymbol{F}}
\newcommand{\bfg}{\boldsymbol{g}}
\newcommand{\bfh}{\boldsymbol{h}}
\newcommand{\bfr}{\boldsymbol{r}}
\newcommand{\bfone}{\boldsymbol{1}}
\newcommand{\N}{\mathbb{N}}
\newcommand{\R}{\mathbb{R}}
\newcommand{\Z}{\mathbb{Z}}
\newcommand{\cB}{\mathcal{B}}
\newcommand{\cE}{\mathcal{E}}
\newcommand{\cF}{\mathcal{F}}
\newcommand{\cFD}{\mathcal{F}_{\mathrm D}}
\newcommand{\cFH}{\mathcal{F}_{\mathrm H}}
\newcommand{\cG}{\mathcal{G}}
\newcommand{\cH}{\mathcal{H}}
\newcommand{\cK}{\mathcal{K}}
\newcommand{\cM}{\mathcal{M}}
\newcommand{\cN}{\mathcal{N}}
\newcommand{\cP}{\mathcal{P}}
\newcommand{\sC}{\mathscr{C}}
\newcommand{\sG}{\mathscr{G}}
\newcommand{\sH}{\mathscr{H}}
\newcommand{\sL}{\mathscr{L}}
\newcommand{\fM}{\mathfrak{M}}
\newcommand{\la}{\langle}\newcommand{\ra}{\rangle}
\newcommand{\wg}{\wedge}
\newcommand{\maruM}{{\stackrel{\circ}{\smash{\mathcal{M}}\rule{0pt}{1.3ex}}}}
\newcommand{\muessinf}{\mu\!\mathop{\text{\rm -ess\,inf}}}
\newcommand{\muesssup}{\mu\!\mathop{\text{\rm -ess\,sup}}}
\newcommand{\esssup}{\nu\!\mathop{\text{\rm -ess\,sup}}}
\newcommand{\nab}{\nabla}
\newcommand{\Dom}{\mathop{\rm Dom}\nolimits}
\newcommand{\rank}{\mathop{\mathrm{rank }}\nolimits}
\providecommand{\Supp}{\mathrm{Supp}}
\providecommand{\Cp}{\mathop{\rm Cap}\nolimits}
\providecommand{\tr}{\mathop{\rm tr}\nolimits}
\providecommand{\diam}{\mathop{\rm diam}\nolimits}
\providecommand{\PSM}{\mathrm{PSM}}
\providecommand{\Mat}{\mathrm{Mat}}
\providecommand{\Int}{\mathrm{Int}}
\DeclareMathOperator{\mint}{
\mathchoice%
{
\ooalign{%
\ensuremath{%
\begin{picture}(8,8)
\linethickness{0.6pt}
\put(1.25,2.5){\line(1,0){8}}
\end{picture}}%
\crcr
\hss\ensuremath{\displaystyle\int}\hss}
\hspace*{-4.25pt}
}
{
\ooalign{%
\ensuremath{%
\begin{picture}(8,8)
\put(2.0,2.75){\line(1,0){4}}
\end{picture}}%
\crcr
\hss\ensuremath{\textstyle\int}\hss}
\hspace*{-2.63pt}
}
{
\ooalign{%
\ensuremath{%
\begin{picture}(8,8)
\thinlines
\put(2.5,2.0){\line(1,0){3}}
\end{picture}}%
\crcr
\hss\ensuremath{\scriptstyle\int}\hss}
\hspace*{-3.20pt}
}
{
\ooalign{%
\ensuremath{%
\begin{picture}(8,8)
\put(2.98,1.5){\line(1,0){2}}
\end{picture}}%
\crcr
\hss\ensuremath{\scriptscriptstyle\int}\hss}
\hspace*{-3.0pt}
}
}
\journalname{Probability Theory and Related Fields}
\begin{document}\allowdisplaybreaks[3]
\title{Upper estimate of martingale dimension for self-similar fractals\thanks{Research partially supported by KAKENHI (21740094, 24540170).}%
}

\dedication{Dedicated to Professor Leonard Gross on the occasion of his 80th birthday}
\author{Masanori Hino}


\institute{Masanori Hino \at
              Graduate School of Informatics, Kyoto University, Kyoto 606-8501, Japan \\
              Tel.: +81-75-753-3378\\
              Fax: +81-75-753-3381\\
              \email{hino@i.kyoto-u.ac.jp}
}

\date{Received: date / Accepted: date}

\maketitle

\begin{abstract}
We study upper estimates of the martingale dimension $d_\mathrm{m}$ of diffusion processes associated with strong local Dirichlet forms.
By applying a general strategy to self-similar Dirichlet forms on self-similar fractals, we prove that $d_\mathrm{m}=1$ for natural diffusions on post-critically finite self-similar sets and that $d_\mathrm{m}$ is dominated by the spectral dimension for the Brownian motion on Sierpinski carpets.
%
\keywords{martingale dimension \and self-similar set\and Sierpinski carpet\and Dirichlet form}
\subclass{60G44 \and 28A80 \and 31C25 \and 60J60}
\end{abstract}
\setcounter{tocdepth}{2}\tableofcontents
\section{Introduction}
Studies on the structure of stochastic processes through the space of martingales associated with them date back to the 1960s.
As seen from Meyer's decomposition theorem for example,
martingales are one of the suitable concepts for understanding the randomness of stochastic processes.
In the framework of general Markov processes, Motoo and Watanabe~\cite{MW64} proved that, for a class $\cM$ of martingale additive functionals, there exists a kind of basis $\{x_n\}$ of $\cM$ such that every element in $\cM$ can be represented as a sum of stochastic integrals based on $\{x_n\}$ and a purely discontinuous part.
This is a generalization of the study by Ventcel'~\cite{Ve61}, wherein the Brownian motion on $\R^d$ was considered.
We term the cardinality of the basis as the martingale dimension. (The precise definition is discussed in Section~2.)
Related general theories are found in some articles such as those by Kunita and Watanabe~\cite{KW67} and Cram\'er~\cite{Cr64}.
Later, Davis and Varaiya~\cite{DV74} introduced the concept of {multiplicity of filtration} on filtered probability spaces as an abstract generalization. 
A vast amount of literature is now available on the study of filtrations from various directions by M.~Yor, M.~\'Emery, M.~T.~Barlow, E.~A.~Perkins, B.~Tsirelson, and many others.
In this article, we focus on the quantitative estimate of martingale dimensions associated with symmetric diffusion processes on state spaces that do not necessarily have smooth structures, in particular, on self-similar fractals.

The martingale dimension of typical examples, such as the Brownian motion on a $d$-dimensional complete Riemannian manifold, is $d$.
This number can be informally interpreted as the number of ``independent noises'' included in the process.
When the underlying space does not have a differential structure, it is not easy to determine or even to provide estimates of the martingale dimension.
The first result in this direction is due to Kusuoka~\cite{Ku89}, who considered the martingale dimension $d_\mathrm{m}$ with respect to additive functionals (AF-martingale dimension) and proved that $d_\mathrm{m}=1$ for the Brownian motion on the $d$-dimensional standard Sierpinski gasket SG$_d$ (see \Fig{fig1}) for every $d$. 
\begin{figure}[t]
\includegraphics[width=0.18\hsize]{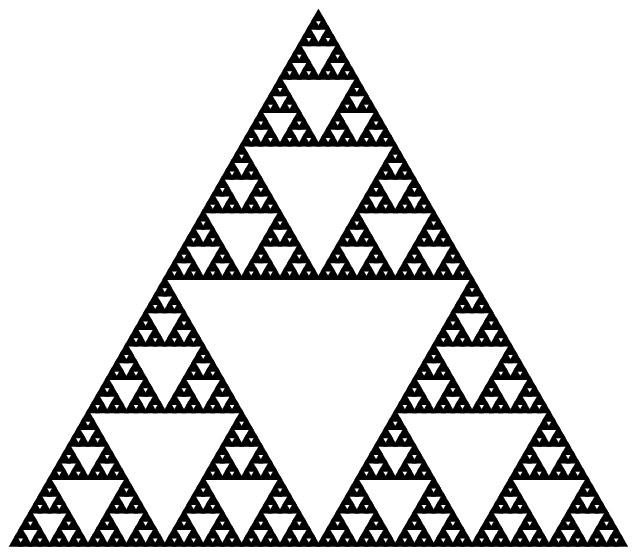}\quad
\includegraphics[width=0.17\hsize]{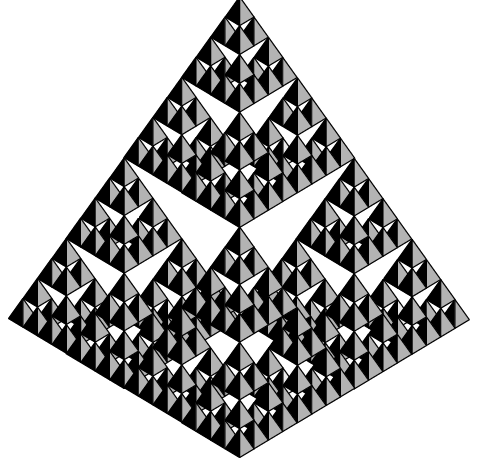}\quad
\includegraphics[width=0.14\hsize]{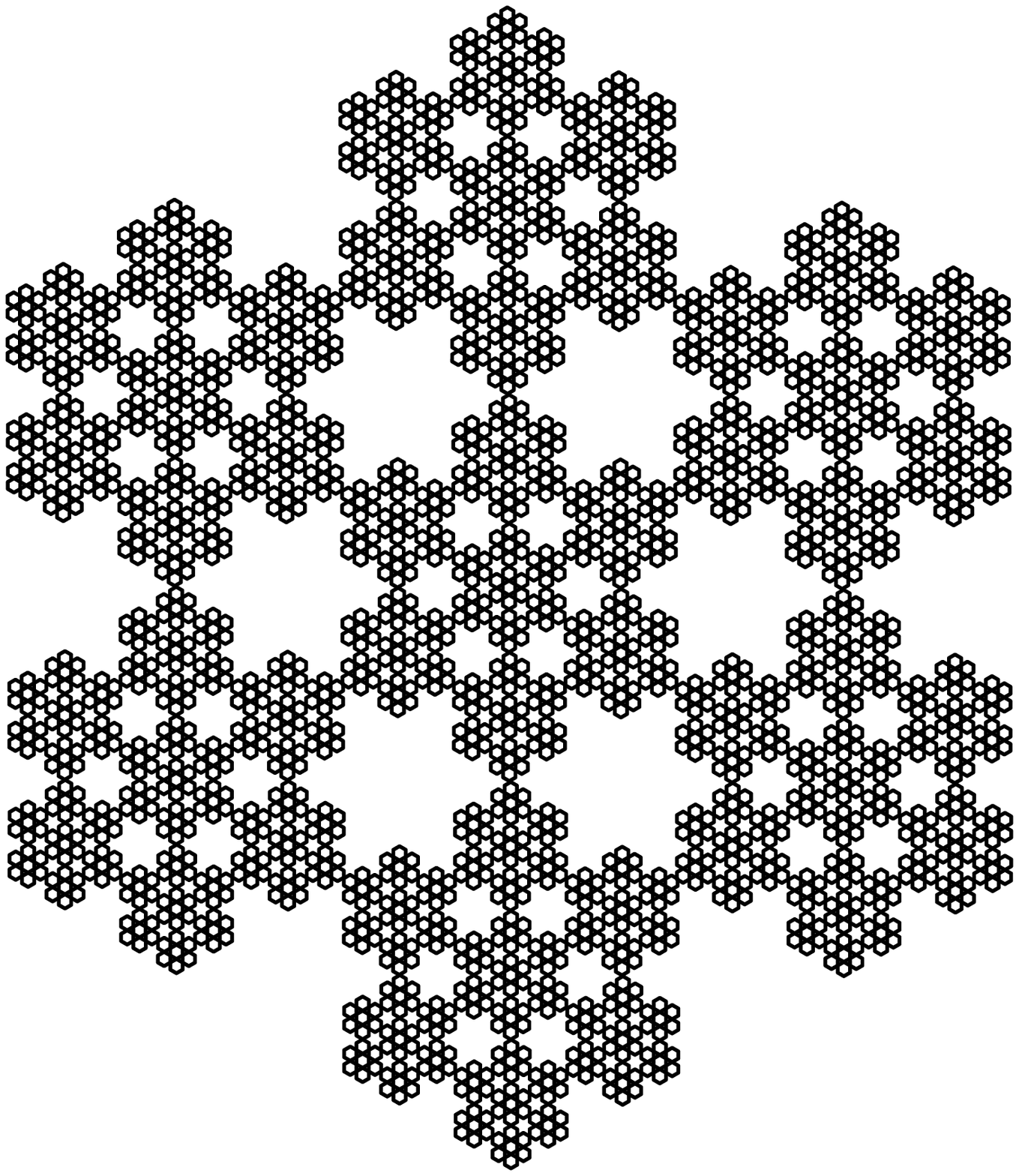}\quad
\includegraphics[width=0.17\hsize]{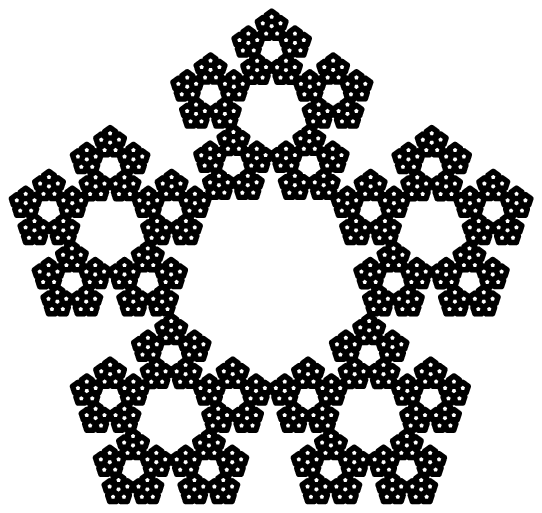}\quad
\includegraphics[width=0.24\hsize]{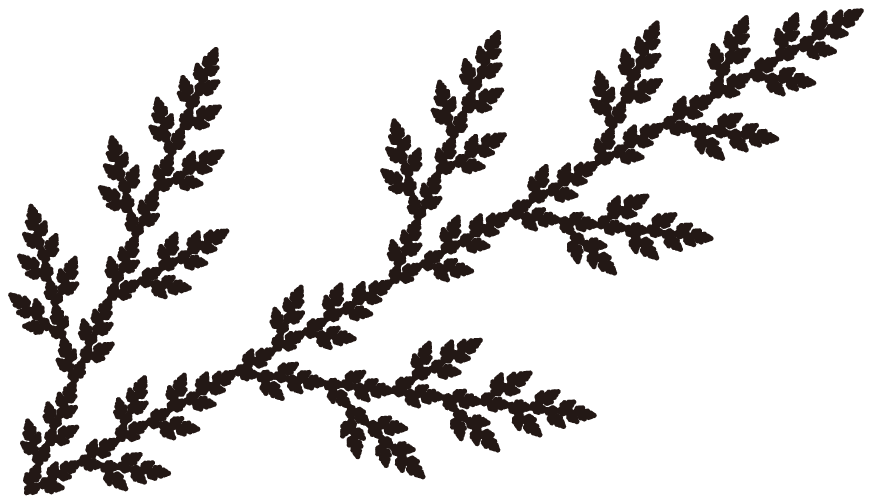}
\caption{SG$_2$, SG$_3$, and some other p.c.f.~self-similar sets}
\label{fig:fig1}
\end{figure}
This was an unexpected result because the Hausdorff dimension of SG$_d$ is $\log (d+1)/\log 2$, which is arbitrarily large when $d$ becomes larger.
This result was generalized in \cite{Hi08,Hi10} to natural self-similar symmetric diffusion processes on post-critically finite (p.\,c.\,f.) self-similar sets (see \Fig{fig1}) satisfying certain technical conditions, with the same conclusion.
The proofs heavily rely on the facts that the fractal sets under consideration are finitely ramified (that is, they can be disconnected by removing finitely many points) and that the Dirichlet form associated with the diffusion is described by infinite random products of a finite number of matrices.
No further results have yet been obtained in this direction.
Thus, the following questions naturally arise:
\begin{itemize}
\item What about the martingale dimensions of infinitely ramified fractals such as\break Sierpinski carpets?
\item In general, are there any relations between $d_\mathrm{m}$ and other kinds of dimensions?
\end{itemize}
In this paper, we provide partial answers to these questions; we prove that the AF-martingale dimension $d_\mathrm{m}$ of the Brownian motion on (generalized) Sierpinski carpets (\Fig{fig2}) are dominated by the spectral dimension $d_\mathrm{s}$. 
\begin{figure}
\includegraphics[width=0.18\hsize]{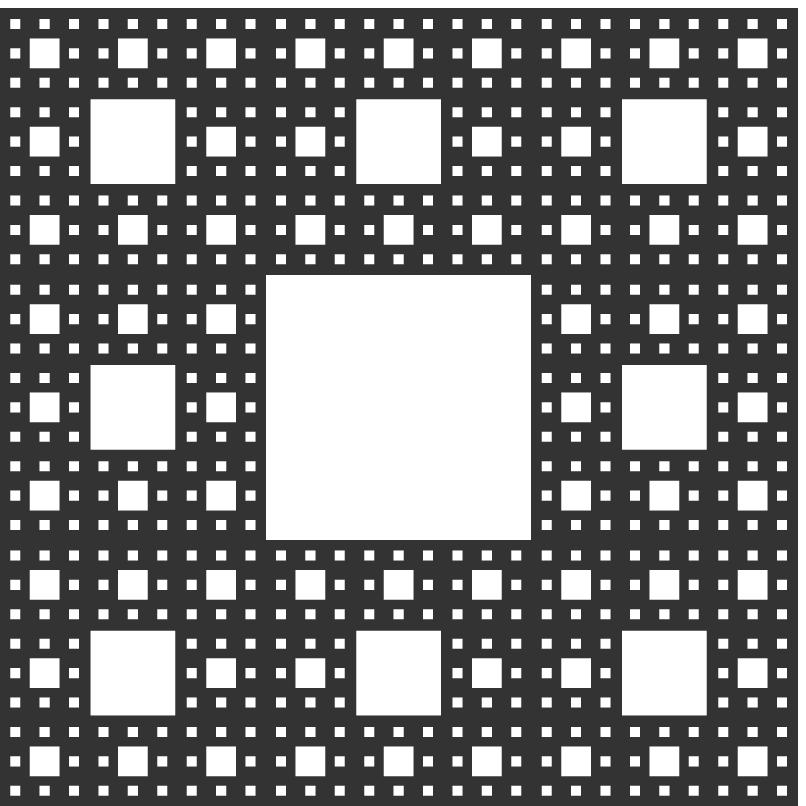}\qquad 
\includegraphics[width=0.18\hsize]{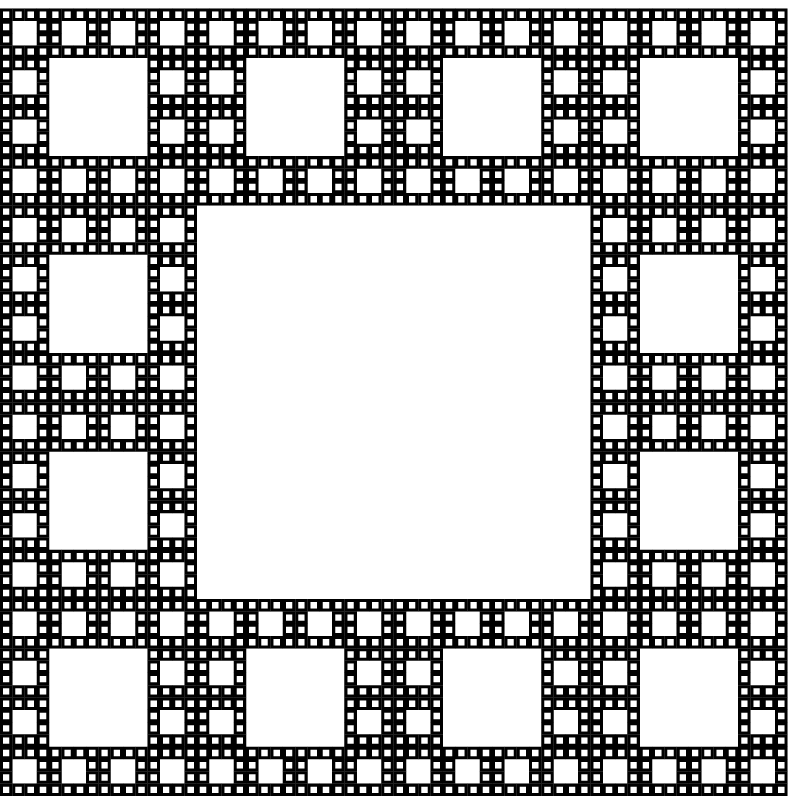}\qquad 
\includegraphics[width=0.17\hsize]{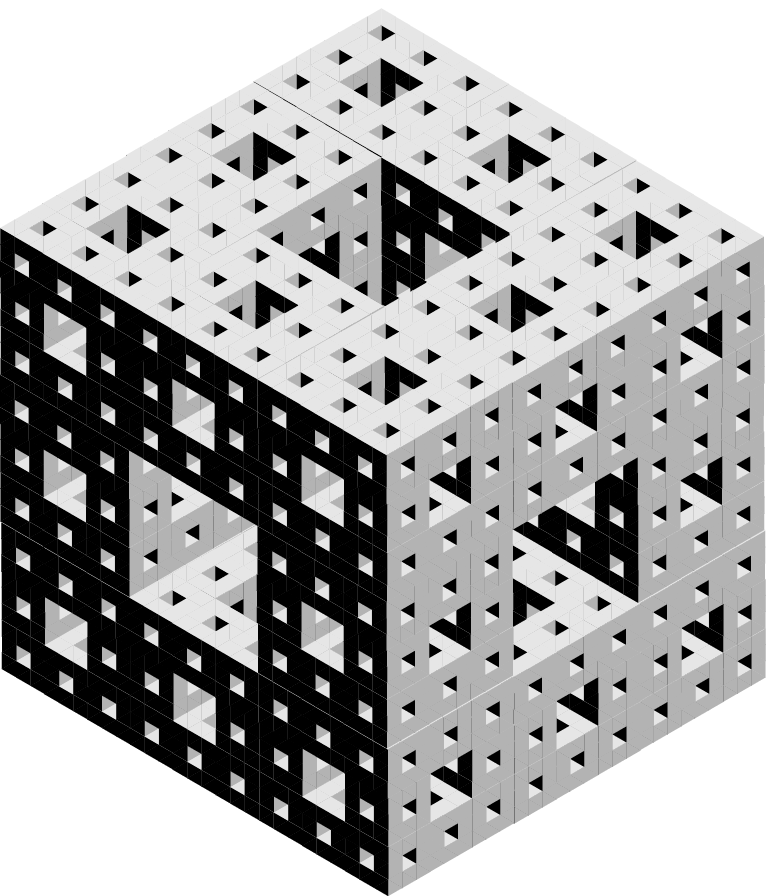}
\caption{Examples of (generalized) Sierpinski carpets}
\label{fig:fig2}
\end{figure}
In particular, if the process is point recurrent (that is, if $d_\mathrm{s}<2$), then $d_\mathrm{m}=1$.
This is the first time that nontrivial estimates of martingale dimensions for infinitely ramified fractals have been obtained.
The proof is based on the analytic characterization of $d_\mathrm{m}$ in terms of the index of the associated Dirichlet form that was developed in \cite{Hi10}, and new arguments for the estimate of the index in general frameworks, in which some harmonic maps play the crucial roles.
This method is also applicable to p.\,c.\,f.\ self-similar sets, which enables us to remove the technical assumptions in \cite{Hi08} and conclude that $d_\mathrm{m}=1$.
In \cite{Hi08}, we had to exclude Hata's tree-like set (the rightmost figure of \Fig{fig1}) because of some technical restrictions such as the condition that every ``boundary point'' had to be a fixed point of one of the maps defining the self-similar set; this example was discussed individually in \cite{Hi10}.

One of the main ingredients of the proof is the construction of a special harmonic map from the fractal to the Euclidean space~$\R^d$, which makes it possible to use certain properties of the classical energy form on $\R^d$.
For this purpose, we use a method analogous to the blowup argument in geometric measure theory.
Although we presently require the self-similar structure of the state space for this argument, we expect the relation $1\le d_\mathrm{m}\le d_\mathrm{s}$ to be true for more general metric measure spaces as well.

This article is organized as follows.
In Section~2, we review the concepts of the index of strong local regular Dirichlet forms and the AF-martingale dimension $d_\mathrm{m}$ of the associated diffusion processes under a general setting.
In Section~3, we develop some tools for the estimation of $d_\mathrm{m}$ in the general framework.
In Section~4.1, we discuss self-similar Dirichlet forms on self-similar fractals and study some properties on the energy measures as a preparation for the proof of the main results.
In Section~4.2, we treat p.\,c.\,f.\ self-similar sets and prove that $d_\mathrm{m}=1$ with respect to natural self-similar diffusions. 
This subsection is also regarded as a warm-up for the analysis on Sierpinski carpets, which is technically more involved.
In Section~4.3, we consider Sierpinski carpets and prove the inequality $1\le d_\mathrm{m}\le d_\mathrm{s}$, putting forth two technical propositions.
These propositions are proved in Section~5.

Hereafter, $c_{i.j}$ denotes a positive constant appearing in Section $i$ that does not play important roles in the arguments.
\section{Martingale dimension of the diffusion processes associated with strong local Dirichlet forms}
In this section, we review a part of the theory of Dirichlet forms and the concept of martingale dimensions, following \cite{FOT,Hi08,Hi10}.
We assume that the state space $K$ is a locally compact, separable, and metrizable space.
We denote the Borel $\sg$-field of $K$ by $\cB(K)$.
Let  $C(K)$ denote the set of all continuous real-valued functions on $K$, and $C_c(K)$, the set of all functions in $C(K)$ with compact support.
Let $\mu$ be a positive Radon measure on $K$ with full support.
For $1\le p\le \infty$, $L^p(K,\mu)$ denotes the real $L^p$-space on the measure space $(K,\cB(K),\mu)$ with norm $\|\cdot\|_{L^p(K,\mu)}$.
The inner product of $L^2(K,\mu)$ is denoted by $(\cdot,\cdot)_{L^2(K,\mu)}$.
Suppose that we are given a symmetric regular Dirichlet form $(\cE,\cF)$ on $L^2(K,\mu)$.
For $\a\in\R$ and $f,g\in\cF$, we define $\cE_\a(f,g)=\cE(f,g)+\a(f,g)_{L^2(K,\mu)}$.
The space $\cF$ becomes a Hilbert space under inner product $(f,g)_\cF:=\cE_1(f,g)$.
Hereafter, the topology of $\cF$ is always considered as that derived from norm $\|\cdot\|_\cF:=(\cdot,\cdot)_\cF^{1/2}$.
We write $\cE(f)$ and $\cE_\a(f)$ instead of $\cE(f,f)$ and $\cE_\a(f,f)$ for simplicity.
The set of all bounded functions in $\cF$ is denoted by $\cF_b$.
The following is a basic fact.
\begin{proposition}[cf.\ {\cite[Theorem~1.4.2]{FOT}}]\label{prop:cEfg}
Let $f,g\in\cF_b$. Then, $fg\in \cF_b$ and
\[
  \cE(fg)^{1/2}\le \cE(f)^{1/2}\|g\|_{L^\infty(K,\mu)}+\cE(g)^{1/2}\|f\|_{L^\infty(K,\mu)}.
\]
\end{proposition}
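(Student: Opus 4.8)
The plan is to prove the inequality first for a family of approximating forms that are of pure‑jump type, for which the estimate reduces to a triangle inequality, and then to pass to the limit. For $\beta>0$ let $G_\beta$ be the resolvent determined by $\cE_\beta(G_\beta h,v)=(h,v)_{L^2(K,\mu)}$ for all $v\in\cF$, and set
\[
  \cE^{(\beta)}(u,v)=\beta\bigl(u-\beta G_\beta u,\,v\bigr)_{L^2(K,\mu)},\qquad u,v\in L^2(K,\mu).
\]
The two standard facts I would rely on are spectral: for $u\in\cF$ one has $\cE^{(\beta)}(u)\le\cE(u)$ with $\cE^{(\beta)}(u)\uparrow\cE(u)$ as $\beta\to\infty$, whereas any $u\in L^2(K,\mu)$ with $\sup_{\beta}\cE^{(\beta)}(u)<\infty$ already lies in $\cF$. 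Both follow immediately from the spectral representation, since the symbol $\beta\lm/(\beta+\lm)$ increases to $\lm$.

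The heart of the argument is to rewrite $\cE^{(\beta)}$ as a pure‑jump form with nonnegative weights. Since $(\cE,\cF)$ is symmetric, the resolvent admits a symmetric kernel representation, $\beta G_\beta h(x)=\beta\int_K h(y)\,g_\beta(x,dy)$ with $g_\beta(x,dy)\,\mu(dx)=g_\beta(y,dx)\,\mu(dy)$, and the Markovian property gives $\beta G_\beta\bfone\le\bfone$ $\mu$‑a.e. Splitting $\beta\|u\|_{L^2(K,\mu)}^2$ and using the symmetry of $g_\beta$ then produces the identity
\[
  \cE^{(\beta)}(u)=\frac12\iint_{K\times K}\bigl(u(x)-u(y)\bigr)^2\,\sg_\beta(dx\,dy)+\int_K u(x)^2\,\kp_\beta(dx),
\]
with $\sg_\beta(dx\,dy)=\beta^2 g_\beta(x,dy)\,\mu(dx)$ symmetric and $\kp_\beta(dx)=\beta\bigl(\bfone-\beta G_\beta\bfone\bigr)(x)\,\mu(dx)\ge0$. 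Adjoining a cemetery point $\Delta$ and setting $u(\Delta)=0$ turns the killing term into a jump to $\Delta$, so that $\cE^{(\beta)}(u)=\tfrac12\iint(u(x)-u(y))^2\,\widetilde{\sg}_\beta(dx\,dy)$ for a single nonnegative symmetric measure $\widetilde{\sg}_\beta$ on $(K\cup\{\Delta\})^2$. I expect this representation --- in particular the genuine nonnegativity of both weights, which is exactly where symmetry and the sub‑Markovian bound enter --- to be the main obstacle; everything after it is formal.

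Granting the representation, the estimate reduces to a triangle inequality. For $f,g\in\cF_b$ the identity $(fg)(x)-(fg)(y)=f(x)(g(x)-g(y))+g(y)(f(x)-f(y))$ together with $f(\Delta)=g(\Delta)=0$ gives the pointwise bound
\[
  \bigl|(fg)(x)-(fg)(y)\bigr|\le\|g\|_{L^\infty(K,\mu)}\,|f(x)-f(y)|+\|f\|_{L^\infty(K,\mu)}\,|g(x)-g(y)|
\]
for all $x,y\in K\cup\{\Delta\}$. Taking $L^2(\widetilde{\sg}_\beta)$‑norms and applying the triangle inequality yields
\[
  \cE^{(\beta)}(fg)^{1/2}\le\cE^{(\beta)}(f)^{1/2}\|g\|_{L^\infty(K,\mu)}+\cE^{(\beta)}(g)^{1/2}\|f\|_{L^\infty(K,\mu)}.
\]
Since $fg\in L^2(K,\mu)$ and the right‑hand side is bounded uniformly in $\beta$ by $\cE(f)^{1/2}\|g\|_{L^\infty(K,\mu)}+\cE(g)^{1/2}\|f\|_{L^\infty(K,\mu)}$, the spectral criterion shows $fg\in\cF$; as $\|fg\|_{L^\infty(K,\mu)}\le\|f\|_{L^\infty(K,\mu)}\|g\|_{L^\infty(K,\mu)}$, in fact $fg\in\cF_b$. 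Finally, letting $\beta\to\infty$ with $\cE^{(\beta)}(fg)\uparrow\cE(fg)$ and $\cE^{(\beta)}(f)\le\cE(f)$, $\cE^{(\beta)}(g)\le\cE(g)$ gives the asserted inequality.
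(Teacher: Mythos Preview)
The paper does not give its own proof of this proposition; it simply cites \cite[Theorem~1.4.2]{FOT} as the source. Your argument is precisely the standard proof found there (and in Chen--Fukushima~\cite{CF}): one passes through the approximating forms $\cE^{(\beta)}(u,v)=\beta(u-\beta G_\beta u,v)$, uses their jump-type representation with the symmetric measure $\sg_\beta$ and killing part $\kp_\beta\ge0$, applies the triangle inequality to the pointwise identity $(fg)(x)-(fg)(y)=f(x)(g(x)-g(y))+g(y)(f(x)-f(y))$, and lets $\beta\to\infty$. So there is nothing to compare; your proof and the paper's reference coincide.

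One minor remark on the step you flagged as the ``main obstacle'': the existence of a genuine kernel for $\beta G_\beta$ is not completely free in the abstract $L^2$ setting of Chapter~1 of \cite{FOT}, but it is unproblematic here because $K$ is a locally compact separable metric space and $\mu$ is Radon, so the sub-Markovian operator $\beta G_\beta$ does admit a transition kernel. In any case, the pure-jump representation of $\cE^{(\beta)}$ is recorded explicitly in \cite[Lemma~1.3.4 and \S1.4]{FOT}, so you may simply cite it rather than rederive it.
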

Let us review the theory of additive functionals associated with $(\cE,\cF)$, following \cite[Chapter~5]{FOT}.
The capacity $\Cp$ associated with $(\cE,\cF)$ is defined as
\[
  \Cp(U)=\inf\{\cE_1(f)\mid f\in\cF\mbox{ and }f\ge1\ \mu\mbox{-a.e.\ on }U\}
\]
if $U$ is an open subset of $K$, and
\[
  \Cp(B)=\inf\{\Cp(U)\mid \mbox{$U$ is open and $U\supset B$}\}
\]
for general subsets $B$ of $K$.
A subset $B$ of $K$ with $\Cp(B)=0$ is called an {\em exceptional set}.
A statement depending on $x\in K$ is said to hold for {\em q.e.} (quasi-every) $x$ if the set of $x$ for which the statement is not true is an exceptional set. 
A real valued function $u$ defined q.e.\ on $K$ is called {\em quasi-continuous} if for any $\eps>0$, there exists an open subset $U$ of $K$ such that $\Cp(U)<\eps$ and $u|_{K\setminus U}$ is continuous. From \cite[Theorem~2.1.3]{FOT}, every $f\in \cF$ has a quasi-continuous modification $\tilde f$ in the sense that $f=\tilde f$ $\mu$-a.e.\ and $\tilde f$ is quasi-continuous.

For a $\mu$-measurable function $u$, the support of the measure $|u|\cdot\mu$ is denoted by $\Supp[u]$.
Hereafter, we consider only the case that $(\cE,\cF)$ is {\em strong local}, that is, the following property holds:
\begin{quote}
 If $u,v\in\cF$, $\Supp[u]$ and $\Supp[v]$ are compact, and $v$ is constant on a neighborhood of $\Supp[u]$, then $\cE(u,v)=0$.
\end{quote}
  From the general theory of regular Dirichlet forms, we can construct a diffusion process $\{X_t\}$ on $K_\Delta$ defined on a filtered probability space $(\Omega,\cF_\infty,P,\{P_x\}_{x\in K_\Delta},\{\cF_t\}_{t\in[0,\infty)})$ associated with $(\cE,\cF)$.
Here, $K_\Delta=K\cup\{\Delta\}$ is a one-point compactification of $K$ and $\{\cF_t\}_{t\in[0,\infty)}$ is the minimum completed admissible filtration.
Any numerical function $f$ on $K$ extends to $K_\Delta$ by letting $f(\Delta)=0$.
We denote by $E_x$ the expectation with respect to $P_x$ for $x\in K$.
The relationship between $\{X_t\}$ and $(\cE,\cF)$ is explained in such a way that the operator $f\mapsto E_\cdot[f(X_t)]$ produces the semigroup associated with $(\cE,\cF)$.
  We may assume that for each $t\in[0,\infty)$, there exists a shift operator $\theta_t\colon \Omega\to\Omega$ that satisfies $X_s\circ\theta_t=X_{s+t}$ for all $s\ge0$.
  We denote the life time of $\{X_t(\om)\}_{t\in[0,\infty)}$ by $\zt(\om)$.
  A $[-\infty,+\infty]$-valued function $A_t(\om)$, $t\in[0,\infty)$, $\om\in\Omega$, is referred to as an {\em additive functional} if the following conditions hold:
  \begin{itemize}
  \item $A_t(\cdot)$ is $\cF_t$-measurable for each $t\ge0$;
  \item There exist a set $\Lambda\in\cF_\infty$ and an exceptional set $N\subset K$ such that $P_x(\Lambda)=1$ for all $x\in K\setminus N$ and $\theta_t\Lambda\subset\Lambda$ for all $t>0$; moreover, for each $\om\in\Lambda$, $A_\cdot(\om)$ is right continuous and has the left limit on $[0,\zt(\om))$, $A_0(\om)=0$, $|A_t(\om)|<\infty$ for all $t<\zt(\om)$, $A_t(\om)=A_{\zt(\om)}(\om)$ for $t\ge\zt(\om)$, and
  \[
    A_{t+s}(\om)=A_s(\om)+A_t(\theta_s \om)
    \quad \mbox{for every }t,s\ge0.
  \]
  \end{itemize}
  The sets $\Lambda$ and $N$ referred to above are called a {\em defining set} and an {\em exceptional set} of the additive functional $A$, respectively.
  A finite (resp.\ continuous) additive functional is defined as an additive functional such that $|A_\cdot(\om)|<\infty$ (resp.\ $A_\cdot(\om)$ is continuous) on $[0,\infty)$ for $\om\in\Lambda$.
  A $[0,+\infty]$-valued continuous additive functional is referred to as a positive continuous additive functional.
   From \cite[Theorems~5.1.3 and 5.1.4]{FOT}, for each positive continuous additive functional $A$, there exists a unique measure~$\mu_A$ on $K$ (termed the Revuz measure of $A$) such that the following identity holds for any $t>0$ and nonnegative Borel functions $f$ and $h$ on $K$:
  \[
  \int_K E_x\left[\int_0^t f(X_s)\,dA_s\right]h(x)\,\mu(dx)
  =\int_0^t \int_K E_x\left[h(X_s)\right]f(x)\,\mu_A(dx)\,ds.
  \]
  Further, if two positive continuous additive functionals $A^{(1)}$ and $A^{(2)}$ have the same Revuz measures, then $A^{(1)}$ and $A^{(2)}$ coincide in the sense that, for any $t>0$, $P_x(A_t^{(1)}=A_t^{(2)})=1$ for q.e.\,$x\in K$.  
  Let $P_\mu$ be a measure on $\Omega$ defined as $P_\mu(\cdot)=\int_K P_x(\cdot)\,\mu(dx)$.
  Let $E_\mu$ denote the integration with respect to $P_\mu$.
  We define the energy $e(A)$ of additive functional $A$ as
  $
  e(A)=\lim_{t\to0}(2t)^{-1}E_\mu[A_t^2]
  $
  if the limit exists.
  
  Let $\cM$ be the space of martingale additive functionals of $\{X_t\}$ that is defined as
  \[
  \cM=\left\{M\;\vrule\;\;\parbox{0.8\textwidth}{%
  $M$ is a finite additive functional such that $M_\cdot(\om)$ is right continuous
  and has a left limit on $[0,\infty)$ for $\om$ in a defining set of $M$, and for
  each $t>0$, $E_x[M_t^2]<\infty$ and $E_x[M_t]=0$ for q.e.\  $x\in K$}\;\right\}.
  \]
  Due to the assumption that $(\cE,\cF)$ is strong local, every $M\in\cM$ is in fact a continuous additive functional (cf.~\cite[Lemma~5.5.1~(ii)]{FOT}).
  Each $M\in\cM$ admits a positive continuous additive functional $\la M\ra$ referred to as the quadratic variation associated with $M$, which satisfies
  $
  E_x[\la M\ra_t]=E_x[M_t^2], \ t>0$ for q.e.\ $x\in K$,
  and the following equation holds:
  $  e(M)=\mu_{\la M\ra}(K)/2$. 
  We set $\maruM=\{M\in\cM\mid e(M)<\infty\}$.
  Then, $\maruM$ is a Hilbert space with inner product $e(M,L):=(e(M+L)-e(M)-e(L))/2$ (see \cite[Theorem~5.2.1]{FOT}). 
  For $M,L\in \maruM$, we set $\mu_{\la M,L\ra}=(\mu_{\la M+L\ra}-\mu_{\la M\ra}-\mu_{\la L\ra})/2$.
  For $M\in\maruM$ and $f\in L^2(K,\mu_{\la M\ra})$, we can define the stochastic integral $f\bullet M$ (cf.~\cite[Theorem~5.6.1]{FOT}), which is a unique element of $\maruM$ such that
  $
  e(f\bullet M,L)=(1/2)\int_Kf(x)\,\mu_{\la M,L\ra}(dx)$
  for all $L\in\maruM$.
  If $f\in C_c(K)$, we may write $\int_0^\cdot f(X_t)\,dM_t$ for $f\bullet M$ since $(f\bullet M)_t=\int_0^t f(X_s)\,dM_s$, $t>0$, $P_x$-a.e.\ for q.e.\,$x\in K$ (cf.~\cite[Lemma~5.6.2]{FOT}).

Let $\Z_+$ denote the set of all nonnegative integers.
\begin{definition}[cf.~\cite{Hi08}]\label{def:AF}
The {\em AF-martingale dimension} of $\{X_t\}$ (or of $(\cE,\cF)$) is defined as the smallest number $p$ in $\Z_+$ satisfying the following: There exists a sequence $\{M^{(k)}\}_{k=1}^p$ in $\maruM$ such that every $M\in\maruM$ has a stochastic integral representation
\[
  M_t=\sum_{k=1}^p(h_k\bullet M^{(k)})_t,\quad t>0,\ P_x\mbox{-a.e.\ for q.e.\,}x,
\]
where $h_k\in L^2(K,\mu_{\la M^{(k)}\ra})$ for each $k=1,\dotsc,p$.
If such $p$ does not exist, the AF-martingale dimension is defined as $+\infty$.
\end{definition}
\begin{remark}\label{rem:martingale}
In the definition above, AF is an abbreviation of ``additive functional.''
We can also consider another version of martingale dimensions for general (not necessarily symmetric) diffusion processes as follows.
Let 
\[
\fM=\left\{M=\{M_t\}_{t\in[0,\infty)}\;\vrule\;\; \parbox{0.6\textwidth}{$M_0=0$ and $M$ is a square-integrable martingale with respect to $P_x$ for all $x\in K$}\right\}.
\]
For $M\in\fM$, denote its quadratic variation process by $\la M\ra$ and define the space $L(\la M\ra)$ as the family of all progressively measurable processes $\ph(t,\om)$ such that $E_x\left[\int_0^t\ph(s)^2\,d\la M\ra_s\right]<\infty$ for all $t>0$ and $x\in K$.
The martingale dimension of $\fM$ is defined as the smallest number $q$ satisfying the following:
There exists $M^{(1)},\dots, M^{(q)}\in\fM$ such that every $M\in\fM$ can be expressed as
$M_t=\sum_{k=1}^q\int_0^t \ph_k(s)\,dM^{(k)}_s$ $P_x$-a.e.\,$x$ for all $x\in K$, where $\ph_k\in L(\la M^{(k)}\ra)$, $k=1,\dots,q$, and the integral above is interpreted as the usual stochastic integral with respect to martingales.
Let us observe the relation between these two concepts.
Suppose that $\{X_t\}$ with $(\Omega,\cF_\infty,P,\{P_x\}_{x\in K_\Delta},\{\cF_t\}_{t\in[0,\infty)})$ is a diffusion process on $K$ with symmetrizing measure $\mu$ and has an associated regular Dirichlet form $(\cE,\cF)$ on $L^2(K,\mu)$.
For $\a>0$ and a bounded Borel measurable function $f$ in $L^2(K,\mu)$, denote the $\a$-order resolvent $E.\left[\int_0^\infty e^{-\a t}f(X_t)\,dt\right]$ by $G_\a f$, and set
\[
  M_t^{f,\a}=(G_\a f)(X_t)-(G_\a f)(X_0)-\int_0^t\left(\a(G_\a f)(X_s)-f(X_s)\right)ds,
  \quad t>0.
\]
Then, $M_\cdot^{f,\a}$ belongs to $\maruM\cap\fM$.
Moreover, concerning the space
\[\hat\fM=\{M_\cdot^{f,\a}\mid \a>0,\ \text{$f$ is a bounded Borel measurable function in $L^2(K,\mu)$}\},\]
the linear span of $\{h\bullet M\mid M\in\hat\fM,\ h\in C_c(K)\}$ is dense in $\maruM$ from \cite[Lemma~5.6.3]{FOT} and the linear span of 
$\left\{\int_0^\cdot \ph(s)\,dM_s\;\vrule\;\;M\in\hat\fM,\ \ph\in L(\la M\ra)\right\}$ is dense in $\fM$ with respect to the natural topology from \cite[Theorem~4.2]{KW67}.
These facts strongly suggest that the two martingale dimensions coincide, although the author does not have a proof.
In this article, we consider only AF-martingale dimensions and often omit ``AF'' from the terminology hereafter.
\end{remark}

We review the analytic representation of the AF-martingale dimension.
First, we introduce the concept of energy measures of functions in $\cF$, which is defined for (not necessarily strong local) regular Dirichlet forms.
For each $f\in\cF$, a positive finite Borel measure $\nu_f$ on $K$ is defined as follows~(cf.~\cite[Section~3.2]{FOT})\footnote{In \cite{FOT}, symbol $\mu_{\la f\ra}$ is used in place of $\nu_f$.}.
When $f$ is bounded, $\nu_f$ is characterized by the identity
\[
  \int_K \ph\,d\nu_f=2\cE(f\ph,f)-\cE(\ph,f^2)\quad
  \mbox{for all }\ph\in\cF\cap C_c(K).
\]
By using the inequality
\begin{equation}\label{eq:energy}
  \left|\sqrt{\nu_f(B)}-\sqrt{\nu_g(B)}\right|^2\le \nu_{f-g}(B)
  \le 2\cE(f-g),
  \quad B\in\cB(K),\ f,g\in\cF_b
\end{equation}
(cf.~\cite[p.~111, and (3.2.13) and (3.2.14) in p.~110]{FOT}), for any $f\in\cF$, we can define a finite Borel measure $\nu_f$ by $\nu_f(B)=\lim_{n\to\infty}\nu_{f_n}(B)$ for $B\in\cB(K)$, where $\{f_n\}_{n=1}^\infty$ is a sequence in $\cF_b$ such that $f_n$ converges to $f$ in $\cF$. 
Then, equation \Eq{energy} still holds true for any $f,g\in\cF$.
The measure $\nu_f$ is called the {\em energy measure} of $f$.
For $f,g\in\cF$, the mutual energy measure $\nu_{f,g}$, which is a signed Borel measure on $K$, is defined as
$
  \nu_{f,g}=(\nu_{f+g}-\nu_{f}-\nu_g)/2$.
Then, $\nu_{f,f}=\nu_f$ and $\nu_{f,g}$ is bilinear in $f$ and $g$ (cf.~\cite[p.~111]{FOT}).
We also have the following inequalities: for $f,g\in\cF$ and $B\in\cB(K)$,
\begin{align}
\label{eq:schwarz}
|\nu_{f,g}(B)|&\le\sqrt{\nu_f(B)}\sqrt{\nu_g(B)},\\
\label{eq:triangular}
\sqrt{\nu_{f+g}(B)}&\le\sqrt{\nu_{f}(B)}+\sqrt{\nu_{g}(B)}.
\end{align}
Moreover, for $f,g\in\cF$ and Borel measurable functions $h_1,h_2$ on $K$,
\begin{equation}\label{eq:KW}
\left|\int_K h_1 h_2\,d\nu_{f,g}\right|\le\left(\int_K h_1^2\,d\nu_f\right)^{1/2}\left(\int_K h_2^2\,d\nu_g\right)^{1/2}
\end{equation}
as long as the integral on the left-hand side makes sense.
This is proved as follows: If $h_1$ and $h_2$ are simple functions, \Eq{KW} follows from \Eq{schwarz} and the Schwarz inequality.
By the limiting argument, \Eq{KW} holds for general $h_1$ and $h_2$.

Under the assumption that $(\cE,\cF)$ is strong local, we have an identity
\begin{equation}\label{eq:323}
\cE(f)=\nu_f(K)/2,\qquad f\in\cF
\end{equation}
(cf.~\cite[Lemma~3.2.3]{FOT}) and the following derivation property.
\begin{theorem}[{cf.\ \cite[Theorem~3.2.2]{FOT}}]\label{th:derivation}
Let $f_1,\dots,f_m$, and $g$ be elements in $\cF$, and $\ph\in C_b^1(\R^m)$ satisfy $\ph(0,\dots,0)=0$.
Then, $u:=\ph(f_1,\dots,f_m)$ belongs to $\cF$ and
\[
  d\nu_{u,g}=\sum_{i=1}^m\frac{\partial\ph}{\partial x_i}(\tilde f_1,\dots,\tilde f_m)\,d\nu_{f_i,g}.
\]
Here, $C^1_b(\R^m)$ denotes the set of all bounded $C^1$-functions on $\R^m$ with bounded derivatives, and $\tilde f_i$ denotes a quasi-continuous modification of $f_i$.
\end{theorem}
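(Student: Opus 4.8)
The plan is to reduce to bounded functions and then to identify the two measures by testing both against a measure-determining class, using a semigroup representation of the mutual energy measure together with a first-order Taylor expansion; strong locality will enter decisively in the control of the Taylor remainder.

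First I would settle membership and reduce to the bounded case. Because $\ph\in C^1_b(\R^m)$ has bounded derivatives it is globally Lipschitz, and this, together with $\ph(0,\dots,0)=0$, forces $u=\ph(f_1,\dots,f_m)\in\cF$, since Lipschitz maps vanishing at the origin operate on $\cF$. Replacing each $f_i$ and $g$ by the truncations $f_i^{(n)}=(-n\vee f_i)\wg n$ and $g^{(n)}=(-n\vee g)\wg n$, it suffices to prove the identity for $f_1,\dots,f_m,g\in\cF_b$: both sides are continuous under $f_i^{(n)}\to f_i$ and $g^{(n)}\to g$ in $\cF$ by the energy estimates \Eq{energy} and \Eq{KW} and by \Prop{cEfg}, so the general case follows upon letting $n\to\infty$. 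Throughout I replace all functions by quasi-continuous modifications, which is legitimate since energy measures do not charge exceptional sets.

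Next I would record the analytic handle. Writing $T_th=E_\cdot[h(X_t)]$, one has $\cE(a,b)=\lim_{t\downarrow0}t^{-1}(a-T_ta,b)_{L^2(K,\mu)}$, and by stationarity and symmetry $t^{-1}(a-T_ta,b)_{L^2(K,\mu)}=(2t)^{-1}E_\mu[(a(X_t)-a(X_0))(b(X_t)-b(X_0))]$. Substituting this into the defining identity for $\nu_u$ and simplifying, I obtain for $u,v\in\cF_b$ and $\psi\in\cF_b\cap C_c(K)$ the representation
\[
\int_K\psi\,d\nu_{u,v}=\lim_{t\downarrow0}\frac1t\,E_\mu\!\left[\psi(X_0)\bigl(u(X_t)-u(X_0)\bigr)\bigl(v(X_t)-v(X_0)\bigr)\right].
\]
Taking $u=\ph(f_1,\dots,f_m)$ and $v=g$ and setting $\Delta f_i=f_i(X_t)-f_i(X_0)$, a first-order Taylor expansion yields
\[
u(X_t)-u(X_0)=\sum_{i=1}^m\frac{\partial\ph}{\partial x_i}\bigl(f_1(X_0),\dots,f_m(X_0)\bigr)\Delta f_i+R_t,\qquad |R_t|\le\eta\bigl(|\Delta f|\bigr)|\Delta f|,
\]
where $\eta$ is a bounded modulus of continuity of $\nab\ph$ with $\eta(s)\to0$ as $s\to0$ and $|\Delta f|=(\sum_i(\Delta f_i)^2)^{1/2}$. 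The linear part contributes exactly $\sum_i\int_K\psi\,\frac{\partial\ph}{\partial x_i}(\tilde f_1,\dots,\tilde f_m)\,d\nu_{f_i,g}$ in the limit, so the whole task reduces to showing that the remainder $t^{-1}E_\mu[\psi(X_0)R_t\,\Delta g]$ tends to $0$; by the Schwarz inequality it is dominated by $\|\psi\|_{L^\infty(K,\mu)}\,(t^{-1}E_\mu[R_t^2])^{1/2}(t^{-1}E_\mu[(\Delta g)^2])^{1/2}$, and the last factor stays bounded.

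The hard part will be to prove that $t^{-1}E_\mu[R_t^2]\le t^{-1}E_\mu[\eta(|\Delta f|)^2|\Delta f|^2]\to0$, and this is precisely where strong locality is indispensable. Because $(\cE,\cF)$ is strong local, $\{X_t\}$ is a diffusion and $t\mapsto\tilde f_i(X_t)$ is $P_\mu$-almost surely continuous, so $\Delta f_i\to0$ $P_\mu$-a.s.\ and hence $\eta(|\Delta f|)^2\to0$; since $t^{-1}E_\mu[|\Delta f|^2]$ remains bounded (converging to $\sum_i\nu_{f_i}(K)$) and $\eta$ is bounded, a dominated-convergence argument gives the decay. For a non-local form the increments $\Delta f_i$ would not be asymptotically negligible and this term would produce the extra jump contribution that is absent from the stated formula. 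Finally, since the resulting identity holds for every $\psi\in\cF_b\cap C_c(K)$, which is measure-determining by the regularity of $(\cE,\cF)$, the signed measures $d\nu_{u,g}$ and $\sum_i\frac{\partial\ph}{\partial x_i}(\tilde f_1,\dots,\tilde f_m)\,d\nu_{f_i,g}$ coincide, which is the assertion.
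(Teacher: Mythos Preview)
The paper does not give its own proof of this theorem; it is quoted as a known result from \cite[Theorem~3.2.2]{FOT}. Your outline---reduce to bounded functions, use the approximating bilinear forms $t^{-1}(u-T_tu,v)$ and their probabilistic rewriting, Taylor-expand, and invoke strong locality to kill the remainder---is essentially the strategy behind the proof in \cite{FOT}, so there is nothing to compare at the level of approaches.

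There is, however, a real gap at the last step. You claim that $t^{-1}E_\mu\bigl[\eta(|\Delta f|)^2|\Delta f|^2\bigr]\to0$ follows ``by dominated convergence'' from the facts that $\eta(|\Delta f|)\to0$ $P_\mu$-a.s., that $\eta$ is bounded, and that $t^{-1}E_\mu[|\Delta f|^2]$ stays bounded. Dominated convergence is not available here: the family $\{t^{-1}\eta(|\Delta f|)^2|\Delta f|^2\}_{t>0}$ has no common integrable majorant. The correct argument is to split according to the size of the increment,
\[
t^{-1}E_\mu\bigl[\eta(|\Delta f|)^2|\Delta f|^2\bigr]
\le\Bigl(\sup_{0\le s\le\eps}\eta(s)\Bigr)^{2}\, t^{-1}E_\mu\bigl[|\Delta f|^2\bigr]
+\|\eta\|_\infty^{2}\, t^{-1}E_\mu\bigl[|\Delta f|^2;\ |\Delta f|>\eps\bigr],
\]
and the whole burden falls on showing that the second term vanishes as $t\downarrow0$ for each fixed $\eps>0$. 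Path continuity of $t\mapsto\tilde f_i(X_t)$ gives only $P_\mu(|\Delta f|>\eps)\to0$, not the required rate $o(t)$; what is needed is precisely the analytic manifestation of strong locality, namely that the jumping measure in the Beurling--Deny decomposition vanishes (equivalently, $\lim_{t\to0}t^{-1}E_\mu[(\Delta f_i)^2;\,|\Delta f_i|>\eps]=0$). This is a separate theorem in \cite{FOT} and must be invoked explicitly; without it the remainder does not die and your argument does not close.
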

We note that the underlying measure $\mu$ does not play an important role with regard to energy measures.

For two $\sg$-finite (or signed) Borel measures $\mu_1$ and $\mu_2$ on $K$, we write $\mu_1\ll\mu_2$ if $\mu_1$ is absolutely continuous with respect to $\mu_2$.
\begin{definition}[cf.~\cite{Hi10}]
A $\sg$-finite Borel measure $\nu$ on $K$ is called a {\em minimal energy-dominant measure} of $(\cE,\cF)$ if the following two conditions are satisfied.
\begin{enumerate}
\item (Domination) For every $f\in \cF$, $\nu_f\ll \nu$.
\item (Minimality) If another $\sg$-finite Borel measure $\nu'$ on $K$ satisfies condition~(i) with $\nu$ replaced by $\nu'$, then $\nu\ll\nu'$.
\end{enumerate}
\end{definition}
By definition, two minimal energy-dominant measures are mutually absolutely continuous.
In fact, a minimal energy-dominant measure is realized by an energy measure as follows.
\begin{proposition}[see {\cite[Proposition~2.7]{Hi10}}]\label{prop:em}
The set of all functions $g\in\cF$ such that $\nu_g$ is a minimal energy-dominant measure of $(\cE,\cF)$ is dense in $\cF$.\end{proposition}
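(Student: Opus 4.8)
The plan is to realize the class of a minimal energy-dominant measure by a single energy measure, and then to show that such functions form a dense set by a generic perturbation. First I would record that, since $(\cE,\cF)$ is a regular Dirichlet form, $\cF$ is a separable Hilbert space, so I may fix a countable dense subset $\{f_n\}_{n\ge1}$ of $\cF$ together with positive constants $c_n$ satisfying $\sum_n c_n(1+\cE(f_n))<\infty$, and put $\nu:=\sum_n c_n\nu_{f_n}$. This is a finite Borel measure, and I claim it is minimal energy-dominant. Domination: if $\nu(B)=0$ then $\nu_{f_n}(B)=0$ for all $n$; for general $f\in\cF$ take $f_{n_k}\to f$ in $\cF$, so that the estimate \Eq{energy} yields $\sqrt{\nu_f(B)}\le\sqrt{\nu_{f_{n_k}}(B)}+\sqrt{2\cE(f-f_{n_k})}\to0$, whence $\nu_f\ll\nu$. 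Minimality: any other dominant measure $\nu'$ satisfies $\nu_{f_n}\ll\nu'$ for all $n$, hence $\nu\ll\nu'$. Thus every minimal energy-dominant measure is mutually absolutely continuous with $\nu$, and it remains to realize this class by an energy measure, densely in $\cF$.

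The crux is to produce one function $g_0$ with $\nu_{g_0}$ mutually absolutely continuous with $\nu$. Since $\nu_{g_0}\ll\nu$ holds automatically, I only need the density $\sigma_{g_0}:=d\nu_{g_0}/d\nu$ to be strictly positive $\nu$-a.e. I would take a sequence of independent standard Gaussian random variables $\{\gamma_n\}$ on an auxiliary probability space with expectation $\hat E$, fix $a_n>0$ with $\sum_n a_n^2\|f_n\|_{\cF}^2<\infty$, and set $g_0:=\sum_n\gamma_n a_n f_n$, which converges almost surely in $\cF$. Writing $\rho_n:=d\nu_{f_n}/d\nu$ and decomposing $g_0=\gamma_n a_n f_n+R$ with $R:=\sum_{m\ne n}\gamma_m a_m f_m$, bilinearity of the mutual energy measure gives, for $\nu$-a.e. $x$ and every value of $\gamma_n$, the identity $\sigma_{g_0}=\gamma_n^2 a_n^2\rho_n+2\gamma_n a_n\,(d\nu_{f_n,R}/d\nu)+\sigma_R$; this is a nonnegative quadratic in $\gamma_n$ with leading coefficient $a_n^2\rho_n$, so at each $x$ with $\rho_n(x)>0$ it has at most one zero in $\gamma_n$. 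Partitioning $K$ according to the least index $n$ with $\rho_n(x)>0$ (these sets cover $K$ modulo $\nu$-null sets, because $\sum_n c_n\rho_n=1$ $\nu$-a.e.), conditioning on the remaining Gaussians, and using that $\gamma_n$ has no atoms, I obtain $\hat P(\sigma_{g_0}(x)=0)=0$ for $\nu$-a.e.\ $x$. Joint measurability of $(x,\omega)\mapsto\sigma_{g_0}(x)$, which follows by approximating $g_0$ by its finite partial sums and invoking \Eq{energy}, together with Fubini's theorem gives $\hat E[\nu(\{\sigma_{g_0}=0\})]=0$; hence for almost every realization $\sigma_{g_0}>0$ $\nu$-a.e., and I select one such $g_0$.

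For the density statement, let $\cD$ denote the set of $g\in\cF$ with $\nu_g$ minimal energy-dominant, and let $h\in\cF$ be arbitrary. I would consider $h+tg_0$ for $t\in\R$ and compute, for $\nu$-a.e.\ $x$, the density $\sigma_{h+tg_0}=\sigma_h+2t\,\tau+t^2\sigma_{g_0}$ with $\tau:=d\nu_{h,g_0}/d\nu$. By \Eq{schwarz} this is a nonnegative quadratic in $t$ with leading coefficient $\sigma_{g_0}(x)>0$, so for each $x$ it vanishes for at most one value of $t$, namely $-\tau(x)/\sigma_{g_0}(x)$. The sets $\{x:-\tau/\sigma_{g_0}=t\}$ are pairwise disjoint over $t$, so only countably many of them carry positive $\nu$-mass; for every remaining $t$ we get $\sigma_{h+tg_0}>0$ $\nu$-a.e., i.e.\ $\nu_{h+tg_0}\gg\nu$, so that $h+tg_0\in\cD$. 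Letting $t\to0$ through such admissible values gives $h+tg_0\to h$ in $\cF$, which proves that $\cD$ is dense.

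The hard part is Step~2: excluding the cancellation of energy measures by a single function. A greedy inductive construction (adding small multiples of $f_n$ one at a time) fails, since later increments can erode the energy already accumulated at points where the relevant densities are small; the remedy is the simultaneous generic choice encoded in the Gaussian series, and the genuine work lies in the measure-theoretic bookkeeping (joint measurability, the application of Fubini's theorem, and the one-coordinate conditioning). By contrast, Steps~1 and~3 are routine consequences of the inequalities \Eq{energy} and \Eq{schwarz}.
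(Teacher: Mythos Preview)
The paper does not prove this proposition; it is quoted from \cite[Proposition~2.7]{Hi10} without argument, so there is no proof in the present paper to compare against. That said, your argument is correct and self-contained.

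Your Steps~1 and~3 are standard: the measure $\nu=\sum_n c_n\nu_{f_n}$ is easily seen to be minimal energy-dominant, and the perturbation argument in Step~3 (using that $t\mapsto\sigma_{h+tg_0}(x)$ is a nonnegative quadratic with strictly positive leading coefficient, hence with at most one zero, so that only countably many $t$ can be ``bad'') is the natural route to density once a single good $g_0$ is in hand. In fact the same quadratic computation shows more generally that for any $f,g\in\cF$ and all but countably many $t$ one has $\{\sigma_{g+tf}=0\}=\{\sigma_g=0\}\cap\{\sigma_f=0\}$ up to $\nu$-null sets; iterating this deterministically over $\{f_n\}$ produces $g_N$ with $\{\sigma_{g_N}=0\}=\bigcap_{n\le N}\{\rho_n=0\}$, and the delicate point---which you correctly flag---is controlling the zero set of the limit function. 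Your randomized construction via the Gaussian series $g_0=\sum_n\gamma_na_nf_n$ sidesteps this limit issue cleanly: the Fubini argument handles all scales at once. Two minor technical remarks: (a) for the ``at most one zero'' claim you do not actually need nonnegativity of the quadratic in $\gamma_n$, since a quadratic with nonzero leading coefficient has at most two zeros, which already suffices for an atomless law; (b) the joint measurability you invoke is obtained most transparently by fixing Borel versions of $Z_{n,m}:=d\nu_{f_n,f_m}/d\nu$ once and for all and realizing $\sigma_{g_0}(x,\omega)$ as a pointwise $\liminf$ of the finite sums $\sum_{n,m\le N}\gamma_n\gamma_m a_na_m Z_{n,m}(x)$.
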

Fix a minimal energy-dominant measure $\nu$ of $(\cE,\cF)$.
From \Eq{schwarz}, $\nu_{f,g}\ll\nu$ for $f,g\in\cF$, so that we can consider the Radon--Nikodym derivative $d\nu_{f,g}/d\nu$.

Let $d\in \N$. We denote $\underbrace{\cF\times\cdots\times\cF}_{d}$ by $\cF^d$ and equip it with the product topology.
\begin{definition}\label{def:basic}
For $\bff=(f_1,\dots,f_d)\in\cF^d$, we define 
\begin{equation}
\cE(\bff)=\frac1d\sum_{i=1}^d\cE(f_i),\quad
\nu_{\bff}=\frac1d\sum_{i=1}^d\nu_{f_i}
\end{equation}
and
\begin{equation}\label{eq:Phif}
\Phi_{\bff}=\begin{cases}\left(\left.\dfrac{d\nu_{f_i,f_j}}{d\nu}\right/\dfrac{d\nu_{\bff}}{d\nu}\right)_{i,j=1}^d& \mbox{on }\left\{\dfrac{d\nu_{\bff}}{d\nu}>0\right\},\\
O& \mbox{on }\left\{\dfrac{d\nu_{\bff}}{d\nu}=0\right\}.
\end{cases}
\end{equation}
\end{definition}
Note that $\Phi_{\bff}$ is a function defined $\nu$-a.e.\ on $K$, taking values in the set of all symmetric and nonnegative-definite matrices of order $d$. 
\begin{lemma}\label{lem:trivial}
For $\bff\in\cF^d$, $\Phi_{\bff}=\left({d\nu_{f_i,f_j}}\big/{d\nu_{\bff}}\right)_{i,j=1}^d$ $\nu_{\bff}$-a.e.
\end{lemma}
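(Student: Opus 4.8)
The plan is to reduce the identity to the chain rule for Radon--Nikodym derivatives, after verifying the two absolute-continuity relations that make both sides meaningful. Write $g=d\nu_{\bff}/d\nu$ and $h_{ij}=d\nu_{f_i,f_j}/d\nu$, so that by \Eq{Phif} we have $\Phi_{\bff}=(h_{ij}/g)_{i,j=1}^d$ on $\{g>0\}$ and $\Phi_{\bff}=O$ on $\{g=0\}$. The goal is then to show, for each pair $(i,j)$, that $h_{ij}/g=d\nu_{f_i,f_j}/d\nu_{\bff}$ holds $\nu_{\bff}$-a.e.

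First I would record that $\nu_{f_i,f_j}\ll\nu_{\bff}$ for all $i,j$. Indeed, if $\nu_{\bff}(B)=0$ for some $B\in\cB(K)$, then, since $\nu_{\bff}=d^{-1}\sum_k\nu_{f_k}$ is a nonnegative combination of the energy measures $\nu_{f_k}$, each $\nu_{f_k}(B)=0$; by \Eq{schwarz} this forces $\nu_{f_i,f_j}(B')=0$ for every measurable $B'\subset B$, hence $|\nu_{f_i,f_j}|(B)=0$. In particular the derivative $d\nu_{f_i,f_j}/d\nu_{\bff}$ on the right-hand side is well defined. Absolute continuity $\nu_{\bff}\ll\nu$ is immediate from the domination property of the minimal energy-dominant measure $\nu$.

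Next, with the chain $\nu_{f_i,f_j}\ll\nu_{\bff}\ll\nu$ in hand, I would apply the chain rule for Radon--Nikodym derivatives (valid for signed measures via the Jordan decomposition) to obtain
\[
  h_{ij}=\frac{d\nu_{f_i,f_j}}{d\nu}=\frac{d\nu_{f_i,f_j}}{d\nu_{\bff}}\cdot\frac{d\nu_{\bff}}{d\nu}
  =\frac{d\nu_{f_i,f_j}}{d\nu_{\bff}}\cdot g
  \quad\nu\text{-a.e.}
\]
Restricting to $\{g>0\}$ and dividing by $g$ gives $h_{ij}/g=d\nu_{f_i,f_j}/d\nu_{\bff}$, $\nu$-a.e.\ on $\{g>0\}$; since $\nu_{\bff}\ll\nu$, the same equality holds $\nu_{\bff}$-a.e.\ on $\{g>0\}$.

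Finally I would dispose of the set $\{g=0\}$, on which $\Phi_{\bff}$ is set to $O$: because $\nu_{\bff}(\{g=0\})=\int_{\{g=0\}}g\,d\nu=0$, this set is $\nu_{\bff}$-negligible and does not affect the $\nu_{\bff}$-a.e.\ identity. Combining the two cases yields $\Phi_{\bff}=(d\nu_{f_i,f_j}/d\nu_{\bff})_{i,j=1}^d$, $\nu_{\bff}$-a.e. There is no serious obstacle here; the only points demanding care are the verification of $\nu_{f_i,f_j}\ll\nu_{\bff}$, so that the right-hand side is meaningful, and the observation that $\{g=0\}$ carries no $\nu_{\bff}$-mass, which together let the elementary chain rule do the remaining work.
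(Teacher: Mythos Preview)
Your proof is correct and follows the same approach as the paper: both derive the identity from the definition of $\Phi_{\bff}$ together with the absolute continuity $\nu_{f_i,f_j}\ll\nu_{\bff}$, which is obtained from \Eq{schwarz}. The paper's proof is a one-line remark to this effect, while you have spelled out the chain-rule computation and the handling of $\{g=0\}$ in full detail.
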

\begin{proof}
This is evident from the definition of $\Phi_{\bff}$, by taking into account that $\nu_{f_i,f_j}\ll \nu_{\bff}$ from \Eq{schwarz}.
\qed\end{proof}
\begin{lemma}\label{lem:energyineq}
\begin{enumerate}
\item Let $\{f^{(n)}\}_{n=1}^\infty$ and $\{g^{(n)}\}_{n=1}^\infty$ be sequences in $\cF$ and $f^{(n)}\to f$ and $g^{(n)}\to g$ in $\cF$ as $n\to\infty$.
Then, $d\nu_{f^{(n)},g^{(n)}}/d\nu$ converges to $d\nu_{f,g}/d\nu$ in $L^1(K,\nu)$.
\item Suppose that a sequence $\{\bff^{(n)}\}_{n=1}^\infty$ in $\cF^d$ converges to $\bff$ in $\cF^d$.
Then, there exists a subsequence $\{\bff^{(n')}\}$ such that $\Phi_{\bff^{(n')}}(x)$ converges to $\Phi_{\bff}(x)$ for $\nu_{\bff}$-a.e.\,$x$.
\end{enumerate}
\end{lemma}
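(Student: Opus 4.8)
The plan is to deduce both parts from the single Cauchy--Schwarz-type estimate \Eq{KW}, combined with the bilinearity of the mutual energy measure and the identity \Eq{323}. I would prove part (i) by a direct $L^1$-estimate, and then obtain part (ii) by applying (i) entrywise and extracting a subsequence along which the relevant Radon--Nikodym derivatives converge pointwise.

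For part (i), I would first use bilinearity of $\nu_{\cdot,\cdot}$ to split
\[
\frac{d\nu_{f^{(n)},g^{(n)}}}{d\nu}-\frac{d\nu_{f,g}}{d\nu}
=\frac{d\nu_{f^{(n)}-f,\,g^{(n)}}}{d\nu}+\frac{d\nu_{f,\,g^{(n)}-g}}{d\nu},
\]
and estimate each summand in $L^1(K,\nu)$. Writing $\rho_n=d\nu_{f^{(n)}-f,g^{(n)}}/d\nu$, I would use $\|\rho_n\|_{L^1(K,\nu)}=\int_K(\operatorname{sgn}\rho_n)\,d\nu_{f^{(n)}-f,g^{(n)}}$ and apply \Eq{KW} with $h_1=\operatorname{sgn}\rho_n$, $h_2=1$, followed by \Eq{323}, to obtain
\[
\|\rho_n\|_{L^1(K,\nu)}\le \nu_{f^{(n)}-f}(K)^{1/2}\,\nu_{g^{(n)}}(K)^{1/2}
=2\,\cE(f^{(n)}-f)^{1/2}\,\cE(g^{(n)})^{1/2}.
\]
Since $g^{(n)}\to g$ in $\cF$ keeps $\cE(g^{(n)})$ bounded while $\cE(f^{(n)}-f)\to0$, this summand tends to $0$; the other summand is treated symmetrically, and adding the two estimates gives the asserted $L^1(K,\nu)$-convergence.

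For part (ii), I would apply part (i) to each of the finitely many pairs $(f_i^{(n)},f_j^{(n)})$, so that $d\nu_{f_i^{(n)},f_j^{(n)}}/d\nu\to d\nu_{f_i,f_j}/d\nu$ in $L^1(K,\nu)$ for all $i,j$; summing the diagonal terms also yields $d\nu_{\bff^{(n)}}/d\nu\to d\nu_{\bff}/d\nu$ in $L^1(K,\nu)$. As these are finitely many $L^1$-convergent sequences, I can extract one subsequence $\{n'\}$ along which all of them converge $\nu$-a.e. On the set $\{d\nu_{\bff}/d\nu>0\}$ the denominators $d\nu_{\bff^{(n')}}/d\nu$ converge to a strictly positive limit, hence are eventually positive, so by the definition \Eq{Phif} each entry of $\Phi_{\bff^{(n')}}$ equals the quotient $(d\nu_{f_i^{(n')},f_j^{(n')}}/d\nu)\big/(d\nu_{\bff^{(n')}}/d\nu)$ for large $n'$ and converges $\nu$-a.e.\ to the corresponding entry of $\Phi_{\bff}$.

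The only delicate point, which I expect to be the main (if modest) obstacle, is passing from this $\nu$-a.e.\ statement to the required $\nu_{\bff}$-a.e.\ statement. The complement $\{d\nu_{\bff}/d\nu=0\}$ is $\nu_{\bff}$-null because $\nu_{\bff}(\{d\nu_{\bff}/d\nu=0\})=\int_{\{d\nu_{\bff}/d\nu=0\}}(d\nu_{\bff}/d\nu)\,d\nu=0$, and the $\nu$-null exceptional set coming from the a.e.\ convergence is $\nu_{\bff}$-null since $\nu_{\bff}\ll\nu$. Hence the pointwise convergence of $\Phi_{\bff^{(n')}}$ to $\Phi_{\bff}$ holds $\nu_{\bff}$-a.e.; the analytic content of the whole argument is concentrated in the single use of \Eq{KW} in part (i).
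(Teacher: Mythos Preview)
Your proof is correct and follows essentially the same approach as the paper. For (i) the paper simply cites \cite[Lemma~2.5]{Hi10}, whereas you supply a self-contained argument via bilinearity and \Eq{KW}, which is the natural proof; for (ii) your reasoning---extract a subsequence with $\nu$-a.e.\ convergence of all numerators and the denominator, conclude on $\{d\nu_{\bff}/d\nu>0\}$, then pass to $\nu_{\bff}$-a.e.\ via $\nu_{\bff}\ll\nu$---is exactly the argument the paper sketches in one line.
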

\begin{proof}
Assertion (i) is proved in \cite[Lemma~2.5]{Hi10}.
We prove (ii). From (i), we can take a subsequence $\{\bff^{(n')}\}$ such that $\Phi_{\bff^{(n')}}$ converges to $\Phi_{\bff}$  $\nu$-a.e.\ on $\left\{{d\nu_{\bff}}/{d\nu}>0\right\}$. This implies the assertion.
\qed\end{proof}
The following definition is taken from \cite{Hi10}, which is a natural generalization of the concept due to Kusuoka~\cite{Ku89,Ku93}.
\begin{definition}
The {\em index} $p$ of $(\cE,\cF)$ is defined as 
the smallest number satisfying the following:
For any $N\in\N$ and any $f_1,\dots,f_N\in\cF$, 
\[
 \rank \left(\frac{d\nu_{f_i,f_j}}{d\nu}(x)\right)_{i,j=1}^N\le p\quad \mbox{for }\nu\mbox{-a.e.\,}x.
\]
If such a number does not exist, the index is defined as $+\infty$.
\end{definition}
It is evident that this definition is independent of the choice of $\nu$.
\begin{proposition}[{cf.~\cite[Proposition~2.10]{Hi10}}]\label{prop:rank}
Let $\{f_i\}_{i=1}^\infty$ be a sequence of functions in $\cF$ such that the linear span of $\{f_i\}_{i=1}^\infty$ is dense in $\cF$.
Denote the Radon--Nikodym derivative $d\nu_{f_i,f_j}/d\nu$ by $Z^{i,j}$ for $i,j\in\N$.
Then, the index of $(\cE,\cF)$ is described as 
$
 \esssup_{x\in K}\;\sup_{N\in\N}\;\rank \left(Z^{i,j}(x)\right)_{i,j=1}^N$.
\end{proposition}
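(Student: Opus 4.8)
The plan is to prove the two inequalities $q\le p$ and $p\le q$, where $p$ denotes the index of $(\cE,\cF)$ and $q:=\esssup_{x\in K}\sup_{N\in\N}\rank\bigl(Z^{i,j}(x)\bigr)_{i,j=1}^N$ is the right-hand side in question. The inequality $q\le p$ is essentially a restatement of the definition of the index: applying that definition to the finite family $f_1,\dots,f_N$ yields $\rank(Z^{i,j}(x))_{i,j=1}^N\le p$ for $\nu$-a.e.\ $x$, for each fixed $N\in\N$. Since the associated exceptional sets are indexed by the countable set $\N$, their union is again $\nu$-null, so that $\sup_{N}\rank(Z^{i,j}(x))_{i,j=1}^N\le p$ for $\nu$-a.e.\ $x$; taking the essential supremum over $x$ gives $q\le p$.

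The content lies in the reverse inequality $p\le q$, which I would prove by checking that $q$ satisfies the defining property of the index, namely that for every $N\in\N$ and all $g_1,\dots,g_N\in\cF$,
\[
\rank\left(\frac{d\nu_{g_i,g_j}}{d\nu}(x)\right)_{i,j=1}^N\le q\quad\text{for $\nu$-a.e.\ }x.
\]
I would first dispose of the case in which each $g_j$ is a finite linear combination $g_j=\sum_{i=1}^M c_{ji}f_i$ of the $f_i$. By the bilinearity of the mutual energy measure and the linearity of the Radon--Nikodym derivative, the matrix $\bigl(d\nu_{g_i,g_j}/d\nu\bigr)_{i,j=1}^N$ equals $C\,\bigl(Z^{k,l}\bigr)_{k,l=1}^M\,C^{\top}$, where $C=(c_{ji})$ is the $N\times M$ coefficient matrix. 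Since $\rank(CAC^{\top})\le\rank(A)$ holds for arbitrary matrices, this rank is at most $\rank(Z^{k,l}(x))_{k,l=1}^M$, which in turn is bounded by $q$ for $\nu$-a.e.\ $x$ by the very definition of $q$.

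For arbitrary $g_1,\dots,g_N\in\cF$, I would invoke the density of the linear span of $\{f_i\}$ to pick, for each $j$, finite linear combinations $g_j^{(n)}$ with $g_j^{(n)}\to g_j$ in $\cF$ as $n\to\infty$. By \Lem{energyineq}(i), $d\nu_{g_i^{(n)},g_j^{(n)}}/d\nu$ converges to $d\nu_{g_i,g_j}/d\nu$ in $L^1(K,\nu)$ for each of the finitely many index pairs $(i,j)$, so after passing to a single common subsequence the convergence holds $\nu$-a.e.\ simultaneously for all entries. The preceding paragraph gives $\rank(d\nu_{g_i^{(n)},g_j^{(n)}}/d\nu(x))_{i,j=1}^N\le q$ for $\nu$-a.e.\ $x$ and every $n$, and since the set of matrices of rank at most $q$ is closed---it is the common zero set of all $(q+1)\times(q+1)$ minors---the rank cannot increase under the pointwise limit. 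This yields the required bound and hence $p\le q$, completing the proof.

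The step I expect to be most delicate is the passage to the limit, because rank is merely lower semicontinuous and can drop, never rise, along convergent sequences, so one cannot argue by continuity. What rescues the argument is that the bound $q$ is a single constant, so the sublevel set $\{\rank\le q\}$ is closed and the limit matrix remains inside it. The accompanying bookkeeping---extracting one subsequence that simultaneously controls all $N^2$ entries, which is legitimate only because there are finitely many---is what lets the entrywise $L^1$ convergence of \Lem{energyineq}(i) be upgraded to convergence of the matrices themselves.
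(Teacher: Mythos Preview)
The paper does not supply its own proof of this proposition; it is stated with a reference to \cite[Proposition~2.10]{Hi10} and no argument is given in the text. Your proof is correct and is precisely the standard argument one would expect: the inequality $q\le p$ is immediate from the definition, and for $p\le q$ you reduce to finite linear combinations of the $f_i$ via the identity $\bigl(d\nu_{g_i,g_j}/d\nu\bigr)=C(Z^{k,l})C^\top$, then pass to the limit using \Lem{energyineq}(i) and the closedness of the set of matrices of rank at most $q$. The bookkeeping you flag (one subsequence for finitely many entries, and the fact that rank can only drop in the limit so the sublevel set $\{\rank\le q\}$ is closed) is handled correctly.
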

We remark the following fact.
\begin{proposition}[{cf.~\cite[Proposition~2.11]{Hi10}}]\label{prop:nontrivial}
The index is $0$ if and only if $\cE(f)=0$ for every $f\in\cF$.
\end{proposition}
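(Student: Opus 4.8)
The plan is to prove both implications by directly unwinding the definition of the index and invoking the strong-locality identity \Eq{323} together with the Schwarz-type inequality \Eq{schwarz}; the argument is essentially a matter of bookkeeping, and I expect no genuine obstacle beyond correctly interpreting ``rank $\le 0$.''

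For the ``if'' direction, suppose $\cE(f)=0$ for every $f\in\cF$. First I would apply \Eq{323} to get $\nu_f(K)=2\cE(f)=0$; since $\nu_f$ is a positive measure, total mass $0$ forces $\nu_f(B)=0$ for all $B\in\cB(K)$, that is, $\nu_f\equiv0$ for every $f$. Then for arbitrary $f,g\in\cF$ and $B\in\cB(K)$, inequality \Eq{schwarz} gives $|\nu_{f,g}(B)|\le\sqrt{\nu_f(B)}\sqrt{\nu_g(B)}=0$, so every mutual energy measure vanishes as well. Consequently, for any $N\in\N$ and any $f_1,\dots,f_N\in\cF$, all Radon--Nikodym derivatives $d\nu_{f_i,f_j}/d\nu$ are zero $\nu$-a.e., so the matrix $(d\nu_{f_i,f_j}/d\nu(x))_{i,j=1}^N$ is the zero matrix and has rank $0\le 0$ for $\nu$-a.e.\ $x$. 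Hence $p=0$ satisfies the defining condition, and since $0$ is the least element of $\Z_+$, the index equals $0$.

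For the ``only if'' direction, suppose the index is $0$. Here I would specialize the defining property to $N=1$ and a single arbitrary function $f_1=f\in\cF$: the $1\times1$ ``matrix'' $(d\nu_f/d\nu)(x)$ must have rank $\le 0$, which means $d\nu_f/d\nu=0$ for $\nu$-a.e.\ $x$. Because $\nu$ is a minimal energy-dominant measure we have $\nu_f\ll\nu$, so the vanishing of the density yields $\nu_f=0$, and then \Eq{323} gives $\cE(f)=\nu_f(K)/2=0$. As $f$ was arbitrary, $\cE$ vanishes identically. The only points requiring a moment's care are the elementary observations that a positive measure of total mass zero is the zero measure and that ``rank at most zero'' means ``the matrix is zero''; both implications then reduce to combining \Eq{323}, \Eq{schwarz}, and the absolute continuity $\nu_f\ll\nu$.
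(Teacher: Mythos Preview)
Your argument is correct. Note, however, that the paper does not actually supply a proof of this proposition; it simply records the statement with a reference to \cite[Proposition~2.11]{Hi10}. Your direct verification from \Eq{323}, \Eq{schwarz}, and the definition of the index is the natural elementary argument, and nothing more is needed.
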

The following theorem is a natural generalization of  \cite[Theorem~6.12]{Ku93} and underlies the estimate of martingale dimensions from the next section.
\begin{theorem}[see {\cite[Theorem~3.4]{Hi10}}]\label{th:index}
The index of $(\cE,\cF)$ coincides with the AF-martingale dimension of $\{X_t\}$.
\end{theorem}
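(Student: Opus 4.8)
The plan is to set up a dictionary between the module $\maruM$ under stochastic integration and the measurable field of quadratic forms encoded by $\Phi_{\bff}$, and then to read off both the index and the martingale dimension as the pointwise rank of this field. The bridge is the Fukushima decomposition: for strong local $(\cE,\cF)$ every $u\in\cF$ yields a martingale part $M^{[u]}\in\maruM$ with $\mu_{\la M^{[u]},M^{[v]}\ra}=\nu_{u,v}$, and, using bilinearity of the stochastic integral, $\mu_{\la h\bullet M^{[u]},k\bullet M^{[v]}\ra}=hk\,\nu_{u,v}$ at the level of Revuz measures. By the density statements recalled in \Rem{martingale} together with \cite[Chapter~5]{FOT}, the family $\{M^{[u]}\mid u\in\cF\}$ generates $\maruM$ under stochastic integration, so that after fixing a minimal energy-dominant measure $\nu$ (available through \Prop{em}) the whole of $\maruM$ is captured by the derivatives $d\nu_{u,v}/d\nu$.

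First I would prove that the index $p$ is at most the AF-martingale dimension $q$. Let $M^{(1)},\dots,M^{(q)}$ be generators, and for $f_1,\dots,f_N\in\cF$ expand $M^{[f_i]}=\sum_{k=1}^q h^{(i)}_k\bullet M^{(k)}$. Writing $a_{kl}=d\mu_{\la M^{(k)},M^{(l)}\ra}/d\nu$ one gets, $\nu$-a.e.,
\[
\frac{d\nu_{f_i,f_j}}{d\nu}=\sum_{k,l=1}^q h^{(i)}_k\, a_{kl}\, h^{(j)}_l,
\]
so the matrix $(d\nu_{f_i,f_j}/d\nu)_{i,j=1}^N$ factors as $H(x)A(x)H(x)^{\top}$ with $A$ of order $q$; hence its rank is at most $q$ pointwise, giving $p\le q$.

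The reverse inequality $q\le p$ is the substantial part. Using a sequence $\{f_i\}$ whose linear span is dense in $\cF$, \Prop{rank} gives $\rank(d\nu_{f_i,f_j}/d\nu)(x)\le p$ for $\nu$-a.e.\,$x$. I would then run a measurable-selection argument: partition $K$ into Borel sets on which a fixed finite subfamily of the $\{f_i\}$ supplies a basis of the (at most $p$-dimensional) fibre of the field $\Phi_{\bff}$, and glue these local choices into $p$ martingales $M^{(1)},\dots,M^{(p)}$. For an arbitrary $u\in\cF$ the coefficients $h_k$ representing $M^{[u]}$ are recovered by solving, $\nu$-a.e., the linear system expressing $d\nu_{u,f_i}/d\nu$ against the chosen basis (a Cramer/pseudo-inverse computation whose measurability follows from the Borel construction). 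The passage from the generating family to all of $\maruM$ is handled by approximation, where \Lem{energyineq} provides the $L^1$-convergence of the derivatives and the $\nu_{\bff}$-a.e.\ convergence of $\Phi_{\bff}$ needed to take limits of the coefficient functions.

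The main obstacle is precisely this second half: ensuring the pointwise linear algebra (choice of a basis of each fibre, and of the representing coefficients) is carried out measurably in $x$, and, more delicately, verifying that the resulting coefficients $h_k$ genuinely lie in $L^2(K,\mu_{\la M^{(k)}\ra})$ so that each $h_k\bullet M^{(k)}$ is a bona fide element of $\maruM$. This requires controlling the Gram determinants away from degeneracy on each piece of the partition and then checking that the representation survives the limiting procedure in the correct $L^2$-spaces; the rank bound supplied by the index is what guarantees that $p$ generators suffice and that no residual direction is lost in the approximation.
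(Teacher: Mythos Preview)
The paper does not prove this theorem; it is quoted verbatim from \cite[Theorem~3.4]{Hi10} and used as a black box. So there is no ``paper's own proof'' to compare against here.

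That said, your outline is the correct skeleton of the argument in \cite{Hi10} (and of its predecessor \cite[Theorem~6.12]{Ku93}): the bridge via Fukushima's decomposition $\mu_{\la M^{[u]},M^{[v]}\ra}=\nu_{u,v}$, the factorisation $HAH^\top$ giving $p\le q$, and a measurable pointwise-basis construction for $q\le p$. Your self-identified obstacles are exactly the ones that carry the weight in \cite{Hi10}: the measurable selection of a fibrewise basis is handled there by working with a fixed dense sequence $\{f_i\}$ and partitioning $K$ according to which $p\times p$ minor of $(d\nu_{f_i,f_j}/d\nu)$ is the first nonvanishing one, and the $L^2(\mu_{\la M^{(k)}\ra})$-membership of the coefficient functions is checked by a direct computation of $e(h_k\bullet M^{(k)})$ in terms of the Gram data. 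One point to tighten: the density statement you need is that $\{M^{[u]}\mid u\in\cF\}$ (equivalently the span of $\{h\bullet M^{[u]}\}$) is dense in $\maruM$, which is \cite[Lemma~5.6.3]{FOT}; \Rem{martingale} is about a slightly different generating family $\hat\fM$ and about $\fM$ rather than $\maruM$, so cite the former directly.
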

\section{Strategy for upper estimate of martingale dimension}
In this section, we develop some tools for the estimation of AF-martingale dimensions under a general framework.
We keep the notations in the previous section.

First, we introduce the concept of harmonic functions.
We fix a closed subset $K^\partial$ of $K$. This set is regarded as a boundary of $K$.
We define 
\begin{align}
\cF_0=\{f\in\cF\mid \Supp[f]\cap K^\partial=\emptyset\}
\text{~~and~~}
\cFD=\{f\in\cF\mid \tilde f=0 \mbox{ q.e.\ on }K^\partial\},\label{eq:cF0cFD}
\end{align}
where $\tilde f$ is a quasi-continuous modification of $f$.
We remark the following:
\begin{proposition}[cf.\ {\cite[Corollary~2.3.1]{FOT}}]
The closure of $\cF_0$ in $\cF$ is equal to $\cFD$.
\end{proposition}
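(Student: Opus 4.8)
The plan is to prove the two inclusions separately, the nontrivial one being $\cFD\subseteq\overline{\cF_0}$. Throughout I would write $G=K\setminus K^\partial$, which is open, so that $\cF_0=\{f\in\cF\mid\Supp[f]\subseteq G\}$.

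First I would dispose of $\overline{\cF_0}\subseteq\cFD$, which rests on two observations. (a) $\cFD$ is closed in $\cF$: if $f_n\to f$ in $\cF$ with $\tilde f_n=0$ q.e.\ on $K^\partial$, then since $\cF$-convergence yields q.e.\ convergence of quasi-continuous modifications along a subsequence (cf.\ \cite[Theorem~2.1.4]{FOT}), the limit satisfies $\tilde f=0$ q.e.\ on $K^\partial$, i.e.\ $f\in\cFD$. (b) $\cF_0\subseteq\cFD$: for $f\in\cF_0$ the closed sets $\Supp[f]$ and $K^\partial$ are disjoint, hence, as $K$ is metrizable and therefore normal, separated by an open neighbourhood $V\supseteq K^\partial$ on which $f=0$ $\mu$-a.e.; since $\mu$ has full support, a quasi-continuous function vanishing $\mu$-a.e.\ on the open set $V$ vanishes q.e.\ there (apply the comparison property of quasi-continuous functions to $\pm f$; cf.\ \cite[Section~2.1]{FOT}), so $\tilde f=0$ q.e.\ on $K^\partial$. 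Combining (a) and (b) gives $\overline{\cF_0}\subseteq\cFD$.

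For the reverse inclusion the clean route is to invoke regularity together with the capacitary characterization of $\cFD$. By the description of such spaces in \cite[Section~2.3]{FOT}, the $\cE_1$-closure of $\cF\cap C_c(G)$ is exactly $\{f\in\cF\mid\tilde f=0\text{ q.e.\ on }K\setminus G\}=\cFD$. Since any $\varphi\in\cF\cap C_c(G)$ has compact support contained in $G$, one has $\cF\cap C_c(G)\subseteq\cF_0\subseteq\cFD$, and taking $\cF$-closures sandwiches $\cFD=\overline{\cF\cap C_c(G)}\subseteq\overline{\cF_0}\subseteq\cFD$, forcing equality.

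The step I expect to be the main obstacle is precisely the one hidden inside that citation: converting the capacity-theoretic condition $\tilde u=0$ q.e.\ on $K^\partial$ into the topological support condition for the approximants. If one wanted a self-contained argument, one would first reduce, replacing $u$ by its truncations $(-n)\vee u\wedge n$ (which converge to $u$ in $\cF$ and remain in $\cFD$) and then by $u^+$ and $u^-$, to the case $u\in\cF_b$ with $u\ge0$; and then construct cut-off functions $\rho_k\in\cF_b$ with $\rho_k=0$ on a neighbourhood of $K^\partial$ and $u\rho_k\to u$ in $\cF$. Membership $u\rho_k\in\cF$ follows from \Prop{cEfg}, the support condition gives $u\rho_k\in\cF_0$, and the convergence $\cE(u(1-\rho_k))\to0$ would be controlled through the energy-measure Leibniz rule of \Thm{derivation}, using $\tilde u=0$ q.e.\ on $K^\partial$ to force the relevant energy contributions to concentrate on a set of vanishing capacity. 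Building the $\rho_k$ from the equilibrium potentials of shrinking neighbourhoods of $K^\partial$ is exactly where the capacity estimates enter, and constitutes the delicate point, since a naive truncation such as $(u-\eps)^+$ only makes $\{\tilde u>\eps\}$ quasi-everywhere disjoint from $K^\partial$ rather than keeping its $\mu$-support away from $K^\partial$.
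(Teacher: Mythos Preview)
The paper does not prove this proposition; it merely cites \cite[Corollary~2.3.1]{FOT} and moves on. Your proposal is correct and amounts to unpacking that citation: you observe $\cF\cap C_c(G)\subseteq\cF_0\subseteq\cFD$ and then invoke precisely the FOT result that identifies $\overline{\cF\cap C_c(G)}$ with $\cFD$, which forces all three closures to coincide. The discussion you add about a self-contained cut-off argument is accurate in spirit but unnecessary here, since both the paper and your main line of argument ultimately rest on the same cited fact.
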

An element $h\in\cF$ is called {\em harmonic} if $\cE(h)\le \cE(h+f)$ for all $f\in\cFD$.
The set of all harmonic functions are denoted by $\cH$.
The following is a standard fact and its proof is omitted (cf.~\cite[Lemma~3.6]{Hi05}).
\begin{lemma}\label{lem:harmonic}
For $h\in\cF$, the following are equivalent.
\begin{enumerate}
\item $h\in\cH$.
\item For every $f\in \cFD$, $\cE(h,f)=0$.
\item For every $f\in \cF_0$, $\cE(h,f)=0$.
\end{enumerate}
Moreover, $\cH$ is a closed subspace of $\cF$.
\end{lemma}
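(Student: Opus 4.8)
The plan is to reduce everything to the quadratic identity $\cE(h+f)=\cE(h)+2\cE(h,f)+\cE(f)$, the nonnegativity $\cE(f)\ge0$, and the density of $\cF_0$ in $\cFD$. For (i)$\Rightarrow$(ii), I would fix $f\in\cFD$ and use that $\cFD$ is a linear subspace, so $tf\in\cFD$ for every $t\in\R$; harmonicity then yields $\cE(h)\le\cE(h+tf)=\cE(h)+2t\,\cE(h,f)+t^2\cE(f)$ for all $t\in\R$. A scalar quadratic in $t$ with nonnegative leading coefficient can attain its minimum at $t=0$ only if its linear coefficient vanishes, forcing $\cE(h,f)=0$. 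Conversely, for (ii)$\Rightarrow$(i), if $\cE(h,f)=0$ for all $f\in\cFD$ then $\cE(h+f)=\cE(h)+\cE(f)\ge\cE(h)$, whence $h\in\cH$.

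The implication (ii)$\Rightarrow$(iii) is immediate from $\cF_0\subset\cFD$. For (iii)$\Rightarrow$(ii) I would combine the density of $\cF_0$ in $\cFD$ (the preceding Proposition) with the continuity of the functional $f\mapsto\cE(h,f)$ on $\cF$: writing $\cE(h,f)=\cE_1(h,f)-(h,f)_{L^2(K,\mu)}$, the term $\cE_1(h,\cdot)$ is continuous as the $\cF$-inner product with $h$, while $(h,\cdot)_{L^2(K,\mu)}$ is continuous because $\|\cdot\|_{L^2(K,\mu)}\le\|\cdot\|_\cF$. Thus vanishing of $\cE(h,\cdot)$ on the dense subset $\cF_0$ propagates to all of $\cFD$.

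For the final assertion, I would invoke the characterization $\cH=\bigcap_{f\in\cFD}\{h\in\cF\mid\cE(h,f)=0\}$ coming from (i)$\Leftrightarrow$(ii). For each fixed $f$, the map $h\mapsto\cE(h,f)$ is a continuous linear functional on $\cF$ by the same continuity argument, so each set in the intersection is a closed subspace; hence $\cH$ is a closed subspace. The argument involves no real obstacle; the only point deserving care is the continuity of $\cE(h,\cdot)$ in the $\cF$-topology, which simultaneously underlies the density step in (iii)$\Rightarrow$(ii) and the closedness of $\cH$.
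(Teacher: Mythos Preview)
Your proof is correct and complete. The paper itself omits the proof as standard (citing \cite[Lemma~3.6]{Hi05}), so there is no in-paper argument to compare against; your approach via the quadratic expansion, density, and continuity of $f\mapsto\cE(h,f)$ on $\cF$ is exactly the standard one the paper implicitly has in mind.
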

Let $d\in \N$. We denote $\underbrace{\cH\times\cdots\times\cH}_{d}$ by $\cH^d$, which is considered as a closed subspace of $\cF^d$.
The Lebesgue measure on $\R^d$ is denoted by $\sL^d$. 
The symbol ``$dx$'' is also used if there is no ambiguity.
For $r\in\N$ and $p\ge1$, $W^{r,p}(\R^d)$ denotes the classical $(r,p)$-Sobolev space on $\R^d$.
Hereafter, for $f\in \cF$, $\tilde f$ denotes a quasi-continuous Borel modification of $f$.
The symbol $\tilde\bff$ corresponding to $\bff\in\cF^d$ is similarly interpreted.
In general, for a measurable map $\bfF\colon X\to Y$ and a measure $m_X$ on $X$, $\bfF_*m_X$ denotes the induced measure of $m_X$ by $\bfF$.

Given $d\in\N$, we consider the following conditions.
\begin{enumerate}[(U')$_d$]
\item[(U)$_d$]
 There exists $\bfh=(h_1,\dots,h_d)\in \cH^d$ such that the following hold:
\begin{enumerate}
\item $\nu_{\bfh}(K)>0$;
\item $\Phi_{\bfh}(x)$ is the identity matrix for $\nu_{\bfh}$-a.e.\,$x\in K$;
\item $\tilde\bfh_*\nu_{\bfh}\ll\sL^d$.
\end{enumerate}
\item[(U')$_d$]
 There exists $\bfh=(h_1,\dots,h_d)\in \cH^d$ such that the following hold:
\begin{enumerate}
\item $\nu_{\bfh}(K)>0$;
\item $\Phi_{\bfh}(x)$ is the identity matrix for $\nu_{\bfh}$-a.e.\,$x\in K$;
\item $\tilde\bfh_*\nu_{\bfh}\ll\sL^d$, and the density $\rho=d(\tilde\bfh_*\nu_{\bfh})/d\sL^d$ is dominated by a certain nonnegative function $\xi$ with $\sqrt\xi\in W^{1,2}(\R^d)$, in that $\rho\le\xi$ $\sL^d$-a.e.
\end{enumerate}
\end{enumerate}
Note that $\tilde\bfh_*\nu_{\bfh}$ does not depend on the choice of $\tilde\bfh$ since $\nu_{\bfh}$ does not charge any sets of zero capacity.

The following three claims are crucial for the estimate of the martingale dimension, the proofs of which are provided later.
\begin{lemma}\label{lem:renormalize}
Let $\bfh=(h_1,\dots,h_d)\in\cH^d$.
Suppose that $\nu_{\bfh}(K)>0$ and $\Phi_{\bfh}(x)=L$ for $\nu_{\bfh}$-a.e.\,$x$ for some symmetric and positive-definite matrix $L$ of order $d$ that is independent of $x$.
Then, there exists $\bfh'=(h'_1,\dots,h_d')\in\cH^d$ such that $\nu_{\bfh'}(K)>0$ and $\Phi_{\bfh'}(x)$ is the identity matrix for $\nu_{\bfh'}$-a.e.\,$x$.
In particular, $\nu_{h'_i}=\nu_{\bfh'}$ for every $i=1,\dots,d$.
\end{lemma}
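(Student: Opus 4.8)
The plan is to obtain $\bfh'$ from $\bfh$ by a single constant linear change of coordinates chosen to normalize $L$ to the identity. Since $L$ is symmetric and positive-definite, its inverse square root $A:=L^{-1/2}$ is well defined, symmetric, and positive-definite; I would set $h'_i:=\sum_{j=1}^d A_{ij}h_j$, that is $\bfh':=A\bfh$. Because $\cH$ is a linear subspace of $\cF$ (\Lem{harmonic}), each $h'_i$, being a finite linear combination of $h_1,\dots,h_d\in\cH$, again lies in $\cH$, so $\bfh'\in\cH^d$.

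The first substantive step is to rewrite the hypothesis $\Phi_{\bfh}=L$ as an identity between measures. By \Lem{trivial} one has $\Phi_{\bfh}=\left(d\nu_{h_i,h_j}/d\nu_{\bfh}\right)_{i,j=1}^d$ $\nu_{\bfh}$-a.e., and from \Eq{schwarz} each mutual energy measure satisfies $\nu_{h_i,h_j}\ll\nu_{\bfh}$. Hence $\Phi_{\bfh}(x)=L$ for $\nu_{\bfh}$-a.e.\ $x$ is equivalent to the global identity
\[
  \nu_{h_i,h_j}=L_{ij}\,\nu_{\bfh},\qquad i,j=1,\dots,d,
\]
as Borel measures on $K$. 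The set $\{d\nu_{\bfh}/d\nu=0\}$ is $\nu_{\bfh}$-null, so the convention $\Phi_{\bfh}=O$ there causes no difficulty.

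Next I would use the bilinearity of the mutual energy measure to compute, for all $i,j$,
\[
  \nu_{h'_i,h'_j}=\sum_{k,l=1}^d A_{ik}A_{jl}\,\nu_{h_k,h_l}
  =\Bigl(\sum_{k,l=1}^d A_{ik}L_{kl}A_{jl}\Bigr)\nu_{\bfh}
  =(ALA^{\mathsf T})_{ij}\,\nu_{\bfh}
  =\delta_{ij}\,\nu_{\bfh},
\]
using $ALA^{\mathsf T}=L^{-1/2}LL^{-1/2}=I$ in the last step. In particular $\nu_{h'_i}=\nu_{\bfh}$ for every $i$, whence $\nu_{\bfh'}=\frac1d\sum_i\nu_{h'_i}=\nu_{\bfh}$; thus $\nu_{\bfh'}(K)=\nu_{\bfh}(K)>0$, and the asserted equalities $\nu_{h'_i}=\nu_{\bfh'}$ all hold. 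Finally, since $\nu_{h'_i,h'_j}=\delta_{ij}\nu_{\bfh'}$, the density $d\nu_{h'_i,h'_j}/d\nu_{\bfh'}$ equals $\delta_{ij}$ $\nu_{\bfh'}$-a.e., so by \Lem{trivial} the matrix $\Phi_{\bfh'}$ is the identity $\nu_{\bfh'}$-a.e., as required.

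This argument is essentially linear algebra layered on the bilinearity and absolute-continuity properties of energy measures, and I anticipate no serious obstacle. The only point needing care is the translation in the first step between the pointwise statement $\Phi_{\bfh}=L$ $\nu_{\bfh}$-a.e.\ and the global measure identity $\nu_{h_i,h_j}=L_{ij}\nu_{\bfh}$, which rests on \Lem{trivial} together with the absolute continuity $\nu_{h_i,h_j}\ll\nu_{\bfh}$ furnished by \Eq{schwarz}.
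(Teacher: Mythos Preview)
Your proof is correct and takes essentially the same approach as the paper: both apply a constant linear change of coordinates $\bfh'=B\bfh$ with $BLB^{\mathsf T}=I$ to normalize $\Phi_{\bfh}$. The only cosmetic difference is that the paper first diagonalizes $L$ via an orthogonal matrix $U$ and then rescales by $\lm_i^{-1/2}$ (so $B=\Lambda^{-1/2}U^{\mathsf T}$), whereas you take $B=L^{-1/2}$ in one step; these are equivalent realizations of the same linear-algebra normalization.
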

\begin{proposition}\label{prop:W12}
Assume that $\bfh=(h_1,\dots,h_d)\in\cH^d$ and $\Phi_{\bfh}(x)$ is the identity matrix for $\nu_{\bfh}$-a.e.\,$x$.
Take a bounded function $f$ from $\cFD$.
Then, the induced measure of $\tilde f^2\nu_{\bfh}$ by $\tilde\bfh\colon K\to\R^d$, denoted by $\tilde\bfh_*(\tilde f^2\nu_{\bfh})$, is absolutely continuous with respect to $\sL^d$, and its density $\xi:=d(\tilde\bfh_*(\tilde f^2\nu_{\bfh}))/d\sL^d$ satisfies $\sqrt\xi\in W^{1,2}(\R^d)$.
\end{proposition}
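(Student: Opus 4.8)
The plan is to analyze the finite measure $m:=\tilde\bfh_*(\tilde f^2\nu_{\bfh})$ on $\R^d$ through its distributional derivatives, and to extract both assertions simultaneously from a single uniform Fisher-information bound on mollifications; this avoids proving absolute continuity as a separate step. Note first that $m$ has finite total mass $\int_K\tilde f^2\,d\nu_{\bfh}<\infty$, since $f$ is bounded and $\nu_{\bfh}$ is finite. The hypothesis $\Phi_{\bfh}=I$ means, by \Lem{trivial}, that $\nu_{h_i,h_j}=\delta_{ij}\nu_{\bfh}$ for all $i,j$; in particular $\nu_{h_i}=\nu_{\bfh}$ for each $i$. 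I would prove two facts: that the distributional partial derivatives of $m$ are the finite signed measures $\lambda_i:=2\tilde\bfh_*(\tilde f\,\nu_{h_i,f})$, and that they are Schwarz-dominated, $|\lambda_i(A)|\le 2\sqrt{m(A)\,m'(A)}$ for Borel $A$, where $m':=\tilde\bfh_*\nu_f$ has mass $\nu_f(K)=2\cE(f)$.

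For the derivative identity, fix $\psi\in C_c^\infty(\R^d)$ and set $\psi_0:=\psi-\psi(0)\in C_b^1(\R^d)$, so that $\psi_0(0)=0$ and $\psi_0(\bfh)\in\cF_b$ by \Thm{derivation}. Since $\tilde f=0$ q.e.\ on $K^\partial$, the product $\psi_0(\bfh)f^2$ lies in $\cF_b$ (by \Prop{cEfg}) and vanishes q.e.\ on $K^\partial$, hence lies in $\cFD$; harmonicity of $h_i$ (\Lem{harmonic}) then gives $\cE(h_i,\psi_0(\bfh)f^2)=0$. Expanding $\nu_{h_i,\psi_0(\bfh)f^2}(K)=2\cE(h_i,\psi_0(\bfh)f^2)$ by the Leibniz rule for energy measures together with $d\nu_{h_i,\psi_0(\bfh)}=(\partial_i\psi)(\tilde\bfh)\,d\nu_{\bfh}$ (from \Thm{derivation} and $\nu_{h_i,h_j}=\delta_{ij}\nu_{\bfh}$) and $d\nu_{h_i,f^2}=2\tilde f\,d\nu_{h_i,f}$ yields
\[
\int_K (\partial_i\psi)(\tilde\bfh)\,\tilde f^2\,d\nu_{\bfh}=-2\int_K\psi_0(\tilde\bfh)\,\tilde f\,d\nu_{h_i,f}.
\]
The additive constant is harmless, because $\int_K\tilde f\,d\nu_{h_i,f}=\tfrac12\nu_{h_i,f^2}(K)=\cE(h_i,f^2)=0$ (here $f^2\in\cFD$ and $h_i$ is harmonic), so $\psi_0$ may be replaced by $\psi$. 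Since $\int_{\R^d}(\partial_i\psi)\,dm=\int_K(\partial_i\psi)(\tilde\bfh)\tilde f^2\,d\nu_{\bfh}$, this is exactly $\partial_i m=\lambda_i$ in the distributional sense. For the domination, I would apply \Eq{KW} with $g=h_i$, $g'=f$ and the functions $\bfone_{\tilde\bfh^{-1}(A)}\tilde f$ and $\bfone_{\tilde\bfh^{-1}(A)}$, using $\nu_{h_i}=\nu_{\bfh}$, to obtain $|\lambda_i(A)|\le 2\sqrt{\int_{\tilde\bfh^{-1}(A)}\tilde f^2\,d\nu_{\bfh}}\,\sqrt{\nu_f(\tilde\bfh^{-1}(A))}=2\sqrt{m(A)}\sqrt{m'(A)}$; in particular $\lambda_i\ll\sigma:=m+m'$ and $|d\lambda_i/d\sigma|\le 2\sqrt{(dm/d\sigma)(dm'/d\sigma)}$ $\sigma$-a.e.\ by Lebesgue differentiation.

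Finally I would mollify. With a standard mollifier $\rho_\eps$, put $m_\eps=\rho_\eps*m$, $m'_\eps=\rho_\eps*m'$, so $\partial_i m_\eps=\rho_\eps*\lambda_i$, and the Schwarz inequality for the averaging measure $\rho_\eps(x-\cdot)\,d\sigma$ gives the pointwise bound $|\partial_i m_\eps(x)|\le 2\sqrt{m_\eps(x)\,m'_\eps(x)}$. Hence
\[
\int_{\R^d}|\nab\sqrt{m_\eps}|^2\,dx=\frac14\int_{\{m_\eps>0\}}\frac{\sum_i(\partial_i m_\eps)^2}{m_\eps}\,dx\le d\int_{\R^d}m'_\eps\,dx=d\,\nu_f(K)=2d\,\cE(f),
\]
while $\int_{\R^d}m_\eps\,dx=\int_K\tilde f^2\,d\nu_{\bfh}$, so $\{\sqrt{m_\eps}\}$ is bounded in $W^{1,2}(\R^d)$ uniformly in $\eps$. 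Extracting a subsequence with $\sqrt{m_{\eps_k}}\rightharpoonup\eta$ weakly in $W^{1,2}$ (hence strongly in $L^2_{\mathrm{loc}}$) and noting $m_{\eps_k}\to m$ as distributions, I identify $m=\eta^2\,\sL^d$; thus $m\ll\sL^d$ with density $\xi=\eta^2$ and $\sqrt\xi=\eta\in W^{1,2}(\R^d)$.

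I expect the main obstacle to be the derivative identity $\partial_i m=\lambda_i$: it rests on invoking harmonicity twice — to annihilate $\cE(h_i,\psi_0(\bfh)f^2)$ and to discard the constant $\psi(0)$ — and on correctly combining \Thm{derivation} with the Leibniz rule for energy measures under the normalization $\nu_{h_i,h_j}=\delta_{ij}\nu_{\bfh}$. A secondary point requiring care is the limiting passage, where I must justify differentiating $\sqrt{m_\eps}$ near its zero set (for instance through $\sqrt{m_\eps+\delta}$ and $\delta\downarrow0$) and use weak lower semicontinuity of the $W^{1,2}$-norm to transfer the uniform bound to the limit $\eta$.
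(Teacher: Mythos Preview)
Your proposal is correct and coincides with the paper's proof in its first half: the identity $\partial_i m=\lambda_i$ (the paper writes $\kappa_i=\tilde\bfh_*\nu_{f^2,h_i}$, which equals your $\lambda_i$ via $d\nu_{h_i,f^2}=2\tilde f\,d\nu_{h_i,f}$) is obtained exactly as you describe, including the two applications of harmonicity, and the Schwarz domination $|\lambda_i(A)|\le 2\sqrt{m(A)m'(A)}$ is the paper's inequality \Eq{Kschwarz}. The second half, however, is a genuinely different route. The paper first cites an external lemma (Malliavin, or \cite[Lemma~I.7.2.2.1]{BH}) asserting that a finite positive measure whose distributional partials are finite signed measures is absolutely continuous; once the density $\xi$ exists, it passes the setwise Schwarz bound to a pointwise bound on densities via the Lebesgue differentiation theorem and then applies the chain rule with $\gm_\eps(t)=\sqrt{t+\eps}-\sqrt\eps$ to conclude $\sqrt\xi\in W^{1,2}(\R^d)$. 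You instead mollify $m$ and transport the Schwarz bound through the averaging to obtain the pointwise Fisher bound $|\partial_i m_\eps|\le 2\sqrt{m_\eps m'_\eps}$, yielding a uniform $W^{1,2}$ bound on $\sqrt{m_\eps}$; weak compactness plus $L^2_{\mathrm{loc}}$-convergence then gives $m=\eta^2\,dx$ in one stroke. Your argument is self-contained and avoids the cited absolute-continuity lemma; the paper's is shorter once that lemma is granted and yields the pointwise gradient bound $|\partial_i\sqrt\xi|^2\le d(\tilde\bfh_*\nu_f)_{\mathrm{ac}}/d\sL^d$ directly. Both produce the same quantitative estimate $\int_{\R^d}|\nab\sqrt\xi|^2\,dx\le 2d\,\cE(f)$. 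For the identification step in your argument, note that $\sqrt{m_{\eps_k}}\to\eta$ in $L^2_{\mathrm{loc}}$ implies $m_{\eps_k}\to\eta^2$ in $L^1_{\mathrm{loc}}$ (since $\|a_k^2-a^2\|_{L^1}\le\|a_k-a\|_{L^2}\|a_k+a\|_{L^2}$), which together with $m_{\eps_k}\,dx\to m$ as distributions yields $m=\eta^2\,dx$; this deserves a line but poses no difficulty.
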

\begin{theorem}\label{th:general}
We assume that $\mu(K)<\infty$ and $1\in\cF$. Then, the following hold for $d\in\N$.
\begin{enumerate}
\item Assume condition~{\rm(U)$_d$}. Moreover, if $\Cp(\{x\})>0$ for every $x\in K$, then $d=1$.
\item Assume condition~{\rm(U')$_d$}. Moreover, suppose that the Sobolev inequality holds for some $d_\mathrm{s}>2$ and $c_{3.1}>0$\textrm{:}
\begin{equation}\label{eq:sobolev}
  \|f\|_{L^{2d_\mathrm{s}/(d_\mathrm{s}-2)}(K,\mu)}^2\le c_{3.1}\cE_1(f),
  \qquad f\in\cF.
\end{equation}
Then, $d\le d_\mathrm{s}$.
\end{enumerate}
\end{theorem}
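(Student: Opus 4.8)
The plan is to convert the nondegeneracy condition $\Phi_{\bfh}(x)=I$ into an exact formula for the energy of functions pulled back from $\R^d$ through $\bfh$, and then to recover the dimension $d$ by comparing the resulting weighted Dirichlet energy on $\R^d$ with a functional inequality supplied by the hypotheses. The cornerstone is the following identity: since $\Phi_{\bfh}=I$ $\nu_{\bfh}$-a.e., \Lem{trivial} gives $\nu_{h_i,h_j}=\delta_{ij}\nu_{\bfh}$, so for $\phi\in C_b^1(\R^d)$ with $\phi(0)=0$, \Thm{derivation} (applied with $g=\phi(\bfh)$) yields $\phi(\bfh)\in\cF$ and
\[
 \nu_{\phi(\bfh)}=\sum_{i,j=1}^d\frac{\partial\phi}{\partial x_i}(\tilde\bfh)\frac{\partial\phi}{\partial x_j}(\tilde\bfh)\,\nu_{h_i,h_j}=|\nabla\phi|^2(\tilde\bfh)\,\nu_{\bfh}.
\]
Writing $\rho=d(\tilde\bfh_*\nu_{\bfh})/d\sL^d$ and using \Eq{323}, this becomes the weighted-energy formula $\cE(\phi(\bfh))=\tfrac12\int_{\R^d}|\nabla\phi|^2\rho\,d\sL^d$. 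The hypothesis $1\in\cF$ lets me drop the normalization $\phi(0)=0$ and use arbitrary $\phi\in C_c^\infty(\R^d)$, so the right-hand side realizes a $\rho$-weighted Dirichlet energy on $\R^d$ inside $(\cE,\cF)$.

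For part (i) I argue by contradiction, assuming $d\ge2$ (that $d\ge1$ is immediate from $\nu_{\bfh}(K)>0$). Since $\tilde\bfh_*\nu_{\bfh}=\rho\,d\sL^d$ is a nonzero finite measure, I first select $a\in\R^d$ that is simultaneously a Lebesgue point of $\rho$ with $\rho(a)>0$, lies in $\tilde\bfh(K)$, and is not an atom of $\tilde\bfh_*\mu$; the last property fails for at most countably many $a$, so such $a$ exists. Because a point has zero classical capacity in $\R^d$ for $d\ge2$ and $\rho$ has a finite Lebesgue value at $a$, a standard weighted-capacity estimate produces cutoffs $\phi_n\in C_c^\infty(\R^d)$ with $\phi_n\equiv1$ near $a$, $0\le\phi_n\le1$, shrinking supports, and $\int_{\R^d}|\nabla\phi_n|^2\rho\,d\sL^d\to0$. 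With $u_n=\phi_n(\bfh)$ the formula above gives $\cE(u_n)\to0$, while $\|u_n\|_{L^2(K,\mu)}^2\to\mu(\tilde\bfh^{-1}(\{a\}))=(\tilde\bfh_*\mu)(\{a\})=0$ by dominated convergence (here $\mu(K)<\infty$ is used), so $\cE_1(u_n)\to0$. Fixing $x_0\in\tilde\bfh^{-1}(\{a\})$, each $\tilde u_n$ equals $1$ on the quasi-open set $\{\phi_n(\tilde\bfh)=1\}\ni x_0$, whence $\Cp(\{x_0\})\le\cE_1(u_n)\to0$, contradicting $\Cp(\{x_0\})>0$. Hence $d=1$, and the supplementary assertion $\nu_{h_i'}=\nu_{\bfh'}$ is exactly \Lem{renormalize}.

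For part (ii) the same identity transports the Sobolev inequality \Eq{sobolev} to $\R^d$ along the substitution $\phi\mapsto\phi(\bfh)$, which embeds $L^p(\R^d,\tilde\bfh_*\mu)$ isometrically into $L^p(K,\mu)$. Applying \Eq{sobolev} (equivalently the associated Nash inequality) to $\phi(\bfh)$ and inserting the weighted-energy formula produces a functional inequality on $\R^d$ whose gradient term is weighted by $\rho$ and whose $L^p$-norms are taken against $\tilde\bfh_*\mu$. Localizing at a Lebesgue point $a$ of $\rho$ and rescaling by $\phi_\lambda(x)=\phi((x-a)/\lambda)$, the energy term behaves like $\lambda^{d-2}\rho(a)\int|\nabla\phi|^2$, and the $d$-dependent homogeneity of this blow-up is compatible with an inequality carrying the exponent $2d_\mathrm{s}/(d_\mathrm{s}-2)$ only if $2d_\mathrm{s}/(d_\mathrm{s}-2)\le 2d/(d-2)$; since $t\mapsto 2t/(t-2)$ decreases on $(2,\infty)$, this forces $d\le d_\mathrm{s}$.

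The hard part is the measure matching in part (ii): the $L^p$-norms coming from \Eq{sobolev} are governed by $\tilde\bfh_*\mu$, while the energy term is governed by $\tilde\bfh_*\nu_{\bfh}=\rho\,d\sL^d$, and on fractals the energy measure $\nu_{\bfh}$ is typically singular with respect to $\mu$, so $\tilde\bfh_*\mu$ may be degenerate where $\rho>0$ and render the transported inequality vacuous. I expect to circumvent this precisely through the regularity in condition (U')$_d$ and \Prop{W12}: the bound $\rho\le\xi$ with $\sqrt\xi\in W^{1,2}(\R^d)$ keeps the relevant push-forwards absolutely continuous, and applying \Prop{W12} to $\phi(\bfh)$ shows $|\phi|\sqrt\rho\in W^{1,2}(\R^d)$, allowing the $\mu$-dependent norms to be replaced by purely Euclidean quantities built from $\sqrt\xi$. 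Reducing the comparison to a genuine Euclidean Sobolev inequality for such functions—where the critical exponent $2d/(d-2)$ appears explicitly, and where $d_\mathrm{s}>2$ is essential—is the step I anticipate will require the most care.
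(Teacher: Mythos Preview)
Your argument for part~(i) is essentially the paper's: both choose a point $a$ in the image where the density $\rho$ has bounded averages and $\tilde\bfh_*\mu$ has no atom, build logarithmic cutoffs with vanishing $\rho$-weighted energy, and conclude that $\tilde\bfh^{-1}(\{a\})$ has zero capacity. Your choice of a Lebesgue point of $\rho$ is a mild variant of the paper's maximal-function condition and works just as well.

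For part~(ii) your scaling heuristic is the right shape, but there is a genuine gap, and your proposed patch does not close it. The transported inequality reads
\[
\|\phi\|_{L^{2d_\mathrm{s}/(d_\mathrm{s}-2)}(\R^d,\tilde\bfh_*\mu)}^2
\le c\Bigl(\tfrac12\int_{\R^d}|\nabla\phi|^2\rho\,d\sL^d+\|\phi\|_{L^2(\R^d,\tilde\bfh_*\mu)}^2\Bigr),
\]
and to extract a contradiction from the rescaling $\phi_\lambda$ you need \emph{two-sided} control of $\tilde\bfh_*\mu$ near the chosen point: an upper bound $(\tilde\bfh_*\mu)(\bar B(a,r))=o(r^{d-2})$ so the $L^2$ term dies, and a lower bound $(\tilde\bfh_*\mu)(\bar B(a,r))\ge a r^d$ so the $L^{2d_\mathrm{s}/(d_\mathrm{s}-2)}$ term blows up. Neither follows from a Lebesgue-point condition on $\rho$, since $\mu$ and $\nu_{\bfh}$ can be mutually singular. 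Your idea of applying \Prop{W12} to $\phi(\bfh)$ does not work as stated: that proposition requires $f\in\cFD$ (vanishing on the boundary $K^\partial$), which $\phi(\bfh)$ generally is not, and even formally it would only yield $|\phi|\sqrt\rho\in W^{1,2}(\R^d)$, which says nothing about $\tilde\bfh_*\mu$.

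The paper resolves this by an entirely different mechanism. It proves (\Prop{charge}) that under (U')$_d$ the measure $\tilde\bfh_*\mu$ cannot concentrate on a set of zero $W^{1,2}$-capacity; the proof is a closability argument for $(\cE,\cF)$ on a weighted $L^2$ space, and it is here that $\sqrt\xi\in W^{1,2}(\R^d)$ is used, via the equivalence of the $(\rho+1)$-weighted capacity with $\Cp^{1,2}$ (\Prop{equivalence}). Separately, geometric lemmas show that the set where the $o(r^{d-2})$ upper bound fails is $\Cp^{1,2}$-null (\Lem{RN2}) and the set where the $r^d$ lower bound fails is $\tilde\bfh_*\mu$-null (\Lem{RN}), while the bounded-maximal-function condition on $\rho$ holds $\Cp^{1,2}$-q.e.\ (\Lem{bound}). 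Combining these with \Prop{charge} produces a single point satisfying all three conditions, and then the explicit test functions $f_\delta(x)=\delta^{1-d/2}g(|\tilde\bfh(x)|/\delta)$ violate the Sobolev inequality when $d>d_\mathrm{s}$. This capacity non-concentration step is the missing idea in your outline.
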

In virtue of these results, the strategy to provide upper estimates of martingale dimensions is summarized as follows.
\begin{strategy}\label{str:str}
The following is a strategy for upper estimates of the AF-martingale dimensions $d_\mathrm{m}$.
\begin{enumerate}[Step 1:]
\item[Step 0:] Take an arbitrary $d\in\N$ such that $d\le d_\mathrm{m}$.
\item Find $\bfh\in\cH^d$ such that $\nu_{\bfh}(K)>0$ and $\Phi_{\bfh}(x)=L$ for $\nu_{\bfh}$-a.e.\,$x$ for some symmetric positive-definite matrix $L$ of order $d$.
We may assume that $L$ is the identity matrix as seen from \Lem{renormalize}.
\item By using the result of Step~1 and \Prop{W12} if necessary, find (possibly different) $\bfh\in\cH^d$ such that condition~(U)$_d$ or (U')$_d$ holds true in addition.
\item Then, under the assumptions of \Thm{general}, we obtain an estimate of $d_\mathrm{m}$.
\end{enumerate}
\end{strategy}
In Sections~4 and 5, we consider self-similar fractals as $K$ and show that the above procedure can be realized.
In the remainder of this section, we prove \Lem{renormalize}, \Prop{W12}, and \Thm{general}.
\begin{theopargself}\begin{proof}[of \Lem{renormalize}]
There exists an orthogonal matrix $U=(u_{ij})_{i,j=1}^d$ such that
\[
^tULU=\begin{pmatrix}\lm_1&&\raisebox{-1.3ex}[0pt][0pt]{\phantom{$\lm_d$}\llap{\smash{\huge$0$}}}\\&\ddots\\\mbox{\rlap{\smash{\huge$0$}}\quad}&&\lm_d\end{pmatrix}\quad\mbox{with }\lm_i>0,\ i=1,\dots,d.
\]
Define $\hat\bfh=(\hat h_1,\dots,\hat h_d)\in\cH^d$ by
$
  \hat h_i=\sum_{k=1}^d u_{ki}h_k$ for $i=1,\dots,d$.
Then,
$
  \nu_{\hat h_i,\hat h_j}=\sum_{k,l=1}^d u_{ki}u_{lj}\,\nu_{h_k,h_l}$
for $i,j=1,\dots,d$,
which implies that
$
 \Bigl(\frac{d\nu_{\hat h_i,\hat h_j}}{d\nu_{\bfh}}(x)\Bigr)_{i,j=1}^d
 ={}^tULU$
for $\nu_{\bfh}\mbox{-a.e.}\,x$.
In particular, $\nu_{\hat h_i}=\lm_i\nu_{\bfh}$ for $i=1,\dots,d$.
Define $\bfh'=(h'_1,\dots,h'_d)\in\cH^d$ as $h'_i=\lm_i^{-1/2}\hat h_i$ for $i=1,\dots,d$.
Then, $\nu_{h'_i}=\nu_{\bfh}$ for all $i$, which implies that $\nu_{\bfh'}=\nu_{\bfh}$.
Moreover, for $i,j=1,\dots, d$,
\[
\frac{d\nu_{h'_i,h'_j}}{d\nu_{\bfh'}}(x)
=\frac{d\nu_{h'_i,h'_j}}{d\nu_{\bfh}}(x)
=\dl_{ij}
\quad \mbox{for }\nu_{\bfh'}\mbox{-a.e.}\,x,
\]
where $\dl_{ij}$ denotes the Kronecker delta.
Therefore, $\Phi_{\bfh'}(x)$ is the identity matrix for $\nu_{\bfh'}$-a.e.\,$x$.
\qed\end{proof}\end{theopargself}
\begin{remark}
As seen from the proof, two $\cH^d$'s in the statement of \Lem{renormalize} can be replaced by $\cF^d$.
\end{remark}
Before proving \Prop{W12}, we remark the following result.
\begin{proposition}[Energy image density property]\label{prop:edp}
For $f\in\cF$, the measure $\tilde f_*\nu_f$ on $\R$ is absolutely continuous with respect to $\sL^1$.
In particular, $\nu_f$ has no atoms.
\end{proposition}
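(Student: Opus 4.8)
The statement is the one-dimensional energy image density property, so the plan is to reduce everything to the derivation rule of \Thm{derivation} and then force absolute continuity by a weak-compactness argument in $\cF$. Write $m:=\tilde f_*\nu_f$; since $\nu_f$ is finite and charges no set of zero capacity, $m$ is a well-defined finite Borel measure on $\R$, and by the definition of the image measure $\int_K G(\tilde f)\,d\nu_f=\int_\R G\,dm$ for every bounded Borel $G$. For $\varphi,\psi\in C^1_b(\R)$ with $\varphi(0)=\psi(0)=0$, applying \Thm{derivation} in each slot gives $d\nu_{\varphi\circ f,\psi\circ f}=\varphi'(\tilde f)\psi'(\tilde f)\,d\nu_f$, whence, by the polarized form of \Eq{323},
\[
  \cE(\varphi\circ f,\psi\circ f)=\tfrac12\int_\R\varphi'\psi'\,dm .
\]
Because $m$ is a finite Borel measure on $\R$ it is inner regular by compact sets, so to prove $m\ll\sL^1$ it suffices to show $m(A)=0$ for every compact $A$ with $\sL^1(A)=0$.

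Suppose this fails: fix a compact $A$ with $\sL^1(A)=0$ and $\delta:=m(A)>0$. Using outer regularity of both $\sL^1$ and $m$, I would choose a decreasing sequence of open sets $O_n\supset A$ with $\sL^1(O_n)<1/n$ \emph{and} $m(O_n)<\delta+1/n$; the second control is the decisive point. For each $n$ pick $\chi_n\in C^\infty_c(\R)$ with $0\le\chi_n\le1$, $\chi_n=1$ on $A$, $\mathrm{supp}\,\chi_n\subset O_n$, and set $\varphi_n(t)=\int_0^t\chi_n$ and $u_n:=\varphi_n\circ f\in\cF$. Then $\varphi_n\in C^1_b(\R)$ with $\varphi_n(0)=0$, $|\varphi_n(t)|\le|t|$ and $\|\varphi_n\|_\infty\le\sL^1(O_n)\to0$, so $|u_n|\le|f|\in L^2(K,\mu)$ and $u_n\to0$ pointwise; dominated convergence yields $u_n\to0$ in $L^2(K,\mu)$. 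Meanwhile $\cE(u_n)=\tfrac12\int_\R\chi_n^2\,dm\le\tfrac12 m(O_1)<\infty$, so $\{u_n\}$ is bounded in the Hilbert space $\cF$; as it tends to $0$ in $L^2(K,\mu)$, every weak cluster point in $\cF$ equals $0$, and hence $u_n\rightharpoonup0$ weakly in $\cF$.

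The contradiction comes from computing $\cE(u_n,u_m)$ two ways for a fixed index $m$. On one hand, weak convergence gives $(u_n,u_m)_\cF\to0$ and $(u_n,u_m)_{L^2(K,\mu)}\to0$, so $\cE(u_n,u_m)=(u_n,u_m)_\cF-(u_n,u_m)_{L^2(K,\mu)}\to0$ as $n\to\infty$. On the other hand $\cE(u_n,u_m)=\tfrac12\int_\R\chi_n\chi_m\,dm$, and since $\chi_n=\chi_m=1$ on $A$ while $0\le\chi_n\chi_m\le\mathbf 1_{O_n}$, splitting the integral over $A$ and over $O_n\setminus A$ gives
\[
  \tfrac12\delta\;\le\;\tfrac12\int_\R\chi_n\chi_m\,dm\;\le\;\tfrac12\bigl(\delta+m(O_n\setminus A)\bigr),
  \qquad m(O_n\setminus A)=m(O_n)-\delta<\tfrac1n ,
\]
so $\cE(u_n,u_m)\to\tfrac12\delta$. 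Comparing the two limits forces $\delta=0$, a contradiction; hence $m\ll\sL^1$, and the absence of atoms follows at once because singletons are $\sL^1$-null.

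The main obstacle is that the naive route---estimating $\cE(\varphi_n\circ f)$ from above---is useless, since the derivation rule makes it \emph{equal} to $\tfrac12\int(\varphi_n')^2\,dm$: the energy identity is self-referential and carries no information separating $m$ from $\sL^1$. The extra input must therefore come from the Hilbert-space geometry: the $u_n$ retain a fixed amount of energy $\ge\tfrac12\delta$ (so they do \emph{not} converge to $0$ strongly in $\cF$), yet they vanish in $L^2(K,\mu)$, and pairing this persistent energy against a \emph{fixed} test element $u_m$ exposes the mass $\delta$ sitting on $A$. Making that detection exact is precisely why the neighborhoods $O_n$ must be chosen to squeeze $m(O_n)$ down to $m(A)$, not merely $\sL^1(O_n)$ down to $0$; without the $m$-control the cross term would only converge to the $m$-mass of the limiting $G_\delta$ set $\bigcap_n O_n$, which could strictly exceed $\delta$ and destroy the argument.
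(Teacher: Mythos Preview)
Your proof is correct and follows the same construction as the Bouleau--Hirsch argument the paper cites (Theorem~I.7.1.1 in \cite{BH}; see also \cite[Theorem~4.3.8]{CF}): build $u_n=\varphi_n\circ f$ from primitives of bump functions localized near a compact Lebesgue-null set $A$, observe that $u_n\to0$ in $L^2(K,\mu)$ while retaining energy at least $\tfrac12 m(A)$, and derive a contradiction from the Hilbert-space structure of $\cF$. The only difference is cosmetic: Bouleau--Hirsch show directly that $\{u_n\}$ is \emph{Cauchy} in $\cF$ (this is where the control $m(O_n)\to m(A)$ is genuinely needed), hence $u_n\to0$ strongly, contradicting $\cE(u_n)\ge\tfrac12\delta$; you instead use weak convergence $u_n\rightharpoonup0$ and pair against a fixed $u_m$.

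One small correction to your commentary: for \emph{your} route the extra control $m(O_n)<\delta+1/n$ is actually unnecessary. Boundedness of $\{u_n\}$ in $\cF$ follows already from $\cE(u_n)\le\tfrac12 m(\R)<\infty$ and $\|u_n\|_{L^2}\to0$, and the decisive lower bound $\cE(u_n,u_m)=\tfrac12\int\chi_n\chi_m\,dm\ge\tfrac12 m(A)=\tfrac12\delta$ uses only that $\chi_n=\chi_m=1$ on $A$. So the contradiction $\cE(u_n,u_m)\to0$ versus $\cE(u_n,u_m)\ge\tfrac12\delta$ goes through without the $m$-squeeze. Your last paragraph therefore misidentifies where that refinement matters: it is essential for the Cauchy argument, not for the weak-convergence one you actually wrote.
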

This proposition is proved in \cite[Theorem~I.7.1.1]{BH} when the strong local Dirichlet form is given by the integration of the carr\'e du champ operator. The proof of \Prop{edp} is provided along the same way, which has been mentioned already, e.g., in \cite{HN06,Hi08}.
See also \cite[Theorem~4.3.8]{CF} for the short proof.

For $\bff\in\cF^d$ with $d\ge2$, the absolute continuity of the measure $\tilde \bff_*\nu_{\bff}$ on $\R^d$ is not expected in general. Some studies on sufficient conditions are found in \cite{BH}.
In \Prop{W12}, we consider a rather special situation that implies a better smoothness.
How to find functions that meet this situation is the main problem that is discussed in the next section. 

\begin{theopargself}\begin{proof}[of \Prop{W12}]
Take an arbitrary $\ph\in C_b^1(\R^d)$ with $\ph(0,\dots,0)=0$ and define $g=\ph\circ\tilde\bfh$.
From \Lem{harmonic} and \Thm{derivation}, for each $i=1,\dots, d$, we have
\begin{align}\label{eq:ibp}
0&= 2\cE(gf^2,h_i)\qquad\mbox{(since $gf^2\in\cFD$ and $h_i\in \cH$)}\notag\\
&=\int_K d\nu_{gf^2,h_i}
=\int_K g\,d\nu_{f^2,h_i}+\int_K\tilde f^2\,d\nu_{g,h_i}\notag\\
&=\int_K g\,d\nu_{f^2,h_i}+\int_K\tilde f^2\frac{\partial \ph}{\partial x_i}(\tilde\bfh)\,d\nu_{h_i}
\qquad\mbox{(since $\nu_{h_i,h_j}=0$ if $i\ne j$)}\notag\\
&=\int_{\R^d}\ph\,d(\tilde\bfh_*\nu_{f^2,h_i})+\int_{\R^d}\frac{\partial \ph}{\partial x_i}\,d(\tilde\bfh_*(\tilde f^2\nu_{\bfh})).
\end{align}
The last equality follows from the change of variable formula and 
$\nu_{h_i}=\nu_{\bfh}$.

Let $\kappa=\tilde\bfh_*(\tilde f^2\nu_{\bfh})$ and $\kappa_i=\tilde\bfh_*\nu_{f^2,h_i}$ for $i=1,\dots, d$.
From \Eq{ibp},
\begin{equation}\label{eq:ibp2}
  \int_{\R^d}\frac{\partial \ph}{\partial x_i}\,d\kappa=-\int_{\R^d}\ph\,d\kappa_i,\quad
  i=1,\dots,d,
\end{equation}
for $\ph\in C_b^1(\R^d)$ with $\ph(0,\dots,0)=0$.
Since 
$
\kp_i(\R^d)=\nu_{f^2,h_i}(K)=2\cE(f^2,h_i)=0$,
identity~\Eq{ibp2} holds for all $\ph\in C_b^1(\R^d)$.
Therefore, in the distribution sense, $\frac{\partial}{\partial x_i}\kp=\kp_i$ for $i=1,\dots,d$.
This implies that $\kp\ll \sL^d$, e.g., from \cite[pp.~196--197]{Ma} or \cite[Lemma~I.7.2.2.1]{BH}.
Denote the Radon--Nikodym derivative $d\kp/d\sL^d$ by $\xi$.
Then, \Eq{ibp2} can be interpreted as $\partial \xi/\partial x_i=\kappa_i$ in the distribution sense for $i=1,\dots,d$.

Now, for any $B\in\cB(\R^d)$, from \Thm{derivation} and \Eq{KW},
\begin{align}\label{eq:Kschwarz}
|\kappa_i(B)|&=\left|\int_{\tilde\bfh^{-1}(B)} 2\tilde f\,d\nu_{f,h_i}\right|
\le 2\left(\int_{\tilde\bfh^{-1}(B)} \tilde f^2\,d\nu_{h_i}\right)^{1/2}\left(\int_{\tilde\bfh^{-1}(B)} d\nu_{f}\right)^{1/2}\notag\\*
&=2\kappa(B)^{1/2}(\tilde\bfh_*\nu_f)(B)^{1/2}.
\end{align}
Therefore, $\kappa_i\ll\kappa$, in particular, $\kappa_i\ll\sL^d$.
This implies that $\xi$ belongs to the Sobolev space $W^{1,1}(\R^d)$.
Let $d\kappa_i/d\sL^d$ be denoted by $\xi_i$.
From \Eq{Kschwarz} and Theorems~1 and 3 in \cite[Section~1.6]{EG}, we have
\[
  \left|\xi_i\right|\le2\xi^{1/2}\biggl(\frac{d(\tilde\bfh_*\nu_f)_{\mathrm{ac}}}{d\sL^d}\biggr)^{1/2} \quad \sL^d\mbox{-a.e.},
\]
where $(\tilde\bfh_*\nu_f)_{\mathrm{ac}}$ denotes the absolutely continuous part in the Lebesgue decomposition of $\tilde\bfh_*\nu_f$.
For $\eps>0$, let $\gm_\eps(t)=\sqrt{t+\eps}-\sqrt{\eps}$, $t\ge0$.
Then,
\begin{align*}
\left(\frac{\partial (\gm_\eps(\xi))}{\partial x_i}\right)^2
=\biggl(\frac1{2\sqrt{\xi+\eps}}\frac{\partial\xi}{\partial x_i}\biggr)^2
=\frac{\xi_i^2}{4(\xi+\eps)}
\le\frac{d(\tilde\bfh_*\nu_f)_{\mathrm{ac}}}{d\sL^d},
\end{align*}
which implies that
\[
\int_{\R^d}\left(\frac{\partial (\gm_\eps(\xi))}{\partial x_i}\right)^2\,dx\le (\tilde\bfh_*\nu_f)_{\mathrm{ac}}(\R^d)
\le \nu_f(K)=2\cE(f)<\infty.
\]
Since $\gm_\eps(t)\nearrow\sqrt{t}$ as $\eps\searrow0$, $\partial\sqrt\xi\big/\partial x_i$ belongs to $L^2(\R^d,dx)$.
This implies that $\sqrt\xi\in W^{1,2}(\R^d)$.
\qed\end{proof}\end{theopargself}
For the proof of \Thm{general}, we need several claims.
Let $\rho$ and $\xi$ be Lebesgue measurable functions on $\R^d$ such that $0\le\rho\le \xi$ $\sL^d$-a.e.\ and $\sqrt\xi\in W^{1,2}(\R^d)$.
Define a bilinear form $Q^\rho$ on $L^2(\R^d,(\rho+1)\,dx)$ by
\[
  Q^\rho(u,v)=\int_{\R^d}(\nab u,\nab v)_{\R^d}(\rho+1)\,dx,\quad
  u,v\in C_c^1(\R^d),
\]
where $(\cdot,\cdot)_{\R^d}$ denotes the standard inner product on $\R^d$ and $C_c^1(\R^d)=C^1(\R^d)\cap C_c(\R^d)$.
It is easy to see that $(Q^\rho,C_c^1(\R^d))$ is closable in $L^2(\R^d,(\rho+1)\,dx)$, and its closure, denoted by $(Q^\rho,\Dom(Q^\rho))$, is a regular Dirichlet form on $L^2(\R^d,(\rho+1)\,dx)$.
We also define the standard regular Dirichlet form $(Q,W^{1,2}(\R^d))$ on $L^2(\R^d,dx)$ as
\[
Q(u,v)=\int_{\R^d}(\nab u,\nab v)_{\R^d}\,dx,\quad
  u,v\in W^{1,2}(\R^d).
\]
The capacities associated with $Q^\rho$ and $Q$ are denoted by $\Cp^\rho$ and $\Cp^{1,2}$, respectively.
For $x\in\R^d$ and $r>0$, we define
\[
B(x,r)=\{y\in\R^d\mid |x-y|_{\R^d}<r\}
\quad\mbox{and}\quad
\bar B(x,r)=\{y\in\R^d\mid |x-y|_{\R^d}\le r\},
\]
where $|\cdot|_{\R^d}$ denotes the Euclidean norm on $\R^d$.
In general, for a measure space $(X,\lm)$ and a subset $E$ with $\lm(E)<\infty$, the normalized integral $\lm(E)^{-1}\int_E\cdots d\lm$ is denoted by $\mint_E\cdots d\lm$.
\begin{lemma}\label{lem:bound}
For $\Cp^{1,2}$-q.e.\,$x\in\R^d$,
$\sup_{r>0}\mint_{B(x,r)}\rho(y)\,dy<\infty$.
\end{lemma}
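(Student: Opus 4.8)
The plan is to pass to the majorant $\xi$ and then exploit that $\sqrt\xi$ is a genuine Sobolev function, not merely locally integrable. Since $0\le\rho\le\xi$ $\sL^d$-a.e., we have $\mint_{B(x,r)}\rho\,dy\le\mint_{B(x,r)}\xi\,dy$ for every $x$ and $r$, so it suffices to prove the statement with $\rho$ replaced by $\xi$. Write $u=\sqrt\xi\in W^{1,2}(\R^d)$, so that $\xi=u^2$ and, in particular, $u\in L^2(\R^d,dx)$. The large radii are disposed of at once: for $r\ge1$ and \emph{every} $x\in\R^d$,
\[
\mint_{B(x,r)}u^2\,dy=\frac1{\sL^d(B(x,r))}\int_{B(x,r)}u^2\,dy\le\frac{\|u\|_{L^2(\R^d)}^2}{\sL^d(B(0,1))\,r^d}\le\frac{\|u\|_{L^2(\R^d)}^2}{\sL^d(B(0,1))},
\]
which is bounded uniformly in $x$. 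Thus the claim reduces to showing that $\sup_{0<r<1}\mint_{B(x,r)}u^2\,dy<\infty$ for $\Cp^{1,2}$-q.e.\,$x$.

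The heart of the matter is the small-radius regime, which I would handle through the quasi-everywhere Lebesgue-point property of Sobolev functions. Let $\tilde u$ denote the quasi-continuous modification of $u$; it is finite $\Cp^{1,2}$-q.e. The classical fine-properties theory of $W^{1,2}(\R^d)$ (see \cite{EG}) produces a Borel set $E$ with $\Cp^{1,2}(E)=0$ such that every $x\notin E$ is an $L^2$-Lebesgue point of $u$, that is,
\[
\lim_{r\to0}\mint_{B(x,r)}\bigl|u(y)-\tilde u(x)\bigr|^2\,dy=0.
\]
(For $d\ge3$ this is the Federer--Ziemer theorem, since the Sobolev exponent satisfies $2\le 2d/(d-2)$; for $d=1$ it is immediate from the continuous embedding $W^{1,2}(\R)\hookrightarrow C_b(\R)$; and the borderline case $d=2$ is covered by the limiting Lebesgue-point theorem for $W^{1,d}$, valid in every $L^q$ with $q<\infty$.) For such $x$, Minkowski's inequality gives
\[
\left(\mint_{B(x,r)}u^2\,dy\right)^{1/2}\le\left(\mint_{B(x,r)}\bigl|u-\tilde u(x)\bigr|^2\,dy\right)^{1/2}+\bigl|\tilde u(x)\bigr|\longrightarrow\bigl|\tilde u(x)\bigr|\quad(r\to0),
\]
so $r\mapsto\mint_{B(x,r)}u^2\,dy$ is bounded near $0$; being moreover continuous and finite on each interval $[\delta,1]$ (as $u^2\in L^1$), it is bounded on all of $(0,1)$. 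Together with the large-radius bound this yields $\sup_{r>0}\mint_{B(x,r)}\xi\,dy<\infty$ for $x\notin E$, which is the assertion.

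The only genuine difficulty is the passage from Lebesgue-almost-everywhere to $\Cp^{1,2}$-quasi-everywhere statements; this is precisely where the Sobolev regularity $\sqrt\xi\in W^{1,2}(\R^d)$ is used rather than mere integrability of $\xi$, and the fine Lebesgue-point theorem is what packages this potential-theoretic content. A secondary point requiring care is the borderline dimension $d=2$, where the Sobolev embedding just fails to reach $L^\infty$ and one must invoke the limiting $L^q$-integrability in place of Morrey's theorem; for $d\ge3$ the argument is entirely routine once the $L^{2^*}$-Lebesgue-point property is granted.
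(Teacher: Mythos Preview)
Your proof is correct and follows essentially the same route as the paper: reduce to $\xi$ via $\rho\le\xi$, use that $u=\sqrt\xi\in W^{1,2}(\R^d)$ has $L^2$-Lebesgue points $\Cp^{1,2}$-q.e., and combine Minkowski's inequality for small radii with a trivial $L^1$-bound for large radii. The paper simply cites \cite[Theorem~6.2.1]{AH} for the quasi-everywhere $L^2$-Lebesgue-point property of $W^{1,2}$ functions, valid uniformly in all dimensions, so your dimension-by-dimension discussion (and the worry about the borderline case $d=2$) is unnecessary---you only need $L^2$-averages, not $L^{2^*}$-averages.
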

\begin{proof}
Take a quasi-continuous modification of $\sqrt \xi$ with respect to $\Cp^{1,2}$, which is denoted by the same symbol. 
We may assume that $0\le\xi(x)<\infty$ for every $x\in \R^d$.

From \cite[Theorem~6.2.1]{AH}, there exists a $\Cp^{1,2}$-null set $B$ of $\R^d$ such that, for every $x\in \R^d\setminus B$,
\[
\lim_{r\to0}\mint_{B(x,r)}\left|\sqrt{\xi(y)}-\sqrt{\xi(x)}\right|^2dy=0.
\]
For $x\in \R^d\setminus B$, take $r_0>0$ such that $\sup_{0<r<r_0}\mint_{B(x,r)}\bigl|\sqrt{\xi(y)}-\sqrt{\xi(x)}\bigr|^2dy\le1$. 
Then, for $r\in(0,r_0)$,
\begin{align*}
\left(\mint_{B(x,r)}\xi(y)\,dy\right)^{1/2}
&\le\left(\mint_{B(x,r)}\left|\sqrt{\xi(y)}-\sqrt{\xi(x)}\right|^2dy\right)^{1/2}
+\left(\mint_{B(x,r)}\xi(x)\,dy\right)^{1/2}\\
&\le 1+\sqrt{\xi(x)}.
\end{align*}
Since $0\le \rho\le \xi$ $\sL^d$-a.e., we obtain that $\sup_{0<r<r_0}\mint_{B(x,r)}\rho(y)\,dy<\infty$.
We also have 
\[
\sup_{r\ge r_0}\mint_{B(x,r)}\rho(y)\,dy\le \sL^d(B(x,r_0))^{-1}\int_{\R^d}\rho(y)\,dy<\infty.\qedhere
\]
\end{proof}
\begin{proposition}\label{prop:equivalence}
Let $B\subset\R^d$.
Then, $\Cp^\rho(B)=0$ if and only if $\Cp^{1,2}(B)=0$.
\end{proposition}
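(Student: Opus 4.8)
The plan is to prove the two implications separately: the forward one is elementary monotonicity, while the converse is where \Lem{bound} does the work.

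\emph{Easy direction} ($\Cp^\rho(B)=0\Rightarrow\Cp^{1,2}(B)=0$). Since $\rho\ge0$ we have $\rho+1\ge1$, so on $C_c^1(\R^d)$ one has $Q_1(u)\le Q^\rho_1(u)$, where $Q_1(u)=Q(u)+\|u\|_{L^2(\R^d,dx)}^2$ and $Q^\rho_1(u)=Q^\rho(u)+\|u\|_{L^2(\R^d,(\rho+1)\,dx)}^2$. Passing to closures, a $Q^\rho_1$-Cauchy sequence in $C_c^1$ is $Q_1$-Cauchy and the two $L^2(\R^d,dx)$-limits agree, so $\Dom(Q^\rho)\subseteq W^{1,2}(\R^d)$ with $Q_1(u)\le Q^\rho_1(u)$ there. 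Hence for every open $U$ each competitor for $\Cp^\rho(U)$ is a competitor for $\Cp^{1,2}(U)$ of no larger energy, giving $\Cp^{1,2}(U)\le\Cp^\rho(U)$; taking the infimum over open $U\supseteq B$ yields $\Cp^{1,2}(B)\le\Cp^\rho(B)$ for all $B$, which gives this implication.

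\emph{Converse direction.} First I would reduce the weight: since $\rho\le\xi$ $\sL^d$-a.e.\ we get $Q^\rho_1\le Q^\xi_1$ and so $\Cp^\rho\le\Cp^\xi$; thus it suffices to prove that $\Cp^{1,2}(B)=0$ implies $\Cp^\xi(B)=0$, where the weight is now $\xi+1$ with $g:=\sqrt\xi\in W^{1,2}(\R^d)$. By outer regularity I may enlarge $B$ to a Borel (indeed $G_\delta$) set of zero $\Cp^{1,2}$-capacity, and then by inner regularity and countable subadditivity of $\Cp^\xi$ it is enough to treat compact $B$. Next I truncate: set $\xi_k=\min(\xi,k)$. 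The weight $\xi_k+1$ lies in $[1,k+1]$, so $\Dom(Q^{\xi_k})=W^{1,2}(\R^d)$ and $\Cp^{1,2}\le\Cp^{\xi_k}\le(k+1)\Cp^{1,2}$ as set functions; in particular $\Cp^{\xi_k}(B)=0$ for every $k$. Thus for each $k$ there are open $U\supseteq B$ of arbitrarily small $\Cp^{\xi_k}$-capacity, with equilibrium potentials $w$ satisfying $0\le w\le1$, $w=1$ q.e.\ on a neighborhood of $B$, $w\to0$ in $W^{1,2}(\R^d)$ and q.e., and $Q^{\xi_k}_1(w)$ arbitrarily small.

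It remains to upgrade this to the full weight $\xi$, and I expect this to be the main obstacle. Because $Q^{\xi_k}\uparrow Q^\xi$, for such a competitor $w$ one has $Q^\xi_1(w)=Q^{\xi_k}_1(w)+\int_{\{\xi>k\}}(|\nabla w|^2+w^2)(\xi-k)\,dx$, and the excess term is concentrated exactly where the weight is unbounded, so no pointwise bound on $\xi$ is available there. Here \Lem{bound} is decisive: off a $\Cp^{1,2}$-polar set the maximal function $\sup_{r>0}\mint_{B(x,r)}\xi\,dy$ is finite, so on the thin annular shells where an equilibrium potential of $B$ makes its transition the averaged weight is controlled, and this combines with the capacitary smallness $\Cp^{1,2}(\{\xi>k\})=\Cp^{1,2}(\{g>\sqrt k\})\le\|g\|_{W^{1,2}(\R^d)}^2/k\to0$ coming from $g\in W^{1,2}$. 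Choosing the neighborhoods of $B$ accordingly (a logarithmic-type cutoff of the $(1,2)$-potential of $B$ being the model case) should force the excess energy to vanish; the borderline character of the hypothesis is already visible for a point in the plane with $\xi=(\log(1/|x|))^{2\beta}$, where the construction succeeds precisely when $\beta<1/2$, that is, exactly when $g\in W^{1,2}$. Granting this estimate, one obtains competitors $w\in\Dom(Q^\xi)$ with $w=1$ near $B$ and $Q^\xi_1(w)$ arbitrarily small, so $\Cp^\xi(B)=0$ and therefore $\Cp^\rho(B)=0$, completing the equivalence.
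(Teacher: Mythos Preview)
Your easy direction is fine, and your reduction to the weight $\xi$ is exactly what the paper does. But the converse direction has a genuine gap: you explicitly write ``granting this estimate'' for the excess term $\int_{\{\xi>k\}}(|\nabla w|^2+w^2)(\xi-k)\,dx$, and the heuristic you offer does not close it. Invoking \Lem{bound} here is misplaced: finiteness of the maximal function of $\xi$ at $\Cp^{1,2}$-q.e.\ points is a pointwise statement about averages over balls, and it gives no control on the integral of $|\nabla w|^2\,\xi$ over the a priori unstructured set $\{\xi>k\}$; the ``thin annular shells'' picture only makes sense when $B$ is a single point. Likewise, the observation that $\Cp^{1,2}(\{\xi>k\})\to0$ is not enough, because on $\{\xi>k\}$ the weight is by definition unbounded, so a $\Cp^{1,2}$-small potential there can still carry large $Q^\xi$-energy. (In fact the paper does not use \Lem{bound} at all in this proof.)

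The missing idea is to show directly that the superlevel sets of $\xi$ are small in the \emph{weighted} capacity $\Cp^\xi$, not merely in $\Cp^{1,2}$. The paper does this by checking that $g(x)=\log(\sqrt{\xi(x)}+1)$ lies in $\Dom(Q^\xi)$: one has $\int g^2(\xi+1)\,dx<\infty$ from Sobolev embedding applied to $\sqrt\xi\in W^{1,2}$, and $\int|\nabla g|^2(\xi+1)\,dx\le\int|\nabla\sqrt\xi|^2\,dx<\infty$ because $|\nabla g|=|\nabla\sqrt\xi|/(\sqrt\xi+1)$. Quasi-continuity of $g$ with respect to $\Cp^\xi$ then gives, for any $\eps>0$, some $b$ and an open $U_1\supset\{\xi>(e^b-1)^2\}$ with $\Cp^\xi(U_1)<\eps$. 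Now take an open $U_2\supset B$ with $\Cp^{1,2}(U_2)<e^{-2b}\eps$, let $e_1,e_2$ be the respective equilibrium potentials, and set $f=e_1\vee e_2$. On $\{e_1\ge e_2\}$ the $Q^\xi_1$-energy is at most $\eps$; on $\{e_1<e_2\}\subset\{\xi\le(e^b-1)^2\}$ the weight is bounded by $(e^b-1)^2+1\le e^{2b}$, so the $Q^\xi_1$-energy there is at most $e^{2b}\cdot e^{-2b}\eps=\eps$. Hence $\Cp^\xi(B)\le2\eps$. This ``max of two potentials, one capping the weight'' is the step your outline does not supply.
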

\begin{proof}
We define a Dirichlet form $(Q^\xi,\Dom(Q^\xi))$ on $L^2(\R^d,(\xi+1)dx)$ and its capacity $\Cp^\xi$, just as $(Q^\rho,\Dom(Q^\rho))$ and $\Cp^\rho$, with $\rho$ replaced by $\xi$.
Then, from the result in \cite{RZ94} (see also Theorem~3.3, Theorem~3.6, and the subsequent Remark~(iv) in \cite{Eb}), $\Dom(Q^\xi)$ is characterized as follows:
\[
\Dom(Q^\xi)
=\left\{ u\;\vrule\;\;
\parbox{0.75\textwidth}{$u\in L^2(\R^d,(\xi+1)\,dx)$, and for every $i=1,\dots,d$, ${\partial u}/{\partial x_i}$ exists in the distribution sense and 
${\partial u}/{\partial x_i}\in L^2(\R^d,(\xi+1)\,dx)$}\;
\right\}.
\]
Since $\Cp^{1,2}(B)\le\Cp^\rho(B)\le\Cp^\xi(B)$ for $B\subset \R^d$, it suffices to show that any $\Cp^{1,2}$-null set $B$ satisfies $\Cp^\xi(B)=0$.
Let $g(x)=\log(\sqrt{\xi(x)}+1)$. 
Since $\sqrt\xi\in W^{1,2}(\R^d)$, $\xi\in L^{1}(\R^d,dx)\cap L^{1+\dl}(\R^d,dx)$ for some $\dl\in(0,2]$ from the Sobolev imbedding theorem.
There exists $c_{3.2}>0$ such that $\log(t+1)\le c_{3.2}t^{\dl/2}\wg t$ for $t\ge0$.
Then,
\[
\int_{\R^d}g^2(\xi+1)\,dx
\le \int_{\R^d}(c_{3.2}^2\xi^\dl\wg\xi)(\xi+1)\,dx
\le\int_{\R^d}(c_{3.2}^2\xi^{1+\dl}+\xi)\,dx <\infty
\]
and
\[
\int_{\R^d}|\nab g|_{\R^d}^2(\xi+1)\,dx
\le \int_{\R^d}\frac{\bigl|\nab \sqrt{\xi}\bigr|_{\R^d}^2}{(\sqrt{\xi}+1)^2}\cdot(\xi+1)\,dx
\le\int_{\R^d}\bigl|\nab \sqrt{\xi}\bigr|_{\R^d}^2\,dx
<\infty.
\]
Thus, $g$ belongs to $\Dom(Q^\xi)$.
We denote by $\tilde g$ the quasi-continuous modification of $g$ with respect to $(Q^\xi,\Dom(Q^\xi))$.
Let $\eps>0$. There exist some $b>0$ and an open set $U_1$ of $\R^d$ such that $U_1\supset\{\tilde g>b\}$ and $\Cp^\xi(U_1)<\eps$.
Note that 
$
\{\tilde g>b\}=\{\xi>(e^b-1)^2\}$
up to $\sL^d$-null set.
Take an open set $U_2$ of $\R^d$ such that $U_2\supset B$ and $\Cp^{1,2}(U_2)<e^{-2b}\eps$.
We denote the $1$-equilibrium potential of $U_1$ with respect to $(Q^\xi,\Dom(Q^\xi))$ by $e_1$, and that of $U_2$ with respect to $(Q,W^{1,2}(\R^d))$ by $e_2$. 

Define $f(x)=e_1(x)\vee e_2(x)$ for $x\in\R^d$. 
Then, $f\in W^{1,2}(\R^d)$ and $f=1$ on $U_2\,(\supset B)$.
Since $\{e_1\ge e_2\}\supset U_1\supset \{\xi> (e^b-1)^2\}$
up to $\sL^d$-null set, we have
\begin{align*}
&\int_{\R^d}(|\nab f|_{\R^d}^2+f^2)(\xi+1)\,dx\\
&= \int_{\{e_1\ge e_2\}}(|\nab e_1|_{\R^d}^2+e_1^2)(\xi+1)\,dx
+\int_{\{e_1< e_2\}}(|\nab e_2|_{\R^d}^2+e_2^2)(\xi+1)\,dx
\\
&\qquad\mbox{(e.g., from \cite[Proposition~I.7.1.4]{BH})}\\
&\le \eps
+\int_{\{\xi\le (e^b-1)^2\}}(|\nab e_2|_{\R^d}^2+e_2^2)(\xi+1)\,dx\\
&\le \eps+\{(e^b-1)^2+1\}e^{-2b}\eps
\le 2\eps.
\end{align*}
Therefore, $f\in \Dom(Q^\xi)$ and $\Cp^\xi(B)\le 2\eps$.
Since $\eps>0$ is arbitrary, we obtain that $\Cp^\xi(B)=0$.
\qed\end{proof}
\begin{lemma}\label{lem:RN}
Let $\kp$ be a positive Radon measure on $\R^d$.
Then, $\kp(A)=0$ with
\[
A=\left\{x\in\R^d\Bigm| \liminf_{r\searrow0}\frac{\kp(\bar B(x,r))}{r^d}=0\right\}.
\]
\end{lemma}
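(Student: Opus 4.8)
The plan is to prove this by a standard fine-covering argument from geometric measure theory: reduce to a bounded region and then apply the Vitali covering theorem for the Radon measure $\kp$. Since $\kp$ is Radon, $\kp(A)=\lim_{n\to\infty}\kp\bigl(A\cap B(\bfz,n)\bigr)$, so it suffices to show that $\kp(A\cap\Om)=0$ for every bounded open set $\Om\subset\R^d$. Fix such an $\Om$ and fix a bounded open set $\Om'$ that contains the closed $1$-neighborhood of $\Om$; then $\bar B(x,r)\subset\Om'$ whenever $x\in\Om$ and $0<r<1$.

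Fix $\eta>0$. For each $x\in A\cap\Om$ the definition of $A$ gives $\liminf_{r\searrow0}\kp(\bar B(x,r))/r^d=0$, so there exist arbitrarily small radii $r\in(0,1)$ with $\kp(\bar B(x,r))<\eta\,r^d$, and for each such ball $\bar B(x,r)\subset\Om'$. Hence the family $\cF_\eta$ of all these closed balls is a \emph{fine} cover of $A\cap\Om$ (each point is the center of arbitrarily small members of $\cF_\eta$), with uniformly bounded radii. By the Vitali covering theorem for the Radon measure $\kp$ (a consequence of the Besicovitch covering theorem; see, e.g., \cite[Section~1.5]{EG}), there is a countable disjoint subfamily $\{\bar B(x_i,r_i)\}_i\subset\cF_\eta$ such that
\[
\kp\Bigl((A\cap\Om)\setminus\bigcup_i\bar B(x_i,r_i)\Bigr)=0.
\]

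Now I use disjointness to convert the sum over balls into a volume bound. Writing $\om_d:=\sL^d(B(\bfz,1))$, the balls $\bar B(x_i,r_i)$ are pairwise disjoint and contained in $\Om'$, so
\[
\om_d\sum_i r_i^d=\sum_i\sL^d\bigl(\bar B(x_i,r_i)\bigr)\le\sL^d(\Om')<\infty.
\]
Combining this with $\kp(\bar B(x_i,r_i))<\eta\,r_i^d$ yields
\[
\kp(A\cap\Om)\le\sum_i\kp\bigl(\bar B(x_i,r_i)\bigr)<\eta\sum_i r_i^d\le\frac{\eta}{\om_d}\,\sL^d(\Om').
\]
Letting $\eta\searrow0$ gives $\kp(A\cap\Om)=0$, and letting $\Om$ exhaust $\R^d$ gives $\kp(A)=0$.

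The only delicate point, which is where essentially all the content sits, is the applicability of the Vitali covering theorem to the arbitrary Radon measure $\kp$: one needs the Besicovitch covering theorem (valid on $\R^d$ with a dimensional constant) to extract from the fine cover a \emph{disjoint} subfamily that still captures $\kp$-almost all of $A\cap\Om$. Everything else is bookkeeping: restricting to radii in $(0,1)$ to keep the balls inside the fixed bounded set $\Om'$ and to keep radii bounded for Besicovitch, and using the packing bound $\om_d\sum_i r_i^d\le\sL^d(\Om')$ to turn the defining inequality $\kp(\bar B(x_i,r_i))<\eta\,r_i^d$ into an arbitrarily small bound on $\kp(A\cap\Om)$.
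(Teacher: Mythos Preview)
Your proof is correct and follows essentially the same route as the paper: both localize to bounded sets and invoke a Vitali/Besicovitch covering argument to compare $\kp$ with Lebesgue measure. The paper simply cites the packaged version of this argument (Lemma~1 in \cite[Section~1.6]{EG}) to obtain $\kp(A\cap B(0,n))\le\a\,\sL^d(A\cap B(0,n))$ for every $\a>0$, whereas you have written out the covering step explicitly.
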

We remark that the set $A$ above is Borel measurable.
Indeed, 
\[
2^{-d}\cdot\frac{\kp(\bar B(x,2^{-k}))}{(2^{-k})^d}\le\frac{\kp(\bar B(x,r))}{r^d}\le 2^d\cdot\frac{\kp(\bar B(x,2^{-k+1}))}{(2^{-k+1})^d}
\quad \text{if }2^{-k}\le r<2^{-k+1},
\]
which implies
$
A=\{x\in\R^d\mid \liminf_{k\to\infty,\,k\in\N}{\kp(\bar B(x,2^{-k}))}/{(2^{-k})^d}=0\}$.
It is easy to see that the right-hand side is a Borel set.
\begin{theopargself}
\begin{proof}[of \Lem{RN}]
For $n\in\N$, let
$
A_{n}=A\cap B(0,n)\in\cB(\R^d)
$.
From Lemma~1 in \cite[Section~1.6]{EG}, for any $\a>0$, $\kp(A_{n})\le \a\sL^d(A_{n})$.
By letting $\a\to0$, we have $\kp(A_{n})=0$.
This implies the assertion.
\qed\end{proof}
\end{theopargself}
For the proof of the next lemma, let us recall the definition of the Hausdorff (outer) measure on $\R^d$.
Let $A\subset \R^d$ and $s>0$.
For $\dl>0$, define
\[
  \sH^s_\dl(A)=\inf\left\{\sum_{j=1}^\infty v_s\left(\frac{\diam C_j}2\right)^s\;\vrule\;
  A\subset \bigcup_{j=1}^\infty C_j,\ \diam C_j\le\dl\right\},
\]
where $v_s=\pi^{s/2}/\Gm(s/2+1)$.
Then, the $s$-dimensional Hausdorff measure of $A$, denoted by $\sH^s(A)$, is defined as $\sH^s(A)=\lim_{\dl\to0}\sH^s_\dl(A)$.
\begin{lemma}\label{lem:RN2}
Suppose that $d\ge2$ and $\kp$ is a positive Radon measure on $\R^d$.
Then,
\begin{equation}\label{eq:RN2}
\lim_{r\searrow0}\frac{\kp(\bar B(x,r))}{r^{d-2}}=0
\quad\text{for $\Cp^{1,2}$-q.e.\,$x\in\R^d$.}
\end{equation}
\end{lemma}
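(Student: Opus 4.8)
The plan is to show that the ``bad set''
\[
A=\Bigl\{x\in\R^d\ \Big|\ \limsup_{r\searrow0}\kp(\bar B(x,r))/r^{d-2}>0\Bigr\}
\]
is $\Cp^{1,2}$-null; since the density quotient is nonnegative, this is equivalent to \Eq{RN2} (on $A^c$ the $\limsup$ vanishes, hence so does the full limit). Writing $A=\bigcup_{m\in\N}A_{1/m}$ with $A_\lambda=\{x\mid \limsup_{r\searrow0}\kp(\bar B(x,r))/r^{d-2}>\lambda\}$ and using the countable subadditivity of $\Cp^{1,2}$, it suffices to prove $\Cp^{1,2}(A_\lambda)=0$ for each fixed $\lambda>0$.

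First I would bound the $(d-2)$-dimensional Hausdorff measure of $A_\lambda$ by a Frostman-type covering argument. Fix $n\in\N$ and $\dl\in(0,1)$. By definition, for every $x\in A_\lambda$ there are arbitrarily small $r\in(0,\dl)$ with $\kp(\bar B(x,r))>\lambda r^{d-2}$, so these balls cover $A_\lambda\cap B(0,n)$. The basic $5r$-covering theorem (e.g.\ \cite[Section~1.5]{EG}) extracts a countable \emph{disjoint} subfamily $\{\bar B(x_j,r_j)\}_j$ with $A_\lambda\cap B(0,n)\subset\bigcup_j\bar B(x_j,5r_j)$, whence
\[
\sH^{d-2}_{10\dl}(A_\lambda\cap B(0,n))\le v_{d-2}\sum_j(5r_j)^{d-2}\le\frac{5^{d-2}v_{d-2}}{\lambda}\sum_j\kp(\bar B(x_j,r_j))\le\frac{5^{d-2}v_{d-2}}{\lambda}\,\kp(B(0,n+1)),
\]
the last steps using $r_j^{d-2}<\kp(\bar B(x_j,r_j))/\lambda$, disjointness, and $\bar B(x_j,r_j)\subset B(0,n+1)$. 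Letting $\dl\searrow0$ and recalling that $\kp(B(0,n+1))<\infty$ because $\kp$ is Radon, I obtain $\sH^{d-2}(A_\lambda\cap B(0,n))<\infty$.

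The decisive step is to upgrade finite Hausdorff measure to vanishing capacity. Here I would invoke the classical comparison between $\Cp^{1,2}$ (equivalently the Bessel capacity $B_{1,2}$, which shares the same null sets) and Hausdorff measure: in the range $0<\alpha p\le d$ one has $\sH^{d-\alpha p}(E)<\infty\Rightarrow B_{\alpha,p}(E)=0$, which is the critical case $\alpha=1$, $p=2$ of \cite[Theorem~5.1.9]{AH}. Thus $\Cp^{1,2}(A_\lambda\cap B(0,n))=0$; taking the union over $n$ gives $\Cp^{1,2}(A_\lambda)=0$, and the union over $\lambda=1/m$ then yields $\Cp^{1,2}(A)=0$. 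For $d=2$ the exponent is $0$ and $\sH^0$ is the counting measure, so the argument degenerates to the statement that the at most countable set of atoms of $\kp$ is $\Cp^{1,2}$-null, consistent with the same theorem. I expect the genuine obstacle to be precisely this last, potential-theoretic input: the covering estimate is routine, so the proof stands or falls on the Hausdorff-measure-to-capacity comparison, together with the bookkeeping verification that the paper's Sobolev capacity $\Cp^{1,2}$ has the same null sets as the Bessel capacity used in \cite{AH}.
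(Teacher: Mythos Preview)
Your proof is correct and follows essentially the same route as the paper: localize the bad set, apply the $5r$/Vitali covering argument to bound $\sH^{d-2}$, then invoke the Hausdorff-measure-to-capacity comparison. The only cosmetic differences are that the paper combines the threshold $1/n$ and the spatial cutoff $B(0,n)$ into a single family $A_n$, treats the case $d=2$ separately at the outset (directly noting that the atoms of $\kp$ form an at most countable, hence $\Cp^{1,2}$-null, set), and cites \cite[Section~4.7, Theorem~3]{EG} for the implication $\sH^{d-2}(E)<\infty\Rightarrow\Cp^{1,2}(E)=0$, which is stated directly for the Sobolev capacity and thus spares you the Bessel-capacity bookkeeping you flagged.
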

\begin{proof}
When $d=2$, \Eq{RN2} is equivalent to the statement that the set $\{x\in\R^2\mid \kp(\{x\})>0\}$ is $\Cp^{1,2}$-null, which is true because the cardinality of this set is at most countable.

We suppose $d\ge3$.
Let $n\in\N$ and set
\[
A_n=\biggl\{x\in\R^d\biggm|\limsup_{r\searrow0}\frac{\kp(\bar B(x,r))}{r^{d-2}}>\frac1n\biggr\}\cap B(0,n).
\]
For $\dl>0$, set
\[
  \sG_\dl=\biggl\{B\biggm| B=\bar B(x,r),\ x\in A_n,\ 0<r<\dl,\ B\subset B(0,n),\ \frac{\kp(B)}{r^{d-2}}>\frac1n\biggr\}.
\]
Then, for each $x\in A_n$, $\inf\{r\mid \bar B(x,r)\in\sG_\dl\}=0$.
From Vitali's covering lemma, there exists an at most countable family $\{\bar B(x_j,r_j)\}_j$ of disjoint balls in $\sG_\dl$ such that $(A_n\subset\nobreak)\bigcup_{B\in\sG_\dl}B\subset \bigcup_j \bar B(x_j,5r_j)$.
Then,
\begin{align*}
\sH^{d-2}_{10\dl}(A_n)
\le\sum_j v_{d-2}(5r_j)^{d-2}
\le v_{d-2}5^{d-2}\sum_j n\kp(\bar B(x_j,r_j))
\le v_{d-2}5^{d-2}n\kp(B(0,n)).
\end{align*}
Letting $\dl\to0$, we obtain that $\sH^{d-2}(A_n)\le v_{d-2}5^{d-2}n\kp(B(0,n))<\infty$.
From Theorem~3 in \cite[Section~4.7]{EG}, $\Cp^{1,2}(A_n)=0$. (Here, we used the relation $d>2$.)
Therefore, $\Cp^{1,2}\left(\bigcup_{n=1}^\infty A_n\right)=0$, which implies \Eq{RN2}.
\qed\end{proof}

\begin{proposition}\label{prop:charge}
Suppose that condition~{\rm(U')$_d$} holds for some $d\in\N$. 
Then, for $\bfh\in\cH^d$ in {\rm(U')$_d$}, the measure $\bfh_*\mu$ does not concentrate on $\Cp^{1,2}$-null set.
More precisely stated, if $B\in\cB(\R^d)$ satisfies $\Cp^{1,2}(B)=0$, then $(\bfh_*\mu)(\R^d\setminus B)>0$.
\end{proposition}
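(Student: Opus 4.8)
The plan is to argue by contradiction, playing the pushforward of the reference measure $\kp_\mu:=\bfh_*\mu$ against the pushforward of the energy measure $\kp_\nu:=\tilde\bfh_*\nu_{\bfh}=\rho\,\sL^d$, using the density lemmas \Lem{RN}, \Lem{RN2}, and \Lem{bound}. Suppose the assertion fails: there is a Borel set $B\subset\R^d$ with $\Cp^{1,2}(B)=0$ and $(\bfh_*\mu)(\R^d\setminus B)=0$, so that $\kp_\mu$ is carried by $B$. The case $d=1$ is vacuous, since on $\R$ every nonempty set has positive $\Cp^{1,2}$, whence $B=\emptyset$; so I assume $d\ge2$. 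By \Prop{equivalence} we also have $\Cp^\rho(B)=0$, and in particular $\sL^d(B)=0$. Since $\nu_{\bfh}(K)>0$ and $\kp_\nu\ll\sL^d$, the measure $\kp_\nu$ is nonzero and, as $\sL^d(B)=0$, is carried by $\R^d\setminus B$. Thus $\bfh$ would send the entire mass of $\mu$ into the Lebesgue-null set $B$ and the entire (positive) mass of $\nu_{\bfh}$ into its complement; the goal is to show that such a separation is impossible.

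First I would extract the relevant densities. Applying \Lem{RN} to $\kp_\mu$ yields a Borel carrier of $\kp_\mu$, necessarily contained in $B$ up to a $\kp_\mu$-null set, on which the lower $d$-dimensional density $\liminf_{r\to0}\kp_\mu(\bar B(y,r))/r^d$ is strictly positive. Applying \Lem{RN2} to $\kp_\mu$ shows that its upper $(d-2)$-dimensional density vanishes off a $\Cp^{1,2}$-null set, so $\kp_\mu$ cannot be too sharply concentrated. On the other side, \Lem{bound} gives $\kp_\nu(\bar B(y,r))=\int_{\bar B(y,r)}\rho\,dx\le c\,r^d$ for $\Cp^{1,2}$-q.e.\ $y$, while \Lem{RN} applied to $\kp_\nu$ gives a strictly positive lower $d$-density $\kp_\nu$-a.e.; combining these, $\kp_\nu$ is comparable to $\sL^d$ in density on a set $P_0\subset\R^d\setminus B$ of positive Lebesgue measure that carries all of $\kp_\nu$.

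The decisive, and hardest, step is to couple $\kp_\mu$ and $\kp_\nu$ quantitatively, and here the harmonicity of $\bfh$ enters. For $\varphi\in C_c^1(\R^d)$, \Lem{harmonic}, \Thm{derivation}, and $\Phi_{\bfh}=I$ (so $\nu_{h_i}=\nu_{\bfh}$ and $\nu_{h_i,h_j}=0$ for $i\ne j$) give the identity $2\cE(\varphi\circ\tilde\bfh)=\int_{\R^d}|\nab\varphi|_{\R^d}^2\,\rho\,dx$, so the $\cE$-energy of a pulled-back test function is governed by $\kp_\nu$, whereas $\|\varphi\circ\tilde\bfh\|_{L^2(K,\mu)}^2=\int_{\R^d}\varphi^2\,d\kp_\mu$ records $\kp_\mu$. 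Testing localized cut-offs at scale $r$ about density points of the two measures, and invoking the elementary bound $\mu(U)\le\Cp(U)$ (valid since $1\in\cF$), produces inequalities linking $\kp_\mu(\bar B(y,r))$ with $\kp_\nu(\bar B(y,Cr))$; via a Vitali covering argument of the type used in the proofs of \Lem{RN} and \Lem{RN2}, these should force $\kp_\mu$ to place positive mass on $P_0\subset\R^d\setminus B$, contradicting that $\kp_\mu$ is carried by $B$. I expect the main obstacle to be exactly this coupling: the symmetrizing measure $\mu$ and the energy measure $\nu_{\bfh}$ are mutually singular on $K$, so the $L^2(K,\mu)$-term in the test-function estimate cannot be absorbed into $\kp_\nu$ directly, and controlling it through the density lemmas is where the real work lies. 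Once the separation is excluded, $(\bfh_*\mu)(\R^d\setminus B)>0$ follows, which is the assertion; the same scheme in fact suggests the stronger conclusion that $\bfh_*\mu$ charges no $\Cp^{1,2}$-null set.
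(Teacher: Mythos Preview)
Your approach has a genuine gap at exactly the step you yourself flag as the obstacle: the coupling between $\kp_\mu=\bfh_*\mu$ and $\kp_\nu=\tilde\bfh_*\nu_{\bfh}$. Testing a cutoff $\varphi$ at scale $r$ and combining $\mu(U)\le\Cp(U)$ with the identity $2\cE(\varphi\circ\tilde\bfh)=\int_{\R^d}|\nabla\varphi|_{\R^d}^2\rho\,dx$ yields at best
\[
\kp_\mu(B(y,r))\;\le\;\frac{C}{r^2}\,\kp_\nu(B(y,2r))+\kp_\mu(B(y,2r)),
\]
and the $\kp_\mu$-term on the right cannot be absorbed or iterated away. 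More fundamentally, Lemmas~\ref{lem:bound} and~\ref{lem:RN2} give conclusions only for $\Cp^{1,2}$-q.e.\ points, while your carrier $B$ is precisely $\Cp^{1,2}$-null, so they say nothing at the points where $\kp_\mu$ lives; a Vitali covering does not bridge this. You would need a new structural input linking $\mu$ and $\nu_{\bfh}$, and none of the cited lemmas supplies one.

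The paper's argument is entirely different and avoids any quantitative comparison of $\kp_\mu$ and $\kp_\nu$. It passes to a finite measure $\mu_\zt=\zt\mu$ with $0<\zt\le1$, $\zt\in L^1(K,\mu)$, and uses that $(\cE,\cF)$ is \emph{closable} on $L^2(K,\mu_\zt)$. Assuming $\bfh_*\mu_\zt$ is carried by a set $B$ with $\Cp^\rho(B)=0$ (equivalent by \Prop{equivalence}), one takes $f_k\in C_c^1(\R^d)$ close to equilibrium potentials of compact exhaustions $B_k\subset B$, so that $f_k\approx1$ on $B_k$ while $\int_{\R^d}(|\nabla f_k|_{\R^d}^2+f_k^2)(\rho+1)\,dx\to0$, and sets $g_k=1-f_k$. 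For fixed $u\in C_c^1(\R^d)$ with $u(0)=0$ and $\int_{\R^d}|\nabla u|_{\R^d}^2\rho\,dx\ne0$, the sequence $u(\bfh)g_k(\bfh)\in\cF$ is $\cE$-Cauchy and tends to $0$ in $L^2(K,\mu_\zt)$ (since $g_k\to0$ $(\bfh_*\mu_\zt)$-a.e.), yet $\cE(u(\bfh)g_k(\bfh))\to\tfrac12\int_{\R^d}|\nabla u|_{\R^d}^2\rho\,dx\ne0$, contradicting closability. Closability is the structural link between the $L^2(\mu)$-norm and $\cE$ that replaces the pointwise coupling you attempt.
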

\begin{proof}
Although the claim might be deduced from the results of \cite{FST91}, we provide a direct proof.
Take $\zt\in L^1(K,\mu)$ such that $0<\zt\le1$ on $K$.
We denote the measure $\zt\cdot\mu$ by $\mu_\zt$.
Then, $\mu_\zt$ is a finite measure on $K$ and $(\cE,\cF)$ is closable in $L^2(K,\mu_\zt)$ (cf.~\cite[Corollary~4.6.1]{FOT}, Eq.~(6.2.22) in \cite{FOT} and the description around there.)
From \Prop{equivalence}, it is sufficient to prove that $(\bfh_*\mu_\zt)(\R^d\setminus B)>0$ for any Borel subset $B$ of $\R^d$ with $\Cp^\rho(B)=0$.
Assume that this claim is false. Then, there exists $B\in\cB(\R^d)$ such that $\Cp^\rho(B)=0$ and $(\bfh_*\mu_\zt)(\R^d\setminus B)=0$.
Since $(\bfh_*\mu_\zt)(\R^d)<\infty$, we can take a sequence of compact sets $\{B_k\}_{k=1}^\infty$ such that $B_1\subset B_2\subset\dots\subset B$ and $(\bfh_*\mu_\zt)(\R^d\setminus B_k)\searrow0$ as $k\to\infty$.
Note that $\Cp^\rho(B_k)=0$ for all $k$.
From \cite[Lemma~2.2.7]{FOT}, there exists $f_k\in C_c^1(\R^d)$, $k=1,2,\dots$, such that 
\[
\begin{array}{l}
\mbox{$1\le f_k\le 1+1/k$ on $B_k$, $0\le f_k\le 1+1/k$ on $\R^d$}\\
\mbox{(in particular, $\lim_{k\to\infty}f_k(x)=1$ for $\bfh_*\mu_\zt$-a.e.\,$x$ on $B$),}
\end{array}
\]
and
\[
  \lim_{k\to\infty}\int_{\R^d}\bigl(|\nab f_k|_{\R^d}^2+f_k^2\bigr)(\rho+1)\,dx= 0.
\]
By taking a subsequence if necessary, we may also assume that $\lim_{k\to\infty}f_k(x)=0$ for $\sL^d$-a.e.\,$x$ on $\R^d$.
Define $g_k=1-f_k\in C^1(\R^d)$ for $k\in\N$.

Now, fix $u\in C_c^1(\R^d)$ such that $u(0,\dots,0)=0$ and $\int_{\R^d}|\nab u|_{\R^d}^2\rho\,dx\ne0$.
Then, for each $k\in\N$, $u g_k\in C_c^1(\R^d)$, $u(\bfh)g_k(\bfh)\in\cF$, and the following estimates hold:
\begin{align*}
&\cE(u(\bfh)g_k(\bfh)-u(\bfh)g_l(\bfh))\\
&=\frac12\sum_{i,j=1}^d
\int_K\biggl(\frac\partial{\partial x_i}\{u(g_k-g_l)\}\biggr)(\bfh)
\biggl(\frac\partial{\partial x_i}\{u(g_k-g_l)\}\biggr)(\bfh)
\,d\nu_{h_i,h_j}\\
&=\frac12\int_{\R^d}|\nab(u(g_k-g_l))|_{\R^d}^2\rho\,dx
\quad\text{(from (U')$_d$ (b) and (c))}\\
&\le \int_{\R^d}|\nab u|_{\R^d}^2(g_k-g_l)^2\rho\,dx
+\int_{\R^d}u^2|\nab(g_k-g_l)|_{\R^d}^2\rho\,dx\\
&\le \|\nab u\|_{L^\infty(\R^d,dx)}^2\int_{\R^d}(f_k-f_l)^2\rho\,dx
+\|u\|_{L^\infty(\R^d,dx)}^2\int_{\R^d}|\nab(f_k-f_l)|_{\R^d}^2\rho\,dx\\
&\to 0 \qquad(k,l\to\infty)
\end{align*}
and
\begin{align*}
\int_K \{u(\bfh)g_k(\bfh)\}^2\,d\mu_\zt
&=\int_{\R^d}(ug_k)^2\,d(\bfh_*\mu_\zt)
\le\|u\|_{L^\infty(\R^d,\bfh_*\mu_\zt)}^2\int_B g_k^2\,d(\bfh_*\mu_\zt)\\*
&\qquad\mbox{(since $(\bfh_*\mu_\zt)(\R^d\setminus B)=0$)}\\
&\to 0 \qquad (k\to\infty),
\end{align*}
while 
\begin{align*}
&\cE(u(\bfh)g_k(\bfh))\\
&=\frac12\int_{\R^d}|\nab(ug_k)|_{\R^d}^2\rho\,dx\\
&=\frac12\int_{\R^d}|\nab u|_{\R^d}^2g_k^2\rho\,dx
+\int_{\R^d}ug_k(\nab u,\nab g_k)_{\R^d}\rho\,dx
+\frac12\int_{\R^d}u^2|\nab g_k|_{\R^d}^2\rho\,dx\\
&=\frac12\int_{\R^d}|\nab u|_{\R^d}^2g_k^2\rho\,dx
+\int_{\R^d}ug_k(\nab u,\nab f_k)_{\R^d}\rho\,dx
+\frac12\int_{\R^d}u^2|\nab f_k|_{\R^d}^2\rho\,dx\\
&\to \frac12\int_{\R^d}|\nab u|_{\R^d}^2\rho\,dx+0+0\ne0\qquad(k\to\infty).
\end{align*}
These estimates contradict the closability of $(\cE,\cF)$ on $L^2(K,\mu_\zt)$.
\qed\end{proof}
\begin{theopargself}\begin{proof}[of \Thm{general}]
(i) Since condition~(U)$_{d'}$ implies (U)$_{d}$ if $d'>d$, it suffices to deduce a contradiction by assuming $d=2$.
Take $\bfh=(h_1,h_2)\in \cH^2$ in condition~(U)$_2$. 
Then, there exists $x_0\in \tilde\bfh(K)\subset \R^2$ such that 
\begin{enumerate}[(a)]
\item $\sup_{r>0}\mint_{B(x_0,r)}\rho(x)\,dx=:b<\infty$;
\item $(\bfh_*\mu)(\{x_0\})=0$.
\end{enumerate}
This is because $\sL^2(\tilde\bfh(K))>0$, the set of $x_0\in\tilde\bfh(K)$ that does not satisfy (a) is an $\sL^2$-null set from the Hardy--Littlewood maximal inequality, and the points in $\tilde\bfh(K)$ that do not satisfy (b) are at most countable.
By considering $\bfh(\cdot)-x_0$ instead of $\bfh$, we may assume $x_0=0$ without loss of generality.

Let $\eps>0$.
Take a smooth function $g$ on $[0,\infty)$ such that
\[
g(t)=\begin{cases}
 1& t\in[0,e^{-2/\eps}],\\
 {-3\eps\log t}-4 & t\in[e^{-14/(9\eps)},e^{-13/(9\eps)}],\\
 0 & t\in[e^{-1/\eps},\infty),
\end{cases}
\]
and $-3\eps/t\le g'(t)\le0$ for all $t>0$.
We write $|\tilde \bfh|(x)=\sqrt{\tilde h_1(x)^2+\tilde h_2(x)^2}$ and define $f(x):=g\bigl(|\tilde\bfh|(x)\bigr)$.
Then, $f$ is quasi-continuous and $f=1$ on $\tilde\bfh^{-1}(\{0\})$.
We have
\begin{align*}
2\cE(f)&=\nu_f(K)\\
&=\int_K g'(|\tilde\bfh|)^2\left(\frac{\tilde h_1}{|\tilde\bfh|}\right)^2d\nu_{h_1}
+\int_K g'(|\tilde\bfh|)^2\left(\frac{\tilde h_2}{|\tilde\bfh|}\right)^2d\nu_{h_2}
\quad\mbox{(since $\nu_{h_1,h_2}=0$)}\\
&=\int_K g'(|\tilde\bfh|)^2d\nu_{\bfh}
\qquad\mbox{(since $\nu_{h_1}=\nu_{h_2}=\nu_{\bfh}$)}\\
&=\int_0^\infty g'(r)^2\,(|\tilde\bfh|_*\nu_{\bfh})(dr)
\le \int_{e^{-2/\eps}}^{e^{-1/\eps}}9\eps^2 r^{-2}\,(|\tilde\bfh|_*\nu_{\bfh})(dr).
\end{align*}
Define
$
\Theta(r)=(|\tilde\bfh|_*\nu_{\bfh})\bigl([0,r]\bigr)
$
for $r>0$. Then, 
\begin{align*}
0\le \Theta(r)=\nu_{\bfh}(\{|\tilde\bfh|\le r\})
=\int_{\bar B(0,r)}\rho(x)\,dx
\le b\sL^2\left( B(0,r)\right)=b\pi r^2
\end{align*}
and
\begin{align*}
\frac{2}{9}\cE(f)
&\le \int_{e^{-2/\eps}}^{e^{-1/\eps}} \eps^2 r^{-2}\,d\Theta(r)
=\eps^2\biggl(\left[r^{-2}\Theta(r)\right]_{e^{-2/\eps}}^{e^{-1/\eps}}+\int_{e^{-2/\eps}}^{e^{-1/\eps}} 2r^{-3}\Theta(r)\,dr\biggr)\\
&\le \eps^2\biggl(b\pi +2b\pi\int_{e^{-2/\eps}}^{e^{-1/\eps}} r^{-1}dr\biggr)
= b\pi(\eps^2+2\eps)
\to 0\quad (\eps\to0).
\end{align*}
Also, we have
\[
\int_K f^2\,d\mu
\le \mu(\{|\bfh|<e^{-1/\eps}\})
=(\bfh_*\mu)\bigl(B(0,e^{-1/\eps})\bigr)\to
(\bfh_*\mu)(\{0\})=0 \quad(\eps\to0).
\]
Therefore, $\Cp\bigl(\tilde\bfh^{-1}(\{0\})\bigr)=0$ from \cite[Theorem~2.1.5]{FOT}.
This contradicts the assumption.

(ii)
Since inequality $d\le d_\mathrm{s}$ is evident if $d\le2$, we may assume $d\ge3$.
Take $\bfh=(h_1,\dots,h_d)\in\cH^d$ in condition~(U')$_d$.
First, we will prove that there exists $x_0\in \tilde\bfh(K)$ such that 
\begin{enumerate}[(a)]
\item $\sup_{r>0}\mint_{B(x_0,r)}\rho(y)\,dy=:b<\infty$;
\item $(\bfh_*\mu)(\bar B(x_0,r))=o(r^{d-2})$ as $r\to0$;
\item there exist $a>0$ and $r_0>0$ such that $(\bfh_*\mu)(\bar B(x_0,r))\ge ar^d$ for every $r\in(0,r_0]$.
\end{enumerate}
Indeed, the set of $x_0\in\tilde\bfh(K)$ that fails to satisfy both (a) and (b) is $\Cp^{1,2}$-null from Lemmas~\ref{lem:bound} and \ref{lem:RN2}.
The set of $x_0\in\tilde\bfh(K)$ that does not satisfy (c) is $\bfh_*\mu$-null from \Lem{RN}.
Therefore, \Prop{charge} assures the existence of $x_0$ that satisfies (a), (b), and (c).
By considering $\bfh(\cdot)-x_0$ instead of $\bfh$, we may assume $x_0=0$ without loss of generality.

It is sufficient to deduce the contradiction by assuming $d> d_\mathrm{s}$.
We write $|\tilde \bfh(x)|=\sqrt{\tilde h_1(x)^2+\cdots+ \tilde h_d(x)^2}$ for $x\in K$.
Take a smooth function $g$ on $[0,\infty)$ such that
\[
g(t)=\begin{cases}
 1& t\in[0,1],\\
 t^{2-d} & t\in[2,3],\\
 0 & t\in[4,\infty),
\end{cases}
\]
and $-c_{3.3}\le g'(t)\le0$ for all $t>0$, where $c_{3.3}$ is a positive constant.
For $\dl\in(0,r_0]$, define $g_\dl(t)=\dl^{1-(d/2)}g(t/\dl)$ for $t\ge0$, and $f_\dl(x)=g_\dl(|\tilde\bfh(x)|)$ for $x\in K$.
Then, as in the calculation in the proof of (i), we have
\begin{align*}
2\cE(f_\dl)
&=\int_K g_\dl'(|\tilde\bfh|)^2\,d\nu_{\bfh}
=\int_{\dl}^{4\dl}g_\dl'(r)^2\,\bigl(|\tilde\bfh|_*\nu_{\bfh}\bigr)(dr)\\
&\le c_{3.3}^2 \dl^{-d}\bigl(|\tilde\bfh|_*\nu_{\bfh}\bigr)\bigl([\dl,4\dl]\bigr)
\le c_{3.3}^2\dl^{-d}\bigl(\tilde\bfh_*\nu_{\bfh}\bigr)(\bar B(0,4\dl))\\
&\le c_{3.3}^2 \dl^{-d}b v_d (4\dl)^d
=O(1) \quad (\dl\to0),
\end{align*}
where $v_d=\sL^d(\bar B(0,1))$, and
\begin{align*}
\|f_\dl\|_{L^2(K,\mu)}^2
&=\int_0^{4\dl}g_\dl(r)^2\,\bigl(|\bfh|_*\mu\bigr)(dr)\\
&\le \dl^{2-d}(\bfh_*\mu)(\bar B(0,4\dl))
= \dl^{2-d}o((4\dl)^{d-2})
=o(1) \quad (\dl\to0).
\end{align*}
On the other hand, we have
\begin{align*}
\|f_\dl\|_{L^{2d_\mathrm{s}/(d_\mathrm{s}-2)}(K,\mu)}^2
&\ge\biggl(\int_0^{\dl}g_\dl(r)^{2d_\mathrm{s}/(d_\mathrm{s}-2)}\,(|\bfh|_*\mu)(dr)\biggr)^{(d_\mathrm{s}-2)/{d_\mathrm{s}}}\\
&\ge \dl^{2-d}a\dl^{d(d_\mathrm{s}-2)/d_\mathrm{s}}
=a\dl^{-2(d-d_\mathrm{s})/d_\mathrm{s}}
\to+\infty\quad (\dl\to0).\end{align*}
Therefore, the Sobolev inequality~\Eq{sobolev} does not hold, which is a contradiction.
\qed\end{proof}\end{theopargself}
\section{Estimation in the case of self-similar sets}
In this section, we consider self-similar Dirichlet forms on self-similar sets such as p.\,c.\,f.~fractals and Sierpinski carpets and show that \Str{str} can be realized to deduce the estimates of the martingale dimensions. 
\subsection{Self-similar Dirichlet forms on self-similar sets}
We follow \cite{Ki,Hi05} to set up a framework.
Let $K$ be a compact and metrizable topological space, and $S$, a finite set with $\# S\ge2$.
We suppose that we are given continuous injective maps $\psi_i\colon K\to K$ for $i\in S$.
Set $\Sg=S^\N$. For $i\in S$, we define a shift operator $\sg_i\colon \Sg\to\Sg$ by $\sg_i(\om_1\om_2\cdots)=i\om_1\om_2\cdots$.
Suppose that there exists a continuous surjective map $\pi\colon \Sg\to K$ such that $\psi_i\circ \pi=\pi\circ\sg_i$ for every $i\in S$.
We term $(K,S,\{\psi_i\}_{i\in S})$ a self-similar structure.

We also define $W_0=\{\emptyset\}$, $W_m=S^m$ for $m\in \N$, and
denote $\bigcup_{m\ge0}W_m$ by $W_*$.
For $w=w_1w_2\cdots w_m\in W_m$, we define $\psi_w=\psi_{w_1}\circ\psi_{w_2}\circ\cdots\circ\psi_{w_m}$
and $K_w=\psi_w(K)$.
By convention, $\psi_{\emptyset}$ is the identity map from $K$ to $K$.
For $w\in W_*$ and a function $f$ on $K$, $\psi_w^*f$ denotes the pullback of $f$ by $\psi_w$, that is, $\psi_w^*f=f\circ \psi_w$.
\begin{definition}\label{def:AA'}
For $w=w_1w_2\cdots w_m\in W_m$ and $w'=w'_1w'_2\cdots w'_{m'}\in W_{m'}$, $ww'$ (or $w\cdot w'$) denotes $w_1w_2\cdots w_mw'_1w'_2\cdots w'_{m'}\in W_{m+m'}$.
For $A\subset W_m$ and $A'\subset W_{m'}$, $A\cdot A'$ denotes $\{ww'\in W_{m+m'}\mid w\in A,\ w'\in A'\}$.
If $A=\{w\}$, we denote $A\cdot A'$ by $w\cdot A'$.
\end{definition}
  Take $\te=\{\te_i\}_{i\in S}\in \R^S$ such that $\te_i>0$ for every $i\in S$ and $\sum_{i\in S}\te_i=1$.
  We set $\te_w=\te_{w_1}\te_{w_2}\cdots \te_{w_m}$ for $w=w_1w_2\cdots w_m\in W_m$, and $\te_\emptyset=1$.  
  Let $\lm_\te$ denote the Bernoulli measure on $\Sg$ with weight $\te$.
  That is, $\lm_\te$ is a unique Borel probability measure such that $\lm_\te(\Sg_w)=\te_w$ for every $w\in W_*$.
Define a Borel measure $\mu_\te$ on $K$ by $\mu_\te=\pi_*\lm_\te$, that is, $\mu_\te(B)=\lm_\te(\pi^{-1}(B))$ for $B\in\cB(K)$.
  It is called the self-similar measure on $K$ with weight $\te$.
  
We impose the following assumption.
\begin{enumerate}[(A0)]
\item[(A1)] For every $x\in K$, $\pi^{-1}(\{x\})$ is a finite set.
\end{enumerate}
Then, according to Theorem~1.4.5 and Lemma~1.4.7 in \cite{Ki}, $\mu_\te(K^b)=0$ with $K^b=\{x\in K\mid \#(\pi^{-1}(\{x\}))>1\}$, and 
  $\mu_\te(K_w)=\te_w$ for all $w\in W_*$.
For any $x\in K\setminus K^b$, there exists a unique element $\om=\om_1\om_2\cdots\in \Sg$ such that $\pi(\om)=x$.
We denote $\om_1\om_2\cdots\om_m\in W_m$ by $[x]_m$ for each $m\in\N$, and define $[x]_0=\emptyset$.
The sequence $\{K_{[x]_m}\}_{m=0}^\infty$ is a fundamental system of neighborhoods of $x$ from \cite[Proposition~1.3.6]{Ki}.

Fix a self-similar measure $\mu$ on $K$.
\begin{definition}\label{def:Psiw}
For $w\in W_*$ and $f\in L^2(K,\mu)$, we define $\Psi_w f\in L^2(K,\mu)$ by 
  \[
    \Psi_w f(x)=\begin{cases}
    f(\psi_w^{-1}(x)) & \mbox{if }x\in K_w, \\
    0 & \mbox{otherwise.}
    \end{cases}
  \]
\end{definition}
  Since $\mu(K^b)=0$, $\psi_{w'}^* \Psi_w f:=(\Psi_w f)\circ\psi_{w'} =0 $ $\mu$-a.e.\ if $w$ and $w'$ are different elements of some $W_m$.

We set 
$
\cP=\bigcup_{m=1}^\infty \sg^m\left(\pi^{-1}\left(\bigcup_{i,j\in S,\,i\ne j}(K_i\cap K_j)\right)\right)$ and $V_0=\pi(\cP)$,
where $\sg^m\colon\Sigma\to\Sigma$ is a shift operator that is defined by $\sg^m(\om_1\om_2\cdots)=\om_{m+1}\om_{m+2}\cdots$.
The set $\cP$ is referred to as the post-critical set.

We consider a regular Dirichlet form $(\cE,\cF)$ defined on $L^2(K,\mu)$.
Take a closed subset $K^\partial$ of $K$ such that $V_0\subset K^\partial\subsetneq K$. 
In concrete examples discussed later, we always take $V_0$ as $K^\partial$. 
Recall $\cF_0$ and $\cFD$ that were introduced in \Eq{cF0cFD}.
We assume the following.
  \begin{enumerate}[(A0)]
  \item[(A2)] $1\in\cF$ and $\cE(1)=0$.
  \item[(A3)] (Self-similarity) $\psi_i^* f\in\cF$ for every $f\in \cF$ and $i\in S$, and there exists $\bfr=\{r_i\}_{i\in S}$ with $r_i>0$ for all $i\in S$ such that
  \[
    \cE(f)=\sum_{i\in S}\frac1{r_i} \cE(\psi_i^* f),
    \quad f\in \cF.
 \]
  \item[(A4)] (Spectral gap) There exists a constant $c_{4.1}>0$ such that
 \begin{equation}\label{eq:Npoincare}
   \left\|f-\int_K f\,d\mu \right\|_{L^2(K,\mu)}^2
   \le c_{4.1} \cE(f)
   \quad \mbox{for all }f\in\cF.
 \end{equation}
  \item[(A5)] $\Psi_i f\in \cF_0$ for any $f\in\cF_0$ and $i\in S\subset W_*$.
  \item[(A6)] For any $f\in\cF$ and $w\in W_*$, there exists $\hat f\in\cF$ such that $\psi_w^*\hat f=f$.
\end{enumerate}
We remark that, for any $f,g\in \cF$ and $m\in\N$, it holds that 
\begin{equation}\label{eq:selfsimilar2}
  \cE(f,g)=\sum_{w\in W_m}\frac1{r_w} \cE(\psi_w^* f,\psi_w^* g)
\end{equation}
from the polarization argument and repeated use of (A3), where $r_w$ denotes $r_{w_1}r_{w_2}\cdots r_{w_m}$ for $w=w_1w_2\cdots w_m$ and $r_{\emptyset}=1$.
The Dirichlet form $(\cE,\cF)$ is inevitably strong local, e.g., from \cite[Lemma~3.12]{Hi05} and (A2).
Typical examples are self-similar Dirichlet forms on post-critically finite self-similar sets and Sierpinski carpets, which we discuss in Sections~4.2 and 4.3.
Readers who are not familiar with these objects are recommended to read the definitions described in these subsections before proceeding to the subsequent arguments.   

The following is a basic property of harmonic functions.
\begin{lemma}\label{lem:basicharmonic}
For any $h\in \cH$ and $w\in W_*$, $\psi_w^* h$ belongs to $\cH$.
\end{lemma}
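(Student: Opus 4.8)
The plan is to first handle the one-letter case $w\in S=W_1$ and then bootstrap to arbitrary $w\in W_*$ by induction on the word length. Indeed, since $\psi_{w_1\cdots w_m}=\psi_{w_1}\circ\cdots\circ\psi_{w_m}$, one has $\psi_{w_1\cdots w_m}^* h=\psi_{w_m}^*(\cdots(\psi_{w_1}^* h)\cdots)$, so once I know that $\psi_i^* h\in\cH$ for every $i\in S$ whenever $h\in\cH$, repeated application will yield $\psi_w^* h\in\cH$ for all $w$. This reduction is convenient because the relevant hypothesis (A5) is stated only for single letters.

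For the one-letter step, fix $i\in S$ and $h\in\cH$. By (A3), $\psi_i^* h\in\cF$, so by the equivalence (i)$\Leftrightarrow$(iii) in \Lem{harmonic} it will be enough to verify $\cE(\psi_i^* h,g)=0$ for every $g\in\cF_0$. The idea I would use is to test the harmonicity of $h$ against the lift $\Psi_i g$ rather than against $g$ directly. By (A5) we have $\Psi_i g\in\cF_0$, whence $\cE(h,\Psi_i g)=0$ since $h\in\cH$. On the other hand, expanding the left-hand side by the self-similarity identity \eqref{eq:selfsimilar2} with $m=1$ gives
\[
\cE(h,\Psi_i g)=\sum_{j\in S}\frac1{r_j}\,\cE(\psi_j^* h,\psi_j^*\Psi_i g).
\]
Here I invoke the pullback behaviour recorded just after \Defn{Psiw}: $\psi_j^*\Psi_i g=0$ $\mu$-a.e.\ for $j\ne i$, while $\psi_i^*\Psi_i g=g$ $\mu$-a.e.\ by injectivity of $\psi_i$. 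Hence every term with $j\ne i$ vanishes and the sum collapses to $r_i^{-1}\cE(\psi_i^* h,g)$. Combining this with $\cE(h,\Psi_i g)=0$ and $r_i>0$ gives $\cE(\psi_i^* h,g)=0$, completing the one-letter step.

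The argument is largely routine once this testing device is in place; the points that will need a little care are checking that all functions entering \eqref{eq:selfsimilar2} genuinely lie in $\cF$ (which follows from (A3) applied to $h$ and to $\Psi_i g\in\cF$) and the bookkeeping of the two pullback identities for $\Psi_i g$. I expect the only genuinely conceptual step---and hence the main obstacle---to be recognizing that $\Psi_i g$ is the correct test function, so that self-similarity reduces the sum to the single term matching $\psi_i^* h$; the spectral-gap and locality hypotheses (A2), (A4), (A6) play no role in this lemma.
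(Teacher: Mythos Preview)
Your proof is correct and uses essentially the same idea as the paper: test the harmonicity of $h$ against the lifted function $\Psi\,g$, expand via the self-similarity identity \eqref{eq:selfsimilar2}, and observe that all off-diagonal terms vanish because $\psi_{w'}^*\Psi_w g=0$ $\mu$-a.e.\ for $w'\ne w$. The only difference is organizational: you handle a single letter $i\in S$ first and then iterate along the word, whereas the paper works directly with a general $w\in W_m$, testing against $\Psi_w g$ and implicitly using that (A5) iterates (since $\Psi_w=\Psi_{w_1}\circ\cdots\circ\Psi_{w_m}$) to conclude $\Psi_w g\in\cF_0$. Your version is arguably a touch more careful about the fact that (A5) is stated only for $i\in S$, but the two arguments are otherwise the same.
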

\begin{proof}
Take any $g\in \cF_0$. From condition~(A5), $\Psi_w g\in\cF_0$.
Then, by \Lem{harmonic} and \Eq{selfsimilar2},
\begin{align*}
0=\cE(h,\Psi_w g)
=\sum_{w'\in W_m}r^{-m}\cE(\psi_{w'}^*h,\psi_{w'}^*\Psi_w g)
=r^{-m}\cE(\psi_w^*h,g).
\end{align*}
Therefore, $\cE(\psi_w^*h,g)=0$.
This implies that $\psi_w^* h\in\cH$.
\qed\end{proof}
The energy measures associated with $(\cE,\cF)$ have the following properties.
\begin{lemma}[cf.~{\cite[Lemma~3.11]{Hi05}}]\label{lem:energymeas}
Let $f\in\cF$. Then, the following hold.
\begin{enumerate}
\item Let $w\in W_*$. For any exceptional set $N$ of $K$, $\psi_w^{-1}(N)$ is also an exceptional set.
  In particular, if we denote a quasi-continuous modification of $f\in \cF$ by $\tilde f$, then $\psi_w^* \tilde f$ is a quasi-continuous modification of $\psi_w^* f$.
\item For $m\in\Z_+$ and a Borel subset $B$ of $K$,
\[
  \nu_f(B)=\sum_{w\in W_m}\frac1{r_w}\nu_{\psi_w^*f}(\psi_w^{-1}(B)).
\]
\end{enumerate}
\end{lemma}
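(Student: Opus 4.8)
The plan is to prove both parts by transplanting test functions along the maps $\psi_w$ and invoking the self-similarity identity \eqref{eq:selfsimilar2}. The two elementary tools I will use repeatedly are, for $g\in\cF$ and $w\in W_*$, that $\psi_w^*g=g\circ\psi_w\in\cF$ (iterate (A3)) together with the scaling estimates
\[
\cE(\psi_w^*g)\le r_w\,\cE(g),
\qquad
\int_K(\psi_w^*g)^2\,d\mu\le \frac1{\te_w}\int_{K_w}g^2\,d\mu .
\]
The first follows from \eqref{eq:selfsimilar2} by discarding all but the $w$-th term, and the second from the self-similarity of $\mu$, which (using $\mu(K^b)=0$) gives $(\psi_w)_*\mu=\te_w^{-1}\mu|_{K_w}$; the same identity shows that $\psi_w^{-1}$ maps open sets to open sets (continuity of $\psi_w$) and $\mu$-null sets to $\mu$-null sets.

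For (i), fix an exceptional $N$ and $\eps>0$, and choose an open $U\supset N$ with $g\in\cF$, $g\ge1$ $\mu$-a.e.\ on $U$, and $\cE_1(g)\le\eps$. Then $\psi_w^{-1}(U)$ is open, contains $\psi_w^{-1}(N)$, and $\psi_w^*g\ge1$ $\mu$-a.e.\ on it (null sets are preserved), so $\psi_w^*g$ is admissible and
\[
\Cp\bigl(\psi_w^{-1}(N)\bigr)\le\Cp\bigl(\psi_w^{-1}(U)\bigr)\le\cE_1(\psi_w^*g)\le C_w\,\cE_1(g)\le C_w\eps,
\qquad C_w:=\max\{r_w,\te_w^{-1}\}.
\]
Letting $\eps\to0$ proves $\Cp(\psi_w^{-1}(N))=0$. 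The same bound applied to an open set $U$ off which a quasi-continuous $\tilde f$ is continuous shows $\Cp(\psi_w^{-1}(U))$ is small and, by continuity of $\psi_w$, that $\psi_w^*\tilde f=\tilde f\circ\psi_w$ is continuous on $K\setminus\psi_w^{-1}(U)$; hence $\psi_w^*\tilde f$ is quasi-continuous, and since it equals $\psi_w^*f$ $\mu$-a.e., it is a quasi-continuous modification of $\psi_w^*f$.

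For (ii), I first take $f\in\cF_b$ and, for $\ph\in\cF\cap C(K)$ (recall $C_c(K)=C(K)$ as $K$ is compact), apply the characterization $\int_K\ph\,d\nu_f=2\cE(f\ph,f)-\cE(\ph,f^2)$, then expand both terms by \eqref{eq:selfsimilar2} using $\psi_w^*(f\ph)=(\psi_w^*f)(\psi_w^*\ph)$ and $\psi_w^*(f^2)=(\psi_w^*f)^2$. Since $\psi_w^*f\in\cF_b$ and $\psi_w^*\ph\in\cF\cap C(K)$, the same characterization identifies each resulting summand with $\int_K(\psi_w^*\ph)\,d\nu_{\psi_w^*f}=\int_K\ph\,d\bigl((\psi_w)_*\nu_{\psi_w^*f}\bigr)$, so
\[
\int_K\ph\,d\nu_f=\int_K\ph\,d\Bigl(\sum_{w\in W_m}\tfrac1{r_w}(\psi_w)_*\nu_{\psi_w^*f}\Bigr).
\]
As $\cF\cap C(K)$ is dense in $C(K)$ by regularity, the two finite Borel measures coincide, and evaluating at $B\in\cB(K)$ gives the formula for bounded $f$ because $(\psi_w)_*\nu_{\psi_w^*f}(B)=\nu_{\psi_w^*f}(\psi_w^{-1}(B))$. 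For general $f\in\cF$ I pass to the limit along the truncations $f_n=((-n)\vee f)\wedge n\in\cF_b$: these converge to $f$ in $\cF$, whence $\psi_w^*f_n\to\psi_w^*f$ in $\cF$ by the scaling estimates, and \eqref{eq:energy} yields convergence of each $\nu_{f_n}(B)$ and $\nu_{\psi_w^*f_n}(\psi_w^{-1}(B))$; as $W_m$ is finite, I may take the limit termwise.

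The only genuinely substantive points are the two scaling estimates and the measure-theoretic bookkeeping they drive in (i): one must ensure that passing to $\psi_w^{-1}$ preserves openness and $\mu$-negligibility, so that both the admissibility ``$\ge1$ $\mu$-a.e.'' and the a.e.\ identity $\psi_w^*\tilde f=\psi_w^*f$ survive transplantation. Part (ii) is then essentially formal, the one thing to verify being that $\psi_w^*\ph$ is a legitimate test function and that truncations converge in $\cF$ so that \eqref{eq:energy} applies.
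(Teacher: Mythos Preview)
Your proof is correct. The paper does not supply its own proof of this lemma; it merely cites \cite[Lemma~3.11]{Hi05}. Your argument is the standard one and is essentially what one finds in that reference: the capacity comparison in~(i) via transplanting a competitor $g$ for $\Cp(U)$ to $\psi_w^*g$ using the scaling estimates $\cE(\psi_w^*g)\le r_w\cE(g)$ and $\|\psi_w^*g\|_{L^2}^2=\te_w^{-1}\int_{K_w}g^2\,d\mu$, and in~(ii) the identification of measures by testing against $\ph\in\cF\cap C(K)$ via the defining identity for $\nu_f$ combined with \eqref{eq:selfsimilar2}, followed by approximation for unbounded $f$. All the bookkeeping points you flag (openness of $\psi_w^{-1}(U)$, preservation of $\mu$-null sets, $\psi_w^*\ph\in\cF\cap C(K)$, convergence of truncations in $\cF$) are handled correctly.
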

For $d\in\N$, $\bff=(f_1,\dots,f_d)\in \cF^d$, and a map $\psi\colon K\to K$, we denote the $\R^d$-valued function $(\psi^*f_1,\dots,\psi^*f_d)$ on $K$ by $\psi^*\bff$.
We also recall the terminology in \Defn{basic}.
\begin{lemma}\label{lem:scale}
Let $d\in\N$, $\bff=(f_1,\dots,f_d)\in\cF^d$, and $w\in W_*$. Take a quasi-continuous nonnegative function $g$ on $K$ such that $g\ge1$ q.e.\ on $K_w$. 
Then, 
\begin{equation}\label{eq:scale}
(\psi_w^*\tilde\bff)_*\nu_{\psi_w^*\bff}\le r_w\tilde\bff_*(g\cdot\nu_{\bff})
\end{equation}
as measures on $\R^d$, that is,
$\nu_{\psi_w^*\bff}((\psi_w^*\tilde\bff)^{-1}(B))\le r_w\int_{\tilde\bff^{-1}(B)}g\,d\nu_{\bff}$ for any $B\in\cB(\R^d)$.
\end{lemma}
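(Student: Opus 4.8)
The plan is to reduce the vector inequality to a scalar one and then extract a single term from the self-similar decomposition of energy measures in \Lem{energymeas}. Since $\nu_{\bff}=\frac1d\sum_{i=1}^d\nu_{f_i}$ and $\nu_{\psi_w^*\bff}=\frac1d\sum_{i=1}^d\nu_{\psi_w^*f_i}$, while the sets $(\psi_w^*\tilde\bff)^{-1}(B)$ and $\tilde\bff^{-1}(B)$ do not depend on $i$, it suffices to prove, for each single $f\in\cF$ and each $A\in\cB(K)$,
\[
\nu_{\psi_w^*f}\bigl(\psi_w^{-1}(A)\bigr)\le r_w\int_A g\,d\nu_f,
\]
and then sum over $i$ (with the factor $1/d$) taking $A=\tilde\bff^{-1}(B)$.

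First I would rewrite the set on the left. Since $\psi_w^*\tilde\bff=\tilde\bff\circ\psi_w$, one has $(\psi_w^*\tilde\bff)^{-1}(B)=\psi_w^{-1}\bigl(\tilde\bff^{-1}(B)\bigr)$; moreover $A:=\tilde\bff^{-1}(B)$ is Borel because $\tilde\bff$ is a Borel map, and by \Lem{energymeas}~(i) the map $\psi_w^*\tilde\bff$ is a quasi-continuous modification of $\psi_w^*\bff$, so integrating $\nu_{\psi_w^*\bff}$ over this preimage is consistent with the definition of the pushforward (the result being independent of the chosen quasi-continuous modification, as energy measures charge no exceptional set). Because $\psi_w$ maps $K$ onto $K_w$ injectively, every $x$ with $\psi_w(x)\in A$ automatically satisfies $\psi_w(x)\in A\cap K_w$, whence $\psi_w^{-1}(A)=\psi_w^{-1}(A\cap K_w)$.

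Next I would apply the decomposition of \Lem{energymeas}~(ii) with $m$ equal to the length of $w$ to the Borel set $A\cap K_w$. Discarding the nonnegative terms with $w'\neq w$ in that identity yields
\[
\frac1{r_w}\,\nu_{\psi_w^*f}\bigl(\psi_w^{-1}(A\cap K_w)\bigr)\le \nu_f(A\cap K_w),
\]
that is, $\nu_{\psi_w^*f}\bigl(\psi_w^{-1}(A)\bigr)\le r_w\,\nu_f(A\cap K_w)$. Finally, since $g$ is nonnegative on $K$ and $g\ge1$ q.e.\ on $K_w$, and since $\nu_f$ does not charge exceptional sets, the q.e.\ bound upgrades to a $\nu_f$-a.e.\ bound, giving
\[
\nu_f(A\cap K_w)=\int_{A\cap K_w}1\,d\nu_f\le\int_{A\cap K_w}g\,d\nu_f\le\int_A g\,d\nu_f.
\]
Combining these two displays proves the scalar inequality; averaging over $i=1,\dots,d$ with $A=\tilde\bff^{-1}(B)$ then gives $\nu_{\psi_w^*\bff}\bigl((\psi_w^*\tilde\bff)^{-1}(B)\bigr)\le r_w\int_{\tilde\bff^{-1}(B)}g\,d\nu_{\bff}$, which is exactly \Eq{scale}.

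Once \Lem{energymeas} is available the computation is essentially bookkeeping, so I do not expect a genuine obstacle; the only delicate points are the measure-theoretic reductions, namely verifying that replacing the generic modification of $\psi_w^*\bff$ by $\psi_w^*\tilde\bff$ does not alter the pushforward, that $\tilde\bff^{-1}(B)$ is Borel, and that the hypothesis $g\ge1$ holding only q.e.\ on $K_w$ may be used $\nu_f$-a.e.\ because capacity-null sets are $\nu_f$-null.
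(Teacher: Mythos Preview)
Your proposal is correct and follows essentially the same route as the paper's proof: reduce to the scalar case, rewrite $(\psi_w^*\tilde\bff)^{-1}(B)=\psi_w^{-1}(B'\cap K_w)$ with $B'=\tilde\bff^{-1}(B)$, keep only the $w$-term in the decomposition of \Lem{energymeas}~(ii), and then use $g\ge1$ on $K_w$. Your version merely spells out the measure-theoretic justifications (Borel measurability of $B'$, that $\psi_w^*\tilde\bff$ serves as a quasi-continuous modification via \Lem{energymeas}~(i), and that the q.e.\ inequality $g\ge1$ upgrades to $\nu_f$-a.e.) that the paper leaves implicit.
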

\begin{proof}
Let $B\in\cB(\R^d)$ and denote $\tilde\bff^{-1}(B)$ by $B'$.
From \Lem{energymeas}~(ii), for $i=1,\dots,d$,
\begin{align*}
r_w^{-1}\nu_{\psi_w^* f_i}((\psi_w^*\tilde\bff)^{-1}(B))
&=r_w^{-1}\nu_{\psi_w^* f_i}(\psi_w^{-1}(B'))
=r_w^{-1}\nu_{\psi_w^* f_i}(\psi_w^{-1}(B'\cap K_w))\\
&\le \nu_{f_i}(B'\cap K_w)
\le (g\cdot\nu_{f_i})(\tilde\bff^{-1}(B)).
\end{align*}
Therefore, $\nu_{\psi_w^*f_i}((\psi_w^*\tilde\bff)^{-1}(B))\le r_w\int_{\tilde\bff^{-1}(B)}g\,d\nu_{f_i}$.
This implies \Eq{scale}.
\qed\end{proof}
We note that condition (A7) mentioned below is not required for Lemmas~\ref{lem:energymeas} and \ref{lem:scale}.

We fix a minimal energy-dominant measure $\nu$ with $\nu(K)<\infty$, and further assume the following.
\begin{enumerate}[(A0)]
  \item[(A7)] $\nu(K^\partial)=0$.
\end{enumerate}

Let $K^\partial_*=\bigcup_{w\in W_*}\psi_w(K^\partial)$ and $V_*=\bigcup_{w\in W_*}\psi_w(V_0)$. 
Clearly, $K^\partial_*\supset V_*$.
\begin{lemma}\label{lem:energymeas'}
Let $f\in\cF$. Then, the following hold.
\begin{enumerate}
\item $\nu_f(K^\partial_*)=0$.
\item For $w\in W_*$ and a Borel subset $B$ of $K_w$,
\[
  \nu_f(B)=\frac1{r_w}\nu_{\psi_w^*f}(\psi_w^{-1}(B)).
\]
\end{enumerate}
\end{lemma}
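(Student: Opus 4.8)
The plan is to derive both assertions from the level-$m$ decomposition of the energy measure in \Lem{energymeas}~(ii) and to show that the ``off-diagonal'' cells contribute nothing, because the relevant pieces are swept into $K^\partial$, on which $\nu$---and hence every energy measure $\nu_g$ with $g\in\cF$, by the domination property---carries no mass by (A7). I would prove (ii) first and then obtain (i) as a one-line corollary.

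The one genuinely structural ingredient is a geometric fact about cell intersections: for $m\in\N$ and distinct $w,v\in W_m$,
\[
  K_w\cap K_v\subseteq \psi_v(V_0).
\]
This follows from the definition of the post-critical set (it is also recorded in \cite{Ki}): given $x\in K_w\cap K_v$, choose $\om\in\Sg_w$ and $\om'\in\Sg_v$ with $\pi(\om)=\pi(\om')=x$, and let $k\le m$ be the first index at which they differ; then $\pi(\sg^{k-1}\om)\in\bigcup_{i\ne j}(K_i\cap K_j)$, so $\sg^{k-1}\om\in\pi^{-1}(\bigcup_{i\ne j}(K_i\cap K_j))$ and therefore $\sg^m\om\in\cP$, whence $\psi_v^{-1}(x)=\pi(\sg^m\om)\in\pi(\cP)=V_0$. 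Since $V_0\subseteq K^\partial$, this controls the off-diagonal cells: for distinct $w,v\in W_m$ and any Borel $B\subseteq K_w$, the injectivity of $\psi_v$ gives $\psi_v^{-1}(B)=\psi_v^{-1}(B\cap K_v)\subseteq\psi_v^{-1}(\psi_v(V_0))=V_0\subseteq K^\partial$.

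For (ii), fix $w\in W_*$ with $m=|w|$ and a Borel set $B\subseteq K_w$, and apply \Lem{energymeas}~(ii):
\[
  \nu_f(B)=\sum_{v\in W_m}\frac1{r_v}\,\nu_{\psi_v^*f}\bigl(\psi_v^{-1}(B)\bigr).
\]
The term $v=w$ is exactly the desired right-hand side $r_w^{-1}\nu_{\psi_w^*f}(\psi_w^{-1}(B))$. For $v\ne w$, the previous paragraph gives $\psi_v^{-1}(B)\subseteq K^\partial$; since $\psi_v^*f\in\cF$ by (A3), domination yields $\nu_{\psi_v^*f}\ll\nu$, and (A7) gives $\nu(K^\partial)=0$, so $\nu_{\psi_v^*f}(\psi_v^{-1}(B))=0$. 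Hence every off-diagonal term vanishes and (ii) follows.

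Assertion (i) is then immediate: for each $w\in W_*$, applying (ii) with $B=\psi_w(K^\partial)\subseteq K_w$ and using $\psi_w^{-1}(\psi_w(K^\partial))=K^\partial$ (injectivity) gives $\nu_f(\psi_w(K^\partial))=r_w^{-1}\nu_{\psi_w^*f}(K^\partial)=0$ as above, so summing over the countable set $W_*$ yields $\nu_f(K^\partial_*)\le\sum_{w\in W_*}\nu_f(\psi_w(K^\partial))=0$. The only delicate point is the cell-intersection inclusion $K_w\cap K_v\subseteq\psi_v(V_0)$; once it is available, everything else is bookkeeping with \Lem{energymeas}~(ii), the domination property, and (A7).
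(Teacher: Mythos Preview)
Your proof is correct and uses the same ingredients as the paper's---\Lem{energymeas}~(ii), the cell-intersection inclusion $\psi_v^{-1}(K_w)\subset V_0\subset K^\partial$ for $v\ne w$, and (A7) together with domination to kill $\nu_{\psi_v^*f}(K^\partial)$---though the paper proves (i) first and then derives (ii), while you reverse the order. One minor slip: in your sketch of the inclusion you should conclude $\psi_v^{-1}(x)=\pi(\sg^m\om')\in V_0$ (using $\om'\in\Sg_v$, not $\om\in\Sg_w$), but by symmetry the statement is correct.
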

\begin{proof}
(i): For $m\in\N$ and $w'\in W_m$, from \Lem{energymeas}~(ii) and (A7),
\begin{align*}
\nu_f(\psi_{w'}(K^\partial))
&=\sum_{w\in W_m}\frac1{r_w}\nu_{\psi^*_w f}(\psi_w^{-1}(\psi_{w'}(K^\partial)))
\le \sum_{w\in W_m}\frac1{r_w}\nu_{\psi^*_w f}(K^\partial)
=0,
\end{align*}
where in the second line, we used the relation 
\[
\psi_w^{-1}(\psi_{w'}(K^\partial))\begin{cases}=K^\partial&\text{if } w=w',\\ \subset V_0\subset K^\partial&\text{otherwise.}\end{cases}
\]
Therefore, $\nu_f(\psi_{w'}(K^\partial))=0$.
This implies (i).
Item~(ii) follows from (i), \Lem{energymeas}~(ii), and the fact $K^\partial_*\supset V_*$.
\qed\end{proof}

For the proof of the next proposition, let $\cB_m$ be a $\sg$-field on $K$ generated by $\{K_w\mid w\in W_m\}$ for $m\ge0$.
Then, $\{\cB_m\}_{m=0}^\infty$ is a filtration on $K$ and the $\sg$-field generated by $\{\cB_m\mid m\ge0\}$ is equal to $\cB(K)$ (from the result of \cite[Proposition~1.3.6]{Ki}, for example). 
\begin{proposition}\label{prop:energyequivalence}
Let $m\in\Z_+$.
Define $\nu'_m=\sum_{w\in W_m}r_w^{-1}(\psi_w)_*\nu$.
That is, 
\[
\nu'_m(B):=\sum_{w\in W_m}\frac{1}{r_w}\nu(\psi_w^{-1}(B)),
\quad B\in\cB(K).
\]
Then, $\nu$ and $\nu'_m$ are mutually absolutely continuous.
Moreover, for any $f,g\in \cF$ and $w\in W_m$,
\begin{equation}\label{eq:abso}
  \frac{d\nu_{f,g}}{d\nu'_m}(x)=\frac{d\nu_{\psi_w^* f,\psi_w^* g}}{d\nu}(\psi_w^{-1}(x))
  \quad \mbox{for }\nu\mbox{-a.e.\,}x\in K_w.
\end{equation}
\end{proposition}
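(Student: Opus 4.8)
The plan is to prove the two assertions in turn: first the mutual absolute continuity $\nu\sim\nu'_m$, then the change-of-variables formula \Eq{abso}. The main tools are the scaling identities \Lem{energymeas} and \Lem{energymeas'}, the two defining properties (domination and minimality) of the minimal energy-dominant measure $\nu$, the surjectivity of pullback in (A6), and the vanishing $\nu(K^\partial)=0$ from (A7).

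For $\nu\ll\nu'_m$ I would first observe that $\nu'_m$ dominates every energy measure: if $\nu'_m(B)=0$, then, since $\nu'_m(B)=\sum_{w\in W_m}r_w^{-1}\nu(\psi_w^{-1}(B))$ is a sum of nonnegative terms, $\nu(\psi_w^{-1}(B))=0$ for each $w$; as $\psi_w^*f\in\cF$ (by (A3)) gives $\nu_{\psi_w^*f}\ll\nu$, \Lem{energymeas}(ii) forces $\nu_f(B)=0$. Because $\nu'_m(K)=\sum_{w\in W_m}r_w^{-1}\nu(K)<\infty$, the minimality of $\nu$ yields $\nu\ll\nu'_m$. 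For the reverse inclusion it suffices to show $(\psi_w)_*\nu\ll\nu$ for each $w$. The device is the finite measure $\tilde\nu_w(A):=\nu(\psi_w(A))$, legitimate since $\psi_w$ is a homeomorphism onto $K_w$. I claim $\tilde\nu_w$ dominates all energy measures: given $f\in\cF$, write $f=\psi_w^*\hat f$ using (A6); then \Lem{energymeas'}(ii) applied to $\hat f$ and $C=\psi_w(A)\subseteq K_w$ gives $\nu_{\hat f}(\psi_w(A))=r_w^{-1}\nu_f(A)$, so $\nu_f(A)=r_w\,\nu_{\hat f}(\psi_w(A))$, which vanishes as soon as $\tilde\nu_w(A)=\nu(\psi_w(A))=0$, because $\nu_{\hat f}\ll\nu$. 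Minimality of $\nu$ then forces $\nu\ll\tilde\nu_w$; taking $A=\psi_w^{-1}(B)$ and unwinding the definitions converts this into $(\psi_w)_*\nu\ll\nu$, and summing over $w\in W_m$ gives $\nu'_m\ll\nu$.

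For \Eq{abso} I would fix $w\in W_m$, restrict to a Borel set $B\subseteq K_w$, and first evaluate $\nu'_m(B)$. The off-diagonal cells drop out: for $w'\ne w$ in $W_m$ one has $\psi_{w'}^{-1}(B)\subseteq\psi_{w'}^{-1}(K_w\cap K_{w'})\subseteq V_0\subseteq K^\partial$ by the basic structure of self-similar sets \cite{Ki}, whence $\nu(\psi_{w'}^{-1}(B))=0$ by (A7). Therefore $\nu'_m|_{K_w}=r_w^{-1}(\psi_w)_*\nu$. Writing $D_w:=d\nu_{\psi_w^*f,\psi_w^*g}/d\nu$ (well defined $\nu$-a.e.\ since $\nu_{\psi_w^*f,\psi_w^*g}\ll\nu$ by \Eq{schwarz}) and combining the change-of-variables formula for the pushforward with \Lem{energymeas'}(ii), extended to the mutual energy measure by polarization, I obtain for every Borel $B\subseteq K_w$
\[
\int_B D_w(\psi_w^{-1}(x))\,\nu'_m(dx)=\frac1{r_w}\int_{\psi_w^{-1}(B)}D_w\,d\nu=\frac1{r_w}\nu_{\psi_w^*f,\psi_w^*g}(\psi_w^{-1}(B))=\nu_{f,g}(B).
\]
Since $\nu_{f,g}\ll\nu\sim\nu'_m$, this identity over all such $B$ identifies $D_w\circ\psi_w^{-1}$ with $d\nu_{f,g}/d\nu'_m$ for $\nu$-a.e.\ $x\in K_w$, which is \Eq{abso}.

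I expect the delicate point to be $\nu'_m\ll\nu$, that is $(\psi_w)_*\nu\ll\nu$. The naive attempt to transport $\nu$-null sets through a minimal energy-dominant $\nu_g$ breaks down, because $\nu_{\psi_w^*g}$ need not itself be minimal energy-dominant, so one cannot directly convert a statement about $\nu_{\psi_w^*g}$ back into one about $\nu$. The auxiliary measure $\tilde\nu_w$ circumvents this: it only needs to \emph{dominate} the energy measures, after which the minimality of $\nu$ does the rest, and verifying that domination is exactly where (A6) and \Lem{energymeas'}(ii) are essential. The structural fact that distinct $m$-cells meet only inside $K^\partial$, used to reduce $\nu'_m|_{K_w}$ to the single term $r_w^{-1}(\psi_w)_*\nu$, is standard but must be invoked explicitly, as it is precisely what lets (A7) take effect.
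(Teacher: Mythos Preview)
Your proof is correct. The mutual absolute continuity argument is essentially the paper's, just phrased through the abstract domination/minimality properties of $\nu$ rather than by fixing a single $f$ with $\nu_f\sim\nu$ (\Prop{em}); the essential input---(A6) to lift $f$ to $\hat f$ and the scaling identity to transfer null sets---is the same. Your auxiliary measure $\tilde\nu_w$ is a tidy way to package this, though the paper gets by with one line less by working directly with $\nu_f$.

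The genuine difference is in \Eq{abso}. The paper establishes it via a martingale convergence argument on the filtration $\cB_n$ generated by the $m$-cells: one writes the ratio $\nu_{f,g}(K_{[x]_n})/\nu'_m(K_{[x]_n})$, rewrites it using \Lem{energymeas'} as a ratio living on $\psi_w^{-1}(K_{[x]_n})$, and lets $n\to\infty$. Your route is a direct Radon--Nikodym verification: after reducing $\nu'_m|_{K_w}$ to the single pushforward term $r_w^{-1}(\psi_w)_*\nu$ (using $\psi_{w'}^{-1}(K_w)\subset V_0\subset K^\partial$ for $w'\ne w$ and (A7)), you check by change of variables that $D_w\circ\psi_w^{-1}$ integrates against $\nu'_m$ to give $\nu_{f,g}$ on every Borel $B\subset K_w$. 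This is more elementary---no martingale theory is needed---and makes the role of the polarized \Lem{energymeas'}(ii) completely transparent. The paper's martingale approach, on the other hand, fits naturally with the cell-filtration structure used elsewhere (e.g., in the blowup arguments of Section~4), so each has its contextual advantages.
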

\begin{proof}
This is proved as in \cite[Proposition~4.3]{Hi10}.
From \Prop{em}, there exists $f\in\cF$ such that $\nu_f$ and $\nu$ are mutually absolutely continuous. 
Let $B$ be a Borel set of $K$.
Suppose $\nu'_m(B)=0$.
Then, for $w\in W_m$,
$
0=((\psi_w)_*\nu)(B)=\nu(\psi_w^{-1}(B\cap K_w))$.
Since $\nu_{\psi_w^* f}\ll\nu$, we have
$0=\nu_{\psi_w^* f}(\psi_w^{-1}(B\cap K_w))=r_w\nu_f(B\cap K_w)$
from \Lem{energymeas'}~(ii).
Since $w\in W_m$ is arbitrary, $\nu_f(B)=0$, that is, $\nu(B)=0$.
Therefore, $\nu\ll\nu'_m$.

Next, suppose $\nu(B)=0$.
Let $w\in W_m$.
From (A6), there exists $\hat f\in\cF$ such that $\psi_w^* \hat f=f$.
From \Lem{energymeas}~(ii), 
$0=\nu_{\hat f}(B)\ge r_w^{-1}\nu_{f}(\psi_w^{-1}(B))$.
Thus, $0=\nu(\psi_w^{-1}(B))=((\psi_w)_*\nu)(B)$.
Therefore, $\nu'_m(B)=0$.
This implies $\nu'_m\ll\nu$.

For the proof of \Eq{abso}, let $n\ge m$.
From \Lem{energymeas'}, for $x\in K_w\setminus V_*$,
\begin{equation}\label{eq:ratio}
  \frac{\nu_{f,g}(K_{[x]_n})}{\nu'_m(K_{[x]_n})}
  = \frac{r_w^{-1}\nu_{\psi_w^* f, \psi_w^* g}(\psi_w^{-1}(K_{[x]_n}))}
  {r_w^{-1}\nu(\psi_w^{-1}(K_{[x]_n}))}
  = \frac{\nu_{\psi_w^* f, \psi_w^* g}(K_{[\psi_w^{-1}(x)]_{n-m}})}
  {\nu(K_{[\psi_w^{-1}(x)]_{n-m}})}.
\end{equation}
If $\nu'_m$ is a probability measure, the first term is given by the conditional expectation $E^{\nu'_m}[{d\nu_{f,g}}/{d\nu'_m}\mid\cB_n](x)$.
From the martingale convergence theorem, this term converges to $({d\nu_{f,g}}/{d\nu'_m})(x)$ for $\nu'_m$-a.e.\,$x$ as $n\to\infty$.
It is evident that this convergence holds true for general $\nu'_m$.
By the same reasoning, the last term of \Eq{ratio} converges $({d\nu_{\psi_w^* f, \psi_w^* g}}/{d\nu})(\psi_w^{-1}(x))$ for $(\psi_w)_*\nu$-a.e.\,$x$ as $n\to\infty$.
Since $\nu$, $\nu'_m$, and $(\psi_w)_*\nu$ are mutually absolutely continuous on $K_w$ from the first claim, we obtain \Eq{abso}.
\qed\end{proof}
\begin{corollary}\label{cor:Phi}
For $d\in \N$, $\bff=(f_1,\dots,f_d)\in\cF^d$ and $w\in W_*$,
\[
  \Ph_{\bff}(\psi_w(y))=\Ph_{\psi_w^*\bff}(y)
  \quad \mbox{for $\nu$-a.e.\,}y\in K,
\]
where $\Ph.$ is defined in \Eq{Phif}.
\end{corollary}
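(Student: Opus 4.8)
The plan is to establish the pointwise matrix identity on each cell $K_w$ by reducing the Radon--Nikodym derivatives appearing in the definition \Eq{Phif} of $\Ph_{\bff}$ to those taken with respect to the auxiliary measure $\nu'_m$ from \Prop{energyequivalence}, and then to invoke the scaling relation \Eq{abso}. Fix $w\in W_*$ and let $m$ be such that $w\in W_m$. Because $\nu$ and $\nu'_m$ are mutually absolutely continuous, the chain rule for Radon--Nikodym derivatives gives $d\nu_{f_i,f_j}/d\nu=(d\nu_{f_i,f_j}/d\nu'_m)(d\nu'_m/d\nu)$ and $d\nu_{\bff}/d\nu=(d\nu_{\bff}/d\nu'_m)(d\nu'_m/d\nu)$ for $\nu$-a.e.\ $x$, where the common factor $d\nu'_m/d\nu$ is strictly positive $\nu$-a.e. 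Consequently the ratios defining $\Ph_{\bff}$ are unchanged when $\nu$ is replaced throughout by $\nu'_m$; in particular $\{d\nu_{\bff}/d\nu>0\}$ and $\{d\nu_{\bff}/d\nu'_m>0\}$ coincide up to a $\nu$-null set, so the two branches of \Eq{Phif} (the matrix part and the $O$ part) are consistently matched.

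Next I would rewrite, on $K_w$, each numerator and the normalizing denominator using \Eq{abso}. For the entries this reads $(d\nu_{f_i,f_j}/d\nu'_m)(x)=(d\nu_{\psi_w^*f_i,\psi_w^*f_j}/d\nu)(\psi_w^{-1}(x))$ for $\nu$-a.e.\ $x\in K_w$. For the denominator, since $\nu_{\bff}=\frac1d\sum_{i=1}^d\nu_{f_i,f_i}$ (recall \Defn{basic}) and the mutual energy measure is bilinear, summing the diagonal entries yields $(d\nu_{\bff}/d\nu'_m)(x)=(d\nu_{\psi_w^*\bff}/d\nu)(\psi_w^{-1}(x))$ for $\nu$-a.e.\ $x\in K_w$. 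Dividing on the set where the denominator is positive, and using \Lem{trivial} together with the degenerate-branch matching noted above, gives $\Ph_{\bff}(x)=\Ph_{\psi_w^*\bff}(\psi_w^{-1}(x))$ for $\nu$-a.e.\ $x\in K_w$, with both sides equal to $O$ on the complementary set.

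Finally I would pass from ``$\nu$-a.e.\ $x\in K_w$'' to ``$\nu$-a.e.\ $y\in K$'' via the substitution $x=\psi_w(y)$. If $E\subset K_w$ denotes the $\nu$-null exceptional set produced above, then $(\psi_w)_*\nu(E)=\nu(\psi_w^{-1}(E))$, and since $(\psi_w)_*\nu$ and $\nu$ are mutually absolutely continuous on $K_w$ (as established in the proof of \Prop{energyequivalence}), $\nu(E)=0$ forces $\nu(\psi_w^{-1}(E))=0$. Hence $\Ph_{\bff}(\psi_w(y))=\Ph_{\psi_w^*\bff}(y)$ for $\nu$-a.e.\ $y\in K$, which is the assertion. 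The only genuinely delicate bookkeeping is keeping the degenerate branch of \Eq{Phif} aligned both under the change of reference measure from $\nu$ to $\nu'_m$ and under the transport by $\psi_w^{-1}$; the rest is bilinearity of $\nu_{\bullet,\bullet}$ combined with the chain rule, so I do not anticipate a substantial obstacle.
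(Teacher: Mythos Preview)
Your proof is correct and follows essentially the same approach as the paper's own proof. The paper proceeds identically: it applies the chain rule together with \Eq{abso} to obtain, for each $i,j$, the identity $\frac{d\nu}{d\nu'_m}(\psi_w(y))\,\frac{d\nu_{f_i,f_j}}{d\nu}(\psi_w(y))=\frac{d\nu_{\psi_w^* f_i,\psi_w^* f_j}}{d\nu}(y)$ for $\nu$-a.e.\ $y$, and then observes that the common factor $\frac{d\nu}{d\nu'_m}(\psi_w(y))$ cancels in the ratios defining $\Ph_{\bff}$; your write-up simply spells out the null-set transport and the degenerate-branch matching that the paper leaves implicit.
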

\begin{proof}
Let $m=|w|$. From \Prop{energyequivalence}, for $i,j=1,\dots,d$,
\[
\frac{d\nu}{d\nu'_m}(\psi_w(y))\frac{d\nu_{f_i,f_j}}{d\nu}(\psi_w(y))
=\frac{d\nu_{\psi_w^* f_i,\psi_w^* f_j}}{d\nu}(y)
  \quad \mbox{for $\nu$-a.e.\,}y\in K.
\]
This implies the assertion.
\qed\end{proof}

For $m\ge0$, let $\cH_m$ denote the set of all functions $f$ in $\cF$ such that $\psi_w^*f\in\cH$ for all $w\in W_m$.
Let $\cH_*=\bigcup_{m\ge0}\cH_m$.
Functions in $\cH_*$ are referred to as piecewise harmonic functions.
From \cite[Lemma~3.10]{Hi05}, $\cH_*$ is dense in $\cF$. 
The AF-martingale dimension of $(\cE,\cF)$ is denoted by $d_\mathrm{m}$ as before.
\begin{proposition}\label{prop:invertible}
Let $d\in\N$ satisfy $d\le d_\mathrm{m}$. 
Then, there exists $\bfg=(g_1,\dots,g_d)\in\cH^d$ such that
\begin{equation}\label{eq:invertible}
\nu_{\bfg}\left(\{x\in K\mid \Ph_{\bfg}(x)\mbox{ is invertible}\}\right)>0.
\end{equation}
\end{proposition}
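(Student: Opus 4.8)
The plan is to exploit the identity that $d_\mathrm{m}$ equals the index of $(\cE,\cF)$ (\Thm{index}) together with the analytic description of the index (\Prop{rank}), and then to upgrade the functions so obtained to genuinely harmonic ones by a localization-and-rescaling argument based on the self-similar structure. First I would fix a sequence $\{f_i\}_{i=1}^\infty$ whose linear span is dense in $\cF$, chosen inside the space $\cH_*$ of piecewise harmonic functions; this is possible because $\cH_*$ is a dense linear subspace of $\cF$ (it is a linear space by \Lem{basicharmonic}, since $\cH_m\subset\cH_{m+1}$, and it is dense by \cite[Lemma~3.10]{Hi05}). Writing $Z^{i,j}=d\nu_{f_i,f_j}/d\nu$, \Prop{rank} gives that the index equals $\esssup_{x}\sup_{N}\rank(Z^{i,j}(x))_{i,j=1}^N$. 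Since $d\le d_\mathrm{m}$ equals this quantity, there exist $N\in\N$ and a Borel set $A$ with $\nu(A)>0$ on which $\rank(Z^{i,j})_{i,j=1}^N\ge d$.

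Second, I would extract $d$ of these functions whose mutual energy densities form an invertible $d\times d$ matrix on a positive-measure set. The matrix $(Z^{i,j}(x))_{i,j=1}^N$ is symmetric and nonnegative-definite, so its rank equals the largest order of a nonsingular principal submatrix (realizing it as a Gram matrix, a maximal linearly independent set of columns yields a nonsingular principal submatrix of order equal to the rank); hence for each $x\in A$ there is an index set $I\subset\{1,\dots,N\}$ with $\#I=d$ such that $(Z^{i,j}(x))_{i,j\in I}$ is invertible. As there are only finitely many such $I$, one of the corresponding measurable pieces of $A$ has positive $\nu$-measure; relabelling, we obtain $\bfg=(g_1,\dots,g_d)\in(\cH_*)^d$ and a set $A'$ with $\nu(A')>0$ on which $G:=(d\nu_{g_i,g_j}/d\nu)_{i,j=1}^d$ is invertible. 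Since $G$ is nonnegative-definite and invertible on $A'$, it is positive-definite there, so its diagonal entries, and hence $d\nu_{\bfg}/d\nu=\frac1d\sum_i d\nu_{g_i}/d\nu$, are strictly positive on $A'$; by \Eq{Phif} this makes $\Ph_{\bfg}=(d\nu_{\bfg}/d\nu)^{-1}G$ invertible on $A'$.

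Third --- and this is the step that requires the self-similar structure --- I would replace the piecewise harmonic tuple $\bfg$ by a genuinely harmonic one. Each $g_i$ lies in $\cH_m$ for a common $m$, so $\psi_w^* g_i\in\cH$ for every $w\in W_m$. Because $K=\bigcup_{w\in W_m}K_w$ and $\nu(A')>0$, there is some $w\in W_m$ with $\nu(A'\cap K_w)>0$. Set $\bfh=\psi_w^*\bfg=(\psi_w^* g_1,\dots,\psi_w^* g_d)\in\cH^d$. By \Cor{Phi}, $\Ph_{\bfh}(y)=\Ph_{\bfg}(\psi_w(y))$ for $\nu$-a.e.\ $y$, so $\Ph_{\bfh}$ is invertible on $\psi_w^{-1}(A'\cap K_w)$ up to a $\nu$-null set. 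Finally, \Lem{energymeas'}~(ii) yields $\nu_{\bfh}(\psi_w^{-1}(A'\cap K_w))=r_w\,\nu_{\bfg}(A'\cap K_w)$, and the right-hand side is positive because $d\nu_{\bfg}/d\nu>0$ on $A'\supset A'\cap K_w$ forces $\nu_{\bfg}(A'\cap K_w)=\int_{A'\cap K_w}(d\nu_{\bfg}/d\nu)\,d\nu>0$; since $\nu_{\bfh}\ll\nu$, this gives $\nu_{\bfh}(\{\Ph_{\bfh}\text{ is invertible}\})>0$, which is exactly \Eq{invertible}.

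The main obstacle is precisely the passage from the functions realizing the index to honestly harmonic functions, since \Prop{rank} only furnishes elements of $\cF$. The device that overcomes it is to build the dense spanning sequence inside $\cH_*$ and then to use self-similarity: pulling back by a single map $\psi_w$ turns a piecewise harmonic function into a harmonic one (\Lem{basicharmonic}) while transforming $\Ph$ and the energy measures in a controlled way (\Cor{Phi} and \Lem{energymeas'}), so that invertibility of $\Ph$ on a single cell is transported to invertibility on $K$ on a set of positive energy measure.
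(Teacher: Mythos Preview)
Your proof is correct and follows essentially the same approach as the paper's: choose the dense spanning sequence inside $\cH_*$, use \Prop{rank} and \Thm{index} to locate a $d\times d$ principal minor that is invertible on a set of positive $\nu$-measure, pass to a common level $m$, pick a cell $K_w$ meeting that set in positive $\nu$-measure, and pull back by $\psi_w$ to land in $\cH^d$. The only cosmetic difference is that where the paper invokes \Prop{energyequivalence} directly and then uses a trace argument to pass from $\nu$-positivity to $\nu_{\bfg}$-positivity, you instead cite its corollary (\Cor{Phi}) together with \Lem{energymeas'}~(ii); these are equivalent routes to the same conclusion.
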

\begin{proof}
Take a countable set $\{f_i\mid i\in \N\}$ from $\cH_*$ such that it is dense in $\cF$.
For $i,j\in\N$, define $\hat Z^{i,j}={d\nu_{f_i,f_j}}/{d\nu}$.
From \Prop{rank} and \Thm{index}, we have
$
\esssup_{x\in K}\;\sup_{N\in\N}\; \rank\left(\hat Z^{i,j}(x)\right)_{i,j=1}^N\ge d$.
Then, there exists $N\in\N$ such that 
$
\nu\bigl(\bigl\{x\in K\bigm| \rank\bigl(\hat Z^{i,j}(x)\bigr)_{i,j=1}^N\ge d\bigr\}\bigr)>0
$.
Therefore, there exists $1\le \a_1<\a_2<\dots<\a_d\le N$ such that $\nu(\hat B)>0$ with
\[
\hat B=\bigl\{x\in K\bigm| \mbox{the matrix}\left(\hat Z^{\a_i,\a_j}(x)\right)_{i,j=1}^d\mbox{is invertible}\bigr\}.
\]
We can take a sufficiently large $m\in\Z_+$ such that every $f_{\a_i}$, $i=1,\dots,d$, belongs to $\cH_m$.
Take $w\in W_m$ such that $\nu(\hat B\cap K_w)>0$.
Define $\bfg=(g_1,\dots,g_d)\in\cH^d$ by $g_i=\psi_w^* f_{\a_i}$, $i=1,\dots,d$, and let $Z^{i,j}={d\nu_{g_i,g_j}}/{d\nu}$
for $i,j\in\{1,\dots,d\}$.
From \Prop{energyequivalence}, we have $\nu(B)>0$, where
\[
B=\bigl\{x\in K\bigm|  \mbox{the matrix }\left(Z^{i,j}(x)\right)_{i,j=1}^d\mbox{ is invertible}\bigr\}.
\]
Since the trace of any invertible and nonnegative definite symmetric matrix is positive, and $({d\nu_{\bfg}}/{d\nu})(x)=(1/d)\tr\left(Z^{i,j}(x)\right)_{i,j=1}^d$, we have $B\subset \{d\nu_{\bfg}/d\nu>0\}$ up to $\nu$-null set, which implies $\nu_{\bfg}(B)>0$.
Then, \Eq{invertible} holds since
$
  \Phi_{\bfg}(x)=\bigl(Z^{i,j}(x)\big/\frac{d\nu_{\bfg}}{d\nu}(x)\bigr)_{i,j=1}^d
$ on $\{{d\nu_{\bfg}}/{d\nu}>0\}$.
\qed\end{proof}
For later use, we introduce the following sets for given $d\in\N$ and $a>0$:
\begin{align*}
\Mat(d)&=\{\mbox{All real square matrices of order $d$}\},\\
\PSM(d;a)&=\{Q\in \Mat(d)\mid \mbox{$Q$ is a positive definite symmetric matrix and }\det Q\ge a\}.
\end{align*} 
The set $\Mat(d)$ is identified with $\R^{d\times d}$ as a topological vector space, and 
$\PSM(d;a)$ is regarded as a closed subset of $\Mat(d)$.
\subsection{Case of post-critically finite self-similar sets}
In this subsection, we follow \cite{Ki} and consider the case that $K$ is connected and the self-similar structure $(K,S,\{\psi_i\}_{i\in S})$ that was introduced in the previous subsection is {\em post-critically finite} (p.c.f.), that is, $\cP$ is a finite set.
See \Fig{fig1} as some of the typical examples.
Let $V_m=\bigcup_{w\in W_m}\psi_w(V_0)$ for $m\in\N$ and $V_*=\bigcup_{m=0}^\infty V_m$.

In general, given a finite set $V$, $l(V)$ denotes the space of all real-valued functions on $V$.
We equip $l(V)$ with an inner product $(\cdot,\cdot)_{l(V)}$ that is defined by $(u,v)_{l(V)}=\sum_{q\in V}u(q)v(q)$.
Let $D=(D_{qq'})_{q,q'\in V_0}$ be a symmetric linear operator on $l(V_0)$, which is also regarded as a square matrix of size $\#V_0$, such that the following conditions hold:
\begin{enumerate}[(D1)]
\item $D$ is nonpositive-definite;
\item $Du=0$ if and only if $u$ is constant on $V_0$;
\item $D_{qq'}\ge0$ for all $q,q'\in V_0$ with $q\ne q'$.
\end{enumerate}
We define $\cE^{(0)}(u,v)=(-Du,v)_{l(V_0)}$ for $u,v\in l(V_0)$.
This is a Dirichlet form on $l(V_0)$, where $l(V_0)$ is identified with the $L^2$ space on $V_0$ with the counting measure (see \cite[Proposition~2.1.3]{Ki}).
For $\bfr=\{r_i\}_{i\in S}$ with $r_i>0$, we define a bilinear form $\cE^{(m)}$ on $l(V_m)$ as
\begin{equation*}
  \cE^{(m)}(u,v)=\sum_{w\in W_m}\frac{1}{r_w}\cE^{(0)}(u\circ\psi_w|_{V_0},v\circ\psi_w|_{V_0}),\quad
  u,v\in l(V_m).
\end{equation*}
We refer to $(D,\bfr)$ as a {\em harmonic structure} if for every $v\in l(V_0)$,
\[
\cE^{(0)}(v,v)=\inf\{\cE^{(1)}(u,u)\mid u\in l(V_1)\mbox{ and }u|_{V_0}=v\}.
\]
Then, for $m\in\Z_+$ and $v\in l(V_m)$,
\[
\cE^{(m)}(v,v)=\inf\{\cE^{(m+1)}(u,u)\mid u\in l(V_{m+1})\mbox{ and }u|_{V_m}=v\}.
\]
In particular, $\cE^{(m)}(u|_{V_m},u|_{V_m})\le \cE^{(m+1)}(u,u)$ for $u\in l(V_{m+1})$.

We consider only {\em regular} harmonic structures, that is, $0<r_i<1$ for all $i\in S$.
Demonstrating the existence of regular harmonic structures is a nontrivial problem.
Several studies have been conducted, such as in \cite{Li,HMT06,Pe07}. 
We only remark here that all nested fractals have canonical regular harmonic structures.
Nested fractals are self-similar sets that are realized in Euclidean spaces and have good symmetry; for the precise definition, see \cite{Li,Ki}. 
All the fractals shown in \Fig{fig1} except the rightmost one are nested fractals.

We assume that a regular harmonic structure $(D,\bfr)$ is given.
Let $\mu$ be a self-similar probability measure on $K$, and take $V_0$ as $K^\partial$.
We can then define a regular Dirichlet form $(\cE,\cF)$ on $L^2(K,\mu)$ associated with $(D,\bfr)$, satisfying conditions (A1)--(A7), by
\begin{align*}
\cF&=\left\{u\in C(K)\subset L^2(K,\mu)\left|\,
\lim_{m\to\infty}\cE^{(m)}(u|_{V_m},u|_{V_m})<\infty\right.\right\},\\
\cE(u,v)&= \lim_{m\to\infty}\cE^{(m)}(u|_{V_m},v|_{V_m}),\quad u,v\in\cF.
\end{align*}
(See the beginning of \cite[Section~3.4]{Ki}.)
Note that (A7) follows from the fact that $\#K^\partial\,({}=\#V_0)<\infty$, \Prop{em}, and \Prop{edp}.
From \cite[Theorem~3.3.4]{Ki}, a property stronger than (A4) follows: There exists a constant $c_{4.2}>0$ such that
\begin{equation}\label{eq:poincare}
  \sup_{x\in K}f(x)-\inf_{x\in K}f(x)\le c_{4.2}\sqrt{\cE(f)},
  \quad f\in\cF\subset C(K).
\end{equation}
From this inequality, it is easy to prove that the capacity associated with $(\cE,\cF)$ of any nonempty subset of $K$ is uniformly positive (see, e.g., \cite[Proposition~4.2]{Hi08}).

Let us recall that the space of all harmonic functions is denoted by $\cH$.
For each $u\in l(V_0)$, there exists a unique $h\in\cH$ such that $h|_{V_0}=u$.
For any $w\in W_*$ and $h\in\cH$, $\psi_w^* h$ belongs to $\cH$.
By using the linear map $l(V_0)\ni u\mapsto h\in \cH$, we can identify $\cH$ with $l(V_0)$.
In particular, $\cH$ is a finite dimensional subspace of $\cF$.

The following is the main theorem of this subsection, which is an improvement of \cite[Theorem~4.4]{Hi08}.
\begin{theorem}\label{th:main1}
The index of $(\cE,\cF)$ is $1$. In other words, the AF-martingale dimension $d_\mathrm{m}$ of the diffusion process associated with $(\cE,\cF)$ is $1$.
\end{theorem}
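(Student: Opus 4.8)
The plan is to follow \Str{str}, establishing the nontrivial inequality $d_\mathrm{m}\le 1$; the reverse inequality $d_\mathrm{m}\ge 1$ is immediate from \Prop{nontrivial} since $\cE$ is not identically zero. Throughout I use that $\mu(K)=1<\infty$ and $1\in\cF$ by~(A2), and that $\Cp(\{x\})>0$ for every $x\in K$, a consequence of~\Eq{poincare}. Assuming for contradiction that $d_\mathrm{m}\ge 2$, I set $d=2$ and aim to produce $\bfh\in\cH^2$ satisfying condition~(U)$_2$, so that \Thm{general}~(i) forces the absurdity $2=1$.

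\emph{Step 1 (blow-up to a constant matrix field).} By \Prop{invertible} there is $\bfg\in\cH^2$ with $\nu_{\bfg}\bigl(\{\Phi_{\bfg}\text{ invertible}\}\bigr)>0$; fixing $a>0$ small, the set $B=\{x\mid\Phi_{\bfg}(x)\in\PSM(2;a)\}$ has $\nu_{\bfg}(B)>0$. The filtration $\{\cB_m\}$ generated by the cells $\{K_w\}_{w\in W_m}$ refines to $\cB(K)$, so the martingale (Lebesgue) differentiation theorem for $\nu_{\bfg}$ yields $\nu_{\bfg}$-a.e.\ $x_0\in B$ at which the normalized oscillation of $\Phi_{\bfg}$ over $K_{[x_0]_m}$ tends to $0$. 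For such an $x_0$ put $\bfg_m=\psi_{[x_0]_m}^*\bfg\in\cH^2$ (\Lem{basicharmonic}); by \Cor{Phi} one has $\Phi_{\bfg_m}(y)=\Phi_{\bfg}(\psi_{[x_0]_m}(y))$, and \Lem{energymeas'}~(ii) rewrites the $L^1(\nu_{\bfg_m})$-distance of $\Phi_{\bfg_m}$ from the constant $\Phi_{\bfg}(x_0)$ as exactly that vanishing cell-oscillation. After rescaling each $\bfg_m$ to unit energy, finite-dimensionality of $\cH$ gives a subsequence $\bfg_{m_k}\to\bfh$ in $\cH^2$; \Lem{energyineq} then provides $\nu_{\bfg_{m_k}}\to\nu_{\bfh}$ and $\Phi_{\bfg_{m_k}}\to\Phi_{\bfh}$ $\nu_{\bfh}$-a.e. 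Since each $\Phi_{\bff}$ is symmetric and nonnegative-definite with $\tr\Phi_{\bff}=2$ on the support of $\nu_{\bff}$, hence uniformly bounded there, these convergences combine with the oscillation estimate to give $\Phi_{\bfh}=\Phi_{\bfg}(x_0)=:L\in\PSM(2;a)$ $\nu_{\bfh}$-a.e., while $\nu_{\bfh}(K)=1>0$. Finally \Lem{renormalize} replaces $\bfh$ by one with $\Phi_{\bfh}$ equal to the identity, so (U)$_2$~(a),(b) hold.

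\emph{Step 2 (absolute continuity of the image measure).} It remains to verify (U)$_2$~(c), namely $\tilde\bfh_*\nu_{\bfh}\ll\sL^2$. Choose a bounded $f\in\cFD$ with $\tilde f>0$ $\nu_{\bfh}$-a.e.\ on $K$; such an $f$ exists because $\nu_{\bfh}(K^\partial_*)=0$ by \Lem{energymeas'}~(i) (so in particular $\nu_{\bfh}(V_0)=0$), and the torsion function solving $\cE(f,v)=(1,v)_{L^2(K,\mu)}$ for all $v\in\cFD$ is bounded by~\Eq{poincare} and strictly positive on $K\setminus V_0$ by connectedness of $K$ together with the strong maximum principle (irreducibility~(D2)). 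Then \Prop{W12} applies to $\bfh$ and $f$, giving $\tilde\bfh_*(\tilde f^2\nu_{\bfh})\ll\sL^2$. As $\tilde f>0$ $\nu_{\bfh}$-a.e., any Borel $A\subset\R^2$ with $\tilde\bfh_*(\tilde f^2\nu_{\bfh})(A)=0$ satisfies $\int_{\tilde\bfh^{-1}(A)}\tilde f^2\,d\nu_{\bfh}=0$ and hence $\nu_{\bfh}(\tilde\bfh^{-1}(A))=0$; thus $\tilde\bfh_*\nu_{\bfh}\ll\tilde\bfh_*(\tilde f^2\nu_{\bfh})\ll\sL^2$, which is (U)$_2$~(c).

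With (U)$_2$ in force and $\Cp(\{x\})>0$ for all $x$, \Thm{general}~(i) yields $d=1$, contradicting $d=2$. Therefore $d_\mathrm{m}\le 1$, and the index equals $1$ by \Thm{index}. I expect the genuine difficulty to be Step~1: arranging a blow-up along cells whose limit has a \emph{constant} matrix field $\Phi_{\bfh}$. The delicate point is matching the two convergence regimes—the $L^1(\nu_{\bfg_m})$-concentration of $\Phi_{\bfg_m}$ near $\Phi_{\bfg}(x_0)$ coming from the differentiation theorem, and the subsequential $\cH^2$-limit $\bfg_{m_k}\to\bfh$ coming from compactness—and checking that the limiting energy $\nu_{\bfh}(K)$ does not degenerate to $0$ under the unit-energy rescaling.
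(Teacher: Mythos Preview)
Your proposal is correct and follows essentially the same route as the paper: a blow-up argument exploiting the finite-dimensionality of $\cH$ to produce $\bfh\in\cH^2$ with constant $\Phi_{\bfh}$, followed by \Prop{W12} and \Thm{general}~(i). The only substantive difference is organizational. The paper first fixes the target matrix $L$ in the support of $(\Phi_{\bfg}|_B)_*(\nu_{\bfg}|_B)$ and then, for each precision level $k$, selects a (possibly different) point $x_k$ and scale $n_k$ so that $\nu_{\bfg}(K_{[x_k]_{n_k}}\cap (\Phi_{\bfg}|_B)^{-1}(U_k))\ge(1-2^{-k})\nu_{\bfg}(K_{[x_k]_{n_k}})$; after passing to the compact limit $\bfh$, a Borel--Cantelli argument gives $\Phi_{\bfh}=L$ $\nu_{\bfh}$-a.e.\ directly. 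You instead fix a single Lebesgue point $x_0$ of $\Phi_{\bfg}$ with respect to $\nu_{\bfg}$ and the cell filtration, set $L=\Phi_{\bfg}(x_0)$, and blow up at $x_0$; this requires you to reconcile the $L^1(\nu_{\bfg_{m_k}})$-concentration of $\Phi_{\bfg_{m_k}}$ near $L$ with the $\nu_{\bfh}$-a.e.\ convergence $\Phi_{\bfg_{m_k}}\to\Phi_{\bfh}$, which does work (use the uniform bound $\tr\Phi_\cdot=2$ and the $L^1(K,\nu)$-convergence of $d\nu_{\bfg_{m_k}}/d\nu$ to $d\nu_{\bfh}/d\nu$ from \Lem{energyineq}~(i)), but the paper's Borel--Cantelli route sidesteps this matching entirely.

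One small omission: rescaling to unit energy is not by itself enough to invoke finite-dimensional compactness, since constants lie in $\cH$ with zero energy. You must also subtract $\int_K\bfg_m\,d\mu$ (as the paper does) and then use \Eq{poincare} to bound $\|\bfg_m\|_{L^\infty}$, hence $\|\bfg_m\|_\cF$; once this is done, $\nu_{\bfh}(K)=\lim\nu_{\bfg_{m_k}}(K)=1$ automatically, resolving your stated concern about degeneration. Your Step~2 is exactly the paper's, and your explicit choice of $f$ as the torsion function (with positivity on $K\setminus V_0$ via the maximum principle) is a clean way to justify what the paper merely asserts.
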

Unlike \cite[Theorem~4.4]{Hi08}, we do not need technical extra assumptions.  
The main ideas of the proofs of \cite[Theorem~4.4]{Hi08} and \Thm{main1} are quite different from each other.
\begin{theopargself}
\begin{proof}[of \Thm{main1}]
Since $(\cE,\cF)$ is nontrivial, $d_\mathrm{m}\ge1$ from \Prop{nontrivial}.
We will derive a contradiction by assuming $d_\mathrm{m}\ge2$.
We proceed to Step~1 of \Str{str} with $d=2$.
From \Prop{invertible}, there exists $\bfg=(g_1,g_2)\in\cH^2$ such that \Eq{invertible} holds.
Take $a>0$ such that
\begin{equation}\label{eq:defa}
\nu_{\bfg}\left(\{x\in K\mid \det\Ph_{\bfg}(x)\ge a\}\right)=:\dl>0.
\end{equation}
Let $B=\left\{x\in K\mid \det\Ph_{\bfg}(x)\ge a\right\}\setminus V_*$. From \Eq{defa} and \Lem{energymeas'}~(i), $\nu_{\bfg}(B)=\dl>0$.
Let us recall $\Mat(2)$ and $\PSM(2;a)$ that were introduced in the end of the previous subsection.
A map that is obtained by restricting the domain of $\Ph_{\bfg}$ to $B$ is denoted by $\Ph_{\bfg}|_B$.
This is regarded as a map from $B$ to $\PSM(2;a)$.
Fix an element $L$ in the support of the induced measure $(\Phi_{\bfg}|_B)_*(\nu_{\bfg}|_B)$ on $\PSM(2;a)$.

We will perform a kind of blowup argument.
Let $k\in\N$. We denote by $U_k$ the intersection of $\PSM(2;a)$ and the open ball with center $L$ and radius $1/k$ in $\Mat(2)\simeq\R^{2\times2}$ with respect to the Euclidean norm. Let $B_k=(\Phi_{\bfg}|_B)^{-1}(U_k)\subset B$.
Then, $\nu_{\bfg}(B_k)>0$.
For $n\in \N$, we set
\[
  Y_n^{(k)}(x)=\begin{cases}{\nu_{\bfg}(K_{[x]_n}\cap B_k)}\big/{\nu_{\bfg}(K_{[x]_n})}& \mbox{if $x\in K\setminus V_*$ and $\nu_{\bfg}(K_{[x]_n})>0$},\\0&\mbox{otherwise.}\end{cases}
\]
Then, from the martingale convergence theorem as in the proof of \Prop{energyequivalence},
$\lim_{n\to\infty}Y_n^{(k)}=1$ $\nu_{\bfg}$-a.e.\ on $B_k$.
In particular, there exist $x_k\in B_k$ and $N_k\in\N$ such that $Y_n^{(k)}(x_k)\ge1-2^{-k}$ for any $n\ge N_k$.

Take increasing natural numbers $n_1<n_2<n_3<\cdots$ such that $Y_{n_k}^{(k)}(x_k)\ge1-2^{-k}$ for all $k$. 
We set $\hat B_k=\psi_{[x_k]_{n_k}}^{-1}(B_k)$. 
We define $\bfg^{(k)}=(g^{(k)}_1,g^{(k)}_2)\in\cH^2$ as
\[
\bfg^{(k)}:=\psi_{[x_k]_{n_k}}^*\bfg=(\psi_{[x_k]_{n_k}}^*g_1,\psi_{[x_k]_{n_k}}^*g_2),
\]
and $\bfh^{(k)}=(h^{(k)}_1,h^{(k)}_2)\in\cH^2$ as
\[
  h_i^{(k)}=\Bigl({g_i^{(k)}-\int_K g_i^{(k)}\,d\mu}\Bigr)\Big/{\sqrt{2\cE(\bfg^{(k)})}},
  \quad i=1,2.
\]
Here, we note that $2\cE(\bfg^{(k)})=\nu_{\bfg^{(k)}}(K)=r_{[x_k]_{n_k}}\nu_{\bfg}(K_{[x_k]_{n_k}})>0$ from \Lem{energymeas'} and $Y_{n_{k}}^{(k)}(x_{k})>0$.
Then,
\begin{equation}\label{eq:average}
\int_K h_i^{(k)}\,d\mu=0,
\quad i=1,2,
\end{equation}
\begin{equation}\label{eq:nuh}
\nu_{\bfh^{(k)}}(K)
=2\cE(\bfh^{(k)})
=1,
\end{equation}
and
\begin{align*}
\nu_{\bfh^{(k)}}(\hat B_k)
=\frac{\nu_{\bfg^{(k)}}(\hat B_k)}{2\cE(\bfg^{(k)})}
=\frac{r_{[x_k]_{n_k}}\nu_{\bfg}(K_{[x_k]_{n_k}}\cap B_k)}{r_{[x_k]_{n_k}}\nu_{\bfg}(K_{[x_k]_{n_k}})}
=Y_{n_{k}}^{(k)}(x_{k})\ge 1-2^{-k}.
\end{align*}
From \Eq{poincare}, \Eq{average}, and \Eq{nuh}, $\{\bfh^{(k)}\}_{k\in\N}$ is bounded in $\cF^2$.
Since $\cH^2$ is a {\em finite-dimensional} subspace of $\cF^2$, we can take a subsequence $\{\bfh^{(k(j))}\}$ of $\{\bfh^{(k)}\}$ converging to some $\bfh\in\cH^2$ in $\cF^2$.
We may assume that $2\cE(\bfh-\bfh^{(k(j))})\le 2^{-j}$ for all $j$ and 
\begin{equation}\label{eq:convergence1}
\lim_{j\to\infty}\Ph_{\nu_{\bfh^{(k(j))}}}(x)=\Ph_{\nu_{\bfh}}(x)\text{ for $\nu_{\bfh}$-a.e.\,$x$}
\end{equation}
from \Lem{energyineq}, by taking a further subsequence if necessary.

Since $2\cE(\bfh^{(k(j))})=1$ for all $j$, $\nu_{\bfh}(K)=2\cE(\bfh)=1$.
We also have
\begin{align*}
\sqrt{\nu_{\bfh}(K\setminus \hat B_{k(j)})}
&\le\left|\sqrt{\nu_{\bfh}(K\setminus \hat B_{k(j)})}-\sqrt{\nu_{\bfh^{(k(j))}}(K\setminus \hat B_{k(j)})}\right|+\sqrt{\nu_{\bfh^{(k(j))}}(K\setminus \hat B_{k(j)})}\\
&\le\sqrt{2\cE(\bfh-\bfh^{(k(j))})}+\sqrt{\nu_{\bfh^{(k(j))}}(K\setminus \hat B_{k(j)})}\notag\\
&\le 2^{-j/2}+2^{-k(j)/2}
\le 2^{-j/2}+2^{-j/2},\notag
\end{align*}
that is, $\nu_{\bfh}(K\setminus \hat B_{k(j)})\le 2^{-j+2}$.

From Borel--Cantelli's lemma, for $\nu_{\bfh}$-a.e.\,$x\in K$, $x$ belongs to $\hat B_{k(j)}$ for sufficiently large $j$.
Note that $x\in \hat B_{k(j)}$ implies $\Ph_{\bfg}(\psi_{[x_{k(j)}]_{n_{k(j)}}}(x))\in U_{k(j)}$.
From \Cor{Phi}, for $\nu_{\bfh}$-a.e.\,$x\in K$, $\Ph_{\bfh^{(k(j))}}(x)\in U_{k(j)}$ for sufficiently large $j$.
Therefore, $\Ph_{\bfh}(x)=L$ for $\nu_{\bfh}$-a.e.\,$x\in K$ from \Eq{convergence1}.
From \Lem{renormalize}, we may assume that $L$ is the identity matrix. This completes Step~1 of \Str{str}.

Take $f\in \cFD$ such that $f>0$ on $K\setminus V_0$.
From \Prop{W12}, $\bfh_*(f^2\nu_{\bfh})\ll\sL^2$.
Since $\nu_{\bfh}(V_0)=0$ by (A7), $\bfh_*\nu_{\bfh}\ll\sL^2$.
This meets condition~(U)$_2$, which conflicts with \Thm{general}~(i) since the capacity of any nonempty set is positive. Therefore, the assumption $d_\mathrm{m}\ge2$ is invalid, which completes the proof of \Thm{main1}.
\qed\end{proof}
\end{theopargself}
\subsection{Case of Sierpinski carpets}
Let $D$ and $l$ be integers with $D\ge2$ and $l\ge3$.
We assume that the cardinality of the index set $S$, denoted by $M$, is less than $l^D$.
Let $Q_0=[0,1]^D$, the $D$-dimensional unit cube.
Let $\sC$ be the collection of all cubes that are described as $\prod_{j=1}^D[k_j/l,(k_j+1)/l]$ for $k_j\in\{0,1,\dots,l-1\}$.
Assume that we are given a family $\{\psi_i\}_{i\in S}$ of contractive affine transformations on $\R^D$ of type $\psi_i(x)=l^{-1} x+b_i$ for $b_i\in \R^D$ such that each $\psi_i$ maps $Q_0$ onto some cube in $\sC$, and $\psi_i\ne \psi_{i'}$ if $i\ne i'$.
Let $Q_m=\bigcup_{w\in W_m}\psi_w(Q_0)$ for $m\in\N$ and $K=\bigcap_{m\in\N}Q_m$.
Then, $(K,S,\{\psi_i\}_{i\in S})$ is a self-similar structure and $K$ is called a (generalized) Sierpinski carpet, which satisfies condition~(A1) in Section~4.1.
See \Fig{fig2} in Section~1 for typical examples.
We take the normalized Hausdorff measure on $K$ as the underlying measure $\mu$.
In order to define a self-similar Dirichlet form on $L^2(K,\mu)$, we further assume the following properties, which are due to M.~T.~Barlow and R.~F.~Bass:
\begin{itemize}
  \item (Symmetry) $Q_1$ is preserved by all the isometries of the unit cube $Q_0$.
  \item (Connectedness) $\Int(Q_1)$ is connected and contains a path connecting the hyperplanes $\{x_1=0\}$ and $\{x_1=1\}$.
  \item (ND: Nondiagonality) Let $m\ge1$ and $B$ be a cube in $Q_0$ of side length $2/l^m$ that is described as $\prod_{j=1}^D [k_j/l^m,(k_j+2)/l^m]$ for $k_j\in\{0,1,\dots, l^m-2\}$.
  Then, $\Int(Q_1\cap B)$ is either an empty set or a connected set.
  \item (BI: Borders included) $Q_1$ contains the line segment $\{(x_1,0,\dots,0)\in\R^D \mid 0 \le x_1\le1\}$.
\end{itemize}
In the above description, $\Int(B)$ denotes the interior of $B$ in $\R^D$.
After several studies such as \cite{BB89,KZ92,BB99}, the unique existence of the ``Brownian motion'' on $K$ up to the constant time change was proved in \cite{BBKT10}.
It has an associated nontrivial regular Dirichlet form $(\cE,\cF)$ on $L^2(K,\mu)$ that satisfies conditions~(A2)--(A4), where $r_i$ in (A3) is independent of $i$. We denote $r_i$ by $r$ and take $K\setminus \Int(Q_0)$ as $K^\partial$, which coincides with $V_0$.
Moreover, $(\cE,\cF)$ has the following property:
\begin{equation}\label{eq:reflection}
\text{For any isometries $\psi$ on $Q_0$ and $f\in\cF$, $\psi^*f$ belongs to $\cF$ and $\cE(\psi^*f)=\cE(f)$.}
\end{equation}
From this property, we can easily prove the following:
\begin{lemma}\label{lem:symmetry}
For any isometries $\psi$ on $Q_0$, $f\in\cF$, and $B\in\cB(K)$, we have $\nu_{\psi^*f}(B)=\nu_f(\psi(B))$.
\end{lemma}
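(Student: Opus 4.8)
The plan is to establish the stronger change-of-variables identity
\[
\int_K \varphi\,d\nu_{\psi^*f}=\int_K(\varphi\circ\psi^{-1})\,d\nu_f,\qquad \varphi\in\cF\cap C_c(K),
\]
which is just the measure equality $\nu_{\psi^*f}=(\psi^{-1})_*\nu_f$ tested against continuous functions, and hence is equivalent to the stated claim $\nu_{\psi^*f}(B)=\nu_f(\psi(B))$. First I would record the structural facts about $\psi$: the symmetry hypothesis forces $\psi$ to map $K$ onto $K$ bijectively and bicontinuously, and the normalized Hausdorff measure $\mu$ is invariant under Euclidean isometries, so the pullback $\psi^*$ is a unitary operator on $L^2(K,\mu)$. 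Combined with \Eq{reflection} and polarization, $\psi^*$ is then an isometry of $(\cF,\|\cdot\|_\cF)$ satisfying $\cE(\psi^*u,\psi^*v)=\cE(u,v)$ for all $u,v\in\cF$. I would also note that $\psi^{-1}$ is again an isometry of $Q_0$, so by \Eq{reflection} applied to $\psi^{-1}$ we have $\varphi\circ\psi^{-1}\in\cF$, and it lies in $C_c(K)$ since $\psi^{-1}$ is a homeomorphism; thus $\varphi\circ\psi^{-1}$ stays in the test class.

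Next, for bounded $f\in\cF_b$ I would use the defining identity $\int_K\varphi\,d\nu_f=2\cE(f\varphi,f)-\cE(\varphi,f^2)$. Writing $\varphi=\psi^*\tilde\varphi$ with $\tilde\varphi:=\varphi\circ\psi^{-1}$ and exploiting the multiplicativity of pullback, namely $(\psi^*f)\varphi=\psi^*(f\tilde\varphi)$ and $(\psi^*f)^2=\psi^*(f^2)$, the energy-invariance of $\psi^*$ yields
\[
\int_K\varphi\,d\nu_{\psi^*f}=2\cE(\psi^*(f\tilde\varphi),\psi^*f)-\cE(\psi^*\tilde\varphi,\psi^*(f^2))=2\cE(f\tilde\varphi,f)-\cE(\tilde\varphi,f^2)=\int_K\tilde\varphi\,d\nu_f.
\]
Since $\int_K\tilde\varphi\,d\nu_f=\int_K\varphi\,d\bigl((\psi^{-1})_*\nu_f\bigr)$, and since $\cF\cap C_c(K)$ is uniformly dense in $C(K)=C_c(K)$ by regularity of $(\cE,\cF)$ and hence measure-determining on the compact space $K$, I would conclude $\nu_{\psi^*f}=(\psi^{-1})_*\nu_f$, that is, $\nu_{\psi^*f}(B)=\nu_f(\psi(B))$ for every $B\in\cB(K)$ and every bounded $f$.

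Finally, to reach general $f\in\cF$ I would approximate: pick $f_n\in\cF_b$ with $f_n\to f$ in $\cF$, so that $\psi^*f_n\to\psi^*f$ in $\cF$ by the isometry property. Then $\nu_{\psi^*f_n}(B)\to\nu_{\psi^*f}(B)$ and $\nu_{f_n}(\psi(B))\to\nu_f(\psi(B))$ by the very construction of energy measures as limits of the bounded-function measures through \Eq{energy}, so the identity $\nu_{\psi^*f_n}(B)=\nu_{f_n}(\psi(B))$ passes to the limit. I expect the only genuinely delicate points to be bookkeeping ones, namely tracking the direction of pullback versus pushforward and checking that $\varphi\circ\psi^{-1}$ remains in the test class, rather than any substantial analytic difficulty; the heart of the argument is the single display above, in which the energy-invariance of $\psi^*$ does all the work.
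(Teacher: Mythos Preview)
Your proof is correct and is precisely the argument the paper has in mind: the paper omits the proof entirely, saying only that the lemma follows ``easily'' from the invariance property~\Eq{reflection}, and your computation via the defining identity for $\nu_f$, polarization of~\Eq{reflection}, and approximation from $\cF_b$ is the natural way to fill in that omission.
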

We will confirm that conditions~(A5)--(A7) are also satisfied.
We remark that we do not use the uniqueness of $(\cE,\cF)$ in the subsequent argument.
\begin{remark}
In \cite{BB99}, the nondiagonality condition was assumed only for $m=1$, but it was not sufficient; it was corrected to the above form in \cite{BBKT10}. In some articles such as \cite{Hi05,HK06}, the conditions described in \cite{BB99} were inherited, which should also be corrected.
\end{remark}
Concerning the nondiagonality, we remark the following fact.
See \cite{Ka10} for the proof.
\begin{proposition}
The following are mutually equivalent.
\begin{itemize}
\item Nondiagonality condition {\rm(ND)} holds.
\item {\rm(ND)} with only $m=2$ holds.
\item {\rm (ND)${}_\text{H}$:} Let $B$ be a $D$-dimensional rectangle in $Q_0$ such that each side length of $B$ is either $1/l$ or $2/l$ and $B$ is a union of some elements of $\sC$. Then, $\Int(B\cap Q_1)$ is either an empty set or a connected set.
\end{itemize}
\end{proposition}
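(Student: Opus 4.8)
The plan is to show the three conditions equivalent through the cycle $\mathrm{(ND)}\Rightarrow\mathrm{(ND)}_{m=2}\Rightarrow\mathrm{(ND)}_{\mathrm H}\Rightarrow\mathrm{(ND)}$, where $\mathrm{(ND)}_{m=2}$ denotes the nondiagonality condition restricted to the single value $m=2$. The first arrow is immediate, since $m=2$ is one instance of (ND). The real content is that the connectivity of $\Int(Q_1\cap B)$ for a test box $B$ is governed by a purely combinatorial datum that is \emph{independent of the scale of $B$}; this is what collapses the infinite family (ND) down to level $m=2$ and links it to the ``thin'' rectangles of $\mathrm{(ND)}_{\mathrm H}$.

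\emph{Combinatorial reformulation.} First I would record the scaling remark that, because $l\ge3$, the cell walls $\{x_j=p/l\}$ ($p\in\Z$) are spaced at distance $l^{m-1}\ge2$ apart in the $l^{-m}$-grid, so a cube $B=\prod_{j=1}^D[k_j/l^m,(k_j+2)/l^m]$ crosses at most one such wall transversally in each coordinate direction (and exactly one in every direction when $m=1$). Let $J\subseteq\{1,\dots,D\}$ collect the directions in which $B$ crosses a wall. These walls partition $B$ into $2^{|J|}$ closed subboxes $B_\varepsilon$ indexed by $\varepsilon\in\{-,+\}^J$, each $B_\varepsilon$ lying in a unique cube $C_\varepsilon\in\sC$; thus $Q_1\cap B=\bigcup_{\varepsilon\in P}B_\varepsilon$ with $P=\{\varepsilon\mid C_\varepsilon\subseteq Q_1\}$. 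An elementary fact about finite unions of axis-parallel boxes then gives that $\Int(Q_1\cap B)$ is connected exactly when $P$, regarded as an induced subgraph of the hypercube graph on $\{-,+\}^J$, is connected: two subboxes contribute to interior-connectivity precisely when they share a $(D-1)$-dimensional facet, that is, when their indices differ in a single coordinate.

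\emph{Matching cubes and rectangles.} The set $P$ depends only on which cubes of $\sC$ adjacent to the crossed walls lie in $Q_1$, not on the side length $2/l^m$ of $B$. For $\mathrm{(ND)}_{\mathrm H}\Rightarrow\mathrm{(ND)}$, given an arbitrary test cube $B$ with crossing set $J$ I would take $B'$ to be the rectangle spanning the two cells $[(p_j-1)/l,(p_j+1)/l]$ in each direction $j\in J$ and the single cell occupied by $B$ in each direction $j\notin J$; then every side of $B'$ has length $1/l$ or $2/l$, $B'$ is a union of elements of $\sC$, and $B'$ produces the same index set $P$, so the two interiors are connected or disconnected together. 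Conversely, for $\mathrm{(ND)}_{m=2}\Rightarrow\mathrm{(ND)}_{\mathrm H}$, given a rectangle $B'$ with thick directions $J$ I would realize the same $P$ by a level-$2$ cube $B$ straddling the wall $x_j=p_j/l$ in each $j\in J$ (taking $k_j=p_j l-1$) and lying inside the prescribed single cell in each $j\notin J$ (taking $k_j=q_j l$); here the estimate $l^{m-1}\ge2$ at $m=2$ is exactly what lets the cube fit inside one cell in the non-crossing directions while staying in $Q_0$. It is this feature --- the ability of a level-$2$ cube to probe a direction \emph{without} crossing a wall --- that makes $m=2$, rather than $m=1$, the decisive scale, since a level-$1$ cube necessarily crosses a wall in every direction.

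\emph{Main obstacle.} The conceptual step is the scale-independence of $P$ and its reduction to hypercube connectivity, which is clean. The delicate part is the bookkeeping of degenerate and boundary-touching configurations: when a wall coincides with a face of $B$ rather than cutting its interior, when $J=\emptyset$ so that $B$ sits in one cell and $\Int(Q_1\cap B)$ is trivially $\Int(B)$ or empty, and the verification that for the admissible ranges of $k_j,p_j,q_j$ the constructed cubes and rectangles remain inside $Q_0$ and are genuine unions of elements of $\sC$. These checks are routine but must be carried out carefully so that the correspondence $B\leftrightarrow B'$ preserves $P$ in every case.
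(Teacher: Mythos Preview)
The paper does not actually prove this proposition; it states the result and refers the reader to Kajino~\cite{Ka10} for the proof. So there is no in-paper argument to compare your proposal against.

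That said, your proposal is a correct and self-contained direct argument, and it is the natural one. The key reduction---that $\Int(Q_1\cap B)$ is (empty or) connected precisely when the occupancy pattern $P\subseteq\{-,+\}^J$ is (empty or) connected as an induced subgraph of the $|J|$-hypercube, and that $P$ depends only on which level-$1$ cells adjacent to the crossed walls lie in $Q_1$---is exactly right. Your constructions matching a general level-$m$ cube to an $\mathrm{(ND)}_{\mathrm H}$ rectangle, and an $\mathrm{(ND)}_{\mathrm H}$ rectangle to a level-$2$ cube, both preserve $P$ for the reasons you give; the observation that $l\ge3$ is precisely what permits a level-$2$ interval of length $2/l^2$ to sit strictly inside a single level-$1$ cell in the non-crossing directions is the essential geometric point, and your remark that this is what distinguishes $m=2$ from $m=1$ is well taken.

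Two cosmetic remarks. First, the inequality ``$l^{m-1}\ge2$'' in your opening paragraph fails at $m=1$ (where $l^0=1$), but you immediately handle $m=1$ separately, so nothing is lost. Second, your connectivity criterion should strictly read ``$P$ is empty or connected,'' matching the disjunction in the statement; again this is implicit in what you wrote. The boundary and degenerate checks you flag as the main obstacle are indeed routine and go through as you outline.
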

We list some properties of this Dirichlet form and the associated objects.
  The Brownian motion has the heat kernel density $p(t,x,y)$ that is continuous in $(t,x,y)\in(0,\infty)\times K\times K$ such that, for some positive constants $c_{4.i}$ $(i=3,4,5,6)$,
\begin{align}\label{eq:Aronson}
    &c_{4.3} t^{-d_\mathrm{s}/2}\exp\bigl(-c_{4.4} (|x-y|_{\R^D}^{d_\mathrm{w}}/t)^{1/(d_\mathrm{w}-1)}\bigr)\notag\\
    &\quad\le 
    p(t,x,y)
   \le c_{4.5} t^{-d_\mathrm{s}/2}\exp\bigl(-c_{4.6}(|x-y|_{\R^D}^{d_\mathrm{w}}/t)^{1/(d_\mathrm{w}-1)}\bigr),
   \quad t\in(0,1],\ x,y\in K.
\end{align}
Here, $d_\mathrm{s}=(2\log M)/\log (M/r)>1$ and $d_\mathrm{w}=\log (M/r)/\log l\ge2$ (cf.\ \cite{BB99,BBK06,BBKT10}).
The constants $d_\mathrm{s}$ and $d_\mathrm{w}$ are called the spectral dimension and the walk dimension, respectively.
The resolvent operators are compact ones on $L^2(K,\mu)$.
 The Sobolev inequality~\Eq{sobolev} holds if $d_\mathrm{s}>2$.
Indeed, from \cite{Va85}, \Eq{sobolev} is equivalent to the on-diagonal upper heat kernel estimate
\begin{equation}\label{eq:upper}
p(t,x,x)\le c_{4.7}t^{-d_\mathrm{s}/2},\quad t\in(0,1],\ x\in K
\end{equation}
for some positive constant $c_{4.7}$.
  The domain $\cF$ is characterized as a Besov space. More precisely stated, the Besov spaces on $(K,\mu)$ are defined as follows: For $1\le p<\infty$, $\b\ge 0$ and $m\in \Z_+$, we set
\[
a_m (\beta, f):=\gamma^{m\beta}\biggl(\gamma^{md_\mathrm{H}}
\iint_{\{(x,y)\in K\times K\mid |x-y|_{\R^D}<c\gamma^{-m}\}}|f(x)-f(y)|^p\,\mu (dx)\,\mu(dy)\biggr)^{1/p}
\]
for $f\in L^p(K,\mu)$,
where $\gamma\in(1,\infty)$ and $c\in(0,\infty)$ are fixed constants, and $d_\mathrm{H}$ is the Hausdorff dimension of $K$, which is equal to $\log M/\log l $. 
Note that the relation 
\begin{equation}\label{eq:einstein}
d_\mathrm{H}={d_\mathrm{w} d_\mathrm{s}}/2\ge d_\mathrm{s}
\end{equation}
holds.
Then, for $1\le q\le \infty$, the Besov space
$\Lambda^\beta_{p,q}(K)$ is defined as the set of all $f\in L^p(K,\mu)$ such that
${\bar a}(\b, f):=\{a_m(\b, f)\}_{m=0}^{\infty}\in l^q$. 
$\Lambda^\b_{p,q}(K)$ is a Banach space with  
norm $\| f\|_{\Lambda^\beta_{p,q}(K)}
:=\|f\|_{L^p(K,\mu)}+\|{\bar a}(\beta, f)\|_{l^q}$. 
Different selections of $c>0$ and $\gamma>1$ provide the same space $\Lambda^\beta_{p,q}(K)$ with equivalent norms.
For $f\in L^2(K,\mu)$ and $\dl>0$, we define
\begin{equation}\label{eq:Edl}
E_\dl(f):=\dl^{-d_\mathrm{w}-d_\mathrm{H}}\iint_{\{(x,y)\in K\times K\mid |x-y|_{\R^D}<\dl\}}|f(x)-f(y)|^2\,\mu(dx)\,\mu(dy).
\end{equation}
\begin{theorem}[{cf.\ \cite[Theorem~5.1]{Gr03}, \cite{Kum00}}]\label{th:E}
The domain $\cF$ is equal to $\Lambda_{2,\infty}^{d_\mathrm{w}/2}(K)$, and the norm $\|\cdot\|_\cF$ is equivalent to $\|\cdot\|_{\Lambda_{2,\infty}^{d_\mathrm{w}/2}(K)}$.
Moreover, $f\in\cF$ if and only if $f\in L^2(K,\mu)$ and $\limsup_{\dl\to0}E_\dl(f)<\infty$. 
Further, for $f\in \cF$,
\[
\cE(f)\asymp \sup_{\dl>0}E_\dl(f)\asymp \limsup_{\dl\to0}E_\dl(f).
\]
\end{theorem}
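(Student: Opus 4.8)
The plan is to deduce \Thm{E} from the sub-Gaussian heat kernel bounds \Eq{Aronson}, by comparing every quantity in the statement to the approximating energy of the heat semigroup at a suitably matched time scale. Let $\{P_t\}_{t>0}$ be the Markov semigroup of $(\cE,\cF)$, and for $f\in L^2(K,\mu)$ and $t>0$ set
\[
  \cE^{(t)}(f)=\frac1t\bigl(f-P_tf,f\bigr)_{L^2(K,\mu)}
  =\frac1{2t}\iint_{K\times K}p(t,x,y)\,|f(x)-f(y)|^2\,\mu(dx)\,\mu(dy).
\]
By the spectral theorem, $\cE^{(t)}(f)$ increases to $\cE(f)$ as $t\searrow0$, with the convention $\cE(f)=+\infty$ when $f\in L^2(K,\mu)\setminus\cF$; hence $\cE(f)=\sup_{t>0}\cE^{(t)}(f)=\lim_{t\to0}\cE^{(t)}(f)$, and $f\in\cF$ exactly when this quantity is finite. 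The whole theorem will follow once I establish a comparison of $\cE^{(t)}(f)$ with the Besov functional $E_\delta(f)$ of \Eq{Edl} at the matched scale $\delta=t^{1/d_\mathrm{w}}$, which is the scale forced by the relation $d_\mathrm{H}=d_\mathrm{w}d_\mathrm{s}/2$ in \Eq{einstein}.

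For the lower bound I would restrict the double integral to $\{|x-y|<\delta\}$ with $\delta=t^{1/d_\mathrm{w}}$: there the exponent in \Eq{Aronson} is bounded, so $p(t,x,y)\asymp t^{-d_\mathrm{s}/2}=\delta^{-d_\mathrm{H}}$, and comparing with \Eq{Edl} gives $\cE^{(t)}(f)\gtrsim E_\delta(f)$. For the upper bound I would split $K\times K$ into the inner region $\{|x-y|<\delta\}$ and the dyadic shells $\{2^k\delta\le|x-y|<2^{k+1}\delta\}$, $k\ge0$. On the $k$-th shell the upper estimate in \Eq{Aronson} yields $p(t,x,y)\lesssim\delta^{-d_\mathrm{H}}\exp(-c\,2^{kd_\mathrm{w}/(d_\mathrm{w}-1)})$, while the shell integral of $|f(x)-f(y)|^2$ equals $(2^{k+1}\delta)^{d_\mathrm{w}+d_\mathrm{H}}E_{2^{k+1}\delta}(f)$; upon summing, the super-exponential decay $\exp(-c\,2^{kd_\mathrm{w}/(d_\mathrm{w}-1)})$ dominates the polynomial growth $2^{k(d_\mathrm{w}+d_\mathrm{H})}$, so the series converges.

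Taking $\sup_{t\in(0,1]}$ (so that $\delta$ ranges over $(0,1]$) in the lower bound gives $\cE(f)\gtrsim\sup_{\delta>0}E_\delta(f)$, once one checks that the large-scale values of $E_\delta$ are harmless, being controlled by $\|f\|_{L^2(K,\mu)}^2$; the upper bound gives the reverse, so $\cE(f)\asymp\sup_{\delta>0}E_\delta(f)$ and $f\in\cF$ iff $\sup_\delta E_\delta(f)<\infty$. The delicate point, which I expect to be the main obstacle, is the passage to $\limsup_{\delta\to0}$: from the lower bound $\limsup_{\delta\to0}E_\delta(f)\lesssim\cE(f)$ is immediate, but for the reverse I must localize the upper bound to small scales. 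I would cut the shell sum at a fixed scale $R$, bound the near shells by $\sup_{\delta'\le 2R}E_{\delta'}(f)$, and show that the far shells contribute at most $\delta^{-d_\mathrm{w}}\|f\|_{L^2(K,\mu)}^2\exp(-c\,(R/\delta)^{d_\mathrm{w}/(d_\mathrm{w}-1)})$, which tends to $0$ as $t\to0$ because the sub-Gaussian tail beats the polynomial prefactor; letting $R\to0$ then yields $\cE(f)=\lim_{t\to0}\cE^{(t)}(f)\lesssim\limsup_{\delta\to0}E_\delta(f)$, and in particular finiteness of $\limsup_{\delta\to0}E_\delta(f)$ already forces $f\in\cF$.

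It remains to identify $\cF$ with $\Lambda^{d_\mathrm{w}/2}_{2,\infty}(K)$. Choosing $\b=d_\mathrm{w}/2$ in the definition of $a_m(\b,f)$ and comparing with \Eq{Edl}, a direct computation shows $a_m(d_\mathrm{w}/2,f)^2=c^{d_\mathrm{w}+d_\mathrm{H}}E_{c\gamma^{-m}}(f)$, so the Besov seminorm $\sup_m a_m(d_\mathrm{w}/2,f)$ is, up to the constant $c^{(d_\mathrm{w}+d_\mathrm{H})/2}$, the supremum of $E_\delta(f)^{1/2}$ over the geometric sequence $\delta=c\gamma^{-m}$. Since $E_\delta(f)\le\gamma^{d_\mathrm{w}+d_\mathrm{H}}E_{\delta'}(f)$ whenever $\delta\le\delta'\le\gamma\delta$ (because enlarging $\delta$ only enlarges the domain of integration), this geometric-scale supremum is comparable to $\sup_{\delta>0}E_\delta(f)$, the large scales again being absorbed into $\|f\|_{L^2(K,\mu)}^2$. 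Combining with the previous paragraph gives $\|f\|_{\Lambda^{d_\mathrm{w}/2}_{2,\infty}(K)}\asymp\|f\|_\cF$ and the equality of the two domains, completing the proof.
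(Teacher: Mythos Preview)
The paper does not give its own proof of this theorem; it is quoted from the literature (Grigor'yan \cite[Theorem~5.1]{Gr03} and Kumagai \cite{Kum00}), so there is no in-paper argument to compare against. Your approach --- comparing the semigroup approximation $\cE^{(t)}(f)=t^{-1}(f-P_tf,f)$ to $E_\dl(f)$ at the matched scale $\dl=t^{1/d_\mathrm{w}}$ via the two-sided sub-Gaussian bounds \Eq{Aronson}, with a dyadic shell decomposition for the upper estimate and a cutoff at a fixed scale $R$ to handle the $\limsup$ --- is exactly the method of \cite{Gr03}, and your outline is correct.

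Two small repairs. First, in the far-shell estimate the prefactor should be $\dl^{-d_\mathrm{w}-d_\mathrm{H}}$ rather than $\dl^{-d_\mathrm{w}}$: you pick up $t^{-1}=\dl^{-d_\mathrm{w}}$ from the definition of $\cE^{(t)}$ and an additional $t^{-d_\mathrm{s}/2}=\dl^{-d_\mathrm{H}}$ from the on-diagonal part of the upper heat kernel bound. This does not affect the conclusion, since the sub-Gaussian tail $\exp(-c(R/\dl)^{d_\mathrm{w}/(d_\mathrm{w}-1)})$ still dominates any polynomial in $\dl^{-1}$ as $\dl\to0$. Second, saying that the large-scale values of $E_\dl$ are ``controlled by $\|f\|_{L^2(K,\mu)}^2$'' would only yield $\sup_{\dl>0}E_\dl(f)\lesssim\cE_1(f)$, not the stated $\sup_{\dl>0}E_\dl(f)\lesssim\cE(f)$. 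For $\dl$ of order the diameter of $K$ one has
\[
E_\dl(f)\lesssim \iint_{K\times K}|f(x)-f(y)|^2\,\mu(dx)\,\mu(dy)=2\Bigl\|f-\int_K f\,d\mu\Bigr\|_{L^2(K,\mu)}^2\lesssim\cE(f)
\]
by the spectral gap \Eq{Npoincare}, and this is what closes the estimate. (You also use conservativeness implicitly in the identity $\cE^{(t)}(f)=\frac1{2t}\iint p(t,x,y)|f(x)-f(y)|^2\,d\mu\,d\mu$; this is fine here because (A2) guarantees $P_t1=1$.) With these adjustments your argument goes through.
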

Here, $a_1\asymp a_2$ represents that there exists a constant $c\ge1$ depending only on $K$ and $(\cE,\cF)$ such that $c^{-1}a_1\le a_2\le ca_1$ holds.

  From this characterization, condition~(A5) is verified.
  Condition (A6) is confirmed, for example, by (A3), \Thm{E}, and a property of the unfolding operator introduced in \cite[p.~665]{BBKT10}. This is also assured by \Lem{reflection} in the next section.
  Condition~(A7) is proved in \cite[Remark~5.3]{BBKT10} under some extra assumptions, e.g., the set $\{(x_2,\dots,x_D)\in \R^{D-1}\mid (0,x_2,\dots,x_D)\in K\}$ also satisfies the conditions corresponding to (H1)--(H4). The proof is based on \cite[Proposition~3.8]{HK06}, and these extra assumptions were introduced for the main topic of the paper~\cite{HK06}, i.e., the characterization of the trace space of $\cF$ on subsets such as surfaces of Sierpinski carpets.
  However, in order to prove condition~(A7) only, such assumptions are in fact not necessary, as seen from the careful modification of the arguments in \cite{HK06}.  
  Since the setup of \cite{HK06} is quite complicated and it is not easy to extract and modify the necessary parts for this purpose, this will be discussed in Section~5 and the following proposition is proved there. 
\begin{proposition}\label{prop:A7}
Condition {\rm(A7)} holds true.
In particular, $\nu_f(K^\partial)=0$ for any $f\in\cF$.
\end{proposition}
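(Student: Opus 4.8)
The plan is to prove the equivalent assertion that $\nu_f(K^\partial)=0$ for every $f\in\cF$. This is equivalent to $\nu(K^\partial)=0$: one direction is immediate since $\nu_f\ll\nu$, and for the converse I would use \Prop{em} to pick $g\in\cF$ with $\nu_g$ minimal energy-dominant, so that $\nu$ and $\nu_g$ are mutually absolutely continuous and $\nu_g(K^\partial)=0$ forces $\nu(K^\partial)=0$. Since $K^\partial=V_0=K\setminus\Int(Q_0)$ is the union of the finitely many faces $K\cap\{x_j=0\}$ and $K\cap\{x_j=1\}$, finite subadditivity reduces everything to showing $\nu_f(F)=0$ for a single face, say $F=K\cap\{x_1=0\}$.

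First I would set up the self-similar covering of $F$. For $m\in\N$ put $\Lambda_m=\{w\in W_m\mid K_w\cap F\ne\emptyset\}$ and $U_m=\bigcup_{w\in\Lambda_m}K_w$. The sets $U_m$ decrease and $\bigcap_m U_m=F$ (any $x\in U_m$ lies within $\diam K_w\le\sqrt{D}\,l^{-m}$ of $F$), so by continuity from above of the finite measure $\nu_f$ we get $\nu_f(F)=\lim_{m\to\infty}\nu_f(U_m)$. Using \Lem{energymeas'}(ii), \Eq{323}, and $r_w=r^{m}$ for $w\in W_m$,
\[
\nu_f(U_m)=\sum_{w\in\Lambda_m}\nu_f(K_w)=2r^{-m}\sum_{w\in\Lambda_m}\cE(\psi_w^*f).
\]
Because each $w\in\Lambda_{m+1}$ factors as $w=w'u$ with $w'\in\Lambda_m$ and $u$ ranging over a set of level-one cells isometric to $\Lambda_1$ (by the symmetry assumption), the quantity $\nu_f(U_m)$ obeys a renewal recursion governed by the single-scale boundary energy $\sum_{u\in\Lambda_1}\cE(\psi_u^*g)$ of an arbitrary $g\in\cF$.

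The main obstacle is that this recursion does \emph{not} contract at a single scale: one cannot bound $\sum_{u\in\Lambda_1}\cE(\psi_u^*g)$ by $\theta\,r\,\cE(g)$ with a fixed $\theta<1$, since a function $g\in\cF$ whose variation is concentrated in the boundary strip carries almost all of its energy in the cells of $\Lambda_1$. This is exactly what distinguishes $\nu_f$ from $\mu$, for which $\mu(U_m)=(\#\Lambda_1/M)^m\to0$ yields $\mu(F)=0$ for free; note also that the coordinate functions themselves are \emph{not} available here, as they fail to lie in $\cF$ when $d_\mathrm{w}>2$. To force $\nu_f(U_m)\to0$ I would instead control the boundary energy through the trace characterization of $\cF$ developed in \cite{HK06}. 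Using the reflection lemma \Lem{reflection} together with the Besov description of $\cF$ in \Thm{E}, functions in $\cF$ extend across $F$ with comparable energy, so $F$ may be treated as an interior $(D-1)$-dimensional slice; the trace $f|_F$ then lies in a Besov space on $F$ whose smoothness is dictated by the bulk walk dimension $d_\mathrm{w}$ while $F$ carries only the strictly smaller Hausdorff dimension. Quantifying this dimensional mismatch is precisely the estimate obtained by adapting \cite[Proposition~3.8]{HK06}, and it supplies the genuine multi-scale decay of $\nu_f(U_m)$ that the single-scale bound cannot give.

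The technical heart---and the step I expect to be genuinely delicate---is therefore establishing this uniform trace/energy bound near a face \emph{without} the auxiliary geometric hypotheses imposed in \cite{HK06} (for instance, that lower-dimensional slices of $K$ are themselves admissible carpets). The remaining ingredients, namely the reduction to one face, the scaling identity for $\nu_f(U_m)$, and the passage to the limit, are elementary bookkeeping built on \Lem{energymeas'} and \Thm{E}; the complete argument is carried out in Section~5.
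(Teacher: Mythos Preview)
Your outline has a genuine circularity. You invoke \Lem{energymeas'}(ii) to write
\[
\nu_f(U_m)=\sum_{w\in\Lambda_m}\nu_f(K_w)=2r^{-m}\sum_{w\in\Lambda_m}\cE(\psi_w^*f),
\]
but \Lem{energymeas'} is proved \emph{using} (A7); the paper flags this explicitly at the start of Section~5.1 (``condition~(A7) cannot be used; in particular, \Lem{energymeas'} and \Eq{additive} are not available''). Without (A7) you cannot split $\nu_f(U_m)$ additively over cells, nor identify $\nu_f(K_w)$ with $2r^{-m}\cE(\psi_w^*f)$. What survives is \Lem{energymeas}(ii), which gives only the one-sided bound $\nu_f(F)\le 2r^{-m}\sum_{w\in\Lambda_m}\cE(\psi_w^*f)=2\cE^{\Lambda_m}(f)$; this inequality is exactly what the paper uses (see \Eq{energyineq} and the proof of \Prop{A7'}). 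So your identity must be downgraded to an inequality and routed through $\cE^{\Lambda_m}$ rather than $\nu_f(U_m)$.

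More substantively, the mechanism you sketch for the decay $\cE^{\Lambda_m}(f)\to0$ --- a trace-theoretic ``dimensional mismatch'' adapted from \cite{HK06} applied directly to $f$ --- is not what the paper does, and you correctly identify that making it work without the extra geometric hypotheses of \cite{HK06} is the whole difficulty. The paper sidesteps trace theory entirely: it replaces $f$ by the harmonic extensions $h_n=H_{I_n}f\in\cH(I_n)$, proves a contraction $\cE^{I_{n+m}}(h)\le c_0\,\cE^{I_n}(h)$ for all $h\in\cH(I_n)$ (\Prop{keyp}) via a compactness argument for rescaled boundary-harmonic functions (\Lem{cpt}, which in turn rests on \Prop{key'}) together with \Lem{B4}, deduces $\nu_{h_n}(K^\partial_{D,0})=0$ by iteration, and then passes to $f$ by weak convergence $h_n\rightharpoonup f$ and Ces\`aro means. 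Your proposal contains neither the harmonic replacement nor the compactness step, and the renewal recursion you set up for a general $f\in\cF$ does not contract (as you yourself note). So as written the proposal reduces the problem correctly but does not supply --- or even outline --- an argument that closes the gap; the route taken in Section~5 is structurally different from the one you describe.
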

For the time being, we admit this proposition and continue arguments.
  The main theorem of this subsection is as follows.
\begin{theorem}\label{th:main2}
$1\le d_\mathrm{m}\le d_\mathrm{s}$. In particular, if $d_\mathrm{s}<2$, then $d_\mathrm{m}=1$.
\end{theorem}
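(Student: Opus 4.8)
The plan is to run the three steps of \Str{str}, following the proof of \Thm{main1} but replacing the finite-dimensionality of $\cH$, which fails for carpets, by a compactness argument. The lower bound $d_\mathrm{m}\ge1$ is immediate from \Prop{nontrivial}, since $(\cE,\cF)$ is nontrivial. For the upper bound it suffices to fix an arbitrary $d\in\N$ with $d\le d_\mathrm{m}$ and show that $d\le d_\mathrm{s}$; since $d_\mathrm{m}\in\Z_+$ and $d_\mathrm{s}>1$, this yields $1\le d_\mathrm{m}\le d_\mathrm{s}$ and forces $d_\mathrm{m}=1$ when $d_\mathrm{s}<2$.

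Step~1. By \Prop{invertible} there is $\bfg\in\cH^d$ satisfying \Eq{invertible}. Mimicking the blowup in the proof of \Thm{main1}, I would fix $a>0$ with $\nu_{\bfg}(\{\det\Phi_{\bfg}\ge a\})>0$, choose $L\in\PSM(d;a)$ in the support of the pushforward of $\nu_{\bfg}$ restricted to $\{\det\Phi_{\bfg}\ge a\}$ by $\Phi_{\bfg}$, then rescale by the cell maps $\psi_{[x_k]_{n_k}}^*$ and normalize to unit energy, obtaining $\bfh^{(k)}\in\cH^d$ with $\nu_{\bfh^{(k)}}(K)=1$ whose energy measure concentrates, up to mass $2^{-k}$, on a set where $\Phi_{\bfh^{(k)}}$ is within $1/k$ of $L$ by \Cor{Phi}. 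The sequence $\{\bfh^{(k)}\}$ is bounded in $\cF^d$. In the p.c.f.\ case an $\cF^d$-convergent subsequence comes from $\dim\cH<\infty$; here I would instead invoke the compactness proposition for rescaled harmonic functions proved in Section~5 to extract a subsequence converging in $\cF^d$ to some $\bfh\in\cH^d$. Then \Lem{energyineq} together with a Borel--Cantelli argument gives $\Phi_{\bfh}=L$ $\nu_{\bfh}$-a.e., and \Lem{renormalize} lets me take $L$ to be the identity matrix.

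Steps~2 and~3. With $\Phi_{\bfh}$ the identity and $\nu_{\bfh}(K)=1$, parts (a) and (b) of (U)$_d$/(U')$_d$ hold. Using \Prop{A7} (hence $\nu_{\bfh}(K^\partial_*)=0$ by \Lem{energymeas'}) I would pick a bounded $f\in\cFD$ and apply \Prop{W12}, so that the density $\xi$ of $\tilde\bfh_*(\tilde f^2\nu_{\bfh})$ satisfies $\sqrt\xi\in W^{1,2}(\R^d)$. When $d_\mathrm{s}<2$ it is enough, exactly as in \Thm{main1}, to take $f>0$ on $K\setminus V_0$ and deduce $\tilde\bfh_*\nu_{\bfh}\ll\sL^d$, verifying (U)$_d$; since the process is then point recurrent and $\Cp(\{x\})>0$ for every $x$, \Thm{general}~(i) forces $d=1$. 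When $d_\mathrm{s}>2$ I would arrange $\tilde f^2\ge1$ $\nu_{\bfh}$-a.e., so that $\tilde\bfh_*\nu_{\bfh}\le\tilde\bfh_*(\tilde f^2\nu_{\bfh})$ has density $\le\xi$, verifying (U')$_d$; the Sobolev inequality \Eq{sobolev} holds (being equivalent to the on-diagonal bound \Eq{upper}), so \Thm{general}~(ii) gives $d\le d_\mathrm{s}$.

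The main obstacle is the compactness in Step~1. Without $\dim\cH<\infty$, boundedness of $\{\bfh^{(k)}\}$ only yields weak $\cF^d$-convergence, which is too weak to pass to the limit in $\Phi$ through \Lem{energyineq}; producing a strongly convergent subsequence is precisely the content of the Section~5 proposition, which I expect to rest on the elliptic Harnack and heat-kernel regularity behind \Eq{Aronson} together with the Besov characterization of \Thm{E} to control the energies along the blowup. A secondary difficulty is the domination step when $d_\mathrm{s}>2$: since $\nu_{\bfh}$ may charge every neighborhood of $K^\partial$, a bounded $f\in\cFD$ with $\tilde f^2\ge1$ $\nu_{\bfh}$-a.e.\ cannot be produced from a single global cutoff, and one must instead assemble the dominating $\xi$ from the self-similar cell pieces, exploiting $\nu_{\bfh}(K^\partial_*)=0$ and the scaling relation \Lem{scale}.
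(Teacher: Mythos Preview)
Your overall architecture is correct and matches the paper's. The real gap is in Step~1: you treat the compactness proposition (\Prop{key}) as a black box that eats any sequence $\{\bfh^{(k)}\}$ bounded in $\cF^d$, but its hypotheses are stronger than that. \Prop{key} applies to $\psi_{w_n}^* h_n$ only when (a) $K_{\cN_3(w_n)}\cap K^\partial=\emptyset$ and (b) $\sup_n r^{|w_n|}\nu_{h_n}(K_{\cN_3(w_n)})<\infty$. Neither follows from $\nu_{\bfh^{(k)}}(K)=1$; the ratio $\nu_{\bfg}(K_{\cN_3([x_k]_{n_k})})/\nu_{\bfg}(K_{[x_k]_{n_k}})$ can blow up along a generic blowup sequence, and nothing prevents $[x_k]_{n_k}$ from hugging $K^\partial$. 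The paper handles this \emph{before} the blowup: it excises from $B_0$ both a neighborhood $K_{A(n_0)}$ of $K^\partial$ (using $\nu_{\bfg}(K^\partial)=0$, i.e.\ \Prop{A7}) and the set $K_\infty=\liminf_n K_{G_n}$ where the $\cN_3$-to-cell energy ratio is bad infinitely often (a Fatou/counting argument shows $\nu_{\bfg}(K_\infty)\le\dl/3$). Only points surviving both excisions are used as blowup centers, and then the scales $n_k$ are chosen so that simultaneously $Y^{(k)}_{n_k}(x_k)\ge 1-2^{-k}$ and the ratio bound \Eq{bound} holds. This preparation is the genuinely new ingredient over the p.c.f.\ proof; without it \Prop{key} cannot be invoked.

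Two smaller points. First, you omit the case $d_\mathrm{s}=2$; the paper closes it by noting that the on-diagonal bound \Eq{upper} is monotone in $d_\mathrm{s}$, so \Eq{sobolev} holds with any exponent $>2$, yielding $d\le 2$ since $d\in\N$. Second, for Step~2 when $d_\mathrm{s}>2$ you correctly spot that no global $f\in\cFD$ with $\tilde f^2\ge1$ $\nu_{\bfh}$-a.e.\ exists, but your proposed fix (``assemble $\xi$ from cell pieces'') is more complicated than needed. The paper simply picks one $w\in W_*$ with $K_w\cap K^\partial=\emptyset$ and $\nu_{\bfh}(K_w)>0$, takes $f\in\cFD\cap C(K)$ with $f=1$ on $K_w$, and then \emph{replaces} $\bfh$ by $\psi_w^*\bfh$: \Cor{Phi} keeps $\Phi_{\psi_w^*\bfh}$ equal to the identity, and \Lem{scale} gives $(\psi_w^*\tilde\bfh)_*\nu_{\psi_w^*\bfh}\le r_w\,\tilde\bfh_*(\tilde f^2\nu_{\bfh})=r_w\,\xi\,\sL^d$ with $\sqrt\xi\in W^{1,2}(\R^d)$ by \Prop{W12}. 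This single-cell trick verifies (U')$_d$ directly and works uniformly in all three regimes of $d_\mathrm{s}$.
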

We note that $d_\mathrm{s}<2$ if and only if the diffusion process associated with $(\cE,\cF)$ is point recurrent.
In view of \Eq{einstein}, $d_\mathrm{s}<2$ holds in particular for 2-dimensional Sierpinski carpets (that is, when $D=2$).
For the 3-dimensional standard Sierpinski carpet (shown in the rightmost figure of \Fig{fig2}), $2<d_\mathrm{s}<3$ holds from \cite[Corollary~5.3]{BB99}, which implies that $d_\mathrm{m}$ is either $1$ or $2$. It has not been determined which is true.

Compared with the case of p.c.f.\ fractals in Section~4.2, the proof of \Thm{main2} is more complicated in that the space $\cH$ of all harmonic functions is infinite-\hspace{0pt}dimensional, so that much work is required to select a converging sequence from a bounded set in $\cH$.

For the proof of \Thm{main2}, we introduce one more notation.
\begin{definition}\label{def:cell}
For $A\subset W_m$ for $m\in\Z_+$, we set $K_A=\bigcup_{w\in A}K_w$.
For $w\in W_m$ with $m\in\Z_+$, we define $\cN_0(w)=\{w\}$ and
\[
  \cN_n(w)=\{v\in W_m\mid K_v\cap K_{\cN_{n-1}(w)}\ne\emptyset\},\quad
  n=1,2,3,\dots,
\]
inductively.
\end{definition}
We remark the following:
  Let $f\in\cF$, $m\in\N$, and $A,A'\subset W_m$ with $A\cap A'=\emptyset$.
  From \Lem{energymeas'}~(i), we have
\begin{equation}\label{eq:additive}
\nu_f(K_{A\cup A'})=\nu_f(K_{A})+\nu_f(K_{A'}).
\end{equation}
We also note that for any $n\in \Z_+$, $\#\cN_n(w)\le (2n+1)^D$ for $w\in W_*$ and 
\[
\sup_{m\in\N}\max_{v\in W_m}\#\{w\in W_m\mid v\in \cN_n(w)\}\le (2n+1)^D.
\]
\begin{theopargself}
\begin{proof}[of \Thm{main2}]
  Since $(\cE,\cF)$ is nontrivial, it is sufficient to prove that $d_\mathrm{m}\le d_\mathrm{s}$ from \Prop{nontrivial}.
Take $d\in\N$ arbitrarily such that $d\le d_\mathrm{m}\,(\,\le+\infty)$.
From \Prop{invertible}, there exists $\bfg=(g_1,\dots,g_d)\in\cH^d$ 
that satisfies \Eq{invertible}.
We may assume $\nu_{\bfg}(K)=1$ by multiplying $\bfg$ by a normalizing constant.
There exists $a>0$ such that
\begin{equation}\label{eq:A1}
\nu_{\bfg}(B_0)=:\dl>0,
\mbox{ where }B_0=\{x\in K\mid \det\Ph_{\bfg}(x)\ge a\}.
\end{equation}
Since $\nu_{\bfg}(K^\partial)=0$ by (A7), there exists $n_0\in\N$ such that for any $n\ge n_0$,
\begin{equation}\label{eq:A2}
\nu_{\bfg}(K_{A(n)})\le\dl/3,
\mbox{ where }A(n)=\{w\in W_n\mid K^\partial\cap K_{\cN_3(w)}\ne\emptyset\}.
\end{equation}
Let 
$b=\sup_{n\in\N}\max_{v\in W_n}\#\{w\in W_n\mid v\in \cN_3(w)\}(\le 7^D)$
and $\eps=\dl /(3b)$.
For $n\ge n_0$, define
$G_n=\{w\in W_n\mid \nu_{\bfg}(K_w)\le\eps\nu_{\bfg}(K_{\cN_3(w)})\}$.
Then, from \Eq{additive},
\[
\nu_{\bfg}(K_{G_n})
=\sum_{w\in G_n}\nu_{\bfg}(K_w)
\le\eps\sum_{w\in G_n}\nu_{\bfg}(K_{\cN_3(w)})
\le \eps b\nu_{\bfg}(K)
=\dl/3.
\]
We define $K_\infty=\liminf_{n\to\infty}K_{G_n}$. From Fatou's lemma,
\begin{equation}\label{eq:A3}
\nu_{\bfg}(K_\infty)\le\liminf_{n\to\infty}\nu_{\bfg}(K_{G_n})\le \dl/3.
\end{equation}
We set $B=B_0\setminus(K_{A(n_0)}\cup K_\infty\cup K^\partial_*)$. 
Then, 
$
\nu_{\bfg}(B)\ge \dl-\dl/3-\dl/3=\dl/3
$
from \Eq{A1}, \Eq{A2}, \Eq{A3}, and \Lem{energymeas'}~(i).

Let $\Ph_{\bfg}|_B$ denote the map $\Ph_{\bfg}$ whose defining set is restricted to $B$. This is a map from $B$ to $\PSM(d;a)$.
Fix an element $L$ in the support of the measure $(\Phi_{\bfg}|_B)_*(\nu_{\bfg}|_B)$ on $\PSM(d;a)$.
We will perform a blowup argument.

Let $k\in\N$. We denote by $U_k$ the intersection of $\PSM(d;a)$ and the open ball with center $L$ and radius $1/k$ in $\Mat(d)\simeq\R^{d\times d}$ with respect to the Euclidean norm. Let $B_k=(\Phi_{\bfg}|_B)^{-1}(U_k)\subset B$.
Then, $\nu_{\bfg}(B_k)>0$.
For $n\in \N$, we set
\[
  Y_{n}^{(k)}(x)=\begin{cases}{\nu_{\bfg}(K_{[x]_n}\cap B_k)}\big/{\nu_{\bfg}(K_{[x]_n})}& \mbox{if $x\in K\setminus K^\partial_*$ and $\nu_{\bfg}(K_{[x]_n})>0$},\\0&\mbox{otherwise.}\end{cases}
\]
Then, from the martingale convergence theorem as in the proof of \Prop{energyequivalence},
$\lim_{n\to\infty}Y_{n}^{(k)}=1$ $\nu_{\bfg}$-a.e.\ on $B_k$.
In particular, there exist $x_k\in B_k$ and $N_k\in\N$ such that $Y_{n}^{(k)}(x_k)\ge1-2^{-k}$ for any $n\ge N_k$.
Since $x_k\notin K_\infty$, for infinitely many $n$, $\nu_{\bfg}(K_{[x_k]_n})>\eps\nu_{\bfg}(K_{\cN_3([x_k]_n)})$.
Therefore, there exists a sequence of increasing natural numbers \mbox{$(n_0\le\,)\,n_1<n_2<n_3<\cdots$} such that 
\begin{equation}\label{eq:bound}
Y_{n_k}^{(k)}(x_k)\ge1-2^{-k}
\quad
\mbox{and}
\quad
\nu_{\bfg}(K_{[x_k]_{n_k}})>\eps\nu_{\bfg}(K_{\cN_3([x_k]_{n_k})})
\end{equation}
for all $k\in\N$. 
For each $k\in\N$, define $\bfg^{(k)}=(g_1^{(k)},\dots,g_d^{(k)})\in\cH^d$ as
\begin{equation}\label{eq:gik}
  g_i^{(k)}=\biggl({g_i-\mint_{K_{[x_k]_{n_k}}}g_i\,d\mu}\biggr)\Big/{\sqrt{r^{n_k}\nu_{\bfg}(K_{[x_k]_{n_k}})}},
  \quad i=1,\dots,d.
\end{equation}
Then, 
$\int_{K_{[x_k]_{n_k}}}g_i^{(k)}\,d\mu=0$ $(i=1,\dots,d)$, 
\begin{equation}\label{eq:nubfg}
  \nu_{\bfg^{(k)}}(K_{[x_k]_{n_k}})=\frac1d\sum_{i=1}^d\nu_{g_i^{(k)}}(K_{[x_k]_{n_k}})
  =r^{-n_k},
\end{equation}
and
\begin{equation}\label{eq:rnk}
  r^{n_k}\nu_{\bfg^{(k)}}(K_{\cN_3([x_k]_{n_k})})={\nu_{\bfg}(K_{\cN_3([x_k]_{n_k})})}\big/{\nu_{\bfg}(K_{[x_k]_{n_k}})}<{1}/\eps
\end{equation}
for all $k\in\N$, from \Eq{bound} and \Eq{gik}.
We denote $\psi_{[x_k]_{n_k}}^*\bfg^{(k)}$ by $\bfh^{(k)}$.
Then, from \Lem{energymeas'}~(ii) and \Eq{nubfg},
\begin{align*}
  \nu_{\bfh^{(k)}}(K)
  =\frac1d\sum_{i=1}^d \nu_{\psi_{[x_k]_{n_k}}^* g_i^{(k)}}(K)
  =\frac1d\sum_{i=1}^d {r^{n_k}}\nu_{g_i^{(k)}}(K_{[x_k]_{n_k}})
  =1.
\end{align*}
Denoting $\psi_{[x_k]_{n_k}}^{-1}(B_k)$ by $\hat B_k$, we have
\begin{align*}
\nu_{\bfh^{(k)}}(\hat B_k)
&=\nu_{\psi_{[x_k]_{n_k}}^*\bfg^{(k)}}(\psi_{[x_k]_{n_k}}^{-1}(B_k))
=r^{n_k}\nu_{\bfg^{(k)}}(K_{[x_k]_{n_k}}\cap B_k)\\
&={\nu_{\bfg}(K_{[x_k]_{n_k}}\cap B_k)}\big/{\nu_{\bfg}(K_{[x_k]_{n_k}})}
=Y_{n_k}^{(k)}(x_k)\ge 1-2^{-k}.
\end{align*}
From \Eq{rnk}, we can use the following proposition.
\begin{proposition}\label{prop:key}
Let $\{h_n\}_{n=1}^\infty$ be a sequence in $\cH$ and $\{w_n\}_{n=1}^\infty$ be a sequence in $W_*$ such that
$K_{\cN_3(w_n)}\cap K^\partial=\emptyset$ and $\int_{K_{w_n}}h_n\,d\mu=0$
for all $n\in\N$, and 
\begin{equation}\label{eq:N3}
\sup_n r^{|w_n|}\nu_{h_n}(K_{\cN_3(w_n)})<\infty.
\end{equation}
Then, the sequence $\{\psi_{w_n}^* h_n\}_{n=1}^\infty$ has a convergent subsequence in $\cF$.
\end{proposition}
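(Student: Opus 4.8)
The plan is to prove that the pullbacks $u_n:=\psi_{w_n}^*h_n$ are bounded in $\cF$ and then — this being the real content — to upgrade a weakly convergent subsequence to a strongly convergent one. First note $u_n\in\cH$ by \Lem{basicharmonic}. For the energy bound I would apply \Lem{energymeas'}~(ii) with $w=w_n$ and $B=K_{w_n}$ (so $\psi_{w_n}^{-1}(K_{w_n})=K$); since $r_i\equiv r$ in the carpet case this gives
\[
2\cE(u_n)=\nu_{u_n}(K)=r^{|w_n|}\nu_{h_n}(K_{w_n})\le r^{|w_n|}\nu_{h_n}(K_{\cN_3(w_n)}),
\]
which is bounded by hypothesis \Eq{N3}. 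For the $L^2$ bound, the self-similarity of $\mu$ yields $\int_K u_n\,d\mu=\mu(K_{w_n})^{-1}\int_{K_{w_n}}h_n\,d\mu=0$, so the spectral gap \Eq{Npoincare} gives $\|u_n\|_{L^2(K,\mu)}^2\le c_{4.1}\cE(u_n)$. Hence $\{u_n\}$ is bounded in $\cF$.

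Because the embedding $\cF\hookrightarrow L^2(K,\mu)$ is compact (the resolvent being compact), I would pass to a subsequence, still written $\{u_n\}$, with $u_n\to u$ in $L^2(K,\mu)$ and $u_n\rightharpoonup u$ weakly in $\cF$; since $\cH$ is a closed, hence weakly closed, subspace by \Lem{harmonic}, the limit $u$ lies in $\cH$. Expanding $\cE(u_n-u)=\cE(u_n)-2\cE(u_n,u)+\cE(u)$ and using that weak $\cF$-convergence together with $L^2$-convergence forces $\cE(u_n,u)\to\cE(u)$, one sees that strong convergence in $\cF$ is \emph{equivalent} to the norm convergence $\cE(u_n)\to\cE(u)$; only the inequality $\limsup_n\cE(u_n)\le\cE(u)$ requires proof, the reverse direction being weak lower semicontinuity.

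This passage from weak to strong convergence is the main obstacle. Because $\cH$ is \textbf{infinite-dimensional}, $L^2$-convergence cannot by itself control the energy: energy could in principle concentrate near the boundary $K^\partial$ in the limit. The purpose of the enlarged neighborhood in the hypothesis $K_{\cN_3(w_n)}\cap K^\partial=\emptyset$ is exactly to leave a collar of full-dimensional cells around $K_{w_n}$ on which \Eq{N3} still controls the energy. I would exploit this by reflecting across the faces of $Q_0$: using the ``borders included'' and symmetry property \Eq{reflection} together with the reflection lemma \Lem{reflection}, each $u_n$ extends to a function $\hat u_n$ that is harmonic on a fixed enlarged fractal $\hat K$ containing $K$ in its interior, with energies uniformly bounded by a constant multiple of $\sup_n r^{|w_n|}\nu_{h_n}(K_{\cN_3(w_n)})$.

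On $\hat K$ the same compactness argument (bounded energy plus compact $L^2$-embedding) produces a limit $\hat u$, harmonic on $\hat K$, with $\hat u|_K=u$, so that $\hat u_n-\hat u$ is harmonic on $\hat K$ and tends to $0$ in $L^2(\hat K)$. The closing step is a Caccioppoli (reverse Poincar\'e) inequality for harmonic functions, available from the elliptic regularity of the Barlow--Bass diffusion encoded in \Eq{Aronson}: the energy of $\hat u_n-\hat u$ over the \emph{interior} region $K$ is dominated by $\|\hat u_n-\hat u\|_{L^2(\hat K,\mu)}^2\to0$. As this interior energy equals $\cE(u_n-u)$, we obtain $u_n\to u$ in $\cF$. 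I expect the reflection/extension bookkeeping (matching isometric copies of $K$ along shared faces) and the precise scaling in the interior energy estimate to be the technically delicate points, while the boundedness in the first paragraph is routine.
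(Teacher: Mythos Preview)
Your reduction to a weak/strong-convergence upgrade via Caccioppoli is the right strategy, and your boundedness argument (energy via \Lem{energymeas'}(ii), $L^2$ via the spectral gap) is fine. The gap is in the construction of the harmonic extension. Even reflection of $u_n$ across a face of $Q_0$ imposes a Neumann-type symmetry on the interface, not harmonicity: if $\hat K$ is two reflected copies of $K$ glued along a face $F$ and $g\in\cF(\hat K)$ vanishes on $\partial\hat K$, the symmetric part $g_+$ of $g$ need not vanish on $F$, so $g_+|_K\notin\cFD$ and $\cE_K(u_n,g_+|_K)$ has no reason to be zero. Thus $\hat u_n$ is harmonic on each copy but not across the seams, $\hat u_n-\hat u$ is not harmonic on $\hat K$, and the interior energy estimate you want does not follow. (\Lem{reflection} is a \emph{folding} lemma $K\to K_i$, not an extension to a larger carpet; it does not provide the $\hat K$ you describe.)

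The paper avoids building any enlarged space: the natural ``extension'' of $u_n$ is already supplied by $h_n$ on the neighbouring cells $K_v$, $v\in\cN_3(w_n)$. After passing to a subsequence so that all $K_{\cN_3(w_n)}$ have the same shape, one transplants $h_n|_{K_{\cN_3(w_n)}}$ by a similitude $\xi_n$ to a fixed reference patch $K_{\cN_3(u)}$ inside $K$, multiplies by a cutoff $g\in\cF^0_{\cN_2(u)}$ equal to $1$ on $K_{\cN_1(u)}$, and obtains $f_n\in\cF^0_{\cN_2(u)}\cap\cH(\cN_1(u))$ with $\psi_u^*f_n=u_n$. A key extra ingredient you do not mention is an $L^\infty$ bound (\Prop{key1}, from the elliptic Harnack inequality): this makes $\{f_n\}$ and $\{f_n^2\}$ bounded in $\cF$, so that after extracting weak limits one can use the energy-measure identity $0=2\cE(\hat f_n,\hat f_n\hat g)=\cE(\hat f_n^2,\hat g)+\int_K\hat g\,d\nu_{\hat f_n}$ with a second cutoff $\hat g\in\cF^0_{\cN_1(u)}$, $\hat g\equiv1$ on $K_u$. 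Since $\hat f_n^2\to0$ weakly in $\cF$, the first term tends to $0$, forcing $\nu_{\hat f_n}(K_u)\to0$, i.e.\ $\cE(\psi_u^*\hat f_n)=\cE(u_n-\psi_u^*f_\infty)\to0$. This is the Caccioppoli step you anticipated, but executed entirely on $K$ with cutoffs and using the genuine values of $h_n$ on the collar rather than an artificial reflection.
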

We note that $r^{|w_n|}\nu_{h_n}(K_{w_n})=\nu_{\psi_{w_n}^* h_n}(K)=2\cE(\psi_{w_n}^* h_n)$ from \Lem{energymeas'}.

Since the proof of \Prop{key} is long, we postpone it until the next section and finish the proof of \Thm{main2} first.

By applying \Prop{key} to $\{g_i^{(k)}\}_{k=1}^\infty\subset\cH$ and $\{[x_k]_{n_k}\}_{k=1}^\infty\subset W_*$ for each $i=1,\dots,d$ successively, we can take a subsequence $\{\bfh^{(k(j))}\}_{j=1}^\infty$ of $\{\bfh^{(k)}\}_{k=1}^\infty$, converging to some $\bfh\in\cH^d$ in $\cF^d$.
By taking a further subsequence, we may assume that $2\cE(\bfh-\bfh^{(k(j))})\le 2^{-j}$ for all $j$ and 
\begin{equation}\label{eq:convergence2}
\lim_{j\to\infty}\Ph_{\bfh^{(k(j))}}(x)=\Ph_{\bfh}(x)
\text{ for $\nu_{\bfh}$-a.e.\,$x$}
\end{equation}
from \Lem{energyineq}.
Then, $\nu_{\bfh}(K)=1$ and
\begin{align*}
\sqrt{\nu_{\bfh}(K\setminus \hat B_{k(j)})}
&\le\Bigl|\sqrt{\nu_{\bfh}(K\setminus \hat B_{k(j)})}-\sqrt{\nu_{\bfh^{(k(j))}}(K\setminus \hat B_{k(j)})}\Bigr|+\sqrt{\nu_{\bfh^{(k(j))}}(K\setminus \hat B_{k(j)})}\\
&\le\sqrt{2\cE(\bfh-\bfh^{(k(j))})}+\sqrt{\nu_{\bfh^{(k(j))}}(K\setminus \hat B_{k(j)})}\notag\\
&\le 2^{-j/2}+2^{-k(j)/2}
\le 2^{-j/2}+2^{-j/2},\notag
\end{align*}
that is, $\nu_{\bfh}(K\setminus \hat B_{k(j)})\le 2^{-j+2}$.
From Borel--Cantelli's lemma, for $\nu_{\bfh}$-a.e.\,$x\in K$, $x\in \hat B_{k(j)}$ for sufficiently large $j$.
Note that $x\in \hat B_{k(j)}$ implies that $\Ph_{\bfg}(\psi_{[x_{k(j)}]_{n_{k(j)}}}(x))\in U_{k(j)}$.
From \Cor{Phi}, $\Ph_{\bfh^{(k(j))}}(x)\in U_{k(j)}$ for sufficiently large $j$ for $\nu_{\bfh}$-a.e.\,$x\in K$.
Therefore, $\Ph_{\bfh}(x)=L$ for $\nu_{\bfh}$-a.e.\,$x\in K$ from \Eq{convergence2}.
From \Lem{renormalize}, we may assume that $L$ is the identity matrix. This completes Step~1 of \Str{str}.

Take $w\in W_*$ such that $K_w\cap K^\partial=\emptyset$ and $\nu_{\bfh}(K_w)>0$.
From the regularity of $(\cE,\cF)$, there exists $f\in \cFD\cap C(K)$ such that $0\le f\le1$ on $K$ and $f=1$ on $K_w$.
From \Prop{W12}, the measure $\tilde\bfh_*(f^2\nu_{\bfh})$ on $\R^d$ is described as $\xi(x)\,dx$ with $\sqrt{\xi}\in W^{1,2}(\R^d)$.
From \Cor{Phi} and \Lem{scale}, $\psi_w^*\bfh$ plays the role of $\bfh$ in Step~2 of \Str{str}, and condition~(U')$_d$ is satisfied.

Now, if $d_\mathrm{s}<2$, then the process associated with $(\cE,\cF)$ is point recurrent and the capacity on nonempty set is positive, thus $d=1$ from \Thm{general}~(i).
if $d_\mathrm{s}>2$, we have $d\le d_\mathrm{s}$ from \Thm{general}~(ii).
When $d_\mathrm{s}=2$, \Eq{upper} holds with $d_\mathrm{s}$ replaced by any number bigger than $2$, since the larger $d_\mathrm{s}$ is, the weaker the inequality is. Thus, the Sobolev inequality~\Eq{sobolev} holds with $d_\mathrm{s}$ replaced by any number bigger than $2$, for example, $2.01$. From \Thm{general}~(ii), $d\le 2.01$. Since $d$ is a natural number, we obtain $d\le 2$.
This completes the proof of \Thm{main2} if we grant Propositions~\ref{prop:A7} and \ref{prop:key}, which are proved in the next section.
\qed\end{proof}
\end{theopargself}
\section{Proof of Propositions~\protect\ref{prop:A7}\ and \protect\ref{prop:key}}
In this section, we prove Propositions~\ref{prop:A7} and \ref{prop:key}.
We use the same notations as those in Section~4.3.
In Section~5.1, we present a description of the structure of $\cF$ (\Prop{compatible}) and a quantitative estimate for a class of harmonic functions (\Prop{key1}) as preparatory results. For the proofs, we use a characterization of $\cF$ by the Besov space, folding/unfolding maps on $K$, some geometric properties of $K$ originating from the nondiagonal property~(ND), the elliptic Harnack inequality, and so on.
Using these results, we prove in Section~5.2 a claim apparently stronger than \Prop{key} (\Prop{key'}), and \Prop{A7}.
\subsection{Preliminaries}
First, we introduce some concepts. 
We have to be careful that condition~(A7) cannot be used; in particular, \Lem{energymeas'} and \Eq{additive} are not available, while \Lem{energymeas} is valid.
We remark that an assertion stronger than \Lem{energymeas}~(i) holds from \cite[pp.~600--601]{HK06}: For any $w\in W_*$, there exists a constant $c_{5.1}\ge1$ such that 
\begin{equation}\label{eq:capequiv}
c_{5.1}^{-1}\Cp(B)\le \Cp(\psi_w(B))\le c_{5.1}\Cp(B)
\end{equation}
for every $B\subset K$.

For a nonempty subset $A$ of $W_m$ for some $m\in\N$,  
a collection $\{f_w\}_{w\in A}$ of functions in $\cF$
is called {\em compatible} if $\tilde f_v(\psi_v^{-1}(x))=\tilde f_w(\psi_w^{-1}(x))$ for q.e.\,$x\in K_v\cap K_w$ for every $v,w\in A$. 
This concept is well-defined from \Eq{capequiv}.
We define
\begin{equation*}
  \cF^A=\left\{f\in L^2(K_A,\mu|_{K_A})\;\vrule\;\text{$\psi_w^* f\in\cF $ for all $w\in A$ and $\{\psi_w^* f\}_{w\in A}$ is compatible}\right\}
\end{equation*}
and 
\begin{equation}\label{eq:cEAdef}
  \cE^A(f,g)=r^{-m}\sum_{w\in A}\cE(\psi_w^*f,\psi_w^*g)
  \quad\text{for }f,g\in\cF^A.\footnote{In \cite{HK06},  symbol $(\cE_A,\cF_A)$ was used instead. Since it is slightly misleading, we use the terminology $(\cE^A,\cF^A)$ here.}
\end{equation}
It is evident that $\{f|_{K_A}\mid f\in\cF\}\subset \cF^A$.
Also, from (A3), $\cE^A(f,g)=\cE^{A\cdot W_n}(f,g)$ for any $n\in\N$ and $f,g\in\cF^A$.
See \Defn{AA'} for the definition of $A\cdot W_n$.

For simplicity, we write $\cE^A(f)$ for $\cE^A(f|_{K_A},f|_{K_A})$ if $f\in\cF$.
Then,
\begin{align}\label{eq:energyineq}
\frac12\nu_f(K_A)&=\frac{1}{2r^m}\sum_{w\in W_m}\nu_{\psi_w^*f}(\psi_w^{-1}(K_A))
\ge \frac{1}{2r^m}\sum_{w\in A}\nu_{\psi_w^*f}(K)
=\cE^A(f),
\end{align}
where the first identity follows from \Lem{energymeas}~(ii).
It will turn out that the above inequality is replaced by the equality from \Prop{A7}, which is yet to be proved.

The following result was used in \cite[Section~5.3]{HK06} without proof.
Since the proof is not obvious, we provide the proof here.
\begin{proposition}\label{prop:compatible}
Let $m,n\in\Z_+$ and let $A$ be a nonempty subset of $W_m$. 
Then, $\cF^A=\cF^{A\cdot W_n}$. 
In particular, $\cF=\cF^{W_n}$ for every $n\in\Z_+$.
\end{proposition}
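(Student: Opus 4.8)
Since $\bigcup_{w'\in W_n}K_{ww'}=K_w$, we have $K_{A\cdot W_n}=K_A$, so $\cF^A$ and $\cF^{A\cdot W_n}$ are both subspaces of $L^2(K_A,\mu|_{K_A})$ and the asserted equality is meaningful. The plan is to prove the two inclusions separately. The inclusion $\cF^A\subseteq\cF^{A\cdot W_n}$ (refinement) is the easy one: for $f\in\cF^A$ and $ww'\in A\cdot W_n$ one has $\psi_{ww'}^*f=\psi_{w'}^*(\psi_w^*f)\in\cF$ by repeated use of (A3), and compatibility of the refined family is checked with the quasi-continuity bookkeeping of \Lem{energymeas}~(i) together with \eqref{eq:capequiv}. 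Concretely, writing $g_w=\psi_w^*f\in\cF$, the identity $\widetilde{\psi_{w'}^*g_w}=\tilde g_w\circ\psi_{w'}$ q.e.\ yields $\widetilde{\psi_{ww'}^*f}(\psi_{ww'}^{-1}(x))=\tilde g_w(\psi_w^{-1}(x))$ for q.e.\ $x\in K_{ww'}$; two pieces sharing the same first block $w$ therefore agree, while two pieces with distinct first blocks $v,w\in A$ agree because this expression reduces the question on $K_{vv'}\cap K_{ww'}\subset K_v\cap K_w$ to the assumed compatibility of $\{\psi_v^*f,\psi_w^*f\}$.

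For the reverse inclusion I would first reduce to a single gluing step. By induction on $n$, writing $A\cdot W_n=(A\cdot W_{n-1})\cdot W_1$, it suffices to treat $n=1$. For $n=1$ and arbitrary $A$, the same quasi-continuity computation localizes the problem to each cell: given $f\in\cF^{A\cdot W_1}$, each $g_w:=\psi_w^*f$ satisfies $\psi_i^*g_w\in\cF$ for all $i\in S$ with $\{\psi_i^*g_w\}_{i\in S}$ compatible, i.e.\ $g_w\in\cF^{W_1}$, and compatibility of $\{\psi_w^*f\}_{w\in A}$ across distinct cells is recovered from the finer compatibility exactly as above. Hence the whole proposition rests on the single assertion $\cF^{W_1}\subseteq\cF$ (its converse being the easy inclusion with $A=\{\emptyset\}$), which simultaneously gives the ``in particular'' statement $\cF=\cF^{W_n}$.

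To prove the crux — if $g\in L^2(K,\mu)$ has $g_i:=\psi_i^*g\in\cF$ for every $i\in S$ with $\{\psi_i^*g\}_{i\in S}$ compatible, then $g\in\cF$ — I would use the Besov characterization \Thm{E}, for which it is enough to bound $\limsup_{\dl\to0}E_\dl(g)$ with $E_\dl$ as in \eqref{eq:Edl}. For $\dl<1/l$ I split the double integral defining $E_\dl(g)$ according to the level-$1$ cells containing $x$ and $y$. The diagonal part rescales exactly: using $\mu(K_i)=1/M$, the contraction ratio $1/l$ of $\psi_i$, and the identities $l^{d_\mathrm{H}}=M$ and $l^{d_\mathrm{w}}=M/r$,
\[
 \dl^{-d_\mathrm{w}-d_\mathrm{H}}\sum_{i\in S}\iint_{K_i\times K_i,\ |x-y|<\dl}|g(x)-g(y)|^2\,\mu(dx)\,\mu(dy)=\frac1r\sum_{i\in S}E_{l\dl}(g_i),
\]
which stays bounded as $\dl\to0$ because each $g_i\in\cF$, again by \Thm{E}. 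Since $S$ is finite and any two cells that do not intersect are at a fixed positive distance, for small $\dl$ the only remaining terms come from the boundedly many pairs of cells $K_i,K_j$ with $K_i\cap K_j\ne\emptyset$.

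The hard part will be this off-diagonal, inter-cell estimate: I must show that for each such adjacent pair,
\[
 \iint_{K_i\times K_j,\ |x-y|<\dl}|g(x)-g(y)|^2\,\mu(dx)\,\mu(dy)=O\!\bigl(\dl^{d_\mathrm{w}+d_\mathrm{H}}\bigr)\qquad(\dl\to0).
\]
This is precisely where the compatibility of $\{\psi_i^*g\}$ and the geometry of $K$ must enter: if the quasi-continuous modifications did not agree q.e.\ on the shared boundary $K_i\cap K_j$, there would be a genuine jump and the left-hand side would blow up. My plan is to control it by folding/unfolding maps across the common face, built from the reflection invariance \eqref{eq:reflection} and the nondiagonality (ND) (which ensures that adjacent cells meet along a common face and that reflecting one cell onto its neighbour remains compatible with the structure of $K$); reflection turns the compatible pair $(g_i,g_j)$ into the pieces of a single function whose Besov energy near the face is already under control, so that compatibility removes the boundary jump and the inter-cell integral is dominated by the within-cell energies $E_{l\dl}(g_i)+E_{l\dl}(g_j)$. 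Because $\cF$ need not embed into $C(K)$ when $d_\mathrm{s}\ge2$, I expect to need the elliptic Harnack inequality (equivalently the heat-kernel bounds \eqref{eq:Aronson}) to control the near-boundary oscillation in this step. This inter-cell estimate, rather than the bookkeeping of the first two paragraphs, is the real content of the proposition.
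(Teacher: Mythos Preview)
Your reduction to the case $\cF^{W_1}\subset\cF$, the use of the Besov characterization, and the diagonal/off-diagonal split are exactly what the paper does. The gap is in your treatment of the off-diagonal terms. You write that nondiagonality ``ensures that adjacent cells meet along a common face,'' and your reflection plan implicitly relies on this; but it is false. When $K_i\cap K_j\ne\emptyset$, the intersection may be a lower-dimensional face (e.g.\ a single corner in $D=2$), and there is no reflection carrying $K_i$ onto $K_j$. Your proposed estimate by $E_{l\dl}(g_i)+E_{l\dl}(g_j)$ via a single fold simply does not go through in that case.

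The paper's resolution is precisely to handle this corner-adjacency. Using (ND) in the form of \Prop{NDH}, one finds a chain $i=n(0),n(1),\dots,n(N)=j$ of cells inside a common $2/l$-cube with $n(k-1)\underset{1}{\leftrightsquigarrow}n(k)$ (full-face adjacency) at each step. One then builds, via the unfolding maps $\Xi_i$ and the half-space cutoffs $\Xi_{n(k),n(k-1)}$ of \Defn{Xi}, a function $h\in\cF$ with $h=f$ on $K_i\cup K_j$, after which $E_\dl(f,K_i,K_j)=E_\dl(h,K_i,K_j)\le E_\dl(h)\lesssim\cE(h)$. The elliptic Harnack inequality plays no role here; the entire off-diagonal estimate is handled by this combinatorial chain construction combined with Lemmas~\ref{lem:reflection} and \ref{lem:reflection0}. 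Your expectation that Harnack is needed suggests you were heading toward a harder (and unnecessary) oscillation argument.
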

Although this assertion might be deduced directly from the powerful theorem on the uniqueness of self-similar diffusions on $K$~\cite{BBKT10}, we give a proof without using this fact, since some concepts and lemmas stated below in proving \Prop{compatible} are useful elsewhere.

For Borel subsets $B_1$ and $B_2$ of $K$ and a positive constant $\dl$, we define
\begin{align*}
E_\dl(f,B_1,B_2)&:=\dl^{-d_\mathrm{w}-d_\mathrm{H}}\iint_{\{(x,y)\in B_1\times B_2\mid |x-y|_{\R^D}<\dl\}}|f(x)-f(y)|^2\,\mu(dx)\,\mu(dy)\end{align*}
for $f\in L^2(K,\mu)$.
We write $E_\dl(f,B_1)$ for $E_\dl(f,B_1,B_1)$.
Note that $E_\dl(f,K,K)=E_\dl(f)$ (see \Eq{Edl}).
\begin{definition}\label{def:periodic}
We define a folding map $\ph\colon[0,1]^D\to[0,1/l]^D$ as follows.
Let $\hat\ph\colon \R\to\R$ be a periodic function with period $2/l$ such that $\hat\ph(t)=|t|$ for $t\in[-1/l,1/l]$.
The map $\ph$ is defined as
\[
\ph(x_1,\dots,x_D)=(\hat\ph(x_1),\dots,\hat\ph(x_D)),
\quad (x_1,\dots,x_D)\in[0,1]^D.
\]
Moreover, we define $\ph_{i}\colon K\to K_i$ for $i\in S$ as
\[
\ph_{i}(x)=\left(\ph|_{K_i}\right)^{-1}(\ph(x)),\quad
x\in K.
\]
\end{definition}
Note that $\ph_i|_{K_i}\colon K_i\to K_i$ is the identity map and $\ph_i\circ \ph_j=\ph_i$ for $i,j\in S$.

Hereafter, $a_1\lesssim a_2$ means that there exists a positive constant $c$ depending only on $(K,\mu)$ and $(\cE,\cF)$ such that $a_1\le ca_2$ holds.
\begin{lemma}\label{lem:reflection}
Let $k\in S$ and $f\in \cF$.
Define $g\in\cF^S$ as
$
  g(x)=f(\psi_k^{-1}(\ph_{k}(x)))$
 for $x\in K$.
Then, $g\in\cF$ and $\cE(g)\lesssim \cE(f)$.
\end{lemma}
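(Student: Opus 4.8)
The plan is to establish membership $g\in\cF$ through the Besov-space characterization of \Thm{E}, and then to compute $\cE(g)$ exactly via the self-similarity (A3). I deliberately avoid \Prop{compatible}, since that proposition is itself to be deduced from this lemma, so using it here would be circular.

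First I would record the geometric features of the folding map from \Defn{periodic}. Write $C_0=[0,1/l]^D$ and $K_0=K\cap C_0$. The component $\hat\ph$ is piecewise linear with slopes $\pm1$, so $\ph$ is $1$-Lipschitz on $\R^D$; and on each retained cube $C_i=\psi_i(Q_0)$ the restriction $\ph|_{C_i}$ is an isometry onto $C_0$ carrying $K_i$ onto $K_0$. By the Symmetry assumption this isometry preserves $K$, and since $\mu$ is the normalized Hausdorff measure it pushes $\mu|_{K_i}$ forward to $\mu|_{K_0}$. Setting $\rho_k=(\ph|_{K_k})^{-1}$ and $T:=\psi_k^{-1}\circ\rho_k\colon K_0\to K$ (a similarity of ratio $l$ sending $K_0$ onto $K$ and $\mu|_{K_0}$ to $M^{-1}\mu$), one has $g=h\circ\ph$ with $h:=f\circ T$ on $K_0$. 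Moreover $\psi_k^{-1}\circ\rho_k\circ\ph\circ\psi_i$ is an isometry $\tau_i$ of $Q_0$, whence $\psi_i^*g=\tau_i^*f$; together with \Eq{reflection} this records that each $\psi_i^*g$ lies in $\cF$ with $\cE(\psi_i^*g)=\cE(f)$, and the same change of variables gives $\|g\|_{L^2(K,\mu)}=\|f\|_{L^2(K,\mu)}$, so $g\in L^2(K,\mu)$.

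For membership I would estimate the Besov energy $E_\dl(g)$ of \Eq{Edl} for small $\dl$. Splitting $\{|x-y|<\dl\}$ according to the cubes containing $x$ and $y$, once $\dl<1/l$ only same-cube pairs ($x,y\in K_i$) and adjacent-cube pairs contribute, and the number of the latter is bounded by a fixed constant multiple of $M$. Using $g=h\circ\ph$ together with the $1$-Lipschitz bound $|\ph x-\ph y|\le|x-y|$ and the measure-preservation of $\ph|_{K_i}$, each block is dominated by $\iint_{\{u,v\in K_0,\ |u-v|<\dl\}}|h(u)-h(v)|^2\,d\mu\,d\mu=\dl^{d_\mathrm{w}+d_\mathrm{H}}E_\dl(h,K_0)$. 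A change of variables through $T$, using $l^{d_\mathrm{H}}=M$ and $l^{d_\mathrm{w}}=M/r$, gives
\[
E_\dl(h,K_0)=\tfrac1r\,E_{l\dl}(f),
\]
so that $E_\dl(g)\lesssim E_{l\dl}(f)$ for all small $\dl$. Taking $\limsup_{\dl\to0}$ and invoking \Thm{E} yields $\limsup_{\dl\to0}E_\dl(g)\lesssim\cE(f)<\infty$, hence $g\in\cF$.

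Finally, now that $g\in\cF$, condition (A3) applies and gives $\cE(g)=\sum_{i\in S}r^{-1}\cE(\psi_i^*g)=r^{-1}\sum_{i\in S}\cE(\tau_i^*f)=(M/r)\cE(f)$ by \Eq{reflection}, which is the asserted estimate $\cE(g)\lesssim\cE(f)$. The main obstacle is the third paragraph: controlling the adjacent-cube (boundary) contributions to $E_\dl(g)$, where $\ph$ fails to be injective, and making precise both the $1$-Lipschitz folding bound and the fact—coming from the Symmetry and nondiagonality (ND) conditions—that $\ph$ maps every $K_i$ onto the single set $K_0$ with the correct measure.
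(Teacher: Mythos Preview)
Your proof is correct and follows essentially the same route as the paper: split $E_\dl(g)$ into same-cube and adjacent-cube blocks, control each via the $1$-Lipschitz property of the folding map together with its measure-preservation on individual cells, and invoke \Thm{E}. The only cosmetic difference is that the paper handles the off-diagonal block by the identity $g\circ\ph_i=g$ (reducing $E_\dl(g,K_i,K_j)$ to $E_\dl(g,K_i)$) rather than pushing both points to a reference cell $K_0$; your final exact evaluation $\cE(g)=(M/r)\,\cE(f)$ via (A3) is a pleasant extra that the paper does not record, and your closing remark invoking (ND) is a harmless misattribution---only the Symmetry axiom (and BI, to ensure the corner cube is retained) is used here.
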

\begin{proof}
Let $\dl\in(0,1/l)$.
We have
\begin{align*}
E_\dl(g)
=\sum_{i\in S}\sum_{j\in S} E_\dl(g,K_i,K_j)
=\sum_{i\in S} E_\dl(g,K_i)+\sum_{i,j\in S,\ i\ne j,\ K_i\cap K_j\ne\emptyset}E_\dl(g,K_i,K_j).
\end{align*}
In the first term of the rightmost side, we have 
\[
E_\dl(g,K_i)=E_\dl(g,K_k)\lesssim E_{l\dl}(f)\lesssim\cE(f).
\]
In the second term, we have
\begin{align*}
E_\dl(g,K_i,K_j)
&=\dl^{-d_\mathrm{w}-d_\mathrm{H}}\iint_{\{(x,y)\in K_i\times K_j\mid |x-y|_{\R^D}<\dl\}}|g(x)-g(\ph_i(y))|^2\,\mu(dx)\,\mu(dy)\\*
&\hspace{16em}\text{(since $g(\ph_i(y))=g(y)$)}\\
&\le \dl^{-d_\mathrm{w}-d_\mathrm{H}}\iint_{\{(x,z)\in K_i\times K_i\mid |x-z|_{\R^D}<\dl\}}|g(x)-g(z)|^2\,\mu(dx)\,\mu(dz)\\
&=E_\dl(g,K_i)
\lesssim\cE(f).
\end{align*}
Here, in the first inequality, we used the inequality $|x-\ph_i(y)|_{\R^D}\le|x-y|_{\R^D}$ for $x\in K_i$ and $y\in K_j$, and the identity $(\ph_i|_{K_j})_*(\mu|_{K_j})=\mu|_{K_i}$.
Therefore, $\limsup_{\dl\to0}E_\dl(g)\lesssim\cE(f)$.
This completes the proof.
\qed\end{proof}
\begin{corollary}\label{cor:reflection}
Let $k\in S$ and $f\in \cF^S$.
Define $g\in\cF^S$ as $g=f\circ\ph_k$.
Then, $g\in\cF$.
\end{corollary}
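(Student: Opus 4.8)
The plan is to reduce the corollary directly to \Lem{reflection}, which is the only nontrivial ingredient. The sole difference between the two statements is the regularity hypothesis on the source function together with the precise form of the reflection: in \Lem{reflection} one starts from $f\in\cF$ and builds $x\mapsto f(\psi_k^{-1}(\ph_k(x)))$, whereas here one starts from the larger class $f\in\cF^S$ and builds $g=f\circ\ph_k$. The key point is that the definition of $\cF^S$ (with $A=S$) already guarantees $\psi_k^* f\in\cF$, so I can feed $u:=\psi_k^* f$ into \Lem{reflection} and let the lemma do the work.

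Concretely, first I would set $u=\psi_k^* f=f\circ\psi_k$, which lies in $\cF$ by the definition of $\cF^S$. Applying \Lem{reflection} to $u$ then shows that the function $x\mapsto u(\psi_k^{-1}(\ph_k(x)))$ belongs to $\cF$. The remaining step is the bookkeeping identity identifying this function with $g$: since $\ph_k(K)\subset K_k$ and $\psi_k\colon K\to K_k$ is a bijection, the point $\psi_k^{-1}(\ph_k(x))$ is well defined for every $x\in K$ and satisfies $\psi_k(\psi_k^{-1}(\ph_k(x)))=\ph_k(x)$; hence $u(\psi_k^{-1}(\ph_k(x)))=f(\psi_k(\psi_k^{-1}(\ph_k(x))))=f(\ph_k(x))=g(x)$. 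This yields $g\in\cF$ at once (and, since $K_S=K$, also $g\in\cF^S$ through the inclusion $\{f|_{K_S}\mid f\in\cF\}\subset\cF^S$).

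I do not anticipate a genuine obstacle, as the entire argument rests on the cancellation $u\circ\psi_k^{-1}\circ\ph_k=f\circ\ph_k$. The one point that deserves a moment's care is the well-definedness of the composition $f\circ\ph_k$ when $f$ is specified only $\mu$-a.e.\ (or q.e.), since $\ph_k$ need not preserve null sets; but this is exactly the difficulty already absorbed inside the proof of \Lem{reflection}, which controls the Besov energy $E_\dl$ of the reflected function rather than manipulating pointwise a.e.\ representatives. By invoking the lemma for $u$ instead of re-deriving anything, I sidestep this issue entirely, so the proof reduces to the two displayed lines above.
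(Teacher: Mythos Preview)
Your proposal is correct and is exactly the paper's approach: the paper's one-line proof reads ``Apply \Lem{reflection} to $\psi_k^* f\in \cF$ as $f$,'' which is precisely your reduction via $u=\psi_k^* f$ together with the cancellation $u\circ\psi_k^{-1}\circ\ph_k=f\circ\ph_k$.
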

We remark that $f=g$ on $K_k$.
\begin{theopargself}
\begin{proof}[of \Cor{reflection}]
Apply \Lem{reflection} to $\psi_k^* f\in \cF$ as $f$.
\qed\end{proof}
\end{theopargself}
\begin{definition}
For $m\in\N$ and $v,w\in W_m$, we write $v\underset{m}{\leftrightsquigarrow}w$ if $\psi_v(Q_0)\cap \psi_w(Q_0)$ is a $(D-1)$-dimensional hypercube.
\end{definition}
For $i,j\in S=W^1$ with $i\underset{1}{\leftrightsquigarrow}j$, let $H_{i,j}$ be a unique $(D-1)$-dimensional hyperplane including $K_i\cap K_j$.
Then, $H_{i,j}$ splits $\R^D$ into two closed half spaces, say $G_{i,j}$ and $G_{j,i}$, which satisfy that $G_{i,j}\supset K_i$ and $G_{j,i}\supset K_j$.
\begin{lemma}\label{lem:reflection0}
Let $i,j\in S$ satisfy that $i\underset{1}{\leftrightsquigarrow}j$.
Suppose that $f\in\cF^S$ satisfies that $\tilde f=0$ q.e.\ on $K_i\cap K_j$.
Define $\hat g\in \cF^S$ as
  $\hat g(x)=f(\ph_{i}(x))\cdot \bfone_{G_{i,j}}(x)$
  for $x\in K$.
Then, $\hat g\in \cF$.
\end{lemma}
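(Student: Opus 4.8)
The plan is to reduce $\hat g$ to the already-understood folded function $f\circ\ph_i$ and then verify membership in $\cF$ through the Besov description of $\cF$ in \Thm{E}. First I would set $g=f\circ\ph_i$; by \Cor{reflection} (applied with $k=i$) we have $g\in\cF$, and $g=f$ on $K_i$. Two structural facts about $g$ form the backbone of the argument. (a) \emph{Vanishing on the slice.} The hyperplane is a coordinate hyperplane, say $H_{i,j}=\{x\in\R^D\mid x_s=k_0/l\}$ for some axis $s$ and integer $k_0$, and the folding map sends $K\cap H_{i,j}$ into the shared face $K_i\cap K_j$: a point of $K$ with $x_s=k_0/l$ has its $s$-th coordinate folded by $\hat\ph$ to an endpoint of $[0,1/l]$, and one checks that $\ph_i$ then maps it into $K_i\cap K_j$. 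Since $\tilde f=0$ q.e.\ on $K_i\cap K_j$ and preimages of exceptional sets under the Lipschitz map $\ph_i$ are exceptional (cf.\ \Eq{capequiv} and \Lem{energymeas}~(i)), we obtain $\tilde g=0$ q.e.\ on $K\cap H_{i,j}$. (b) $\mu(K\cap H_{i,j})=0$, since the slice has Hausdorff dimension strictly below $d_\mathrm{H}$; hence $\hat g=g$ $\mu$-a.e.\ on the closed half $G_{i,j}$ and $\hat g=0$ $\mu$-a.e.\ on $G_{j,i}$.

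By \Thm{E} it suffices to show $\hat g\in L^2(K,\mu)$ and $\limsup_{\dl\to0}E_\dl(\hat g)<\infty$; the former is immediate from $|\hat g|\le|g|$. For the latter I would split the double integral defining $E_\dl(\hat g)$ according to which of the half-spaces $G_{i,j},G_{j,i}$ contains $x$ and $y$. The $G_{i,j}$--$G_{i,j}$ part equals $E_\dl(g,K\cap G_{i,j})\le E_\dl(g)\lesssim\cE(g)$, bounded by \Thm{E}; the $G_{j,i}$--$G_{j,i}$ part vanishes since $\hat g=0$ there. The only genuine contribution is the cross term, where $\hat g(x)=g(x)$ and $\hat g(y)=0$, so its integrand is $|g(x)|^2$ with $x,y$ on opposite sides and $|x-y|_{\R^D}<\dl$; both points then lie within $\dl$ of $H_{i,j}$. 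Using the Ahlfors regularity $\mu(\{y\in K\mid |x-y|_{\R^D}<\dl\})\lesssim\dl^{d_\mathrm{H}}$ to carry out the $y$-integration, the cross term is
\[
\lesssim\dl^{-d_\mathrm{w}}\int_{S_\dl}|g|^2\,d\mu,\qquad S_\dl:=\{x\in K\cap G_{i,j}\mid \operatorname{dist}(x,H_{i,j})<\dl\}.
\]

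The crux, and the step I expect to be the main obstacle, is therefore the slab estimate $\int_{S_\dl}|g|^2\,d\mu\lesssim\dl^{d_\mathrm{w}}$, which must exploit the vanishing of $g$ on $K\cap H_{i,j}$ from (a). I would cover $S_\dl$ by the level-$m$ cells $K_w$ (with $l^{-m}\asymp\dl$) that meet, or are adjacent to a cell meeting, $H_{i,j}$. For a cell $K_w$ with $K_w\cap H_{i,j}\neq\emptyset$, this slice is a rescaled copy of a fixed positive-capacity subset of $K$, and $\tilde g=0$ q.e.\ on it; a capacitary (zero-trace) Poincaré inequality on $K$, transported to $K_w$ via $\psi_w$, then gives $\int_{K_w}|g|^2\,d\mu\lesssim\dl^{d_\mathrm{w}}\nu_g(K_w)$ once the scalings $\mu(K_w)\asymp M^{-m}$, $\nu_{\psi_w^*g}(K)\le r^m\nu_g(K_w)$ (from \Lem{energymeas}~(ii)) and $M^{-m}r^m=l^{-md_\mathrm{w}}$ are accounted for; cells approaching $H_{i,j}$ without meeting it are handled by chaining through an adjacent touching cell. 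Summing over $w$ and using $\sum_w\nu_g(K_w)\lesssim\nu_g(K)=2\cE(g)$ yields the slab estimate. Combining the three parts bounds $\limsup_{\dl\to0}E_\dl(\hat g)$, whence $\hat g\in\cF$ by \Thm{E}. The delicate points to get right are the uniform lower bound on the capacity of the rescaled slices $K_w\cap H_{i,j}$ (there being finitely many congruence types by self-similarity) and the chaining argument for the cells that approach $H_{i,j}$ without meeting it.
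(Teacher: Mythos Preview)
Your route is genuinely different from the paper's and the reduction to a slab estimate is correct, but its linchpin—the bound $\int_{S_\delta}|g|^2\,d\mu\lesssim\delta^{d_\mathrm{w}}\cE(g)$—rests on a Poincar\'e inequality $\|h\|_{L^2(K,\mu)}^2\lesssim\cE(h)$ for $h\in\cF$ vanishing q.e.\ on a face of $K$. In this paper that inequality is exactly \Lem{poincareH}, and \Lem{poincareH} is proved \emph{using} \Lem{reflection0}; so within the paper's logical order your argument is circular. To break the circle you would need an independent proof that each face $K^\partial_{s,j}$ has positive capacity (from which the desired inequality follows via the compact embedding $\cF\hookrightarrow L^2(K,\mu)$). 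That is plausible—for instance through hitting-time estimates derived from the heat-kernel bounds \Eq{Aronson}—but it is genuine additional work that you have not supplied, and it is not obviously cheaper than the lemma itself. Your flagged ``delicate point'' about the capacity of the slices is thus not a technicality but the whole difficulty.

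The paper sidesteps the slab estimate entirely by approximation. Since $\psi_i^*f$ vanishes q.e.\ on the face $\psi_i^{-1}(K_i\cap K_j)$, \cite[Lemma~2.3.4]{FOT} furnishes $\hat f_n\in\cF\cap C(K)$ with $\hat f_n\to\psi_i^*f$ in $\cF$ and $\Supp[\hat f_n]$ disjoint from that face. Folding each $\hat f_n$ via \Lem{reflection} yields $g_n\in\cF$ vanishing on a full neighbourhood of $K\cap H_{i,j}$; hence for $\hat g_n:=g_n\cdot\bfone_{G_{i,j}}$ the cross term in $E_\delta(\hat g_n)$ is identically zero for all sufficiently small $\delta$, so $\hat g_n\in\cF$ with $\sup_n\cE(\hat g_n)<\infty$. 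Since $\hat g_n\to\hat g$ in $L^2(K,\mu)$, a weak-limit argument gives $\hat g\in\cF$. This uses only \Lem{reflection} and soft functional analysis, with no face-Poincar\'e input—precisely why it can precede \Lem{poincareH} in the development.
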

\begin{proof}
From \cite[Lemma~2.3.4]{FOT}, there exists a sequence $\{\hat f_n\}_{n=1}^\infty$ in $\cF\cap C(K)$ such that $\hat f_n\to \psi_i^*f$ in $\cF$ and $\Supp[\hat f_n]\subset K\setminus \psi_i^{-1}(K_i\cap K_j)$.
For each $n$, define 
\[
g_n(x)=\hat f_n(\psi_i^{-1}(\ph_{i}(x))),\quad
\hat g_n(x)=g_n(x)\cdot \bfone_{G_{i,j}}(x)\quad
  \text{for }x\in K.
\]
Then, from \Lem{reflection}, $g_n\in \cF$ and
$E_\dl(g_n)\lesssim \cE(\hat f_n)$ for $\dl>0$.
Here, we note that the constant involved in symbol~$\lesssim$ is independent of $n$ and $\dl$.

Let $n\in \N$ and $\dl>0$ be smaller than the Euclidean distance between $\Supp[g_n]$ and $\psi_i^{-1}(K_i\cap K_j)$.
Then,
\[
  E_\dl(\hat g_n)=E_\dl(g_n,K\cap G_{i,j})\le E_\dl(g_n)\lesssim \cE(\hat f_n).
\]
Therefore, $\limsup_{\dl\to0}E_\dl(\hat g_n)\lesssim \cE(\hat f_n)$, which implies that $\hat g_n\in \cF$ and 
\[
\limsup_{n\to\infty}\cE(\hat g_n)\lesssim\limsup_{n\to\infty}\cE(\hat f_n)=\cE(\psi_i^*f)<\infty.
\]
Since $\hat g_n\to\hat g$ in $L^2(K,\mu)$, $\hat g_n$ converges weakly in $\cF$ and the limit coincides with $\hat g$. In particular, $\hat g\in\cF$.
\qed\end{proof}
\begin{definition}\label{def:Xi}
We define maps
\[
\Xi_{i}\colon \cF^S\to \cF,
\quad i\in S
\]
and 
\[
\Xi_{i,j}\colon \{f\in\cF^S\mid \tilde f=0 \text{ q.e.~on }K_i\cap K_j\}\to \cF,
\quad i,j\in S\text{ with }i\underset{1}{\leftrightsquigarrow}j
\] 
by $\Xi_{i}(f)=g$ and $\Xi_{i,j}(f)=\hat g$, where $g$ and $\hat g$ are provided in \Cor{reflection} and \Lem{reflection0}, respectively.
\end{definition}
For $i\in S$, let $z^{(i)}=(z^{(i)}_1,\dots,z^{(i)}_D)\in \R^D$ be defined as $z^{(i)}=\psi_i(1/2,\dots,1/2)$, that is, the center of $\psi_i(Q_0)$.
\begin{definition}
For $i,j\in S$, we define a distance $\mathsf{d}(i,j)$ between $i$ and $j$ as $\mathsf{d}(i,j)=l\sum_{k=1}^D|z^{(i)}_k-z^{(j)}_k|$.
\end{definition}
Note that $\mathsf{d}(i,j)=1$ if and only if $i\underset{1}{\leftrightsquigarrow}j$.

We recall the following fact, where condition~(ND) plays the essential role.
\begin{proposition}[{cf.\ \cite[Proposition~2.5]{Ka10}}]\label{prop:NDH}
Let $C$ be a $D$-dimensional cube with side length $2/l$ that is a union of some $2^D$ elements of $\sC$.
We define $T\subset S$ as $T=\{i\in S\mid K_i\subset C\}$.
Then, for each $i,j\in T$, there exists a sequence $\{n(k)\}_{k=0}^{\mathsf{d}(i,j)}$ of elements of $T$ such that $n(0)=i$, $n(\mathsf{d}(i,j))=j$, and $n(k-1)\underset{1}{\leftrightsquigarrow}n(k)$ for $k=1,2,\dots, \mathsf{d}(i,j)$.
\end{proposition}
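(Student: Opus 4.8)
The plan is to translate the statement into a purely combinatorial assertion about the hypercube and then prove it by induction on $\mathsf{d}(i,j)$, invoking the nondiagonality condition in the form $(\mathrm{ND})_{\mathrm H}$ at each step. After an affine relabeling I would identify $C$ with the block $\{0,1\}^D$, so that each $i$ with $K_i\subset C$ carries a label $\epsilon^i\in\{0,1\}^D$ recording which half of $C$ the cube $K_i$ occupies in each coordinate direction. Since adjacent grid cells have centers at Euclidean distance $1/l$, the quantity $\mathsf{d}(i,j)=l\sum_{k=1}^D|z^{(i)}_k-z^{(j)}_k|$ equals the Hamming distance $\#\{k:\epsilon^i_k\ne\epsilon^j_k\}$, and $i\underset{1}{\leftrightsquigarrow}j$ holds exactly when $\epsilon^i$ and $\epsilon^j$ differ in a single coordinate. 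Thus the sought sequence $\{n(k)\}$ is a walk of length $\mathsf{d}(i,j)$ in the face-adjacency graph on $T$ that strictly decreases the Hamming distance to $j$ at every step, i.e.\ a \emph{geodesic} (coordinate-monotone) path from $i$ to $j$ inside $T$.

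The key reduction is to read $(\mathrm{ND})_{\mathrm H}$ as a graph-connectivity statement. For any axis-aligned box $B\subset C$ whose side lengths are each $1/l$ or $2/l$, the set $B\cap Q_1$ is the union $\bigcup K_i$ over the filled cells $K_i\subset B$, and its interior is connected if and only if any two such filled cells can be joined through shared $(D-1)$-dimensional faces; this is because two grid cubes meeting only along a face of dimension $\le D-2$ contribute disjoint pieces to $\Int(B\cap Q_1)$. Hence $(\mathrm{ND})_{\mathrm H}$ asserts precisely that, for every admissible $B$, the filled cells contained in $B$ form a connected subgraph of the face-adjacency graph whenever this vertex set is nonempty.

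I would then argue by induction on $\mathsf{d}:=\mathsf{d}(i,j)$. The cases $\mathsf{d}=0$ and $\mathsf{d}=1$ are immediate. For $\mathsf{d}\ge2$, put $S_{ij}=\{k:\epsilon^i_k\ne\epsilon^j_k\}$ and let $B$ be the minimal box containing $K_i$ and $K_j$: it has extent $2/l$ in each direction $k\in S_{ij}$ and extent $1/l$ in the remaining directions, so it is admissible for $(\mathrm{ND})_{\mathrm H}$, lies in $C$, and inside $B$ the cells $i$ and $j$ are antipodal. Every cell of $B$ agrees with $i$ outside $S_{ij}$, so any face-neighbor of $i$ within $B$ arises by flipping a single coordinate $k_0\in S_{ij}$, and such a flip necessarily carries that coordinate from $\epsilon^i_{k_0}$ to $\epsilon^j_{k_0}$. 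Since $B\cap Q_1$ contains $K_i$ and is therefore nonempty, the reformulated $(\mathrm{ND})_{\mathrm H}$ makes the filled cells of $B$ connected; as this graph has at least two vertices, $i$ has degree $\ge1$, yielding a filled face-neighbor $i'$ of $i$ in $B$. Then $i'\in T$ and $\mathsf{d}(i',j)=\mathsf{d}-1$, so the induction hypothesis provides a geodesic from $i'$ to $j$ inside $T$; prepending the step $i\underset{1}{\leftrightsquigarrow}i'$ produces the required length-$\mathsf{d}$ path.

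The step needing care — and the reason $(\mathrm{ND})_{\mathrm H}$ must be applied to sub-boxes rather than to $C$ alone — is that connectivity of the filled cells of $C$ does \emph{not} by itself force a \emph{shortest} path: one can build connected configurations in $\{0,1\}^D$ whose only $i$--$j$ connections detour through coordinates outside $S_{ij}$. Restricting to the minimal box $B$ eliminates exactly these detours, because in $B$ there are no coordinates off $S_{ij}$ available to exploit, so connectivity there is automatically geodesic connectivity. The remaining point, the elementary equivalence between connectedness of $\Int(B\cap Q_1)$ and connectedness of the face-adjacency graph, is routine.
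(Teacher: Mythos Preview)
Your argument is correct. The paper does not supply its own proof of this proposition; it merely cites \cite[Proposition~2.5]{Ka10} and records that condition~(ND) is the essential ingredient. So there is no ``paper's proof'' to compare against beyond the reference.

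Your proof is a clean, self-contained version of the intended argument. The reduction to Hamming geometry on $\{0,1\}^D$ is exactly right: $\mathsf{d}(i,j)$ is the Hamming distance of the labels and $i\underset{1}{\leftrightsquigarrow}j$ is Hamming distance~$1$. The inductive step via the minimal box $B$ is the key idea, and you have correctly identified why one must apply $(\mathrm{ND})_{\mathrm H}$ to $B$ rather than to $C$: inside $B$ every face-neighbor of $i$ automatically flips a coordinate in $S_{ij}$ toward $j$, so connectivity forces a distance-decreasing first step. The equivalence between connectedness of $\Int(B\cap Q_1)$ and connectedness of the face-adjacency graph on the filled cells of $B$ is standard (a point on the relative interior of a shared $(D-1)$-face has a full neighborhood in the union of the two cells, while cells meeting only in lower-dimensional faces contribute disjoint open pieces), and your sketch of it is adequate.

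One small cosmetic point: the observation that $B'\subset B\subset C$ at the next inductive step is implicit in your argument (since $i'$ agrees with $j$ in coordinate $k_0$ and with $i$ elsewhere), and it is what guarantees the entire geodesic stays in $T$. You might state this explicitly for clarity, but it is not a gap.
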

Now, we prove \Prop{compatible}.
\begin{theopargself}
\begin{proof}[of \Prop{compatible}]
By induction, it is sufficient to prove that $\cF=\cF^S$.
Take $f\in \cF^S$. In order to prove that $f\in \cF$, it suffices to show that $\limsup_{\dl\to0}E_\dl(f)<\infty$.
For $\dl\in(0,1/l)$, we have
\begin{align*}
E_\dl(f)
=\sum_{i\in S}\sum_{j\in S} E_\dl(f,K_i,K_j)
=\sum_{i\in S} E_\dl(f,K_i)+\sum_{i,j\in S,\ i\ne j,\ K_i\cap K_j\ne\emptyset}E_\dl(f,K_i,K_j).
\end{align*}
Since $E_\dl(f,K_i)\lesssim E_{l\dl}(\psi_i^* f)\lesssim \cE(\psi_i^*f)$, we have $\limsup_{\dl\to0}E_\dl(f,K_i)\lesssim\cE(\psi_i^* f)$.
Therefore, it is sufficient to show that $\limsup_{\dl\to0}E_\dl(f,K_i,K_j)<\infty$ for $i,j\in S$ such that $i\ne j$ and $K_i\cap K_j\ne\emptyset$.
Hereafter, we fix such $i$ and $j$.

We can take a $D$-dimensional cube $C$ with side length $2/l$ such that $C$ is a union of some $2^D$ elements of $\sC$ and $K_i\cup K_j\subset C$.
Take a subset $T$ of $S$ as in \Prop{NDH}. Then, from \Prop{NDH}, there exists a sequence $\{n(k)\}_{k=0}^N$ in $T$, where $N=\mathsf{d}(i,j)$, such that $n(0)=i$, $n(N)=j$, and $n(k-1)\underset{1}{\leftrightsquigarrow}n(k)$ for $k=1,2,\dots, N$.
We note that $N$ is equal to the number of $\a\in\{1,\dots,D\}$ such that the $\a$-th coordinates of the centers of $K_i$ and $K_j$, that is, $z^{(i)}_\a$ and $z^{(j)}_\a$, are different.
In particular, for each $k=1,\dots, N$, there exists a unique $\a(k)\in\{1,\dots,D\}$ such that $z^{(n(k-1))}_{\a(k)}\ne z^{(n(k))}_{\a(k)}$.
Then, $z^{(n(0))}_{\a(k)}=z^{(n(1))}_{\a(k)}=\dots=z^{(n(k-1))}_{\a(k)}$ and $z^{(n(k))}_{\a(k)}=z^{(n(k+1))}_{\a(k)}=\dots=z^{(n(N))}_{\a(k)}$; there is no leeway to change a fixed coordinate more than once.
This in particular implies that $\bigcup_{s=0}^{k-1} K_{n(s)}\subset G_{n(k-1),n(k)}$ and $\bigcup_{s=k}^{N} K_{n(s)}\subset G_{n(k),n(k-1)}$
(see the description before \Lem{reflection0} for the definition of $G_{\cdot,\cdot}$).

Keeping \Defn{Xi} in mind, we define $h_k\in \cF$, $k=0,1,\dots,N$, inductively by
\[
 h_0=\Xi_{i}(f)\quad\text{and}\quad
 h_k=\Xi_{n(k),n(k-1)}\biggl(f-\sum_{s=0}^{k-1}h_s\biggr)\quad
 \text{for }k=1,\dots, N.
\]
Based on the above observation, we can prove by mathematical induction that $f-\sum_{s=0}^{k}h_s=0$ on $\bigcup_{s=0}^k K_{n(s)}$ for every $k=0,1,\dots, N$.
Denoting $\sum_{s=0}^{N}h_s$ by $h\in\cF$, we have $f=h$ on $\bigcup_{s=0}^N K_{n(s)}(\supset K_i\cup K_j)$.
Therefore, 
\[
  E_\dl(f,K_i,K_j)=E_\dl(h,K_i,K_j)\le E_\dl(h)\lesssim \cE(h),
\]
which implies that $\limsup_{\dl\to 0}E_\dl(f,K_i,K_j)<\infty$.
\qed\end{proof}
\end{theopargself}
For the proof of \Prop{key'} in the next subsection, we study some properties of functions that are harmonic on subsets of $K$ and other related function spaces.
From \Defn{space} to \Lem{basicH} stated below, $m$ is a fixed natural number and $A$ is a subset of $W_m$.
\begin{definition}\label{def:space}
We define closed subspaces $\cF^0_A$ and $\cH(A)$ of $\cF$ as
\begin{align*}
  \cF^0_A&=\{f\in\cF\mid  f=0 \ \mu\text{-a.e.\ on }K_{W_m\setminus A}\},\\
  \cH(A)&=\{h\in\cF\mid \cE(h)\le \cE(h+g)\text{ for all }g\in \cF^0_A\}.
\end{align*}
\end{definition}
Note that the inclusion $\cF_A^0\subset\cFD$ does not necessarily hold if $K_A\cap K^\partial\ne\emptyset$.
The following lemma is a variant of Lemmas~\ref{lem:harmonic} and \ref{lem:basicharmonic} and its proof is omitted.
\begin{lemma}\label{lem:harmonicA}
\begin{enumerate}
\item For $h\in\cF$, $h\in \cH(A)$ if and only if $\cE(h,g)=0$ for all $g\in \cF^0_A$.
\item For any $f\in\cH(A)$ and $w\in A$, $\psi_w^* f$ belongs to $\cH$.
\end{enumerate}
\end{lemma}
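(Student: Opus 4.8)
The plan is to run both parts in parallel with the already-established Lemmas~\ref{lem:harmonic} and \ref{lem:basicharmonic}, which are explicitly cited as models; the only genuinely new ingredient is that the localized test functions must be seen to land in $\cF^0_A$ rather than in $\cFD$.

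For part~(i), I would simply note that $\cF^0_A$ is a linear subspace of $\cF$ and that $\cE$ is a nonnegative symmetric bilinear form, so the claim is exactly the first-order (orthogonality) characterization of a minimizer. Concretely, if $\cE(h,g)=0$ for all $g\in\cF^0_A$, then $\cE(h+g)=\cE(h)+\cE(g)\ge\cE(h)$, whence $h\in\cH(A)$; conversely, for $h\in\cH(A)$, $g\in\cF^0_A$ and $t\in\R$ one has $tg\in\cF^0_A$, so $0\le\cE(h+tg)-\cE(h)=2t\,\cE(h,g)+t^2\cE(g)$, and dividing by $t>0$ and by $t<0$ and sending $t\to0$ forces $\cE(h,g)=0$. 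Nothing about the geometry of $K$ enters here beyond linearity of $\cF^0_A$.

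For part~(ii) I would follow the proof of \Lem{basicharmonic}. By \Lem{harmonic} it suffices to verify $\cE(\psi_w^*f,g)=0$ for an arbitrary $g\in\cF_0$. Given such $g$, I form the extension-by-zero $\Psi_w g$ of \Defn{Psiw}; iterating (A5) along $w=w_1\cdots w_m$ shows $\Psi_w g\in\cF_0\subset\cF$. The point is that $\Psi_w g$ then belongs to $\cF^0_A$: it vanishes off $K_w$, and since $w\in A$ while distinct level-$m$ cells overlap only in a $\mu$-null set, $\Psi_w g=0$ $\mu$-a.e.\ on $K_{W_m\setminus A}$. Part~(i) then gives $\cE(f,\Psi_w g)=0$. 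Expanding this by the self-similarity identity~\Eq{selfsimilar2} at level $m$, and using that $\psi_v^*\Psi_w g=0$ $\mu$-a.e.\ for $v\ne w$ (the remark after \Defn{Psiw}) while $\psi_w^*\Psi_w g=g$, the sum collapses to a single term:
\[
0=\cE(f,\Psi_w g)=\sum_{v\in W_m}\frac1{r_v}\cE(\psi_v^*f,\psi_v^*\Psi_w g)=\frac1{r_w}\cE(\psi_w^*f,g),
\]
and since $r_w>0$ we conclude $\cE(\psi_w^*f,g)=0$, i.e.\ $\psi_w^*f\in\cH$.

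The variational computation in (i) and the algebra of the self-similarity expansion are routine. The one step I expect to require the most care — modest though it is — is the membership $\Psi_w g\in\cF^0_A$: one must use $w\in A$ together with the $\mu$-nullity of overlaps of distinct level-$m$ cells to upgrade ``vanishing off $K_w$'' to ``vanishing $\mu$-a.e.\ on all of $K_{W_m\setminus A}$.'' This is precisely the adjustment that distinguishes $\cH(A)$ from the global $\cH$, where the analogous test function instead lies in $\cFD$.
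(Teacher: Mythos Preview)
Your proof is correct and is exactly the argument the paper intends: the paper omits the proof, stating only that it is ``a variant of Lemmas~\ref{lem:harmonic} and \ref{lem:basicharmonic},'' and you have carried out precisely that variant, including the one genuinely new point---checking that $\Psi_w g\in\cF^0_A$ (rather than merely $\cFD$) via $w\in A$ and the $\mu$-nullity of overlaps of distinct level-$m$ cells.
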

 The following is proved as in \cite[Lemma~3.5]{Hi05}; we provide a proof for readers' convenience.\begin{lemma}\label{lem:poincareA}
Suppose that $A\ne W_m$.
Then, there exists some constant $c_{5.2}>0$ (depending on $A$) such that 
$
  \|f\|_{L^2(K,\mu)}^2\le c_{5.2}\cE(f)$
for all $f\in\cF^0_A$.
\end{lemma}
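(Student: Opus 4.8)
The plan is to prove the inequality by contradiction, using the compact embedding of $\cF$ into $L^2(K,\mu)$ together with the spectral gap condition~(A4). This is the standard ``Poincar\'e-by-compactness'' scheme, and the only feature special to the present setting is that the boundary condition defining $\cF^0_A$ must survive passage to an $L^2$-limit, which is exactly where the hypothesis $A\ne W_m$ enters.

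Suppose the assertion fails. Then for every choice of constant the inequality is violated, so I can select a sequence $\{f_n\}_{n=1}^\infty$ in $\cF^0_A$ with $\|f_n\|_{L^2(K,\mu)}=1$ and $\cE(f_n)\to0$ as $n\to\infty$. Since $\cE_1(f_n)=\cE(f_n)+\|f_n\|_{L^2(K,\mu)}^2$ then stays bounded, $\{f_n\}$ is bounded in the Hilbert space $(\cF,\cE_1)$.

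Next I would invoke compactness. For Sierpinski carpets the resolvent operators are compact on $L^2(K,\mu)$, which means that $\cF$ embeds compactly into $L^2(K,\mu)$ (this can alternatively be read off from the Besov-space description of $\cF$ in \Thm{E} by a Rellich-type argument). Passing to a subsequence, $f_n$ converges in $L^2(K,\mu)$ to some $f$, and because $\{f_n\}$ is bounded in $\cF$ I may arrange, along a further subsequence, that $f_n\to f$ weakly in $\cF$; in particular $f\in\cF$. Since $\cE^{1/2}$ is a continuous seminorm on $\cF$ and hence weakly lower semicontinuous, $\cE(f)\le\liminf_{n\to\infty}\cE(f_n)=0$. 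Applying the spectral gap \Eq{Npoincare} to $f$ yields $\bigl\|f-\int_K f\,d\mu\bigr\|_{L^2(K,\mu)}^2\le c_{4.1}\cE(f)=0$, so $f$ equals the constant $c:=\int_K f\,d\mu$, and $\|f\|_{L^2(K,\mu)}=\lim_n\|f_n\|_{L^2(K,\mu)}=1$ forces $c\ne0$.

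Finally I would extract the contradiction from the boundary condition. Each $f_n$ vanishes $\mu$-a.e.\ on $K_{W_m\setminus A}$, and $L^2$-convergence (along a subsequence, $\mu$-a.e.\ convergence) forces the limit $f=c$ to vanish $\mu$-a.e.\ on $K_{W_m\setminus A}$ as well. Here the hypothesis $A\ne W_m$ is decisive: $W_m\setminus A$ is nonempty, and since $\mu$ assigns positive mass to every cell $K_w$ (with distinct cells of the same level overlapping only inside the $\mu$-null set $K^b$), we have $\mu(K_{W_m\setminus A})>0$. Hence $c=0$, contradicting $c\ne0$, and the asserted inequality holds with some constant $c_{5.2}>0$ depending on $A$. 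The step deserving the most care is the justification of the compact embedding $\cF\hookrightarrow L^2(K,\mu)$; once that is secured, the remaining verifications—weak lower semicontinuity of the energy, reduction to a constant via~(A4), and the positivity of $\mu(K_{W_m\setminus A})$—are routine.
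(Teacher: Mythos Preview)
Your argument is correct: the compactness-plus-contradiction scheme goes through exactly as you describe, since the compact embedding $\cF\hookrightarrow L^2(K,\mu)$ is available for Sierpinski carpets (as the paper notes, the resolvents are compact), weak lower semicontinuity of $\cE$ and the spectral gap force the limit to be a nonzero constant, and the vanishing condition on $K_{W_m\setminus A}$ (which has positive $\mu$-measure because $A\ne W_m$) gives the contradiction.

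The paper, however, takes a more direct and more elementary route that avoids compactness entirely. It applies Chebyshev's inequality to $f-\int_K f\,d\mu$ together with the spectral gap~\Eq{Npoincare} to bound $\mu\bigl(\{|f-\int_K f\,d\mu|>b\}\bigr)\le c_{4.1}b^{-2}\cE(f)$; choosing $b=(2c_{4.1}\cE(f)/a)^{1/2}$ with $a=\mu(K\setminus K_A)$ makes this measure strictly less than $a$, and since $f=0$ on a set of measure $a$ one concludes $\bigl|\int_K f\,d\mu\bigr|\le b$. Then $\|f\|_{L^2}^2=\|f-\int_K f\,d\mu\|_{L^2}^2+\bigl|\int_K f\,d\mu\bigr|^2\le c_{4.1}\cE(f)+2c_{4.1}a^{-1}\cE(f)$. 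This has two advantages over your approach: it uses only~(A4) rather than the stronger compact-embedding property, and it yields an explicit constant $c_{5.2}=c_{4.1}(1+2/a)$. Your method, by contrast, is the textbook ``Poincar\'e-by-compactness'' argument and would apply verbatim in any setting with a compact Dirichlet embedding, but the constant it produces is non-constructive.
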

\begin{proof}
 Let $f\in\cF^0_A$.
  From Chebyshev's inequality and \Eq{Npoincare}, for $b>0$,
  \begin{equation}\label{eq:Chebyshev}
  \mu\left(
  \left\{\left| f- \int_K f\,d\mu\right|>b
  \right\}\right)
  \le \frac{1}{b^2}\left\|f- \int_K f\,d\mu\right\|_{L^2(K,\mu)}^2
  \le \frac{c_{4.1}}{b^2}\cE(f).
  \end{equation}
Let $a=\mu(K\setminus K_A)>0$ and $b=(2c_{4.1}\cE(f)/a)^{1/2}$. Then, the last term of \Eq{Chebyshev} is less than $a$. Since $f=0$ on $K\setminus K_A$, $\left|\int_K f\,d\mu\right|$ must be less than or equal to $b$.
  Therefore, 
  \[
    \|f\|_{L^2(K,\mu)}^2 
    = \left\|f-\int_K f\,d\mu\right\|_{L^2(K,\mu)}^2 
        +\left|\int_K f\,d\mu\right|^2 
    \le c_{4.1} \cE(f)+\frac{2c_{4.1}}{a}\cE(f).\qedhere
  \]
\end{proof}
\begin{lemma}\label{lem:Hw}
Suppose that $A\ne W_m$.
Then, for each $f\in \cF$, there exists a unique function $H_Af\in\cH(A)$ such that $H_Af=f$ on $K_{W_m\setminus A}$.
\end{lemma}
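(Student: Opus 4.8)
The plan is to realize $H_A f$ as the solution of a Dirichlet-type variational problem, namely to minimize the energy over the affine space $f+\cF^0_A$. The point is that the condition ``$H_A f=f$ on $K_{W_m\setminus A}$'' (read $\mu$-a.e., matching the definition of $\cF^0_A$) is equivalent to $H_A f - f\in\cF^0_A$, so the admissible functions are exactly those of the form $f+g$ with $g\in\cF^0_A$. Moreover, by \Lem{harmonicA}~(i) the membership $H_A f\in\cH(A)$ is equivalent to the orthogonality relation $\cE(H_A f,g')=0$ for all $g'\in\cF^0_A$, which is precisely the Euler--Lagrange condition for the minimizer. Thus the whole statement reduces to the existence and uniqueness of an $\cE$-orthogonal projection onto the closed subspace $\cF^0_A$.

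The key step I would carry out first is to show that $\cE$ is a genuine inner product on $\cF^0_A$ whose induced norm is equivalent to $\|\cdot\|_\cF$. On the one hand $\cE(g)\le\cE_1(g)=\|g\|_\cF^2$ always holds. On the other hand, the Poincar\'e-type estimate of \Lem{poincareA}, which is valid precisely because $A\ne W_m$, gives $\|g\|_{L^2(K,\mu)}^2\le c_{5.2}\,\cE(g)$, whence $\|g\|_\cF^2=\cE(g)+\|g\|_{L^2(K,\mu)}^2\le(1+c_{5.2})\cE(g)$ for every $g\in\cF^0_A$. In particular $\cE$ is positive-definite on $\cF^0_A$, and since $\cF^0_A$ is a closed subspace of the Hilbert space $(\cF,\cE_1)$ it is complete; by the norm equivalence $(\cF^0_A,\cE)$ is itself a Hilbert space.

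Existence then follows from the Riesz representation theorem: the linear functional $g'\mapsto-\cE(f,g')$ on $\cF^0_A$ is bounded because $|\cE(f,g')|\le\cE(f)^{1/2}\cE(g')^{1/2}$, so there is a unique $g_0\in\cF^0_A$ with $\cE(g_0,g')=-\cE(f,g')$ for all $g'\in\cF^0_A$, i.e.\ $\cE(f+g_0,g')=0$ for all such $g'$. Setting $H_A f:=f+g_0$, \Lem{harmonicA}~(i) yields $H_A f\in\cH(A)$, while $g_0\in\cF^0_A$ gives $H_A f=f$ on $K_{W_m\setminus A}$. (Equivalently, one could minimize $\cE(f+g)$ directly; coercivity and strict convexity from the norm equivalence produce a unique minimizer satisfying the same first-order condition.) For uniqueness, if $h_1,h_2\in\cH(A)$ both agree with $f$ on $K_{W_m\setminus A}$, then $h_1-h_2\in\cF^0_A$, and since $\cH(A)$ is a linear subspace (the $\cE$-annihilator of $\cF^0_A$ by \Lem{harmonicA}~(i)) we also have $h_1-h_2\in\cH(A)$; taking $g'=h_1-h_2$ in the orthogonality relation gives $\cE(h_1-h_2)=0$, so $\|h_1-h_2\|_{L^2(K,\mu)}^2\le c_{5.2}\,\cE(h_1-h_2)=0$ by \Lem{poincareA}, forcing $h_1=h_2$.

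The one genuinely delicate point—and the place where the hypothesis $A\ne W_m$ is indispensable—is the step establishing positive-definiteness and completeness of $\cE$ on $\cF^0_A$. If $A=W_m$ we would have $\cF^0_A=\cF\ni 1$ with $\cE(1)=0$ by (A2), so $\cE$ would be degenerate; both the existence of a canonical minimizer and uniqueness would then fail. Everything else is the standard projection argument and involves no further obstacle.
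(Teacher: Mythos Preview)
Your proof is correct and follows essentially the same approach as the paper: both reduce the problem to a variational minimization over $f+\cF^0_A$, rely on \Lem{poincareA} (which needs $A\ne W_m$) to control the $L^2$-norm, and obtain uniqueness from the resulting positive-definiteness of $\cE$ on $\cF^0_A$. The only cosmetic difference is that the paper carries out the direct method (minimizing sequence plus weak compactness in $\cF$) while you invoke the Riesz representation theorem on $(\cF^0_A,\cE)$, a variant you yourself note is equivalent.
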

\begin{proof}
This is proved by a standard argument.
Let $\cG=\{\hat f\in\cF\mid \hat f-f\in \cF^0_{A}\}$.
Take a sequence $\{\hat f_n\}$ from $\cG$ such that $\cE(\hat f_n)$ decreases to $\inf\{\cE(\hat f)\mid \hat f\in\cG\}=:a$ as $n\to\infty$.
Then, we have
\begin{align*}
\|\hat f_n\|_{L^2(K,\mu)}
&\le \|\hat f_n-f\|_{L^2(K,\mu)}+\|f\|_{L^2(K,\mu)}\\
&\le\sqrt{c_{5.2}}\cE(\hat f_n-f)^{1/2}+\|f\|_{L^2(K,\mu)} \quad\text{(from \Lem{poincareA})}\\
&\le \sqrt{c_{5.2}}\{\cE(\hat f_n)^{1/2}+\cE(f)^{1/2}\}+\|f\|_{L^2(K,\mu)}.
\end{align*}
Therefore, $\{\hat f_n\}$ is bounded in $\cF$.
A weak limit point $\hat f_\infty$ of $\{\hat f_n\}$ in $\cF$ belongs to $\cG$ and attains the infimum of $\inf\{\cE(\hat f)\mid \hat f\in\cG\}$, i.e., $\cE(\hat f_\infty)=a$.
Thus, $\hat f_\infty\in\cH(A)$ and we can take $\hat f_\infty$ as $H_A f$.
Uniqueness follows from the strict convexity of $\cE(\cdot)$.
More precisely speaking, if another $f'$ attains the infimum, then $\hat f_\infty-f'\in \cF^0_A$ and \Lem{poincareA} implies
\[
c_{5.2}^{-1}\|{\hat f_\infty-f'}\|_{L^2(K,\mu)}^2
\le\cE({\hat f_\infty-f'})
=2\cE(\hat f_\infty)+2\cE(f')-4\cE\left((\hat f_\infty+f')/2\right)\le0.
\]
Therefore, $f'=\hat f_\infty$.
\qed\end{proof}
From this lemma, we can define a bounded linear map $H_A\colon \cF\ni f\mapsto H_Af\in \cF$.
The following lemma is also proved in a standard manner.
\begin{lemma}\label{lem:basicH}
Suppose that $A\ne W_m$.
Let $f,f_1,f_2\in\cF$.
\begin{enumerate}
\item If $f_1=f_2$ on $K_A$, then $H_A f_1=H_A f_2$ on $K_A$.
\item It holds that
\[
\muessinf_{x\in K_A}f(x)\le \muessinf_{x\in K_A} H_A f(x)\le \muesssup_{x\in K_A} H_A f(x)\le \muesssup_{x\in K_A} f(x).
\]
\end{enumerate}
\end{lemma}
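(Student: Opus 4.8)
The plan is to prove the maximum principle~(ii) first and to deduce~(i) from it at the end. For~(ii) note that the middle inequality is trivial, and that $H_A$ is linear (immediate from the uniqueness in \Lem{Hw}, which in particular gives $H_A(-f)=-H_Af$), so the leftmost inequality follows from the rightmost one applied to $-f$. Hence it suffices to prove $\muesssup_{K_A}H_Af\le\muesssup_{K_A}f=:M$, and we may assume $M<\infty$.

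The main step is a truncation argument exploiting energy minimality. Writing $u=H_Af$ and $v=f\vee M$, I set $w:=(u-v)^+$. Since $u=f\le v$ $\mu$-a.e.\ on $K_{W_m\setminus A}$, we have $w\in\cF^0_A$, and harmonicity of $u$ (\Lem{harmonicA}~(i)) gives $\cE(u,w)=0$. On the other hand, the derivation property (\Thm{derivation}, extended to the Lipschitz maps $t\mapsto t^+$ and $\vee$ by approximation, together with the lattice property of $\cF$) yields the chain rule $\nu_{w,g}=\bfone_{\{\tilde u>\tilde v\}}\,\nu_{u-v,g}$ for $g\in\cF$; taking $g=u$ and using $\nu_{u-v,u}=\nu_{u-v,u-v}+\nu_{u-v,v}$ gives $\cE(u,w)=\cE(w)+\cE(v,w)$. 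Everything thus reduces to showing that the cross term $\cE(v,w)$ vanishes: granting this, $\cE(w)=0$, and the Poincar\'e inequality \Lem{poincareA} (available because $A\ne W_m$) forces $w=0$ $\mu$-a.e., i.e.\ $u\le v=M$ $\mu$-a.e.\ on $K_A$, which is exactly the claim.

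To see $\cE(v,w)=0$, write $v=M+(f-M)^+$; since constants have zero energy (by (A2) and strong locality), $\cE(v,w)=\cE((f-M)^+,w)$. Now $\nu_{(f-M)^+}$ is carried by $\{\tilde f>M\}$ and $\nu_w$ by $\{\tilde u>\tilde v\}$. Using that $\tilde f\le M$ q.e.\ on $K_A$ and $\tilde u=\tilde f$ q.e.\ on $K_{W_m\setminus A}$, the first carrier avoids $K_A$ and the second avoids $K_{W_m\setminus A}$, in each case up to a set of zero capacity; since energy measures charge no set of zero capacity, the Schwarz inequality \Eq{schwarz} applied over $K=K_A\cup K_{W_m\setminus A}$ gives $\nu_{(f-M)^+,w}\equiv0$, hence $\cE(v,w)=0$. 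Finally, (i) follows from (ii): if $f_1=f_2$ on $K_A$ then $\muesssup_{K_A}(f_1-f_2)=\muessinf_{K_A}(f_1-f_2)=0$, so (ii) applied to $f_1-f_2$ together with the linearity of $H_A$ yields $H_Af_1=H_Af_2$ $\mu$-a.e.\ on $K_A$.

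The delicate point — and the step I expect to cost the most — is justifying the two quasi-everywhere statements ``$\tilde f\le M$ q.e.\ on $K_A$'' and ``$\tilde u=\tilde f$ q.e.\ on $K_{W_m\setminus A}$'', i.e.\ upgrading a $\mu$-a.e.\ bound on a union of cells to a quasi-everywhere bound that includes the (possibly capacity-positive) interface $K_A\cap K_{W_m\setminus A}$. This is where care is needed, since condition~(A7) is unavailable in this section; the upgrade rests on each cell being the closure of its relative interior, on the quasi-continuity of the modifications, and on the capacity comparison \Eq{capequiv}.
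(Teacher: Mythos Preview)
Your approach reverses the paper's order, and this reversal is precisely what creates the difficulty you flag at the end. The paper proves (i) first, directly and almost trivially: if $f_1=f_2$ on $K_A$ then $\psi_w^*(f_1-f_2)=0$ for every $w\in A$, so $\cE^A(f_1-f_2)=0$; since $\cE=\cE^A+\cE^{W_m\setminus A}$ by (A3) and every competitor $\hat f$ with $\hat f-(f_1-f_2)\in\cF^0_A$ agrees with $f_1-f_2$ on $K_{W_m\setminus A}$, the function $f_1-f_2$ already attains the minimum, whence $H_A(f_1-f_2)=f_1-f_2$. For (ii) the paper then sets $\hat f=f\wedge b$ (so $\hat f=f$ on $K_A$ and, by (i), $H_A\hat f=H_Af$ on $K_A$), observes that $(H_A\hat f)\wedge b-\hat f=(H_A\hat f-\hat f)\wedge(b-\hat f)\in\cF^0_A$, and invokes the Markov property $\cE((H_A\hat f)\wedge b)\le\cE(H_A\hat f)$ together with uniqueness of the minimizer to get $H_A\hat f\le b$. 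No q.e.\ statements on cells, no energy-measure carriers, no chain rule for $(\cdot)^+$.

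By contrast, your route hinges on the two q.e.\ upgrades you single out. These are not free: $K_A$ and $K_{W_m\setminus A}$ are closed sets whose common interface can have positive capacity, and the standard ``$\mu$-a.e.\ $\Rightarrow$ q.e.'' lemma is stated for all of $K$ (or for quasi-open sets), not for closed unions of cells. The sketch you give (closure of relative interior, quasi-continuity, \Eq{capequiv}) is plausible in spirit but would itself require a careful argument at the level of the folding/compatibility machinery in this section---work comparable to the surrounding lemmas, and arguably circular in flavor since this lemma is meant to be an elementary input. Your deduction of (i) from (ii) is fine, and the skeleton of your (ii) argument is sound up to the cross term; but the paper's order eliminates the cross-term problem altogether by first reducing (via (i)) to a globally bounded $\hat f$, after which the Markov property does all the work.
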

\begin{proof}
(i) Since $f_1-f_2=0$ on $K_A$, $\cE^A(f_1-f_2)=0$.
Therefore, $f_1-f_2$ is the minimizer of $\inf\{\cE(\hat f)\mid \hat f-(f_1-f_2)\in \cF^0_A\}$. This implies that $H_A(f_1-f_2)=f_1-f_2$.
From the linearity of $H_A$, $H_A f_1-H_A f_2=0$ on $K_A$.

(ii) Suppose that $f\le b$ $\mu$-a.e.\ on $K_A$ for $b\in\R$.
Let $\hat f=f\wg b\in\cF$.
Since $f=\hat f$ on $K_A$, $H_Af=H_A \hat f$ on $K_A$ from (i).
Since $H_A\hat f-\hat f\in\cF^0_A$, $b-\hat f\in \cF$, and $b-\hat f\ge0$, we have
$(H_A\hat f)\wg b-\hat f=(H_A \hat f-\hat f)\wg (b-\hat f)\in \cF^0_A$.
Moreover, we have $\cE((H_A \hat f)\wg b)\le \cE(H_A\hat f)$ by the Markov property of $(\cE,\cF)$.
Thus, $(H_A \hat f)\wg b=H_A \hat f$, which implies that $H_A\hat f\le b$.
Therefore, $H_A f=H_A \hat f\le b$ on $K_A$.
This implies the last inequality.
By considering $-f$ in place of $f$, we obtain the first inequality.
The second inequality is evident.
 \qed\end{proof}
Hereafter, in most cases, we use the map $H_A$ for $A=\cN_3(w)$ with $w\in W_*$. (See \Defn{cell} for the definition of $\cN_n(w)$.)
\begin{definition}\label{def:surface}
For $i\in \{1,\dots,D\}$ and $j\in\{0,1\}$, we define
\[
K^\partial_{i,j}=\{x=(x_1,\dots,x_D)\in K\mid x_i=j\}.
\]
A subset $\cFH$ of $\cF$ including $\cFD$ is defined as
\[
\cFH=\bigl\{f\in\cF\bigm| \text{$\tilde f=0$ q.e.\ on $K^\partial_{i,j}$ for some $i\in\{1,\dots,D\}$ and some $j\in\{0,1\}$}\bigr\}.
\]
\end{definition}
We note that $\bigcup_{i=1}^D \bigcup_{j=0}^1 K^\partial_{i,j}=K^\partial$.
\begin{lemma}\label{lem:poincareH}
There exists some constant $c_{5.3}>0$ such that 
$
  \|f\|_{L^2(K,\mu)}^2\le c_{5.3}\cE(f)$
  for all $f\in\cFH$.
\end{lemma}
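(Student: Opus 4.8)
The plan is to reduce to a single face and then argue by contradiction using compactness, rather than by the Chebyshev estimate of \Lem{poincareA}: here the vanishing set is $\mu$-null (a face of $K$ carries no mass of the normalized Hausdorff measure, its Euclidean dimension being strictly below $d_\mathrm{H}$), so that estimate is unavailable. Since
\[
\cFH=\bigcup_{i=1}^D\bigcup_{j=0}^1\bigl\{f\in\cF\bigm| \tilde f=0\ \text{q.e.\ on }K^\partial_{i,j}\bigr\}
\]
is a finite union, it suffices to find, for each fixed $i$ and $j$, a constant $c_{i,j}>0$ with $\|f\|_{L^2(K,\mu)}^2\le c_{i,j}\cE(f)$ for all $f$ in $\cG:=\{f\in\cF\mid \tilde f=0\ \text{q.e.\ on }K^\partial_{i,j}\}$, and then take $c_{5.3}=\max_{i,j}c_{i,j}$. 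First I would note that $\cG$ is a closed subspace of $\cF$: if $f_n\to f$ in $\cF$ with $\tilde f_n=0$ q.e.\ on $K^\partial_{i,j}$, then a subsequence of $\{\tilde f_n\}$ converges to $\tilde f$ q.e.\ (cf.\ \cite{FOT}), whence $\tilde f=0$ q.e.\ on $K^\partial_{i,j}$; being closed, $\cG$ is weakly closed in $\cF$.

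Fix $(i,j)$ and suppose the asserted inequality fails on $\cG$. Then there exist $f_n\in\cG$ with $\|f_n\|_{L^2(K,\mu)}=1$ and $\cE(f_n)\to0$, so $\{f_n\}$ is bounded in $\cF$. Because the resolvent operators of $(\cE,\cF)$ are compact, the embedding $\cF\hookrightarrow L^2(K,\mu)$ is compact, and after passing to a subsequence I may assume $f_n\to f$ strongly in $L^2(K,\mu)$ and weakly in $\cF$. As $\cE_1(\cdot)=\|\cdot\|_\cF^2$ is weakly lower semicontinuous and $\|f_n\|_{L^2(K,\mu)}\to\|f\|_{L^2(K,\mu)}$, we obtain $\cE(f)\le\liminf_n\cE(f_n)=0$; hence $\cE(f)=0$, and the spectral gap \Eq{Npoincare} (together with $1\in\cF$) forces $f$ to be a constant $c$ $\mu$-a.e., with $c\ne0$ since $\|f\|_{L^2(K,\mu)}=1$. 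Weak closedness of $\cG$ gives $f\in\cG$, i.e.\ $\tilde f=0$ q.e.\ on $K^\partial_{i,j}$; as $\tilde f=c\ne0$, this can happen only if $\Cp(K^\partial_{i,j})=0$.

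The whole proof therefore hinges on the single nontrivial point, which I expect to be the main obstacle: every face $K^\partial_{i,j}$ has \emph{positive} capacity. I would derive this from irreducibility of the diffusion --- the two-sided bounds \Eq{Aronson} yield a jointly continuous, strictly positive transition density --- together with the continuity of sample paths. An inner hyperplane-slice $K\cap\{x_i=k/l\}$ with $0<k<l$ separates $K$ into its portions $\{x_i<k/l\}$ and $\{x_i>k/l\}$, so any continuous path in $K$ between their interiors must meet the slice; irreducibility makes such a crossing occur with positive probability, so the slice is nonpolar and hence has positive capacity. I would then transfer positivity to the boundary faces $K^\partial_{i,j}$ by mapping them onto such inner slices through suitable $\psi_w$ --- whose existence is guaranteed by the connectedness hypothesis --- and invoking the capacity comparison \Eq{capequiv}, with the reflection symmetry \Eq{reflection} covering the remaining faces. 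This contradicts $\Cp(K^\partial_{i,j})=0$ and closes the argument. The compactness scheme itself is routine once the compact embedding is in hand; the capacity positivity of the faces is the delicate ingredient, and it must be established without recourse to (A7), which is not yet available at this stage.
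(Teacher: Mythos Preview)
Your argument is correct and genuinely different from the paper's. The paper avoids the capacity question entirely: by the isometry invariance \Eq{reflection} it reduces to $\tilde f=0$ q.e.\ on $K^\partial_{1,1}$, then uses the folding map $\ph$ of \Defn{periodic} and \Lem{reflection0} to build $g(x)=f(l\cdot\ph(x))\cdot\bfone_{K'}(x)$ with $K'=\{x\in K\mid x_1\le 1/l\}$. This $g$ lies in $\cF^0_A$ for $A=\{i\in S\mid K_i\subset K'\}\subsetneq S$, so \Lem{poincareA} applies directly, and the $L^2$- and $\cE$-norms of $g$ and $f$ differ by explicit multiplicative constants. The whole proof is four lines.

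Your route trades that constructive trick for soft analysis: the compactness--contradiction scheme is routine once the compact embedding $\cF\hookrightarrow L^2(K,\mu)$ is in hand (and it is, from the heat kernel bounds), so everything rests on $\Cp(K^\partial_{i,j})>0$. Your sketch for this is essentially right and can be tightened as follows. The slice $K\cap\{x_D=1/l\}$ separates $K$ into two nonempty relatively open pieces (BI gives bottom-layer cells, symmetry gives top-layer cells, and $l\ge3$ keeps them apart); if the slice were polar the continuous diffusion started in the lower piece would never reach the upper one, contradicting strict positivity of $p(t,x,y)$ from \Eq{Aronson}. The slice is a finite union of sets of the form $\psi_i(K^\partial_{D,0})$ or $\psi_i(K^\partial_{D,1})$ with $i\in S$, so by subadditivity one such image has positive capacity, and \Eq{capequiv} transfers this to $K^\partial_{D,0}$ or $K^\partial_{D,1}$; symmetry \Eq{reflection} then gives all faces. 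No circularity arises: \Eq{capequiv} is quoted from \cite{HK06} and does not rely on (A7) or on the present lemma.

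What each approach buys: the paper's folding argument is shorter, yields an explicit constant, and reuses machinery (\Lem{reflection0}, \Lem{poincareA}) already in place for \Prop{compatible}. Your approach is conceptually cleaner and would transplant to settings without a folding structure, at the cost of importing the probabilistic non-polarity argument.
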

\begin{proof}
From \Eq{reflection}, it suffices to consider the case when $\tilde f=0$ q.e.\ on $K^\partial_{1,1}$.
Let us recall the folding map $\ph$ in \Defn{periodic}.
Let $K'=\{x=(x_1,\dots,x_D)\in K\mid x_1\le 1/l\}$.
From \Lem{reflection0}, the function $g$ defined as $g(x)=f(l\cdot\ph(x))\cdot \bfone_{K'}(x)$ belongs to $\cF$.
Then, it holds that
$\|g\|_{L^2(K,\mu)}^2\le c_{5.2}\cE(g)$
 from \Lem{poincareA}.
Since $\|g\|_{L^2(K,\mu)}=c_{5.4}\|f\|_{L^2(K,\mu)}$ and $\cE(g)=c_{5.5}\cE(f)$ for some positive constants $c_{5.4}$ and $c_{5.5}$ that are independent of $f$, we complete the proof.
\qed\end{proof}
\begin{definition}\label{def:shape}
Let $A\subset W_m$ and $A'\subset W_{m'}$ for $m,m'\in\Z_+$.
We say that $K_A$ and $K_{A'}$ have the same shape and write $K_A\sim K_{A'}$ if there exists a similitude $\xi(x)=l^{m-m'}x+b$ with some $b\in (l^{-m'}\Z)^D$ such that $\xi(K_A)=K_{A'}$ and $\xi(K_A\cap K^\partial)=K_{A'}\cap K^\partial$. 
\end{definition}
It is evident that $\sim$ is an equivalence relation on the set $\{K_A\mid A\subset W_m\text{ for some } m\in\Z_+\}$.

The following Lemmas~\ref{lem:7'}--\ref{lem:elliptic} are used only to prove \Prop{key1} stated below.
\begin{lemma}\label{lem:7'}
There exists a positive constant $c_{5.6}$ such that for any $w\in W_*$ and $f\in \cF^0_{\cN_3(w)}\cap \cFD$,
\[
  \|f\|_{L^2(K,\mu)}^2=\int_{K_{\cN_3(w)}}f^2\,d\mu
  \le c_{5.6}(r/M)^{|w|}\cE^{\cN_3(w)}(f)
  = c_{5.6}(r/M)^{|w|}\cE(f).
\]
\end{lemma}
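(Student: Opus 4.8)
The plan is to treat the two equalities as bookkeeping and to concentrate the real work on the middle inequality. The first equality $\|f\|_{L^2(K,\mu)}^2=\int_{K_{\cN_3(w)}}f^2\,d\mu$ is immediate from $f\in\cF^0_{\cN_3(w)}$, since then $f=0$ $\mu$-a.e.\ off $K_{\cN_3(w)}$. For the last equality $\cE^{\cN_3(w)}(f)=\cE(f)$, write $m=|w|$ and apply \Eq{selfsimilar2}: $\cE(f)=r^{-m}\sum_{v\in W_m}\cE(\psi_v^*f)$; because $f=0$ $\mu$-a.e.\ on each $K_v$ with $v\in W_m\setminus\cN_3(w)$, the corresponding $\psi_v^*f$ vanishes, so only $v\in\cN_3(w)$ survive and the sum equals $r^m\cE^{\cN_3(w)}(f)$. (Here $r_i=r$ for all $i$, and this step uses only (A3), not the still-unproved (A7).)

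First I would rewrite both remaining sides cell by cell. Using $\mu(\psi_v(B))=M^{-m}\mu(B)$ one gets $\int_{K_{\cN_3(w)}}f^2\,d\mu=M^{-m}\sum_{v\in\cN_3(w)}\|\psi_v^*f\|_{L^2(K,\mu)}^2$, while $(r/M)^m\cE^{\cN_3(w)}(f)=M^{-m}\sum_{v\in\cN_3(w)}\cE(\psi_v^*f)$. Hence the claim is equivalent to a Poincaré-type inequality $\int_{K_{\cN_3(w)}}f^2\,d\mu\le c\,(r/M)^m\cE^{\cN_3(w)}(f)$ in which the constant $c$ must be \emph{uniform in $w$}. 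The mechanism for uniformity is \Defn{shape}: as $w$ ranges over $W_*$, the regions $K_{\cN_3(w)}$ fall into only finitely many classes under $\sim$, since $\cN_3(w)$ is a block of at most $7^D$ cells whose combinatorial configuration and whose incidence with $K^\partial$ take, by self-similarity, only finitely many forms.

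Second I would reduce to a model inequality on each representative shape and transfer it by the defining similitude. Fix a shape with representative $A_0\subset W_{m_0}$ and similitude $\xi$ satisfying $\xi(K_{\cN_3(w)})=K_{A_0}$ and $\xi(K_{\cN_3(w)}\cap K^\partial)=K_{A_0}\cap K^\partial$, and set $g=f\circ\xi^{-1}$, extended by $0$. Then $\psi_{v_0}^*g=\psi_v^*f$ for corresponding cells, so the per-cell energies and masses match and one checks $\int_{K_{A_0}}g^2\,d\mu=M^{m-m_0}\int_{K_{\cN_3(w)}}f^2\,d\mu$ and $\cE^{A_0}(g)=r^{m-m_0}\cE^{\cN_3(w)}(f)$. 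The compatibility of $g$ across the outer interfaces of $K_{A_0}$, forced by $f\in\cF^0_{\cN_3(w)}$ vanishing on the neighbouring cells, together with \Prop{compatible}, gives $g\in\cF$, and in fact $g\in\cF^0_{A_0}\cap\cFD$. Thus for a shape with $A_0\ne W_{m_0}$, \Lem{poincareA} yields $\int_{K_{A_0}}g^2\,d\mu=\|g\|_{L^2(K,\mu)}^2\le c_{5.2}\cE(g)=c_{5.2}\cE^{A_0}(g)$, whereas for the finitely many small-scale shapes with $K_{A_0}=K$ one instead uses $\cFD\subset\cFH$ and \Lem{poincareH}. Undoing the scaling gives $\int_{K_{\cN_3(w)}}f^2\,d\mu\le C(r/M)^m\cE^{\cN_3(w)}(f)$, and taking $c_{5.6}$ to be the maximum of $c_{5.2}(M/r)^{m_0}$ (resp.\ $c_{5.3}(M/r)^{m_0}$) over the finite list of shapes completes the argument.

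The hard part will be the finiteness of shapes: one must argue that the local occupancy pattern of the $7^D$-neighbourhood of any cell, together with its position relative to $\partial Q_0$, is one of finitely many possibilities, and this is where the self-similar structure (and, if needed, the symmetry \Eq{reflection}) is genuinely used. A secondary point requiring care is verifying $g\in\cF$ rather than merely $g\in\cF^{A_0}$, for which \Prop{compatible} is precisely the tool. Once these are in place, the scaling computation and the invocations of Lemmas~\ref{lem:poincareA} and \ref{lem:poincareH} are routine.
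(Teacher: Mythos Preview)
Your proposal is correct and follows essentially the same route as the paper: reduce by similitude to a representative of each equivalence class of shapes $K_{\cN_3(w)}$ under $\sim$, use \Prop{compatible} to check the transported function lies in $\cF$, apply a Poincar\'e-type inequality on the representative, scale back, and take the maximum over the finitely many shapes. The one minor simplification in the paper's argument is that it uses \Lem{poincareH} uniformly (since $f\in\cFD\subset\cFH$ in every case), avoiding your case split between $A_0\ne W_{m_0}$ and $A_0=W_{m_0}$; you may adopt this and drop the reference to \Lem{poincareA}.
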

\begin{proof}
It is sufficient to prove the inequality in the equation described above.
Let $\hat w\in W_*$ and $\hat f \in \cF^0_{\cN_3(\hat w)}\cap\cFD$. Then, from \Lem{poincareH}, 
\begin{equation}\label{eq:c73}
\int_{K_{\cN_3(\hat w)}}\hat f^2\,d\mu
\le c_{5.3}\cE(\hat f)
=c_{5.3}\cE^{\cN_3(\hat w)}(\hat f).
\end{equation}
Next, let $w\in W_*$, $f\in \cF^0_{\cN_3(w)}\cap\cFD$ and suppose that $K_{\cN_3(\hat w)}\sim K_{\cN_3(w)}$.
We take a similitude $\xi$ as in \Defn{shape} such that $\xi(K_{\cN_3(\hat w)})=K_{\cN_3(w)}$ and $\xi(K_{\cN_3(\hat w)}\cap K^\partial)=K_{\cN_3(w)}\cap K^\partial$, and define 
\[
  \hat f(x)=\begin{cases}
  f(\xi(x))&\text{if }x\in K_{\cN_3(\hat w)},\\
  0&\text{otherwise.}
  \end{cases}
\]
Then, $\hat f\in\cF^0_{\cN_3(\hat w)}\cap \cFD$ from \Prop{compatible} and
\begin{align*}
\int_{K_{\cN_3(w)}} f^2\,d\mu
&=M^{|\hat w|-|w|}\int_{K_{\cN_3(\hat w)}} \hat f^2\,d\mu\\
&\le {c_{5.3}}M^{|\hat w|-|w|}\cE^{\cN_3(\hat w)}(\hat f)\qquad\text{(from \Eq{c73})}\\
&= {c_{5.3}}M^{|\hat w|-|w|}r^{|w|-|\hat w|}\cE^{\cN_3(w)}(f)\quad\text{(from \Eq{cEAdef})}\\
&=c_{5.3}(M/r)^{|\hat w|}\cdot (r/M)^{|w|}\cE^{\cN_3(w)}(f).
\end{align*}
Since the number of the equivalent classes of $\{K_{\cN_3(w)}\mid w\in W_*\}$ with respect to $\sim$ is finite, we obtain the assertion.
\qed\end{proof}
\begin{lemma}\label{lem:Hwf}
There exists a positive constant $c_{5.7}$ such that for any $f\in \cF$ and $w\in W_*$ with $K_{\cN_3(w)}\cap K^\partial=\emptyset$, it holds that
\begin{equation}\label{eq:Hwf2}
\cE^{\cN_3(w)}(H_{\cN_3(w)} f)\le \cE^{\cN_3(w)}(f)
\end{equation}
and
\begin{align}\label{eq:Hwf1}
\biggl(\int_{K_{\cN_3(w)}}(H_{\cN_3(w)} f)^2\,d\mu\biggr)^{1/2}
\le c_{5.7}\bigl((r/M)^{|w|}\cE^{\cN_3(w)}(f)\bigr)^{1/2}
+\biggl(\int_{K_{\cN_3(w)}}f^2\,d\mu\biggr)^{1/2}.
\end{align}
\end{lemma}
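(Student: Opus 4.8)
The plan is to reduce both inequalities to a single additive decomposition of the energy across the cells of $W_m$ (with $m=|w|$), which is available from \Lem{energymeas}~(ii) even though \Lem{energymeas'} and \Eq{additive} are not. Writing $A=\cN_3(w)$ and applying \Lem{energymeas}~(ii) with $B=K$ (so that $\psi_v^{-1}(K)=K$ and $\nu_{\psi_v^*g}(K)=2\cE(\psi_v^*g)$), one obtains
\[
\cE(g)=\tfrac12\nu_g(K)=r^{-m}\sum_{v\in W_m}\cE(\psi_v^*g)=\cE^A(g)+\cE^{W_m\setminus A}(g),
\qquad g\in\cF,
\]
where $\cE^{W_m\setminus A}$ is defined by the formula \Eq{cEAdef} with $A$ replaced by $W_m\setminus A$ (nonempty, since $K^\partial\subset K_{W_m\setminus A}$). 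The crucial feature is that $H_Af=f$ $\mu$-a.e.\ on $K_{W_m\setminus A}$, so $\psi_v^*(H_Af)=\psi_v^*f$ for every $v\in W_m\setminus A$ and hence $\cE^{W_m\setminus A}(H_Af)=\cE^{W_m\setminus A}(f)$; symmetrically, if $g\in\cF^0_A$ then $\cE^{W_m\setminus A}(g)=0$ and $\cE(g)=\cE^A(g)$, which is exactly the equality already recorded in \Lem{7'}.

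For \Eq{Hwf2} I would combine this decomposition with the global minimality of $H_Af$: since $H_Af\in\cH(A)$ and $f-H_Af\in\cF^0_A$, the defining property of $\cH(A)$ in \Defn{space} gives $\cE(H_Af)\le\cE(f)$. Subtracting the common term $\cE^{W_m\setminus A}(f)=\cE^{W_m\setminus A}(H_Af)$ from the decompositions of $\cE(H_Af)$ and $\cE(f)$ yields $\cE^A(H_Af)\le\cE^A(f)$ immediately.

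For \Eq{Hwf1} the idea is to estimate $H_Af-f$ and invoke the triangle inequality $\|H_Af\|_{L^2(K_A)}\le\|H_Af-f\|_{L^2(K_A)}+\|f\|_{L^2(K_A)}$. Here $H_Af-f\in\cF^0_A$, and since the hypothesis $K_{\cN_3(w)}\cap K^\partial=\emptyset$ forces $K^\partial\subset K_{W_m\setminus A}$, one has $\cF^0_A\subset\cFD$ (as anticipated in the remark after \Defn{space}), so $H_Af-f\in\cF^0_{\cN_3(w)}\cap\cFD$ and \Lem{7'} applies:
\[
\|H_Af-f\|_{L^2(K,\mu)}^2\le c_{5.6}(r/M)^{|w|}\cE(H_Af-f).
\]
To bound the right-hand side, I would use that $H_Af\in\cH(A)$ and $H_Af-f\in\cF^0_A$, so \Lem{harmonicA}~(i) gives the orthogonality $\cE(H_Af,H_Af-f)=0$, whence the Pythagorean identity $\cE(H_Af-f)=\cE(f)-\cE(H_Af)$. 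Combining with the decomposition of the first paragraph (the $\cE^{W_m\setminus A}$ terms cancel) gives $\cE(H_Af-f)=\cE^A(H_Af-f)=\cE^A(f)-\cE^A(H_Af)\le\cE^A(f)$. Substituting and taking square roots yields \Eq{Hwf1} with $c_{5.7}=\sqrt{c_{5.6}}$.

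The step requiring the most care — and the one place where the absence of (A7) is felt — is justifying the energy splitting while relying \emph{only} on \Lem{energymeas}~(ii), since no additivity of $\nu_f$ across cell boundaries is at our disposal; it is precisely the telescoping of the $\cE^{W_m\setminus A}$ contributions that makes the comparison work without any control of $\nu_f$ on $K^\partial_*$. The second delicate point is the membership $H_Af-f\in\cFD$, which must be extracted from the separation $K_{\cN_3(w)}\cap K^\partial=\emptyset$ together with quasi-continuity, exactly as foreshadowed by the remark following \Defn{space}.
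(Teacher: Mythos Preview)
Your argument is correct and follows essentially the same route as the paper: both reduce \Eq{Hwf1} to the triangle inequality in $L^2(K_{\cN_3(w)},\mu)$ together with \Lem{7'} applied to $H_{\cN_3(w)}f-f\in\cF^0_{\cN_3(w)}\cap\cFD$, and \Eq{Hwf2} (which the paper calls ``evident'') is exactly your cancellation of the $\cE^{W_m\setminus A}$ contribution in the global splitting $\cE=\cE^A+\cE^{W_m\setminus A}$. The only notable difference is that you bound $\cE^{\cN_3(w)}(H_{\cN_3(w)}f-f)$ via the Pythagorean identity $\cE(H_Af-f)=\cE(f)-\cE(H_Af)\le\cE^A(f)$, whereas the paper uses the cruder triangle inequality $\cE^{\cN_3(w)}(H_{\cN_3(w)}f-f)^{1/2}\le 2\,\cE^{\cN_3(w)}(f)^{1/2}$; this yields $c_{5.7}=\sqrt{c_{5.6}}$ for you versus $2\sqrt{c_{5.6}}$ in the paper, a harmless sharpening.
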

\begin{proof}
Equation~\Eq{Hwf2} is evident. We prove \Eq{Hwf1}.
Since $H_{\cN_3(w)} f-f\in \cF^0_{\cN_3(w)}\cap \cFD$, from \Lem{7'},
\begin{align*}
&\biggl(\int_{K_{\cN_3(w)}}(H_{\cN_3(w)} f)^2\,d\mu\biggr)^{1/2}
-\biggl(\int_{K_{\cN_3(w)}}f^2\,d\mu\biggr)^{1/2}\\
&\le\biggl(\int_{K_{\cN_3(w)}}(H_{\cN_3(w)} f-f)^2\,d\mu\biggr)^{1/2}\\
&\le \bigl(c_{5.6}(r/M)^{|w|}\cE^{\cN_3(w)}(H_{\cN_3(w)}f-f)\bigr)^{1/2}\\
&\le 2\bigl(c_{5.6}(r/M)^{|w|}\cE^{\cN_3(w)}(f)\bigr)^{1/2}.
\end{align*}
Here, we used \Eq{Hwf2} in the last inequality.
\qed\end{proof}
\begin{lemma}\label{lem:poincareD}
There exists a positive constant $c_{5.8}$ such that for any $f\in \cF$ and $w\in W_*$ with $K_{\cN_3(w)}\cap K^\partial=\emptyset$,\begin{equation}\label{eq:poincareD3}
  \int_{K_{\cN_3(w)}}\left|f(x)-\mint_{K_w}f\,d\mu\right|^2\,\mu(dx)
  \le c_{5.8}(r/M)^{|w|}\cE^{\cN_3(w)}(f).
\end{equation}
\end{lemma}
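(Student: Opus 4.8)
The plan is to prove a single, scale-invariant Poincaré--Wirtinger inequality on each of the finitely many ``shapes'' that the neighbourhoods $K_{\cN_3(w)}$ can take, and then to transport it to an arbitrary $w$ by the similitude scaling already exploited in \Lem{7'}. Since the right-hand side of \Eq{poincareD3} is centred at the \emph{central} cell $K_w$, I would first refine the equivalence $\sim$ of \Defn{shape} to a \emph{marked} version: declare the pairs $(K_{\cN_3(w)},K_w)$ and $(K_{\cN_3(\hat w)},K_{\hat w})$ equivalent if the similitude $\xi$ of \Defn{shape} can moreover be chosen with $\xi(K_{\hat w})=K_w$. There are only finitely many marked shapes with $K_{\cN_3(\cdot)}\cap K^\partial=\emptyset$, since the local configuration of cells around a given cell takes finitely many forms up to similitude. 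It therefore suffices to establish, for each fixed marked shape $A=\cN_3(\hat w)\subset W_m$ (with $m=|\hat w|$), a constant $C$ with
\[
\int_{K_{\cN_3(\hat w)}}\Big|g-\mint_{K_{\hat w}}g\,d\mu\Big|^2\,d\mu\le C\,\cE^{\cN_3(\hat w)}(g),\qquad g\in\cF^{\cN_3(\hat w)};
\]
I call this the fixed-shape inequality. Granting it, I pull back: for $K_{\cN_3(w)}\sim K_{\cN_3(\hat w)}$ (marked) and $f\in\cF$, the function $\hat f=f\circ\xi$ lies in $\cF^{\cN_3(\hat w)}$ by \Prop{compatible}, satisfies $\mint_{K_{\hat w}}\hat f\,d\mu=\mint_{K_w}f\,d\mu$, and obeys $\int_{K_{\cN_3(w)}}|f-c|^2\,d\mu=M^{|\hat w|-|w|}\int_{K_{\cN_3(\hat w)}}|\hat f-c|^2\,d\mu$ together with $\cE^{\cN_3(\hat w)}(\hat f)=r^{|w|-|\hat w|}\cE^{\cN_3(w)}(f)$, exactly as computed in the proof of \Lem{7'}.

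For the fixed-shape inequality I would argue by compactness. The form $(\cE^A,\cF^A)$ is a Dirichlet form on $L^2(K_A,\mu)$, and the embedding $\cF^A\hookrightarrow L^2(K_A,\mu)$ is compact: for a bounded sequence $\{g_n\}$ in $\cF^A$, the identities $\|g_n\|_{L^2(K_A)}^2=M^{-m}\sum_{w\in A}\|\psi_w^*g_n\|_{L^2(K,\mu)}^2$ and $\cE^A(g_n)=r^{-m}\sum_{w\in A}\cE(\psi_w^*g_n)$ show that each sequence $\{\psi_w^*g_n\}_n$ is bounded in $\cF$, so the compactness of the resolvent of $(\cE,\cF)$ on $L^2(K,\mu)$ lets me extract, after a finite diagonal argument over $w\in A$, a subsequence converging in $L^2(K_A,\mu)$. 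Moreover $\cE^A(g)=0$ forces $g$ to be constant: each $\psi_w^*g$ then has zero energy, hence is constant on $K_w$ by the spectral gap \Eq{Npoincare}, and the resulting cellwise constants must agree across any pair $v,w\in A$ whose cells share a $(D-1)$-dimensional face, because the quasi-continuous traces coincide on the positive-capacity set $K_v\cap K_w$; since \Prop{NDH} chains any two cells of $A$ through such face-adjacent neighbours, all the constants coincide.

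With these two facts the standard contradiction argument proves the fixed-shape inequality: if no $C$ worked, I choose $g_n\in\cF^A$ with $\mint_{K_{\hat w}}g_n\,d\mu=0$, $\|g_n\|_{L^2(K_A)}=1$, and $\cE^A(g_n)\to0$; the sequence is bounded in $\cF^A$, so a subsequence converges in $L^2(K_A)$ and weakly in $\cF^A$ to some $g_\infty$ with $\cE^A(g_\infty)\le\liminf\cE^A(g_n)=0$ by lower semicontinuity of the closed form, whence $g_\infty$ is constant; but $\mint_{K_{\hat w}}g_\infty\,d\mu=0$ forces $g_\infty=0$, contradicting $\|g_\infty\|_{L^2(K_A)}=1$. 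Combining the fixed-shape inequality with the scaling identities yields
\[
\int_{K_{\cN_3(w)}}\Big|f-\mint_{K_w}f\,d\mu\Big|^2\,d\mu\le C\,M^{|\hat w|-|w|}r^{|w|-|\hat w|}\cE^{\cN_3(w)}(f)=C\,(M/r)^{|\hat w|}(r/M)^{|w|}\cE^{\cN_3(w)}(f),
\]
so taking $c_{5.8}$ to be $C$ times the maximum of $(M/r)^{|\hat w|}$ over the finitely many marked representatives gives \Eq{poincareD3}.

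The step I expect to be the main obstacle is the irreducibility claim, that $\cE^A$ annihilates only constants. This is exactly where the nondiagonality condition enters, through the face-connectedness supplied by \Prop{NDH}, and care is needed to ensure the chaining proceeds through $(D-1)$-dimensional overlaps — which carry positive capacity — rather than through lower-dimensional corner contacts, so that the matching of quasi-continuous traces across adjacent cells is legitimate. The compact embedding and the finiteness of marked shapes are comparatively routine, the former being inherited cell-by-cell from the global compact resolvent via \Prop{compatible}.
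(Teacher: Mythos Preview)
Your argument is correct, but it takes a genuinely different route from the paper's.

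The paper proves \Eq{poincareD3} by a direct inductive chaining argument: it enumerates $\cN_3(w)=\{v_1,\dots,v_s\}$ so that $v_1=w$ and every $v_i$ with $i\ge2$ is face-adjacent to some earlier $v_j$ (this enumeration exists by nondiagonality), and then proves cell by cell that $\int_{K_{v_i}}|f-\mint_{K_w}f|^2\,d\mu\le c_i(r/M)^m\cE^{\cN_3(w)}(f)$. The base case $i=1$ is the spectral gap \Eq{Npoincare}; the inductive step reflects $f$ across the shared face and applies the half-Dirichlet Poincar\'e inequality of \Lem{poincareH} to $\psi_{v_{k+1}}^*(f-\hat f)\in\cFH$. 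Summing over the cells yields an explicit constant $c_{5.8}=\sum_{i=1}^{7^D}(\sqrt{c_{4.1}}+2(i-1)\sqrt{c_{5.3}})^2$.

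Your approach is the standard soft argument: reduce to finitely many marked shapes, then on each fixed shape obtain the Poincar\'e--Wirtinger inequality by contradiction, using that $(\cE^A,\cF^A)$ is a closed form with compact embedding into $L^2(K_A,\mu)$ and that $\cE^A$ vanishes only on constants. This is cleaner and more robust (it would transfer to other self-similar settings with compact resolvent), at the cost of losing any explicit constant. The one point you flag --- that $(D-1)$-dimensional overlaps $K_v\cap K_w$ carry positive capacity --- is indeed the crux of irreducibility, and it can be justified with tools already available at this stage: $K_v\cap K_w=\psi_v(K^\partial_{i,j})$ for some face $K^\partial_{i,j}$, \Lem{poincareH} forces $\Cp(K^\partial_{i,j})>0$ (otherwise the inequality there would hold for constants), and \Eq{capequiv} transports this through $\psi_v$. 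Your citation of \Prop{NDH} for the chaining is slightly loose, since that proposition is stated at level~$1$, but self-similarity immediately gives the level-$m$ analogue you need; the paper's own proof simply invokes ``the assumption of nondiagonality'' at this point.
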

\begin{proof}
Let $m=|w|$.
We write $\cN_3(w)=\{v_1,\dots,v_s\}$. Here, $s$ is the cardinality of $\cN_3(w)$, which does not exceed $7^D$.
From the assumption of the nondiagonality of $K$, we can renumber the indices such that the following hold:
\begin{itemize}
\item $v_1=w$;
\item for any $i\ge2$, there exists $j<i$ such that $v_i\underset{m}{\leftrightsquigarrow}v_j$.
\end{itemize}
First, we prove by mathematical induction that 
\begin{equation}\label{eq:induction}
  \int_{K_{v_i}}\left|f(x)-\mint_{K_w}f\,d\mu\right|^2\,\mu(dx)
  \le c_i(r/M)^m\cE^{\cN_3(w)}(f)
\end{equation}
for $i=1,\dots, s$ with $c_i=(\sqrt{c_{4.1}}+2(i-1)\sqrt{c_{5.3}})^2$.
When $i=1$, we have
\begin{align*}
\text{LHS of \Eq{induction}}
&=M^{-m}\int_K\left|\psi_w^* f(x)-\int_K \psi_w^* f\,d\mu\right|^2\,\mu(dx)\\
&\le c_{4.1} M^{-m}\cE(\psi_w^* f)={c_{4.1}} M^{-m}r^m \cE^{\{w\}}(f)
\quad\text{(from \Eq{Npoincare} and \Eq{cEAdef})}\\
&\le {c_{4.1}}(r/M)^m\cE^{\cN_3(w)}(f).
\end{align*}
Therefore, \Eq{induction} holds for $i=1$.
Supposing \Eq{induction} holds for $i=1,\dots,k$ with $k<s$, we prove \Eq{induction} for $i=k+1$.
Take $j$ such that $j\le k$ and $v_{k+1}\underset{m}{\leftrightsquigarrow}v_j$.
Let $\Gm\colon\R^D\to\R^D$ be the reflection map with respect to the $(D-1)$-dimensional hyperplane containing $K_{v_{k+1}}\cap K_{v_j}$.
Define a function $\hat f$ on $K_{v_{k+1}}$ as $\hat f(x)=f(\Gm(x))$.
Then,
\begin{align*}
&\biggl(\int_{K_{v_{k+1}}}\biggl|f(x)-\mint_{K_w}f\,d\mu\biggr|^2\,\mu(dx)\biggr)^{1/2}\\
&\le \biggl(\int_{K_{v_{k+1}}}\biggl|\hat f(x)-\mint_{K_w}f\,d\mu\biggr|^2\,\mu(dx)\biggr)^{1/2}
+\biggl(\int_{K_{v_{k+1}}}\left|f(x)-\hat f(x)\right|^2\,\mu(dx)\biggr)^{1/2}\\
&=\biggl(\int_{K_{v_{j}}}\biggl|f(x)-\mint_{K_w}f\,d\mu\biggr|^2\,\mu(dx)\biggr)^{1/2}+M^{-m/2}\bigl\|\psi_{v_{k+1}}^*(f-\hat f)\bigr\|_{L^2(K,\mu)},
\end{align*}
since $(\Gm|_{K_{v_{k+1}}})_*(\mu|_{K_{v_{k+1}}})=\mu|_{K_{v_j}}$.
The first term is dominated by $\bigl(c_j(r/M)^m\cE^{\cN_3(w)}(f)\bigr)^{1/2}$ from the induction hypothesis.
Since $\psi_{v_{k+1}}^* (f-\hat f)$ belongs to $\cFH$, from \Lem{poincareH}, the second term is dominated by 
\begin{align*}
 M^{-m/2}\sqrt{c_{5.3}\cE(\psi_{v_{k+1}}^*(f-\hat f))}
&\le \sqrt{c_{5.3}}M^{-m/2}\bigl(\cE(\psi_{v_{k+1}}^* f)^{1/2}+\cE(\psi_{v_{k+1}}^* \hat f)^{1/2}\bigr)\\
&= \sqrt{c_{5.3}}M^{-m/2}\bigl(\cE(\psi_{v_{k+1}}^* f)^{1/2}+\cE(\psi_{v_{j}}^* f)^{1/2}\bigr)\\
&= \sqrt{c_{5.3}}M^{-m/2} \bigl(r^{m/2}\cE^{\{v_{k+1}\}}(f)^{1/2}+r^{m/2}\cE^{\{v_j\}}(f)^{1/2}\bigr)\\
&\le 2\sqrt{c_{5.3}}(r/M)^{m/2}\cE^{\cN_3(w)}(f)^{1/2}.
\end{align*}
Therefore,
\begin{align*}
\int_{K_{v_{k+1}}}\left|f(x)-\mint_{K_w}f\,d\mu\right|^2\,\mu(dx)
&\le(\sqrt{c_j}+2\sqrt{c_{5.3}})^2 (r/M)^{m}\cE^{\cN_3(w)}(f)\\
&=c_{j+1}(r/M)^{m}\cE^{\cN_3(w)}(f)
\le c_{k+1}(r/M)^{m}\cE^{\cN_3(w)}(f);
\end{align*}
thus \Eq{induction} holds for $i=k+1$.

Now, by summing up \Eq{induction} for $i=1,\dots,s$, we obtain \Eq{poincareD3} with 
$c_{5.8}=\sum_{i=1}^{7^D}\bigl(\sqrt{c_{4.1}}+2(i-1)\sqrt{c_{5.3}}\bigr)^2$.
\qed\end{proof}
\begin{lemma}\label{lem:elliptic}
There exists a constant $c_{5.9}>0$ such that for any $w\in W_*$ with $K_{\cN_3(w)}\cap K^\partial=\emptyset$ and $h\in \cH(\cN_3(w))$ with $h\ge0$ $\mu$-a.e., the following inequalities hold:
\begin{align*}
\muesssup_{\makebox[0pt]{\scriptsize$x\!\in \!K_{\cN_2(w)}$}}h(x)
&\le c_{5.9}\,\muessinf_{\makebox[0pt]{\scriptsize$x\!\in \!K_{\cN_2(w)}$}}\,h(x)\\
&\le c_{5.9}\biggl(\mint_{K_{\cN_2(w)}}h^2\,d\mu\biggr)^{1/2}
\le c_{5.9}\biggl(M^{|w|}\int_{K_{\cN_2(w)}}h^2\,d\mu\biggr)^{1/2}.
\end{align*}
\end{lemma}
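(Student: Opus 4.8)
I would separate the three displayed inequalities, since they are of quite different natures. The second and third are elementary. For the second, since $\mu$ has full support and $h\ge0$, Jensen's inequality gives $\muessinf_{x\in K_{\cN_2(w)}}h(x)\le\mint_{K_{\cN_2(w)}}h\,d\mu\le(\mint_{K_{\cN_2(w)}}h^2\,d\mu)^{1/2}$, so it holds with constant $1$. For the third, $K_w\subset K_{\cN_2(w)}$ (because $\cN_0(w)\subset\cN_2(w)$) together with the fact that the weights of $\mu$ are equal gives $\mu(K_{\cN_2(w)})\ge\mu(K_w)=M^{-|w|}$, whence $\mint_{K_{\cN_2(w)}}h^2\,d\mu\le M^{|w|}\int_{K_{\cN_2(w)}}h^2\,d\mu$, again with constant $1$. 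Thus the entire content lies in the first inequality, which is an elliptic Harnack inequality comparing the $\mu$-essential sup and inf of $h$ over $K_{\cN_2(w)}$ for $h$ harmonic on the larger neighborhood $K_{\cN_3(w)}$. Throughout I would pass to the continuous $\mu$-version of $h$, so that $\muesssup$ and $\muessinf$ are genuine sup and inf.

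For the Harnack inequality my plan is to make the uniformity manifest by reducing to finitely many reference configurations. Given $w\in W_m$ with $K_{\cN_3(w)}\cap K^\partial=\emptyset$, the hypothesis that the neighborhood avoids the boundary means that, by self-similarity, the combinatorial pattern of $K_{\cN_3(w)}$ together with its marked center $K_w$ realizes one of finitely many shapes in the sense of \Defn{shape}; this is the same finiteness exploited in \Lem{7'}. I would fix a representative $\hat w$ of bounded level with $K_{\cN_3(\hat w)}\sim K_{\cN_3(w)}$ via a similitude $\xi$ carrying $K_w$ to $K_{\hat w}$, hence $K_{\cN_2(\hat w)}$ onto $K_{\cN_2(w)}$. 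Because the similitudes of \Defn{shape} are pure scalings and translations, the pullback $\hat h=h\circ\xi$ matches $h$ cell by cell under the self-similar identifications, so $\psi_{\hat v}^*\hat h\in\cH$ for every $\hat v\in\cN_3(\hat w)$ by \Lem{harmonicA}(ii) and (A3); consequently $\hat h\ge0$ and $\hat h\in\cH(\cN_3(\hat w))$, and the essential sup and inf of $h$ over $K_{\cN_2(w)}$ equal those of $\hat h$ over $K_{\cN_2(\hat w)}$. It therefore suffices to prove the comparison on each of the finitely many reference configurations.

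On a fixed reference configuration I would invoke the elliptic Harnack inequality valid for $(\cE,\cF)$ on $K$, which follows from the two-sided sub-Gaussian heat kernel estimates \Eq{Aronson} (Barlow--Bass; see \cite{BB99,BBKT10}). The geometric point that makes this applicable is the one-cell buffer $K_{\cN_3(w)}\setminus K_{\cN_2(w)}$: it places $K_{\cN_2(w)}$ compactly inside the open set on which $\hat h$ is harmonic, so that an elliptic Harnack inequality whose inner region nearly fills the outer one---obtained from the basic estimate by chaining along a bounded Harnack chain---yields $\muesssup_{K_{\cN_2(\hat w)}}\hat h\le C\,\muessinf_{K_{\cN_2(\hat w)}}\hat h$ with $C$ depending only on the reference. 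Taking $c_{5.9}$ to be the largest such $C$ together with $1$ would complete the proof. (Equivalently, one could apply the scale-invariant form of the Harnack inequality directly at scale $l^{-|w|}$, letting scale invariance supply the uniformity in place of the finiteness of shapes.)

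I expect the main obstacle to be precisely this uniform elliptic Harnack step: guaranteeing that the constant neither degenerates with the scale $|w|$ nor with the location of $w$. This forces one to combine scale invariance of the heat kernel bounds, the finiteness of shapes, and the buffer geometry. The most delicate verifications are that the similitude genuinely transports harmonicity on $K_{\cN_3(w)}$ to the reference cell---so that self-similarity and \Eq{reflection} apply cell by cell---and that a single buffer layer really does leave enough room to join any two points of $K_{\cN_2(w)}$ by a Harnack chain lying inside $K_{\cN_3(w)}$.
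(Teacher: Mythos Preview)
Your proposal is correct and follows essentially the same approach as the paper. The paper's proof is very terse: it says the first inequality is the elliptic Harnack inequality, which follows from the parabolic Harnack inequality equivalent to the heat kernel bounds \Eq{Aronson} (citing \cite{BBK06,BB99,FOT,HK06}), and dismisses the remaining two inequalities as ``evident.'' Your treatment of the second and third inequalities spells out what the paper leaves implicit, and your alternative at the end---applying the scale-invariant Harnack inequality directly at scale $l^{-|w|}$---is exactly what the paper has in mind; the finiteness-of-shapes reduction you develop first is a valid but unnecessary detour once scale invariance is available from the cited references.
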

\begin{proof}
The first inequality follows from the elliptic Harnack inequality; this is implied by the parabolic Harnack inequality, which is equivalent to \Eq{Aronson} in our context. See, e.g., \cite{BBK06}, \cite{BB99}, \cite[Theorem~4.6.5]{FOT}, and \cite[Proposition~2.9]{HK06} for further details.
The remaining inequalities are evident.
\qed\end{proof}
\begin{proposition}\label{prop:key1}
There exists a constant $c_{5.10}>0$ such that for any $w\in W_*$ with $K_{\cN_3(w)}\cap K^\partial=\emptyset$ and $h\in \cH(\cN_3(w))$ with $\int_{K_w}h\,d\mu=0$,
\[
\muesssup_{\makebox[0pt]{\scriptsize$x\!\in \!K_{\cN_2(w)}$}}\,\left|h(x)\right|\le c_{5.10}\bigl(r^{|w|}\cE^{\cN_3(w)}(h)\bigr)^{1/2}.
\]
\end{proposition}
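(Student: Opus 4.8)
The plan is to combine an $L^2$ Poincar\'e bound over $K_{\cN_3(w)}$ with an interior $L^2$-to-$L^\infty$ estimate for harmonic functions coming from the elliptic Harnack inequality; the point is that the two scaling factors $(r/M)^{|w|}$ and $M^{|w|}$ that these two steps produce multiply to exactly $r^{|w|}$. First I would use the hypothesis $\int_{K_w}h\,d\mu=0$, i.e.\ $\mint_{K_w}h\,d\mu=0$, so that \Lem{poincareD} applied to $h$ immediately gives
\[
\int_{K_{\cN_3(w)}}h^2\,d\mu\le c_{5.8}(r/M)^{|w|}\cE^{\cN_3(w)}(h).
\]

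Next I would establish a mean value inequality $\muesssup_{K_{\cN_2(w)}}|h|^2\le c\,M^{|w|}\int_{K_{\cN_3(w)}}h^2\,d\mu$ for $h\in\cH(\cN_3(w))$. \Lem{elliptic} provides precisely this when $h\ge0$, so the work is to pass to signed $h$. Here the mean-zero normalization is useful: since $K_w\subset K_{\cN_2(w)}$, it forces $\muessinf_{K_{\cN_2(w)}}h\le0\le\muesssup_{K_{\cN_2(w)}}h$, whence $\muesssup_{K_{\cN_2(w)}}|h|$ is dominated by the oscillation of $h$ on $K_{\cN_2(w)}$. Applying the Harnack part of \Lem{elliptic} to the \emph{nonnegative} harmonic functions $\muesssup_{K_{\cN_3(w)}}h-h$ and $h-\muessinf_{K_{\cN_3(w)}}h$ yields geometric oscillation decay between neighbouring scales; iterating this over the sub-cells $\{wv\mid v\in W_n\}$, which remain disjoint from $K^\partial$ because $K_{\cN_3(wv)}\subset K_{\cN_3(w)}$, and combining it with the $L^2$-mean-value part of \Lem{elliptic}, controls the full supremum by the $L^2$ average $\mint_{K_{\cN_3(w)}}h^2\,d\mu\asymp M^{|w|}\int_{K_{\cN_3(w)}}h^2\,d\mu$. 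Uniformity of the constant in $w$ is guaranteed, as in \Lem{7'}, by the finiteness of the shape classes of $\{K_{\cN_3(w)}\}$ under $\sim$. Equivalently, one may run a Moser iteration directly on the subharmonic function $h^2$ using the Sobolev inequality~\Eq{sobolev} and the Caccioppoli-type energy $\cE^{\cN_3(w)}(\cdot)$, which sidesteps the sign issue altogether. Multiplying the two estimates then gives $\muesssup_{K_{\cN_2(w)}}|h|^2\le c\,c_{5.8}\,r^{|w|}\cE^{\cN_3(w)}(h)$, which is the assertion.

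I expect the second step to be the main obstacle. \Lem{elliptic} is stated only for nonnegative harmonic functions, and the obvious reduction — subtracting $\muesssup_{K_{\cN_3(w)}}h$ to make the function nonnegative — fails, because the $L^2$ norm of $\muesssup_{K_{\cN_3(w)}}h-h$ is comparable to that unknown supremum rather than to $\|h\|_{L^2(K_{\cN_3(w)})}$, so the supremum reappears on the right-hand side. The genuine content is therefore the elliptic (De Giorgi--Moser) mean value inequality for signed harmonic functions, which must be extracted from the elliptic Harnack inequality together with volume doubling; the self-similar scaling and the mean-zero normalization are exactly what turn this local regularity statement into a clean bound carrying the single factor $r^{|w|}$. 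A secondary technical point, needed so that the essential suprema above are finite in the first place, is the a priori local boundedness of $h$, which itself follows from the same Harnack machinery.
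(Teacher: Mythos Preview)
Your two-step plan (Poincar\'e on $K_{\cN_3(w)}$ via \Lem{poincareD}, then an $L^\infty$--$L^2$ mean-value estimate on $K_{\cN_2(w)}$) matches the paper exactly, and your scaling bookkeeping $(r/M)^{|w|}\cdot M^{|w|}=r^{|w|}$ is the right one. The difference is in how the sign issue is resolved. The paper does not attempt an oscillation iteration or Moser argument; instead it writes $h_1=h\vee 0$, $h_2=(-h)\vee 0$ and observes that, since $h\in\cH(\cN_3(w))$, one has $h=H_{\cN_3(w)}h_1-H_{\cN_3(w)}h_2$, where $H_{\cN_3(w)}h_j\ge 0$ on all of $K$ by the maximum principle (\Lem{basicH}). \Lem{elliptic} then applies directly to each $H_{\cN_3(w)}h_j$, and the new ingredient is \Lem{Hwf}, which says that passing from $h_j$ to $H_{\cN_3(w)}h_j$ costs only $\cE^{\cN_3(w)}(h_j)\le\cE^{\cN_3(w)}(h)$ and $\|h_j\|_{L^2(K_{\cN_3(w)})}\le\|h\|_{L^2(K_{\cN_3(w)})}$; the Poincar\'e step then closes the estimate. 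This sidesteps entirely the circularity you flagged.

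Your route has two soft spots. First, \Lem{elliptic} as stated requires $h\ge 0$ $\mu$-a.e.\ on all of $K$, not merely on $K_{\cN_3(w)}$; the shifted functions $\muesssup_{K_{\cN_3(w)}}h-h$ and $h-\muessinf_{K_{\cN_3(w)}}h$ need not satisfy this, so you would still have to pass through some $H_{\cN_3(w)}$-type modification before applying it, which is essentially the paper's trick. Second, the Moser alternative via \Eq{sobolev} is unavailable when $d_\mathrm{s}\le 2$, a case the paper explicitly covers (indeed the headline application is $d_\mathrm{s}<2$). Your oscillation-iteration scheme could presumably be made to work, but the paper's $h^\pm$-plus-harmonic-extension argument is both shorter and independent of $d_\mathrm{s}$.
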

\begin{proof}
Let $w\in W_*$ and $h\in \cH(\cN_3(w))$ as stated above.
We define $h_1=h\vee0$ and $h_2=(-h)\vee0$.
Since $h=H_{\cN_3(w)} h_1-H_{\cN_3(w)} h_2$ and $H_{\cN_3(w)} h_j\ge0$ on $K$ for $j=1,2$ from \Lem{basicH}, we have
\begin{align*}
&\muesssup_{\makebox[0pt]{\scriptsize$x\!\in \!K_{\cN_2(w)}$}}\,|h(x)|
\le\muesssup_{\makebox[0pt]{\scriptsize$x\!\in \!K_{\cN_2(w)}$}}\,(H_{\cN_3(w)} h_1)(x)
+\muesssup_{\makebox[0pt]{\scriptsize$x\!\in \!K_{\cN_2(w)}$}}\,(H_{\cN_3(w)} h_2)(x)\\
&\le \sum_{j=1}^2 c_{5.9}\biggl(M^{|w|}\int_{K_{\cN_2(w)}}(H_{\cN_3(w)} h_j)^2\,d\mu\biggr)^{1/2}
\quad\text{(from \Lem{elliptic})}\\
&\le \sum_{j=1}^2 c_{5.11}\biggl\{\bigl(r^{|w|}\cE^{\cN_3(w)}(h_j)\bigr)^{1/2}+\biggl(M^{|w|}\int_{K_{\cN_3(w)}}h_j^2\,d\mu\biggr)^{1/2}\biggr\}\quad\text{(from \Lem{Hwf})}\\
&\le 2c_{5.11}\biggl\{\bigl(r^{|w|}\cE^{\cN_3(w)}(h)\bigr)^{1/2}+\biggl(M^{|w|}\int_{K_{\cN_3(w)}}h^2\,d\mu\biggr)^{1/2}\biggr\}\\
&\le 2c_{5.11}\Bigl\{\bigl(r^{|w|}\cE^{\cN_3(w)}(h)\bigr)^{1/2}+\bigl(c_{5.8}r^{|w|}\cE^{\cN_3(w)}(h)\bigr)^{1/2}\Bigr\}.
\end{align*}
Here, the last inequality follows from the assumption $\int_{K_w}h\,d\mu=0$ and \Lem{poincareD}.
This completes the proof.
\qed\end{proof}
\subsection{Proof of Propositions~\ref{prop:A7}\ and \ref{prop:key}}
The ideas of the proof are based on \cite{Hi05,HK06}.
First, we prove \Prop{key}.
By taking \Eq{energyineq} into consideration, it is sufficient to prove the following.
\begin{proposition}\label{prop:key'}
Under the same assumptions as those of \Prop{key}, with \Eq{N3} replaced by
\begin{equation}\label{eq:N3'}
 \sup_n r^{|w_n|}\cE^{\cN_3(w_n)}(h_n)<\infty,
\end{equation}
the same conclusion of \Prop{key} holds.
\end{proposition}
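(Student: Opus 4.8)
The plan is to deduce \Prop{key'} from three facts: the normalization gives uniform bounds on $\{\psi_{w_n}^* h_n\}$ in $\cF$ and in $L^\infty$; finiteness of the possible shapes of $K_{\cN_3(w_n)}$ lets one transplant everything onto a single fixed cell complex; and an integration-by-parts argument, legitimate because the central cell sits in the interior of $K_{\cN_3(w_n)}$, converts uniform convergence into convergence of energy. First I would collect the uniform bounds. Since $\{w_n\}\subset\cN_3(w_n)$, \Eq{cEAdef} gives $\cE(\psi_{w_n}^* h_n)=r^{|w_n|}\cE^{\{w_n\}}(h_n)\le r^{|w_n|}\cE^{\cN_3(w_n)}(h_n)$, which is bounded by \Eq{N3'}. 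As $\int_{K_{w_n}}h_n\,d\mu=0$ and $\mu$ is self-similar, the pullback has zero mean, $\int_K\psi_{w_n}^* h_n\,d\mu=M^{|w_n|}\int_{K_{w_n}}h_n\,d\mu=0$, so the spectral gap \Eq{Npoincare} bounds $\|\psi_{w_n}^* h_n\|_{L^2(K,\mu)}^2$. Moreover $h_n\in\cH\subset\cH(\cN_3(w_n))$---the inclusion holding because $\cF^0_{\cN_3(w_n)}\subset\cF_0\subset\cFD$ when $K_{\cN_3(w_n)}\cap K^\partial=\emptyset$---so \Prop{key1} gives $\|\psi_{w_n}^* h_n\|_{L^\infty}=\muesssup_{K_{w_n}}|h_n|\le c_{5.10}(r^{|w_n|}\cE^{\cN_3(w_n)}(h_n))^{1/2}$. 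Applying \Prop{key1} on interior subcells after subtracting cell averages (constants are harmonic by (A2)) gives oscillation $O(r^{k/2})$ on cells of generation $|w_n|+k$, i.e.\ a uniform modulus of continuity.

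Next I would pass to a fixed model. By \Defn{shape} and the finiteness of the $\sim$-classes of $\{K_{\cN_3(w)}\}$ (already used in the proof of \Lem{7'}), along a subsequence every $K_{\cN_3(w_n)}$ has the shape of a fixed $K_{\cN_3(\hat w)}$ with $K_{\cN_3(\hat w)}\cap K^\partial=\emptyset$; let $\xi_n$ be the shape similitude and set $\hat h_n=h_n\circ\xi_n$ on $K_{\cN_3(\hat w)}$. Using \Prop{compatible} and the invariance \Eq{reflection}, $\hat h_n\in\cF^{\cN_3(\hat w)}$ is piecewise harmonic, with $\cE^{\cN_3(\hat w)}(\hat h_n)=r^{|w_n|-|\hat w|}\cE^{\cN_3(w_n)}(h_n)$ bounded, zero mean on the central cell $K_{\hat w}$, and bounded $L^2$ norm by \Lem{poincareD}. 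Because $\psi_{\hat w}^*\hat h_n=\tau_n^*\psi_{w_n}^* h_n$ for an isometry $\tau_n$ of $K$, of which there are finitely many, it suffices by \Eq{reflection} to prove that $\psi_{\hat w}^*\hat h_n$ converges in $\cF$ along a subsequence on which $\tau_n$ is constant. On the fixed complex the embedding into $L^2$ is compact, so a subsequence converges weakly in $\cF^{\cN_3(\hat w)}$ to some $\hat h$ and in $L^2$; together with the equicontinuity above, Arzel\`a--Ascoli promotes this to uniform convergence on a neighborhood of $K_{\hat w}$ interior to $K_{\cN_3(\hat w)}$.

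The decisive---and hardest---step is to convert this into energy convergence of the central piece $\psi_{\hat w}^*\hat h_n$. The point is that each $\hat h_n$, being a transplant of the globally harmonic $h_n$, is harmonic in the \emph{interior} of the complex, i.e.\ $\cE^{\cN_3(\hat w)}(\hat h_n,g)=0$ for every $g\in\cF^{\cN_3(\hat w)}$ supported in $\Int(K_{\cN_3(\hat w)})$ (this reduces, via \Eq{selfsimilar2} and \Eq{reflection}, to $\cE(h_n,\cdot)=0$ on $\cF_0$); fixing such a $g$ and passing to the weak limit shows the same for $\hat h$. Writing $v_n=\hat h_n-\hat h\to0$ uniformly and choosing $\chi\in\cF\cap C(K)$ with $\chi=1$ on $K_{\hat w}$ and $\Supp\chi\subset\Int(K_{\cN_2(\hat w)})$, the test function $\chi^2 v_n$ is interior-supported, so $\cE^{\cN_3(\hat w)}(v_n,\chi^2 v_n)=0$. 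Expanding by the derivation property \Thm{derivation} and \Eq{KW} (with $\nu^{\cN_3(\hat w)}_\cdot$ the energy measure of $\cE^{\cN_3(\hat w)}$),
\[
\int\chi^2\,d\nu^{\cN_3(\hat w)}_{v_n}
=-2\int v_n\chi\,d\nu^{\cN_3(\hat w)}_{v_n,\chi}
\le 2\Bigl(\int\chi^2\,d\nu^{\cN_3(\hat w)}_{v_n}\Bigr)^{1/2}\Bigl(\int v_n^2\,d\nu^{\cN_3(\hat w)}_{\chi}\Bigr)^{1/2},
\]
hence $\int\chi^2\,d\nu^{\cN_3(\hat w)}_{v_n}\le4\|v_n\|_{L^\infty}^2\,\nu^{\cN_3(\hat w)}_{\chi}(K_{\cN_3(\hat w)})\to0$.

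As $\chi\equiv1$ on $K_{\hat w}$ this bounds $\cE(\psi_{\hat w}^* v_n)$, giving energy---and, with the $L^2$ convergence, $\cF$---convergence of $\psi_{\hat w}^*\hat h_n$; unwinding $\tau_n$ and $\xi_n$ then yields the convergence of $\psi_{w_n}^* h_n$. I expect the main obstacle to be exactly this last step: the energy of a globally harmonic function near $K^\partial$ is not detected by uniform smallness of its values, so it is essential that the three-cell buffer $\cN_3(w_n)$ places the central cell strictly inside the region of harmonicity, which is what makes the interior cutoff $\chi$ admissible and lets the derivation identity close. A secondary technical point to watch is the justification of equicontinuity uniformly in $n$, where one must apply \Prop{key1} only on subcells whose $\cN_3$-neighborhoods still avoid $K^\partial$, controlling their local energies by $\cE^{\cN_3(w_n)}(h_n)$ through \Prop{compatible}.
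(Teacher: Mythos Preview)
Your overall architecture matches the paper's: reduce to a fixed shape, transplant to a fixed neighborhood, extract a weak limit, and then run an integration-by-parts (Caccioppoli-type) argument using interior harmonicity with a cutoff to upgrade to strong $\cF$-convergence on the central cell. The difference lies in how the cross term is killed, and that difference exposes a real gap in your version.

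You control $\int \chi^2\,d\nu^{\cN_3(\hat w)}_{v_n}$ by $4\int v_n^2\,d\nu^{\cN_3(\hat w)}_{\chi}$ and then invoke \emph{uniform} convergence $v_n\to0$, justified by the oscillation bound ``$O(r^{k/2})$'' obtained from iterating \Prop{key1} on subcells. That bound is correct as stated, but it only decays when $r<1$, equivalently $d_\mathrm{s}<2$. For higher-dimensional carpets with $d_\mathrm{s}\ge2$ one has $r\ge1$ (since $d_\mathrm{s}=2\log M/\log(M/r)$), and your modulus of continuity blows up rather than decays. Since the energy measure $\nu_\chi$ is typically singular with respect to $\mu$, $L^2(\mu)$-convergence alone does not give $\int v_n^2\,d\nu_\chi\to0$, so step~4 genuinely needs uniform control that you have not provided in the case $r\ge1$. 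One can repair this by deriving H\"older continuity directly from the elliptic Harnack inequality of \Lem{elliptic} via the standard oscillation-reduction argument, but that is an additional ingredient you did not invoke.

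The paper sidesteps the issue entirely. It tests against $\hat g\,\hat f_n$ (linear in the cutoff) and uses the defining identity of the energy measure to write $0=2\cE(\hat f_n,\hat f_n\hat g)=\cE(\hat f_n^2,\hat g)+\int\hat g\,d\nu_{\hat f_n}$. The point is that $\{\hat f_n^2\}$ is bounded in $\cF$ by \Prop{cEfg} (using only $\cF$- and $L^\infty$-boundedness of $\hat f_n$), hence weakly convergent to $0$ in $\cF$ along a subsequence, which immediately gives $\cE(\hat f_n^2,\hat g)\to0$ without any equicontinuity or assumption on $r$. Your Caccioppoli inequality can in fact be closed the same way: $\int v_n^2\,d\nu_\chi=2\cE(\chi v_n^2,\chi)-\cE(v_n^2,\chi^2)\to0$ by weak $\cF$-convergence of $v_n^2$ and $\chi v_n^2$; but once you do this you are essentially reproducing the paper's mechanism.
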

\begin{proof}
Let us recall the concept of ``same shape'' in \Defn{shape}.
Since there are only finite kinds of shapes of $K_{\cN_3(w)}$ $(w\in W_*)$, that is, the cardinality of $\{K_{\cN_3(w)}\mid w\in W_*\}/\!\sim$ is finite, we may assume that all $K_{\cN_3(w_n)}$, $n\in\N$, are of the same shape by taking a suitable subsequence.

We let $u=w_1$.
Take $g\in \cF^0_{\cN_2(u)}\cap C(K)$ such that $0\le g\le 1$ on $K$ and $g=1$ on $K_{\cN_1(u)}$.
Let $c_{5.12}=\max\{\cE(\psi_v^* g)^{1/2}\mid v\in \cN_2(u)\}$.

Let $n\in \N$, and take a similitude $\xi_n$ on $\R^D$ as in \Defn{shape} such that $\xi_n(K_{\cN_3(u)})=K_{\cN_3(w_n)}$.
Set
\[
  f_n(x)=\begin{cases}
  g(x)h_n(\xi_n(x))& \text{if }x\in K_{\cN_3(u)},\\
  0&\text{otherwise.}
  \end{cases}
\]
Then, $f_n\in \cF^0_{\cN_2(u)}$ since $h_n$ is bounded on $K_{\cN_2(w_n)}$ from \Prop{key1}.
We have
\begin{align*}
\cE(f_n)^{1/2}
&=\biggl(\sum_{v\in \cN_2(u)} r^{-|u|}\cE(\psi_v^* f_n)\biggr)^{1/2}
\le\sum_{v\in \cN_2(u)} r^{-|u|/2}\cE(\psi_v^* f_n)^{1/2}\\
&\le r^{-|u|/2}\sum_{v\in \cN_2(u)} \bigl\{\cE(\psi_v^* g)^{1/2}\|\psi_v^*(h_n\circ \xi_n)\|_{L^\infty(K,\mu)}\\
&\hspace{4em}+\cE(\psi_v^*(h_n\circ \xi_n))^{1/2}\|\psi_v^* g\|_{L^\infty(K,\mu)}\bigr\}\quad\text{(from \Prop{cEfg})}\\
&\le r^{-|u|/2}\sum_{v\in \cN_2(u)} \Bigl\{\cE(\psi_v^* g)^{1/2}\muesssup_{\makebox[0pt]{\scriptsize $x\!\in\! K_{\cN_2(w_n)}$}}\,|h_n(x)|+\cE(\psi_v^*(h_n\circ \xi_n))^{1/2}\Bigr\}\\
&\le r^{-|u|/2}\sum_{v\in \cN_2(u)}\Bigl\{c_{5.12}c_{5.10}\bigl(r^{|w_n|}\cE^{\cN_3(w_n)}(h_n)\bigr)^{1/2}
+\bigl(r^{|w_n|}\cE^{\{v'\}}(h_n)\bigr)^{1/2}\Bigr\}\\
&\qquad\left(\;\parbox{0.7\textwidth}{from \Prop{key1}, and $v'$ denotes a unique element of $\cN_2(w_n)$ such that $K_{v'}=\xi_n(K_v)$}\;\right)\\
&\le c_{5.13}r^{-|u|/2}\bigl(r^{|w_n|}\cE^{\cN_3(w_n)}(h_n)\bigr)^{1/2}
\end{align*}
and
\begin{align*}
\|f_n\|_{L^\infty(K,\mu)}^2
&\le\muesssup_{\makebox[0pt]{\scriptsize $x\!\in\! K_{\cN_2(w_n)}$}}\,|h_n(x)|^2
\le c_{5.10}^2 r^{|w_n|}\cE^{\cN_3(w_n)}(h_n)
\quad\text{(from \Prop{key1})}.
\end{align*}
Therefore, $\{f_n\}_{n=1}^\infty$ is bounded both in $\cF$ and in $L^\infty(K,\mu)$ under assumption~\Eq{N3'}.
We can take a suitable subsequence of $\{f_n\}_{n=1}^\infty$, denoted by the same notation, converging to some $f_\infty$ weakly in $\cF$.
It is evident that $f_\infty\in\cF^0_{\cN_2(u)}\cap L^\infty(K,\mu)$.
Since $f_n=h_n\circ\xi_n$ on $K_{\cN_1(u)}$, it holds that $f_n\in\cH(\cN_1(u))$ for all $n$. This implies that $f_\infty\in\cH(\cN_1(u))$.

We define $\hat f_n=f_n-f_\infty$ for $n\in\N$.
Then, $\hat f_n\in \cH(\cN_1(u))\cap L^\infty(K,\mu)$ and $\hat f_n\to 0$ weakly in $\cF$.
Since $\cF$ is compactly imbedded in $L^2(K,\mu)$, which is equivalent to the statement that the resolvent operators are compact ones on $L^2(K,\mu)$, $\hat f_n\to0$ strongly in $L^2(K,\mu)$.
\Prop{cEfg} implies that
\[
\cE(f_n^2)^{1/2}\le 2\cE(f_n)^{1/2}\|f_n\|_{L^\infty(K,\mu)}
\]
and
\[
\cE(f_n f_\infty)^{1/2}\le \cE(f_n)^{1/2}\|f_\infty\|_{L^\infty(K,\mu)}+\cE(f_\infty)^{1/2}\|f_n\|_{L^\infty(K,\mu)},
\]
which are both bounded in $n$.
Then, $\{\hat f_n^2\}_{n=1}^\infty$ is also bounded in $\cF$ since 
\[
\cE(\hat f_n^2)^{1/2}\le \cE(f_n^2)^{1/2}+2\cE(f_n f_\infty)^{1/2}+\cE(f_\infty^2)^{1/2}.
\]
By taking a subsequence if necessary, we may assume that $\{\hat f_n^2\}_{n=1}^\infty$ converges weakly in $\cF$.
This implies that $\{\hat f_n^2\}_{n=1}^\infty$ converges in $L^2(K,\mu)$ from the same reason as described above. 
Then, the limit function has to be $0$.

Now, we take $\hat g\in \cF^0_{\cN_1(u)}\cap C(K)$ such that $0\le \hat g\le 1$ on $K$ and $\hat g=1$ on $K_u$.
Then, since $\hat f_n\hat g\in \cF^0_{\cN_1(u)}$,
\[
0=2\cE(\hat f_n,\hat f_n\hat g)=\cE(\hat f_n^2,\hat g)+\int_K \hat g\,d\nu_{\hat f_n},
\]
where the second equality follows from the characterization of the energy measure $\nu_{\hat f_n}$.
From the above argument, $\cE(\hat f_n^2,\hat g)\to0$ as $n\to\infty$.
We also have
\begin{align*}
\int_K \hat g\,d\nu_{\hat f_n}
&=\sum_{v\in \cN_1(u)}r^{-|u|}\int_K \psi_v^*\hat g\,d\nu_{\psi_v^*\hat f_n}\qquad\text{(from \Lem{energymeas}~(ii))}\\
&\ge r^{-|u|}\int_K \psi_u^*\hat g\,d\nu_{\psi_u^*\hat f_n}
=r^{-|u|}\nu_{\psi_u^*\hat f_n}(K)
=2r^{-|u|}\cE(\psi_u^*\hat f_n).
\end{align*}
Combining these relations, we obtain that $\limsup_{n\to\infty}\cE(\psi_u^*\hat f_n)\le0$, in other words, $\lim_{n\to\infty}\cE(\psi_u^*\hat f_n)=0$.
Therefore, $\psi_u^*\hat f_n \to 0$ in $\cF$ since $\hat f_n\to 0$ in $L^2(K,\mu)$.
This implies that $\psi_{w_n}^* h_n=\psi_u^* f_n\to \psi_u^* f_\infty$ in $\cF$ as $n\to\infty$, which completes the proof.
\qed\end{proof}

Next, we proceed to prove \Prop{A7}.
Let $I\subset S$ be defined by 
\[
  I=\left\{i\in S\;\vrule\; K_i\subset \{(x_1,\dots,x_D)\in \R^D\mid x_D\le 1/l\}\right\}.
\]
For $n\in\N$, we denote the direct product of $n$ copies of $I$ by $I_n$, which is regarded as a subset of $W_n$.
Note that $K_{I_1}\supset K_{I_2}\supset K_{I_3}\supset\cdots$ and $\bigcap_{n=1}^\infty K_{I_n}=K^\partial_{D,0}$ (see \Defn{surface}).
We define
\[
\cK(w;a)=\biggl\{f\in\cH(\cN_3(w))\biggm| \int_{K_{w}}f\,d\mu=0,\ r^{|w|}\cE^{\cN_3(w)}(f)\le a\biggr\}
\]
for $w\in \bigcup_{n=2}^\infty I_{n}$ and $a>0$, and 
\[
\cK=\mbox{the closure of }\biggl\{\psi_w^* f\biggm|w\in \bigcup_{n=2}^\infty I_{n},\ f\in\cK(w;1)\biggr\} \mbox{ in }\cF.
\]
\begin{remark}
Since $l\ge3$, for any $w\in \bigcup_{n=2}^\infty I_{n}$, it holds that
$
  K_{\cN_3(w)}\cap K^\partial_{D,1}=\emptyset$.
Moreover, for each $i=1,\dots,D-1$, there exists $j\in\{0,1\}$ such that
$
  K_{\cN_3(w)}\cap K^\partial_{i,j}=\emptyset$.
\end{remark}

\begin{lemma}\label{lem:cpt}
The set $\cK$ is a compact subset in $\cF$.
\end{lemma}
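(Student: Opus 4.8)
The plan is to show that the generating family $\cK_0:=\{\psi_w^* f\mid w\in\bigcup_{n\ge2}I_n,\ f\in\cK(w;1)\}$ is relatively compact in $\cF$; since $\cF$ is complete and $\cK=\overline{\cK_0}$ is closed by construction, this yields compactness of $\cK$. Hence it suffices to show that every sequence $\{\psi_{w_k}^* f_k\}_k\subset\cK_0$ has a subsequence converging in $\cF$. First I would record that each generator is globally harmonic: since $f_k\in\cH(\cN_3(w_k))$ and $w_k\in\cN_3(w_k)$, \Lem{harmonicA}~(ii) gives $\psi_{w_k}^* f_k\in\cH$. Because $\cE(\psi_{w_k}^* f_k)=r^{|w_k|}\cE^{\{w_k\}}(f_k)\le r^{|w_k|}\cE^{\cN_3(w_k)}(f_k)\le1$ by \Eq{cEAdef}, and $\int_K\psi_{w_k}^* f_k\,d\mu=M^{|w_k|}\int_{K_{w_k}}f_k\,d\mu=0$, the Poincaré inequality \Eq{Npoincare} bounds $\{\psi_{w_k}^* f_k\}_k$ in $\cF$. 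Boundedness alone furnishes a weak limit and, through the compact embedding $\cF\hookrightarrow L^2(K,\mu)$, an $L^2$-convergent subsequence; the substance of the proof is to upgrade this to convergence in the $\cF$-norm.

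For that upgrade I would reproduce the mechanism of \Prop{key'}. The family $\{K_{\cN_3(w)}\mid w\in\bigcup_{n\ge2}I_n\}$ splits into finitely many shape classes (\Defn{shape}), so I would pass to a subsequence along which all $K_{\cN_3(w_k)}$ share a common shape $K_{\cN_3(u)}$, transport each $f_k$ to the reference cell by the associated similitude, and multiply by a fixed cutoff equal to $1$ on $K_{\cN_1(u)}$. The uniform energy bound together with an $L^\infty$ bound for the harmonic pieces makes the transported functions bounded in $\cF$ and in $L^\infty(K,\mu)$; extracting a weak limit, showing that the differences tend to $0$ in $L^2$ while their squares remain bounded in $\cF$ (hence tend to $0$ in $L^2$), and testing the harmonicity identity $0=2\cE(\hat f,\hat f\hat g)=\cE(\hat f^2,\hat g)+\int_K\hat g\,d\nu_{\hat f}$ against a suitable cutoff $\hat g$ then forces the residual energies to vanish. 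This produces a subsequence of $\{\psi_{w_k}^* f_k\}$ converging in $\cF$, whose limit lies in the closed set $\cK$.

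The one genuinely new difficulty, and the main obstacle, is that, in contrast with \Prop{key'}, the cells $K_{\cN_3(w)}$ with $w\in I_n$ are \emph{not} disjoint from $K^\partial$. By the Remark preceding the statement, for $w\in\bigcup_{n\ge2}I_n$ the set $K_{\cN_3(w)}$ meets $K^\partial$ only in the single bottom face $K^\partial_{D,0}$ (it avoids $K^\partial_{D,1}$ and, in each remaining direction, at least one face). The interior estimates underpinning the previous paragraph, namely the $L^\infty$/Harnack bound \Prop{key1} (via \Lem{elliptic}), the Poincaré estimate \Lem{poincareD}, and \Lem{Hwf}, were all proved under $K_{\cN_3(w)}\cap K^\partial=\emptyset$, and so are not directly available. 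I would therefore establish boundary versions valid up to the single reflecting face $K^\partial_{D,0}$. A function in $\cH(\cN_3(w))$ satisfies only a Neumann (reflecting) condition on $K^\partial_{D,0}$, and the carpet Brownian motion is itself reflecting, so the two-sided heat-kernel bounds \Eq{Aronson}, and hence the parabolic and elliptic Harnack inequalities behind \Lem{elliptic}, persist for balls abutting $K^\partial_{D,0}$. For the Poincaré-type bound I would reflect evenly across $\{x_D=0\}$, an argument parallel to \Lem{reflection}--\Lem{reflection0} and the symmetry \Eq{reflection}, so that the bottom face becomes interior and the chaining argument of \Lem{poincareD} (together with \Lem{poincareH}) applies; here the Remark is used precisely to guarantee that $K^\partial_{D,0}$ is the only external face requiring such a reflection. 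With these boundary analogues in hand, the argument of the preceding paragraph goes through and $\cK$ is compact.
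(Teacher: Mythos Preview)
Your plan correctly identifies the main obstacle---that the cells $K_{\cN_3(w)}$ for $w\in I_n$ may touch $K^\partial$---and your instinct to handle it by reflection is sound, but your reading of the Remark is inaccurate: it says that for each $i\le D-1$ there exists \emph{some} $j$ with $K_{\cN_3(w)}\cap K^\partial_{i,j}=\emptyset$, not that both faces in that direction are avoided. Thus $K_{\cN_3(w)}$ may abut several faces of $K^\partial$ simultaneously (for instance when $w$ lies near a bottom edge or corner), not only $K^\partial_{D,0}$, so a single even reflection across $\{x_D=0\}$ will not suffice. Re-deriving \Prop{key1}, \Lem{elliptic}, \Lem{poincareD}, and \Lem{Hwf} in this multi-face Neumann setting is plausible in principle but substantially more work than you indicate.

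The paper takes a much shorter route that sidesteps all of this. It fixes $u\in W_2$ with $K_{\cN_1(u)}\cap K^\partial=\emptyset$ and uses the level-$2$ folding map $\ph_u\colon K\to K_u$ to define, for each $f\in\cK(w;1)$, an unfolded function $f_u(x)=f(\psi_u^{-1}(\ph_u(x)))\in\cF$ (via \Prop{compatible}). One checks that $f_u\in\cH(\cN_3(u\cdot w))$ with $K_{\cN_3(u\cdot w)}\cap K^\partial=\emptyset$, $\int_{K_{u\cdot w}}f_u\,d\mu=0$, $r^{|u\cdot w|}\cE^{\cN_3(u\cdot w)}(f_u)\le 2^D$, and crucially $\psi_{u\cdot w}^*f_u=\psi_w^*f$. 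Hence the generating set of $\cK$ is contained in $\{\psi_{u\cdot w}^*g\mid w\in\bigcup_{n\ge2}I_n,\ g\in\cK(u\cdot w;2^D)\}$, to which \Prop{key'} applies \emph{verbatim} because the cells $u\cdot w$ now satisfy $K_{\cN_3(u\cdot w)}\cap K^\partial=\emptyset$. This folding trick---transporting the whole family into interior cells at the cost of a harmless constant $2^D$, rather than extending every interior estimate to the boundary---is the idea you are missing, and it reduces the lemma to a one-paragraph consequence of \Prop{key'}.
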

\begin{proof}
  We fix $u\in W_2$ such that $K_{\cN_1(u)}\cap K^\partial=\emptyset$.
  As in \Defn{periodic}, we define a folding map $\ph^{(2)}\colon [0,1]^D\to[0,1/l^{2}]^D$ as
\[
\ph^{(2)}(x_1,\dots,x_D)=(\hat\ph^{(2)}(x_1),\dots,\hat\ph^{(2)}(x_D)),
\quad (x_1,\dots,x_D)\in[0,1]^D,
\]
where $\hat\ph^{(2)}\colon \R\to\R$ is a periodic function with period $2/l^{2}$ such that $\hat\ph^{(2)}(t)=|t|$ for $t\in[-1/l^{2},1/l^{2}]$.
Let $\ph_u\colon K\to K_u\subset K$ be defined as
\[
\ph_u(x)=\bigl(\ph^{(2)}\big|_{K_u}\bigr)^{-1}(\ph^{(2)}(x)),\quad
x\in K.
\]
This is the folding map based on $K_u$.
For $f\in\cF$,
define $f_u(x)=f(\psi_u^{-1}(\ph_u(x)))$ for $x\in K$.
Then, $f_u\in \cF^{W_2}=\cF$ from \Prop{compatible}.
Now, let $w\in I_n$ with $n\ge 2$ and $f\in \cK(w;1)$.
Let $\cN_{3,u}(w)$ denote a subset of $W_{n+2}$ such that $K_{\cN_{3,u}(w)}$ is the connected component of $\ph_u^{-1}(\psi_u(K_{\cN_3(w)}))$ that includes $K_{u\cdot w}$ (see \Fig{cpt}).
\begin{figure}[t]
\begin{minipage}{0.5\hsize}
\includegraphics[clip]{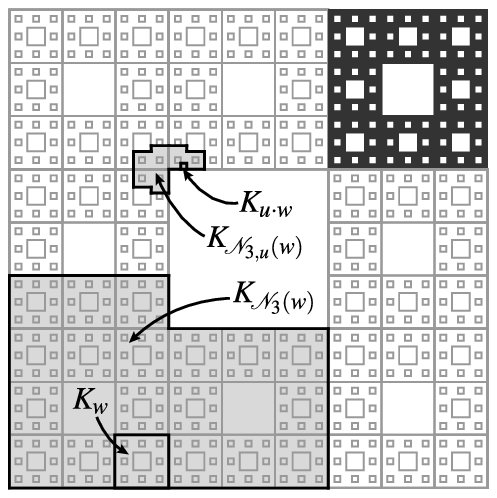}
\caption{Case of $w\in I_2$}\label{fig:cpt}
\end{minipage}
\begin{minipage}{0.48\hsize}
\includegraphics[clip]{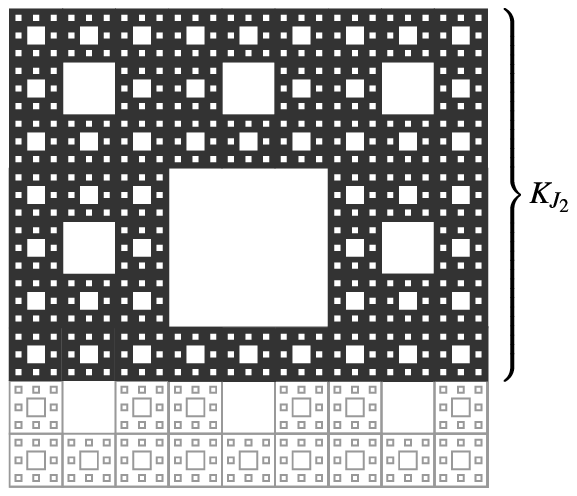}
\caption{Illustration of $K_{J_m}$ for $m=2$}\label{fig:Kjm}
\end{minipage}
\end{figure}
Set $K_{\cN_{3,u}(w)}$ is described as a union of (at most $2^D$) sets that are isometric to $\psi_u(K_{\cN_3(w)})$.
Moreover, $K_{\cN_{3,u}(w)}\supset K_{\cN_3(u\cdot w)}$, $K_{\cN_{3,u}(w)}\cap K^\partial=\emptyset$ and $f_u\in\cH(\cN_{3,u}(w))$.
In particular, $K_{\cN_3(u\cdot w)}\cap K^\partial=\emptyset$ and $f_u\in\cH(\cN_3(u\cdot w))$.
We also have $\psi_{u\cdot w}^*f_u=\psi_w^* f$, $\int_{K_{u\cdot w}} f_u\,d\mu=M^{-2}\int_{K_w}f\,d\mu=0$, and
\begin{align*}
r^{|u\cdot w|}\cE^{\cN_3(u\cdot w)}(f_u)
&=\sum_{v'\in\cN_3(u\cdot w)}\cE(\psi_{v'}^*f_u)
\le 2^D\sum_{v\in\cN_3(w)}\cE(\psi_{v}^* f)\\
&= 2^D r^{|w|}\cE^{\cN_3(w)}(f)\le 2^D.
\end{align*}
In other words, $f\in \cK(w;1)$ implies $f_u\in\cK(u\cdot w;2^D)$.
Therefore, we have
\begin{align}\label{eq:inc}
\biggl\{\psi_w^* f\biggm|w\in \bigcup_{n\ge2}I_{n},\ f\in\cK(w;1)\biggr\}
\subset 
\biggl\{\psi_{u\cdot w}^*g\biggm|w\in \bigcup_{n\ge2}I_{n},\ g\in\cK(u\cdot w;2^D)\biggr\}.
\end{align}
From \Prop{key'}, the right-hand side is relatively compact in $\cF$.
This completes the proof.
(We note that \Prop{key'} cannot be applied directly to the left-hand side of \Eq{inc}, since $K_{\cN_3(w)}\cap K^\partial=\emptyset$ does not necessarily hold.)
\qed\end{proof}
The following claim is stated in \cite{HK06} without an explicit proof.
We provide the proof for completeness.
\begin{lemma}\label{lem:B4}
Let $f\in\cF$. 
If $\cE^{W_m\setminus I_m}(f)=0$ for all $m\in\N$, then $f$ is constant $\mu$-a.e.
\end{lemma}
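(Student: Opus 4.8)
The plan is to deduce from the hypothesis that $f$ is $\mu$-a.e.\ constant on every cell lying outside the shrinking bottom slab $K_{I_m}$, to propagate this single value across the whole carpet using connectivity of the cell network and non-polarity of the interior faces, and finally to discard the bottom face $K^\partial_{D,0}$, which is $\mu$-negligible.

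First I would unwind the hypothesis. Since $\cE^{W_m\setminus I_m}(f)=r^{-m}\sum_{w\in W_m\setminus I_m}\cE(\psi_w^*f)=0$ with all summands nonnegative, we get $\cE(\psi_w^*f)=0$ for every $w\in W_m\setminus I_m$ and every $m$, and the spectral gap~\eqref{eq:Npoincare} then forces $\psi_w^*f$ to equal a constant $c_w$ $\mu$-a.e., i.e.\ $f=c_w$ $\mu$-a.e.\ on $K_w$. Upgrading to quasi-everywhere: the constant $c_w$ is its own quasi-continuous modification, $\psi_w^*\tilde f$ is a quasi-continuous modification of $\psi_w^*f$ by \Lem{energymeas}(i), and the capacity comparison~\eqref{eq:capequiv} transports exceptional sets through $\psi_w$, so $\tilde f=c_w$ q.e.\ on $K_w$. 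Appending a suffix to $w\in W_m\setminus I_m$ keeps it outside $I_{m+k}$, so these constants are consistent under refinement and define a locally constant value of $\tilde f$ on $K\setminus K^\partial_{D,0}$.

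Next I would identify all the $c_w$. If two cells $K_v,K_w$ with $v,w\in W_m\setminus I_m$ meet along a $(D-1)$-dimensional face $F=K_v\cap K_w$, then $\tilde f=c_v$ and $\tilde f=c_w$ q.e.\ on $F$, forcing $c_v=c_w$ as soon as $\Cp(F)>0$. The positivity of $\Cp(F)$ is the first geometric input: by~(BI) together with the symmetry and self-similarity, such a face carries a scaled copy of a boundary segment of $K$ and is non-polar, which is exactly the type of non-degeneracy established in the trace analysis of \cite{HK06}. The second input is that for each $m$ the cells $\{K_w\mid w\in W_m\setminus I_m\}$ are joined to a fixed reference cell by finite chains of face-adjacent cells remaining in $W_m\setminus I_m$; this rests on the connectedness of $\Int(Q_1)$ and the nondiagonality~(ND), the same geometry underlying \Prop{NDH}. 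Chaining along these adjacencies shows that all $c_w$ equal a single constant $c$, hence $\tilde f=c$ q.e.\ on $K\setminus K^\partial_{D,0}$.

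It remains to remove the bottom face. Since $\mu(K_w)=M^{-m}$ for $w\in W_m$ and $\#I_m=(\#I)^m$ with $\#I<M$, we have $\mu(K_{I_m})\le(\#I/M)^m\to0$, so $\mu(K^\partial_{D,0})=\mu\bigl(\bigcap_m K_{I_m}\bigr)=0$; together with $f=c$ $\mu$-a.e.\ on $K\setminus K^\partial_{D,0}$ this gives $f=c$ $\mu$-a.e.\ on $K$. The main obstacle is precisely the middle step: a purely energy-theoretic attempt stalls, since \Lem{energymeas}(ii) gives $\nu_f(K_{I_m})=2\cE(f)$ for every $m$, so $\nu_f$ only concentrates on $K^\partial_{D,0}$ without being shown to vanish, which is why one must pass through the capacity and connectivity facts above. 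Throughout one must respect the paper's warning and avoid (A7) and its corollaries \Lem{energymeas'} and~\eqref{eq:additive}, using only the still-valid \Lem{energymeas}.
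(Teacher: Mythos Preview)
Your approach is essentially the same as the paper's: deduce that $f$ is $\mu$-a.e.\ constant on each cell outside the shrinking bottom slab, propagate the single value via face-connectivity of the cell complex, and discard the $\mu$-null bottom face $K^\partial_{D,0}$. The paper carries this out with the slightly smaller index set $J_m:=W_m\setminus(I_{m-1}\cdot S')$ rather than your $W_m\setminus I_m$ (and cites (BI) in addition to connectedness and (ND) for its face-connectivity), but is actually silent on the step you make explicit---why the constants on face-adjacent cells must coincide---so your argument via $\Cp(F)>0$ and \cite{HK06} supplies a detail the paper leaves to the reader.
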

\begin{proof}
Let $S'\subset S$ be defined by 
$
  S'=\bigl\{i\in S\bigm| K_i\subset \{(x_1,\dots,x_D)\in \R^D\mid x_D\le 1-1/l\}\bigr\}
$.
For $m\ge2$, let $J_m:=W_{m}\setminus (I_{m-1}\cdot S')\subset W_m\setminus I_m$ (see \Fig{Kjm}).
Then, $J_m$ is connected in the following sense: For any $v,w\in J_m$, there exists a sequence $w_0,w_1,\dots,w_k$ in $J_m$ for some $k$ such that $w_0=v$, $w_k=w$, and $w_j\underset{m}{\leftrightsquigarrow}w_{j+1}$ for all $j=0,1,\dots, k-1$.
This is confirmed by the assumptions of connectedness, nondiagonality, and borders inclusion on $K$ (see Section~4.3).
Since $f$ is constant on $K_w$ for each $w\in J_m$, the connectedness of $J_m$ implies that $f$ is constant on $K_{J_m}$.
Since $K\setminus \bigcup_{m=2}^\infty K_{J_m}=K^\partial_{D,0}$ and $\mu(K^\partial_{D,0})=0$, we conclude that $f$ is constant $\mu$-a.e.
\qed\end{proof}
The following proposition states that the energy measures of a class of functions do not concentrate near the boundary uniformly in some sense, which is the key proposition to prove \Prop{A7}.
\begin{proposition}\label{prop:keyp}
There exist $c_0\in(0,1)$ and $m\in\N$ such that for all 
$n\in\Z_+$ and $h\in \cH(I_n)$, 
\[
\cE^{I_{n+m}}(h)\le c_0\cE^{I_{n}}(h).
\]
\end{proposition}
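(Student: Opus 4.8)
The plan is to peel off a single scale by self-similarity and then settle the resulting statement by a blow-up argument that invokes the compactness of $\cK$ (\Lem{cpt}) and the unique-continuation property (\Lem{B4}). First I would use the factorization $I_{n+m}=I_n\cdot I_m$: writing $w=uv$ with $u\in I_n$, $v\in I_m$ and using $\psi_{uv}^*h=\psi_v^*(\psi_u^*h)$ together with \Eq{cEAdef}, one gets
\[
\cE^{I_{n+m}}(h)=r^{-n}\sum_{u\in I_n}\cE^{I_m}(\psi_u^*h),
\qquad
\cE^{I_n}(h)=r^{-n}\sum_{u\in I_n}\cE(\psi_u^*h),
\]
and by \Lem{harmonicA}~(ii) each $\psi_u^*h$ lies in $\cH$. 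Hence \Prop{keyp} follows at once from the single-scale estimate
\begin{quote}
(SS) there exist $m\in\N$ and $c_0\in(0,1)$ with $\cE^{I_m}(h_0)\le c_0\,\cE(h_0)$ for every $h_0\in\cH$,
\end{quote}
which says that a fixed proportion of the energy of any harmonic function must escape the bottom layer $K_{I_m}$, uniformly over $\cH$.

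I would prove (SS) by contradiction. If it fails, then for each $m$ there is $h^{(m)}\in\cH$ with $\cE(h^{(m)})=1$, mean zero, and $\cE^{W_m\setminus I_m}(h^{(m)})=1-\cE^{I_m}(h^{(m)})\to0$; since $W_k\setminus I_k\supset(W_{k_0}\setminus I_{k_0})\cdot W_{k-k_0}$ and $\cE^{A\cdot W_j}=\cE^A$, the off-bottom energy at a fixed scale is controlled by that at any finer scale. A direct weak-$\cF$ limit is useless here: the energy of $h^{(m)}$ concentrates near the bottom face $K^\partial_{D,0}$ and is lost in the limit, where \Lem{B4} would only force the mean-zero weak limit to vanish. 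This escape of energy to the boundary is the main obstacle, and it is what makes a blow-up necessary. A second subtlety is that the energy need not concentrate at a point but may \emph{spread} over the many bottom cells, so no single cell carries a fixed proportion of it.

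To overcome this I would renormalize at a good bottom cell. Choose a depth $\ell_m\to\infty$ growing slowly enough that $(\#I)^{\ell_m}=o(m)$, discard the ``bad'' cells $w\in I_{\ell_m}$ with $\cE^{\cN_3(w)}(h^{(m)})>\eps^{-1}\cE^{\{w\}}(h^{(m)})$ --- whose total energy is $\lesssim\eps\,7^D$ by bounded overlap of the $\cN_3$-neighborhoods and \Eq{energyineq} --- and pick $w_m$ among the remaining cells with $\cE^{\{w_m\}}(h^{(m)})$ maximal, so that $\cE^{\{w_m\}}(h^{(m)})\gtrsim(\#I)^{-\ell_m}$ and $\cE^{\cN_3(w_m)}(h^{(m)})\lesssim\cE^{\{w_m\}}(h^{(m)})$. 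Setting $\hat g_m:=(\psi_{w_m}^*h^{(m)}-c_m)/\sqrt{2\cE(\psi_{w_m}^*h^{(m)})}$ with $2\cE(\hat g_m)=1$, these functions, after the folding step used in the proof of \Lem{cpt} to move off the global boundary, belong to a bounded dilate of $\cK$; \Lem{cpt} (via \Prop{key'}) then yields a subsequence converging strongly in $\cF$ to a limit $h^\ast$ with $\cE(h^\ast)=\tfrac12>0$, hence nonconstant. In the off-bottom ratio the factor $r^{\ell_m}$ cancels, leaving $\cE^{W_{m_0}\setminus I_{m_0}}(\hat g_m)\lesssim(\#I)^{\ell_m}/m\to0$ for each fixed $m_0$; by strong convergence $\cE^{W_{m_0}\setminus I_{m_0}}(h^\ast)=0$ for all $m_0$, so \Lem{B4} forces $h^\ast$ constant --- contradicting $\cE(h^\ast)>0$. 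This contradiction proves (SS) and hence \Prop{keyp}.

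The delicate point throughout is the cell selection: balancing $\ell_m\to\infty$ (to push into the boundary) against $(\#I)^{\ell_m}=o(m)$ (to kill the off-bottom ratio), while the good-cell condition keeps the renormalized family precompact and the normalization guarantees a nonconstant limit. Because \Prop{A7} is not yet available, all comparisons between $\nu_{h^{(m)}}$ on a cell, on its $\cN_3$-neighborhood, and on $K$ must go through the one-sided inequality \Eq{energyineq} and \Lem{energymeas}, and the folding must be arranged exactly as in \Lem{cpt} so that \Prop{key'} remains applicable near the boundary.
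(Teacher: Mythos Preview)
Your reduction to the single-scale statement (SS) is algebraically sound, but (SS) is the wrong target: it is almost certainly \emph{false}, and your compactness argument for it breaks precisely where the two notions of harmonicity diverge. The class $\cH(I_n)$ requires $\cE(h,g)=0$ for all $g\in\cF^0_{I_n}$, and such $g$ may be nonzero on $K^\partial\cap K_{I_n}$; this is a Neumann-type condition on the bottom face. By contrast $\cH$ only tests against $g\in\cFD$ vanishing on $K^\partial$, so an $\cH$-function carries no normal-derivative constraint on $K^\partial_{D,0}$. Your reduction $h_0=\psi_u^*h$ discards exactly this Neumann structure. Now every $w_m\in I_{\ell_m}$ has $K_{w_m}\cap K^\partial_{D,0}\ne\emptyset$ (each cell of $I_k$ sits on $\{x_D=0\}$), so $\cF^0_{\cN_3(w_m)}\not\subset\cFD$ and $h^{(m)}\in\cH$ does \emph{not} give $h^{(m)}\in\cH(\cN_3(w_m))$; hence your blow-ups are not in any dilate of $\cK$. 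The folding map from \Lem{cpt} does not repair this: it yields a function in $\cH(\cN_3(u\cdot w))$ only because the input already has Neumann behaviour on $K^\partial$ --- even reflection of an $\cH$-function across $K^\partial_{D,0}$ produces a normal-derivative jump, so the folded function is not locally harmonic and \Prop{key'} is inapplicable. The classical picture confirms that (SS) itself fails: on $[0,1]^2$ the harmonic functions $h_n(x,y)=e^{-n\pi y}\sin(n\pi x)$ have $|\nabla h_n|^2=n^2\pi^2 e^{-2n\pi y}$, so the energy fraction in any fixed bottom strip tends to~$1$. The same mechanism (rapidly oscillating Dirichlet data on $K^\partial_{D,0}$) should defeat (SS) on carpets.

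The paper never reduces to $\cH$. It works at level $n+2$ directly: for $w\in I_{n+2}$ one has $\cN_3(w)\subset I_n\cdot W_2$, hence $\cF^0_{\cN_3(w)}\subset\cF^0_{I_n}$ and $h\in\cH(I_n)$ genuinely lies in $\cH(\cN_3(w))$, so the compactness of $\cK$ applies legitimately. Compactness of $\cK^\dl$ together with \Lem{B4} then gives a uniform $m'$ and $a'<1$ with $\cE^{I_{m'}}(f)<a'\cE(f)$ for all $f\in\cK^\dl$. The remaining difficulty --- that no single cell need carry a fixed share of the energy --- is handled not by a blow-up but by a covering argument: an oriented graph on $I_{n+2}$ organises cells into trees rooted at cells $w\in Y$ not dominated by a neighbour; for each root either the normalized pull-back lies in $\cK^\dl$ (so energy leaks out of $I_{m'}$ inside $K_w$) or the energy already leaks sideways into $(I_n\cdot W_2)\setminus I_{n+2}$, and summing with bounded overlap gives the contraction.
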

\begin{proof}
We define $C:=\sup_{n\in\Z_+}\max_{w\in I_{n+2}}\# \cN_3(w)\le 7^D$.
Let $n\in\Z_+$ and $w\in I_{n+2}$. 

Let $\dl=1/(4C^2)$. We define
$\cK^\dl=\{f\in\cK\mid \cE(f)\ge\dl\}$.
From \Lem{B4}, for each $f\in\cK^\dl$, there exist $m(f)\in\N$ 
and $a(f)\in(0,1)$ such that $\cE^{I_m}(f)<a(f)\cE(f)$ for all $m\ge m(f)$. 
By continuity, $\cE^{I_m}(g)<a(f)\cE(g)$ for all $m\ge m(f)$ for any $g$ in some neighborhood of $f$ in
$\cF$. Since $\cK^\dl$ is compact in $\cF$ from \Lem{cpt}, there exist $m'\in\N$ and $a'\in(0,1)$ 
such that $\cE^{I_{m'}}(f)<a'\cE(f)$ for every $f\in\cK^\dl$.
Then, 
\begin{equation}\label{eq:dom1}
\cE(f)< a \cE^{W_{m'}\setminus I_{m'}}(f)
\quad\text{for all }f\in\cK^\dl
\end{equation}
with $a=(1-a')^{-1}>1$.

Now, consider $n$ and $h$ in the claim of the proposition.
We note that $h\in\cH(\cN_3(w))$ for any $w\in I_{n+2}$ since $\cN_3(w)\subset I_n\cdot W_2$.
We construct an oriented graph as follows: The vertex set is $I_{n+2}$ and the set $E$ of oriented edges is defined as
\[
E=\left\{(v,w)\in I_{n+2}\times I_{n+2}\;\vrule\;
v\in \cN_3(w),\, \cE^{\{w\}}(h)>0, \mbox{ and }
\cE^{\{w\}}(h)\ge 2C \cE^{\{v\}}(h)\right\}.
\]
This graph does not have any loops.
Let $Y$ be the set of all elements $w$ in $I_{n+2}$ such that $\cE^{\{w\}}(h)>0$ and $w$ is not a source of any edges.
For $w\in Y$, we define 
\begin{align*}
N_0(w)=\{w\},\quad
N_k(w)=\biggl\{v\in I_{n+2}\setminus \bigcup_{j=0}^{k-1}
N_j(w)\biggm| (v,u)\in E \mbox{ for some }u\in N_{k-1}(w)\biggr\}
\end{align*}
for $k=1,2,3,\dots$ inductively, and $N(w)=\bigcup_{k\ge0}N_k(w)$.
It is evident that 
\begin{equation}\label{eq:graph}
I_{n+2}=\bigcup_{w\in Y}N(w)\cup \bigl\{w\in I_{n+2}\bigm|\cE^{\{w\}}(h)=0\bigr\}
\end{equation}
and that for all $k\in\Z_+$,
$
\# N_k(w)\le C^k$ and $\cE^{\{v\}}(h)\le 
(2C)^{-k}\cE^{\{w\}}(h)$ for $v\in N_k(w)$.
Then, for each $w\in Y$,
\begin{equation}\label{eq:dom2}
  \cE^{N(w)}(h)
  = \sum_{k=0}^\infty\sum_{v\in N_k(w)}\cE^{\{v\}}(h)
  \le \sum_{k=0}^\infty C^k (2C)^{-k}\cE^{\{w\}}(h)
  = 2\cE^{\{w\}}(h).
\end{equation}
Suppose $w\in Y$ and $\cE^{\{w\}}(h)\ge \dl\cE^{\cN_3(w)}(h)$. Then, since 
\[
\psi_w^*\biggl(\biggl(h-\mint_{K_{w}}h\,d\mu\biggr)\biggr/\sqrt{ r^{n+2}\cE^{\cN_3(w)}(h)}\biggr)\in\cK^\dl,
\]
(\ref{eq:dom1}) implies that $\cE(\psi_w^* h)< a \cE^{W_{m'}\setminus I_{m'}}(\psi_w^* h)$, that is,
$
  \cE^{\{w\}}(h)< a\cE^{w\cdot (W_{m'}\setminus I_{m'})}(h)$.
(See \Fig{Kw}.)
\begin{figure}[t]
\includegraphics[clip]{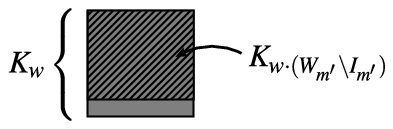}
\caption{Case of $w\in Y$ and $\cE^{\{w\}}(h)\ge \dl\cE^{\cN_3(w)}(h)$}\label{fig:Kw}
\end{figure}

Next, suppose $w\in Y$ and $\cE^{\{w\}}(h)< \dl\cE^{\cN_3(w)}(h)$.
Since $w$ is not a source of any edges, $\cE^{\{v\}}(h)<2C\cE^{\{w\}}(h)$ for every $v\in \cN_3(w)\cap I_{n+2}$.
Then,
\[
  \cE^{\cN_3(w)\cap I_{n+2}}(h)
  < C\cdot 2C\cE^{\{w\}}(h)
  < 2C^2\dl \cE^{\cN_3(w)}(h)
  = (1/2) \cE^{\cN_3(w)}(h),
\]
which implies that 
$\cE^{\cN_3(w)\cap I_{n+2}}(h)
< \cE^{\cN_3(w)\cap ((I_n\cdot W_{2})\setminus I_{n+2})}(h)$ since $\cN_3(w)\subset I_n\cdot W_2$.
In particular,
$
  \cE^{\{w\}}(h)
  <\cE^{\cN_3(w)\cap ((I_n\cdot W_{2})\setminus I_{n+2})}(h)$.
(See \Fig{Kw2}.)
\begin{figure}[t]
\includegraphics[clip]{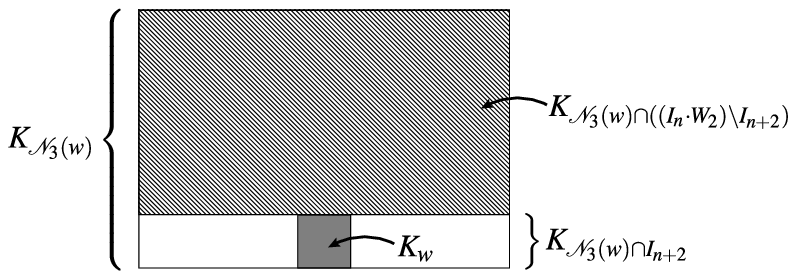}
\caption{Case of $w\in Y$ and $\cE^w(h)< \dl\cE^{\cN_3(w)}(h)$}\label{fig:Kw2}
\end{figure}

Therefore, in any case, for $w\in Y$, we have  
\begin{align}\label{eq:dom3}
  \cE^{\{w\}}(h)
  &< a\cE^{w\cdot (W_{m'}\setminus I_{m'})}(h)\vee\cE^{\cN_3(w)\cap ((I_n\cdot W_{2})\setminus I_{n+2})}(h)\notag\\
  &\le a\cE^{(w\cdot(W_{m'}\setminus I_{m'}))\cup((\cN_3(w)\cap ((I_n\cdot W_{2})\setminus I_{n+2}))\cdot W_{m'})}(h)\notag \\
  &\le a\cE^{(\cN_3(w)\cdot W_{m'})\cap((I_n\cdot W_{m})\setminus I_{n+m})}(h),
\end{align}
where $m=m'+2$ (see \Fig{Kw3}); further, it should be noted that
$I_{n+m}\subset I_{n+2}\cdot W_{m'}$.
\begin{figure}[t]
\includegraphics[clip]{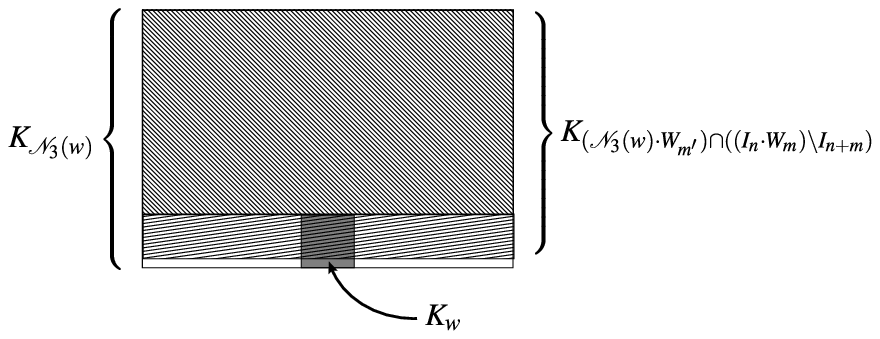}
\caption{Illustration of $K_w$ and $K_{(\cN_3(w)\cdot W_{m'})\cap((I_n\cdot W_{m})\setminus I_{n+m})}$}\label{fig:Kw3}
\end{figure}

Then, we have
\begin{align*}
\cE^{I_{n+m}}(h)
&\le \cE^{I_{n+2}}(h)
\le \sum_{w\in Y} \cE^{N(w)}(h)
\qquad\text{(from \Eq{graph})}\\
&\le 2a\sum_{w\in Y}\cE^{(\cN_3(w)\cdot W_{m'})\cap((I_n\cdot W_{m})\setminus I_{n+m})}(h)
\quad\text{(from \Eq{dom2} and \Eq{dom3})}\\
&\le 2a C'\cE^{(I_n\cdot W_{m})\setminus I_{n+m}}(h)
=2aC'\left(\cE^{I_n}(h)-\cE^{I_{n+m}}(h)\right),
\end{align*}
where 
$
C':=\sup_{n\in\Z_+}\max_{v\in W_{n+2}}\#\{w\in I_{n+2}\mid v\in\cN_{3}(w)\}\le 7^D$.
Hence, the claim of the proposition holds by setting $c_0=2a C'/(1+2a C')$.
\qed\end{proof} 
\begin{proposition}\label{prop:A7'}
For any $f\in\cF$, $\nu_f(K^\partial_{D,0})=0$.
\end{proposition}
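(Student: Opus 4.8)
The plan is to reduce the energy-measure statement on the bottom face to a statement about the restricted forms $\cE^{I_n}$ on the column of cells touching that face, and then to annihilate that restricted energy using the contraction supplied by \Prop{keyp}. The decisive step is an exact self-similar identity. For $w\in W_n$ the cell $K_w$ meets $K^\partial_{D,0}$ precisely when $w\in I_n$, and for such $w$ the affine map $\psi_w$ preserves the hyperplane $\{x_D=0\}$, so that $\psi_w^{-1}(K^\partial_{D,0})=K^\partial_{D,0}$, whereas $\psi_w^{-1}(K^\partial_{D,0})=\emptyset$ for $w\in W_n\setminus I_n$. Feeding this into \Lem{energymeas}~(ii) and using $\Eq{323}$ and $\Eq{cEAdef}$ gives, for every $g\in\cF$ and every $n$,
\begin{equation*}
\nu_g(K^\partial_{D,0})=r^{-n}\sum_{w\in I_n}\nu_{\psi_w^*g}(K^\partial_{D,0})\le r^{-n}\sum_{w\in I_n}\nu_{\psi_w^*g}(K)=2\cE^{I_n}(g).\tag{$\ast$}
\end{equation*}
The point is that $(\ast)$ needs neither (A7) nor \Lem{energymeas'}, so it is available here; it converts a boundary quantity---exactly the sort (A7) is about, whence the apparent circularity---into the controllable forms $\cE^{I_n}$.

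Next I would dispose of the harmonic case. If $h\in\cH(I_{n_0})$ then, because the spaces $\cF^0_{I_n}$ decrease in $n$, the characterization in \Lem{harmonicA}~(i) gives $h\in\cH(I_n)$ for all $n\ge n_0$; iterating \Prop{keyp} then yields $\cE^{I_{n_0+km}}(h)\le c_0^{\,k}\cE^{I_{n_0}}(h)\to0$, so $(\ast)$ forces $\nu_h(K^\partial_{D,0})=0$. For general $f\in\cF$ I would pass to the harmonic replacement $h_n:=H_{I_n}f\in\cH(I_n)$ (legitimate since $I_n\subsetneq W_n$, by \Lem{Hw}) and invoke the triangle inequality $\Eq{triangular}$:
\[
\sqrt{\nu_f(K^\partial_{D,0})}\le\sqrt{\nu_{h_n}(K^\partial_{D,0})}+\sqrt{\nu_{f-h_n}(K^\partial_{D,0})}=\sqrt{\nu_{f-h_n}(K^\partial_{D,0})}\le\sqrt{2\cE(f-h_n)},
\]
so everything comes down to showing $\cE(f-h_n)\to0$.

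For this last point, write $g_n=f-h_n\in\cF^0_{I_n}$. The orthogonality $\cE(h_n,g_n)=0$ from \Lem{harmonicA}~(i) makes $\cE(g_n)=\cE(f)-\cE(h_n)$ nonincreasing in $n$; fixing some $n_1$ and using $\cF^0_{I_n}\subset\cF^0_{I_{n_1}}$ for $n\ge n_1$, \Lem{poincareA} with the fixed constant attached to $I_{n_1}$ bounds $\|g_n\|_{L^2(K,\mu)}^2$ by a multiple of $\cE(g_n)\le\cE(f)$, so $\{g_n\}$ is bounded in $\cF$. A weak limit $g_\infty$ is then a strong $L^2$-limit along a subsequence by the compactness of $\cF\hookrightarrow L^2(K,\mu)$; since each $g_n$ vanishes off $K_{I_n}$ and $\bigcap_nK_{I_n}=K^\partial_{D,0}$ is $\mu$-null, the limit $g_\infty$ vanishes $\mu$-a.e., whence $\cE(g_n)=\cE(f,g_n)\to\cE(f,g_\infty)=0$, the subsequential statement being upgraded to the full sequence by monotonicity. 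Combined with the previous display this gives $\nu_f(K^\partial_{D,0})=0$.

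The genuinely heavy analytic work is already packaged in \Prop{keyp}, so the main obstacle here is conceptual rather than computational: recognizing that the self-similar identity $(\ast)$ is what breaks the circularity inherent in proving a boundary-charging estimate, and that the harmonic replacement together with a \emph{uniform} Poincaré bound (secured by comparing against the fixed space $\cF^0_{I_{n_1}}$ rather than the shrinking $\cF^0_{I_n}$) reduces the general case to the harmonic one. I would expect the verification of $\cE(f-h_n)\to0$ to be the only step requiring care, precisely because one must keep the Poincaré constant from degenerating as the column collapses onto the null set $K^\partial_{D,0}$.
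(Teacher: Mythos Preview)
Your proof is correct and follows essentially the same route as the paper: both arguments introduce the harmonic replacements $h_n=H_{I_n}f$, use the self-similar identity~$(\ast)$ together with \Prop{keyp} to obtain $\nu_{h_n}(K^\partial_{D,0})=0$, and then pass from $h_n$ to $f$. The only difference is in this last step: the paper shows $h_n\to f$ weakly in $\cF$ and invokes Ces\`aro means of a subsequence to get strong convergence, whereas you exploit the orthogonality $\cE(h_n,g_n)=0$ to write $\cE(g_n)=\cE(f,g_n)$ and pass to the limit directly, using monotonicity of $\cE(g_n)$ to upgrade from a subsequence to the full sequence---a slightly cleaner variant that avoids the Ces\`aro detour.
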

\begin{proof}
Take $c_0$ and $m$ as in \Prop{keyp}.
For each $n\in\N$, define $h_n=H_{I_n}(f)\in \cH(I_n)$ (cf.\ \Lem{Hw}).
From \Lem{energymeas}~(ii) and \Prop{keyp}, for $j\in\N$,
\begin{align*}
\frac12\nu_{h_n}(K^\partial_{D,0})
&\le \frac12\sum_{w\in I_{n+jm}}r^{-(n+jm)}\nu_{\psi_w^* h_n}(K)\\
&\quad\qquad\text{(since $\psi_w^{-1}(K^\partial_{D,0})=\emptyset$ for $w\in W_{n+jm}\setminus I_{n+jm}$)}\\
&=\cE^{I_{n+jm}}(h_n)
\le c_0^j\cE^{I_n}(h_n)
\le c_0^j\cE^{I_n}(f)
\le c_0^j\cE(f)
\to 0
\quad \text{as }j\to \infty.
\end{align*}
Therefore, 
\begin{equation}\label{eq:zero}
\nu_{h_n}(K^\partial_{D,0})=0.
\end{equation}
Since $\cE(h_n)\le \cE(f)$ and $h_n=f$ on $K_{W_n\setminus I_n}$ for each $n$, $\{h_n\}_{n=1}^\infty$ is bounded in $\cF$ in view of \Lem{poincareA}.
Moreover, since $\mu(K^\partial_{D,0})=0$ and $\bigcup_{n=1}^\infty K_{W_n\setminus I_n}=K\setminus K^\partial_{D,0}$, $h_n(x)$ converges to $f(x)$ for $\mu$-a.e.\,$x\in K$.
Therefore, $h_n$ converges weakly to $f$ in $\cF$.
In particular, the Ces\`aro mean of a certain subsequence of $\{h_n\}_{n=1}^\infty$ converges to $f$ in $\cF$.
By combining \Eq{triangular}, \Eq{energy}, and \Eq{zero}, we obtain that $\nu_f(K^\partial_{D,0})=0$.
\qed\end{proof}
\begin{theopargself}
\begin{proof}[of \Prop{A7}]
From Propositions~\ref{prop:em} and \ref{prop:A7'}, and \Lem{symmetry}, we conclude that $\nu(K^\partial)=0$.
\qed\end{proof}
\end{theopargself}

\begin{acknowledgements}
The author expresses his gratitude to M.~T.~Barlow, S.~Watanabe, and M.~Yor for their valuable comments on the concept of martingale dimensions.
The author also thanks N.~Kajino for discussion on \Prop{compatible} and the anonymous referees for their careful reading and constructive suggestions.
\end{acknowledgements}


\addcontentsline{toc}{section}{References}

\end{document}